\newtheorem{theorem}{Theorem}[section]
\newtheorem{proposition}[theorem]{Proposition}
\newtheorem{lemma}[theorem]{Lemma}
\newtheorem{coro}[theorem]{Corollary}
\newtheorem{cor}[theorem]{Corollary}
\newtheorem{remark}[theorem]{Remark}
\newcounter{compte}
\newenvironment{enum-i}{\begin{list}{\roman{compte})} {\usecounter{compte}
\topsep=1mm \itemsep=0.2mm \leftmargin=5mm  }}{\end{list}}
\newenvironment{enum-1}{\begin{list}{\arabic{compte})} {\usecounter{compte}
\topsep=1mm \itemsep=0.2mm \leftmargin=5mm  }}{\end{list}}
\newenvironment{enum-a}{\begin{list}{\alph{compte})} {\usecounter{compte}
\topsep=1mm \itemsep=0.2mm \leftmargin=5mm  }}{\end{list}}
\renewcommand{\dim}{\operatorname{dim}}
\renewcommand{\ker}{\operatorname{Ker}}
\newcommand{\EQ}[1]{\begin{equation}\begin{split} #1 \end{split}\end{equation}}
\def\al{\alpha}
\def\f{\frac}
\def\R{\mathbb{R}}
\def\les{\lesssim}
\def\la{\langle}
\def\ra{\rangle}
\def\calH{\mathcal{H}}
\def\nn{\nonumber}
\def\eps{\varepsilon}
\def\I{\infty}
\def\p{\partial}
\def\fy{\varphi}
\def\nn{\nonumber}
\newcommand{\LR}[1]{{\langle #1 \rangle}}
\def\calL{\mathcal{L}}
\numberwithin{equation}{section}
\begin{document}

\author{N.\ Burq, G.\ Raugel, W.\ Schlag}

\address{ Nicolas Burq: Univ Paris-Sud, Laboratoire de Math\'ematiques d'Orsay,
Orsay Cedex, F-91405; CNRS, Orsay cedex, F-91405, France}
\address{ Genevi\`eve Raugel: CNRS, Laboratoire de Math\'{e}matiques d'Orsay, Orsay Cedex, 
F-91405; Univ Paris-Sud, Orsay cedex, F-91405, France} 
\address{Wilhelm Schlag: University of Chicago, Department of Mathematics, 5734 South University Avenue, Chicago, IL 60636, U.S.A.}

\title{Long time dynamics for damped Klein-Gordon equations}

\begin{abstract}
For general  nonlinear Klein-Gordon equations with dissipation we show  that 
any finite energy  radial solution either blows up in finite time or asymptotically approaches  a stationary solution in $H^1\times L^2$. In particular, any global
solution is bounded. The result applies to standard energy subcritical focusing nonlinearities $|u|^{p-1} u$, $1<p<(d+2)/(d-2)$ as well as  
any energy subcritical  nonlinearity  obeying  a sign condition
of the Ambrosetti-Rabinowitz type. The argument involves both techniques from nonlinear dispersive PDEs 
and dynamical systems (invariant manifold theory in Banach spaces and 
 convergence theorems).
\end{abstract}

\thanks{The authors thank  F.\ Merle, E.\ Hebey and M.\ Willem for fruitful discussions. In particular, they thank E.\ Hebey for having indicated them the paper of Cazenave~\cite{Caz85}. The first author was partially funded by ANR through ANR-13-BS01-0010-03 (ANA\'E).
The third author was partially supported by the NSF through DMS-1160817.}

\maketitle

\section{Introduction}

Nonlinear dispersive evolution equations such as the wave and Schr\"odinger equations have been investigated for decades.
For defocusing power-type energy subcritical or critical nonlinearities the theory is    developed, while the energy supercritical powers are wide open. 
For semilinear focusing equations the picture is less complete for  long-term dynamics. 
These equations exhibit   finite-time blowup, small data global existence and scattering, as well as time-independent solutions (solitons). 
For the energy critical wave equation
\EQ{\nn 
\Box u &= u^5 ~, \quad (t,x)\in \R^{1+3}~, \\
(u(0),  \partial_t u (0)) &\in   \dot H^1(\R^3) \times L^2(\R^3)~,
}
in the radial setting, Duyckaerts, Kenig, and Merle~\cite{DuKeMer} achieved a breakthrough by showing that all global   trajectories
can be described as a superposition of a finite number of rescalings of the  ground state $W(r)=(1+r^2/3)^{-\f12}$ plus a radiation term which is asymptotic to a free wave. 
This work introduces the novel {\em exterior energy} estimates. 
The subcritical case appears to require different techniques, however. Nakanishi and the third author~\cite{NaS} described the asymptotics of solutions provided
the energy is only slightly larger than the ground state energy. The trichotomy in forward time of (i) blowup in finite time (ii) global existence and scattering to zero (iii) global existence and
scattering to the ground state,  can be naturally formulated in terms of the center-stable manifold associated with the ground state. 

In this paper, we develop a robust approach to the problem of long-term asymptotics of the general energy subcritical Klein-Gordon equations with (arbitrarily small) dissipation. 
The focusing damped subcritical Klein-Gordon equation in $\mathbb{R}^d$, $1 \leq d \leq 6$ (for the case $d \geq 7$, see 
\cite{BRS2}), is
\begin{equation}
\label{KGspecial}
\begin{split}
& \partial_t^2 u+2\alpha  \partial_t u -\Delta u+u- |u|^{\theta -1}u=0, \cr 
&(u(0), \partial_t u (0))=(\varphi_0,\varphi_1)\in\mathcal{H}~,
\end{split}
\end{equation}
where $\mathcal{H}= H^1(\mathbb{R}^d)\times L^2(\mathbb{R}^d)$,  $\alpha \geq 0$ and  
\begin{equation}
\label{eq:theta}
1 < \theta < \theta^*~, \hbox{ with  }  \theta^*= \frac{d+2}{d-2}~. 
\end{equation}
We will limit our study to the case of radial functions 
$$
\mathcal{H}_{rad}= H^1_{rad}(\mathbb{R}^d)\times L^2_{rad}(\mathbb{R}^d)~.
$$
The energy functional $E^{\theta}$ below, also called Lyapunov functional in the dissipative case $\alpha>0$, plays an important role in the analysis of the behaviour of the solutions of \eqref{KGspecial}. This energy functional is given by
 \begin{equation}
\label{Eq:Ep}
E^{\theta}(\varphi_0,\varphi_1) = \int_{\mathbb{R}^d}\left(\frac{1}{2}|\nabla\varphi_0|^2+\frac{1}{2}\varphi_0^2+\frac{1}{2}\varphi_1^2- \frac{1}{\theta +1} |\varphi_0 |^{\theta +1}\right)\,  dx
\end{equation}
For  the  Klein-Gordon equation  \eqref{KGspecial},  it is known (see \cite{Strauss}, \cite{BeLions83I}, 
\cite{Coff72}, \cite{McLeod93} and \cite{ChenLin91} for example) that \eqref{KGspecial}  admits a unique positive radial stationary solution $(Q_g,0)$ (the ground state solution), which minimizes the energy $E^{\theta}(.,0)$ in the class of all nonzero stationary solutions $(Q,0)$ in $\mathcal{H}$, that is,
$$
0 <E^{\theta}(Q_g,0) = \min \{ E(Q,0) \, | \, Q  \in H^1(\mathbb{R}^d), Q \ne 0, -\Delta Q +Q - |Q|^{\theta -1}Q =0\}
$$
The behaviour of solutions of \eqref{KGspecial} with initial data $(\varphi_0,\varphi_1) \in \mathcal{H}$ with energy 
$E^{\theta}(\varphi_0,\varphi_1) < E^{\theta}(Q_g,0)$  is rather  well understood in the case $\alpha \geq 0$ since these solutions remain  in the so-called  Payne-Sattinger sets 
(see \cite{PaSat}) for all positive times (and also negative times for $\alpha=0$). In these Payne-Sattinger domains, the solutions either blow-up in finite time or globally exist and  scatter to $0$ when $\alpha=0$ or converge to $0$ for $\alpha > 0$, respectively. For a description of this phenomenon in the case $\alpha=0$, we refer for example to the book \cite{NaS}.

\medskip

It is also well-known that this equation has an infinite number of radial equilibrium points 
$(e_{\ell},0)$ with a prescribed number $\ell\ge1$ of zeros (these are called {\em nodal solutions}, see for example \cite{BeLions83II}).  
Unfortunately, one  knows almost nothing  about the uniqueness and the hyperbolicity of those nodal solutions. 
(\cite{CG-HY11} obtains uniqueness results for nodal solutions but for sub-linear nonlinearities). 
In  the Hamiltonian case ($\alpha=0$), this in part prevents the description of the behaviour of the solutions $\vec u(t)$ of 
\eqref{KGspecial} whose initial data $(\varphi_0,\varphi_1)$ have an energy $E^{\theta}(\varphi_0,\varphi_1)$ much larger than the one of the ground state $(Q,0)$.

In 1985 Cazenave~\cite{Caz85} established the following dichotomy for the Hamiltonian case 
$\alpha =0$:   solutions of \eqref{KGspecial} either blow up in finite  time or are global and bounded in $\mathcal{H}$, provided $1<\theta < +\infty$, if $d=1, 2$ with $\theta \leq 5$ if $d=2$ and $1 < \theta \leq \frac{d}{d-2}$ if $d \geq 3$ (the result of Cazenave should extend to the case $\alpha >0$).
In 1998 Feireisl~\cite{Feir98A}, for the dissipative case $\alpha >0$,  gave an independent proof of the boundeness of the global solutions of \eqref{KGspecial}, when $d \geq 3$ and $1< \theta < 1 + \min (\frac{d}{d-2},\frac{4}{d})$ (for the case $d=1$, see his earlier paper \cite{Feir94a}). 

Unfortunately,  the proofs of  Cazenave \cite{Caz85} and of Feireisl \cite{Feir98A} do not seem  to extend to nonlinearities satisfying $\frac{d}{d-2} < \theta < \frac{d +2}{d-2}$, when $d \geq 3$, where one needs to use Strichartz estimates in the various a priori estimates rather than Galiardo-Nirenberg-Sobolev inequalities. In this paper, we restrict our study to the dissipative radial case ($\alpha >0$)   and show the following dichotomy.

\begin{theorem} \label{ThBRS1special}
Let $\alpha >0$ and $ d \leq 6$. Then, 
\begin{enumerate}
\item either the solutions of \eqref{KGspecial} in 
$\mathcal{H}_{rad}$ blow up in finite positive time, 
\item or  they are global in positive time and converge to an equilibrium point. 
\end{enumerate}
In  particular,  all global in positive times solutions are bounded for positive times. 
\end{theorem}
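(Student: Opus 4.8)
The plan is to combine a local/global existence dichotomy with a convergence-to-equilibrium statement driven by the Lyapunov structure of the damped flow, and then use invariant manifold theory near the (hopefully finitely many) equilibria to rule out oscillations. First I would set up the semiflow in $\mathcal{H}_{rad}$: the Cauchy problem for \eqref{KGspecial} is locally well-posed in $\mathcal{H}_{rad}$ by the subcriticality assumption \eqref{eq:theta} together with radial Strichartz estimates (this is where $d\le 6$ and $\theta<\theta^*$ enter, replacing the Gagliardo--Nirenberg arguments of Cazenave and Feireisl), so every initial datum generates a maximal solution on $[0,T_{\max})$, and if $T_{\max}<\infty$ the $\mathcal{H}$-norm blows up --- this is case (1). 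So it suffices to treat $T_{\max}=+\infty$ and show convergence to an equilibrium. The energy $E^\theta$ is a strict Lyapunov functional: along the flow $\frac{d}{dt}E^\theta(\vec u(t)) = -2\alpha\int |\partial_t u|^2\,dx \le 0$, so $E^\theta(\vec u(t))$ is nonincreasing and, if the trajectory is bounded, converges.

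The heart of the argument, then, is: \emph{a global trajectory that is bounded in $\mathcal{H}_{rad}$ for $t\ge 0$ converges in $\mathcal{H}$ to a single equilibrium}. I would first establish precompactness of the forward orbit: boundedness plus the dissipation relation gives $\int_0^\infty \|\partial_t u\|_{L^2}^2\,dt<\infty$, and parabolic-type smoothing/Strichartz bounds on the Duhamel formula upgrade weak to strong precompactness of $\{\vec u(t):t\ge 1\}$ in $\mathcal{H}_{rad}$ (radial compact Sobolev embeddings help control the tails). Hence the $\omega$-limit set is nonempty, compact, connected, invariant, and by the LaSalle invariance principle consists of equilibria (on it $E^\theta$ is constant, forcing $\partial_t u\equiv 0$, hence $-\Delta Q+Q-|Q|^{\theta-1}Q=0$). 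So the $\omega$-limit set is a compact connected set of radial stationary solutions at one fixed energy level. If all such equilibria were isolated this would already finish the proof; since uniqueness/hyperbolicity of the nodal solutions $e_\ell$ is unknown, one cannot argue this way directly.

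To close the gap I would invoke a {\L}ojasiewicz--Simon type convergence theorem: the elliptic functional $E^\theta(\cdot,0)$ is analytic on $H^1_{rad}$, and at any equilibrium $Q$ a {\L}ojasiewicz--Simon inequality $|E^\theta(v,0)-E^\theta(Q,0)|^{1-\sigma}\le C\|{-\Delta v+v-|v|^{\theta-1}v}\|_{H^{-1}}$ holds in a neighborhood of $Q$ (this requires Fredholmness of the linearized operator $-\Delta+1-\theta|Q|^{\theta-1}$ on the radial subspace, which follows from the compactness structure; in the non-analytic nonlinearity case one appeals instead to the Feireisl--Simondon/Haraux--Jendoubi convergence results or to the local-center-manifold/angular-velocity arguments mentioned in the abstract). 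Combined with the dissipation identity, the {\L}ojasiewicz--Simon inequality yields $\int_0^\infty\|\partial_t u\|_{L^2}\,dt<\infty$, i.e. $\vec u(t)$ has finite length in $\mathcal{H}$ as $t\to\infty$, hence is Cauchy and converges to a single equilibrium; boundedness is then automatic, giving conclusion (2) and the final ``in particular'' statement.

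The main obstacle is the asymptotic compactness of the forward orbit in the full energy-subcritical range $\theta<\theta^*$: one must rule out energy escaping to spatial infinity or concentrating, and the standard energy-method a priori bounds (Gagliardo--Nirenberg) are insufficient above $\theta=\tfrac{d}{d-2}$, so this step genuinely needs radial Strichartz estimates and a careful profile-decomposition/concentration-compactness analysis of the Duhamel term; a secondary difficulty is verifying the hypotheses of the {\L}ojasiewicz--Simon (or invariant-manifold) machinery uniformly along the orbit, i.e. that the $\omega$-limit points are nondegenerate enough in the relevant Fredholm sense. Everything else --- the blow-up alternative, the Lyapunov monotonicity, and the LaSalle step --- is comparatively routine once the functional framework is fixed.
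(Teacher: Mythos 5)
There is a genuine gap, and it sits exactly where the paper's main difficulty lies: you assume what must be proven. You reduce the problem to showing that ``a global trajectory that is \emph{bounded} in $\mathcal{H}_{rad}$ converges to a single equilibrium,'' but the boundedness of global solutions is a \emph{conclusion} of the theorem (the final ``in particular''), not a hypothesis. Your dichotomy silently drops the third alternative of a global but unbounded trajectory, and for such a trajectory neither the LaSalle step (which needs a nonempty, compact, connected, invariant $\omega$-limit set) nor the \L ojasiewicz--Simon step (which needs the orbit to remain in a fixed neighborhood of one equilibrium) can be run. The paper's whole architecture is built around this obstruction: in the possibly unbounded case it uses Lemma~\ref{SolBorne} together with the blow-up criterion of Lemma~\ref{eqBlowup} (both resting on the Ambrosetti--Rabinowitz sign condition, here automatic for the pure power) to extract a sequence $t_n\to\infty$ with $K_0(u(t_n))\to 0$; Theorem~\ref{K0(tn)stern} then shows --- via the radial compact Sobolev embedding, an equicontinuity/Ascoli argument on the translated solutions $u(t_n+\cdot)$, and an observation inequality for the free Klein--Gordon flow --- that $\vec u(t_n)\to(Q^*,0)$ strongly in $\mathcal{H}$ \emph{along that subsequence only}; finally Section~\ref{sec:conv} upgrades subsequential convergence to convergence of the entire trajectory using the asymptotic-phase property of the local center-unstable manifold and the Brunovsk\'y--Pol\'a\v{c}ik lemma, which is designed precisely to dispense with connectedness and precompactness of $\omega(\vec u_0)$.

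Two further points. First, your claimed strong precompactness of $\{\vec u(t):t\ge1\}$ from ``parabolic-type smoothing/Strichartz bounds'' is not something the paper establishes and is not routine: the damped wave semigroup has no smoothing effect, and even in the radial class the velocity component $\partial_t u(t)$ does not obviously lie in a compact set; the paper never proves precompactness of the full orbit, only convergence along the carefully chosen times $t_n$. Second, the authors explicitly state that the \L ojasiewicz--Simon/Haraux--Jendoubi functional route could not be carried out here, both because the trajectory is not a priori bounded and because the requisite a priori estimates must pass through Strichartz norms; so the closing step of your outline follows a path the paper tried and abandoned in favor of the dynamical-systems argument through $K_0$ and invariant manifolds.
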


We notice that this theorem is a particular case of Theorem \ref{ThBRS1} below. In \cite{BRS2}, we will partly generalise this dichotomy to non-radial solutions.

\medskip 

Actually the above dichotomy holds for some more general nonlinearities and, in this paper, we consider the damped Klein-Gordon equation in $\mathbb{R}^d$, $ d \leq 6$ (for the case $d \geq 7$, see \cite{BRS2}),
\begin{equation*}
\tag*{$(KG)_\alpha$}
\label{KGalpha}
\begin{split}
& \partial_t^2 u+2\alpha  \partial_t u -\Delta u+u-f(u)=0~, \cr
&(u(0), \partial_t u (0))=(\varphi_0,\varphi_1)\in\mathcal{H}_{rad}~,
\end{split}
\end{equation*}
where $f: y \in  \mathbb{R} \mapsto f(y) \in  \mathbb{R}$ is an odd $C^1$-function, $f'(0)=0$,  which satisfies the following Ambrosetti-Rabinowitz type condition: there exists 
a constant $\gamma >0$ such that 
\begin{equation*}
\tag*{$(H.1)_f$}
\label{H1f}
\begin{split}
\int_{\mathbb{R}^d} \big(2(1+ \gamma) F(\varphi) - \varphi(x) f( \varphi(x))\big) \, dx \leq 0~, \quad \forall \varphi \in H^1(\mathbb{R}^d)~,
\end{split}
\end{equation*}
where $F(y) = \int_0^y f(s)ds$.\\
We also need to impose a growth condition on $f$, when $d \geq 2$. We assume that,
\begin{equation*}
\tag*{$(H.2)_f$}
\label{H2f}
\begin{split}
&|f'(y)| \leq  C \max\big( |y|^\beta, |y|^{\theta -1}\big)~, \quad   \forall  y \in \mathbb{R}~,  \cr
&|f'(y_1) - f'(y_2)| \leq C \big( |y_1 - y_2 |^{\beta} + |y_1 - y_2 |^{\theta -1} \big)~, \quad   \forall  y_1, y_2
 \in \mathbb{R}~,
\end{split}
\end{equation*}
where  $1 < \theta < \theta^* $, $ 0<\beta<\theta-1$, $\theta^* = 2^*-1$ and where $2^*=\infty$ if $d=1,2$ and $2^*=\frac{2d}{d-2}$ if $d\ge3$. 
We notice that, when $d \geq 3$, $ \theta^* = \frac{d+2}{d-2}$. \\
 In other words,  the growth of $f$ is energy subcritical for large $y=0$,  and we also assume that $f'$ is  $\beta$-H\"older continuous. 
 For sake of simplicity in the proofs below, we may assume, without loss of generality, that 
 $ 0 < \beta < \min( \theta -1, \frac{2}{d-2})$.

\medskip

We remark that our argument does not depend on the existence or uniqueness of a ground state solution. Note that 
  Hypothesis \ref{H1f} alone does not imply the existence and uniqueness of a ground state solution. 
We further note that  Hypothesis \ref{H1f} may actually be replaced by the following weaker one: 
\begin{equation*}
\tag*{$(H.1bis)_f$}
\label{H1bisf}
\begin{split}
\int_{\mathbb{R}^d} \big(2(1+ \gamma) F(\varphi) - \varphi(x) f( \varphi(x))\big) dx \leq 0~, \quad  \hbox{for } \| \varphi\|_{H^1} \hbox{ large enough}.
\end{split}
\end{equation*}
But, for sake of simplicity, we assume  \ref{H1f}  throughout.
A classical example of a function $f$ satisfying  hypotheses \ref{H1f} and \ref{H2f} is as follows: 
 \begin{equation}
\label{fExample2}
\begin{split}
f(u) =  \sum_{i=1}^{m_1} a_i |u |^{p_i-1}u - \sum_{j=1}^{m_2}b_j |u |^{q_j -1}u~, 
&\hbox{ with }  1< q_j < p_i \leq \frac{d+2}{d-2}, \,\forall i,j  \cr
& \hbox{ and }  a_i, b_j \geq 0, a_{m_1}>0~.
\end{split}
\end{equation}

In Section~\ref{sec:basic}, we shall prove that the equation \ref{KGalpha} generates a local dynamical system on 
 $\mathcal{H}$ as well as on $\mathcal{H}_{rad}$, for $\alpha \geq 0$. We denote $S_{\alpha}(t)$, $\alpha \geq 0$, this local dynamical system. As in the particular case of the Klein-Gordon equation \eqref{KGspecial}, we introduce  the energy functional (also called Lyapunov functional in the case of positive damping $\alpha>0$) on  $\mathcal{H}$:
 \begin{equation}
\label{Eenergie}
E(\varphi_0,\varphi_1) = \int_{\mathbb{R}^d}\left(\frac{1}{2}|\nabla\varphi_0|^2+\frac{1}{2}\varphi_0^2+\frac{1}{2}\varphi_1^2- F(\varphi_0)\right)\,  dx~.
\end{equation}

 \medskip 

The natural first step in the study of the dynamics of the equation \ref{KGalpha} consists in studying the boundedness or unboundedness of its global (in positive times) solutions.
As already mentioned above, under restrictions on the growth rate of the nonlinearity, Cazenave~\cite{Caz85} and Feireisl~ \cite{Feir98A} established this boundedness. 
In this paper, taking advantage of the fact that all the functions are radial, we will show the boundedness of the global solutions of \ref{KGalpha}, for $\alpha >0$, by using  ``dynamical systems'' arguments. Indeed, we will show that each global solution $\vec u(t)$ converges to an equilibrium point as $t$ goes to $+\infty$.

If the equation \ref{KGalpha} admits a ground state solution and is Hamiltonian, the  functional $K_0: \varphi \in H^1(\mathbb{R}^d) \mapsto K_0(\varphi) \in \mathbb{R}$ 
defined as 
 \begin{equation}
\label{eq:DefK0}
K_0(\varphi) = \int_{\mathbb{R}^d}\left(|\nabla\varphi|^2+\varphi^2- \varphi f(\varphi)\right)\,  dx~,
\end{equation}
has played a decisive  role in the description of the dynamics of the solutions with initial energy smaller or slightly larger than
the one of the ground state (see \cite{PaSat}, \cite{NaS} for example). It will also be important in our situation. First we shall prove in Lemma \ref{eqBlowup}, that if 
$$
\vec u(t) =
S_{\alpha}(t)(\varphi_0,\varphi_1)(t) \equiv (u(t),  \partial_t u (t))
$$
satisfies  $K_0(u(t)) \leq  -\delta$ (where $\delta >0$),
on the maximal interval of existence, the solution blows up in finite time. On the other hand, 
we will see that, if $K_0(u(t)) \geq \eta$ for some finite $\eta$ on the maximal interval of existence, the solution exists and is bounded for all positive times. 

In order to prove that each global solution 
$\vec u(t) =S_{\alpha}(t)(\varphi_0,\varphi_1)(t)$  converges to an equilibrium point 
as $t$ goes to $+\infty$, we  argue by contradiction. If this trajectory $\vec u(t)$ is unbounded in positive time, then there exists  a sequence of times $t_n$, $t_n \rightarrow_{n \rightarrow +\infty} +\infty$, such that $$K_0(u(t_n)) \rightarrow_{n \rightarrow +\infty} 0$$
 Then, using this sequence of times $t_n$, we show in 
Theorem \ref{K0(tn)stern}, that the $\omega$-limit set $\omega(\varphi_0,\varphi_1)$ of $(\varphi_0,\varphi_1)$ is non-empty and contains at least one equilibrium point $(Q^*,0)$ of the equation \ref{KGalpha}. We recall that the $\omega$-limit set 
$\omega(\varphi_0,\varphi_1)$ of $(\varphi_0,\varphi_1)$ is defined as follows:
 \begin{equation}
\label{eqOmegaLim}
 \begin{split}
\omega(\varphi_0,\varphi_1) = \{ \vec w \in \mathcal{H}_{rad} \, | \, \exists &\hbox{  a sequence } \tau_n \geq 0, 
\hbox{ so that } \tau_n  \rightarrow_{n \rightarrow +\infty} +\infty~,\cr
& \hbox { and }  S_{\alpha} (\tau_n) (\varphi_0,\varphi_1)  \rightarrow_{n \rightarrow +\infty} \vec w  \}~. 
\end{split}
\end{equation}
Then, in Section~\ref{sec:conv}, taking advantage of the fact that the linearized Klein-Gordon equation around $(Q^*,0)$ in the space $\mathcal{H}_{rad}$ has a kernel which is  at most one-dimensional, we show, by using classical convergence arguments based on invariant manifold theory,  that the  trajectory converges to this equilibrium in positive infinite time, and is therefore bounded.  
  
  \begin{theorem} \label{ThBRS1}
 Let $\alpha >0$.
 Assume that $1 \leq d \leq 6$ and that $f$ satisfies the conditions \ref{H1f} and \ref{H2f}.
Let  $(\varphi_0,\varphi_1)\in\mathcal{H}_{rad}$, then
\begin{enumerate}
\item  either $S_\alpha(t)(\varphi_0,\varphi_1)$ blows up in finite time,
\item or   $S_\alpha(t)(\varphi_0,\varphi_1)$ exists globally and converges 
to an equilibrium point $(Q^*,0)$ of \ref{KGalpha}, as $t\to+\infty$.
\end{enumerate}
 \end{theorem}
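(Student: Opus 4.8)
The plan is to argue by contradiction, following the outline sketched in the introduction. Suppose a solution $\vec u(t) = S_\alpha(t)(\varphi_0,\varphi_1)$ exists globally in positive time but does not converge to any equilibrium point. Because $\alpha > 0$, the energy $E$ from \eqref{Eenergie} is a strict Lyapunov functional: differentiating along the flow gives $\frac{d}{dt}E(\vec u(t)) = -2\alpha\int |\partial_t u|^2\,dx \le 0$. Hence $E(\vec u(t))$ is nonincreasing, and either it tends to $-\infty$ or it converges to a finite limit. The first step is to rule out $E(\vec u(t))\to -\infty$ (and more generally to control the solution): combining the Lyapunov decay with the functional $K_0$ I would show that if the trajectory is bounded then $E$ stays bounded below, while if $K_0(u(t))\le -\delta$ at some point then Lemma~\ref{eqBlowup} forces finite-time blowup, contradicting global existence. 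So on a global trajectory we must have $K_0(u(t)) > -\delta$ for every $\delta>0$ eventually, i.e. $\liminf_{t\to\infty} K_0(u(t)) \ge 0$.

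Next I would establish the dichotomy between boundedness and the existence of a sequence $t_n\to\infty$ with $K_0(u(t_n))\to 0$. Indeed, by the remark quoted from the text, if $K_0(u(t)) \ge \eta > 0$ for all large $t$ then the solution is bounded and global; and a bounded global trajectory with a strict Lyapunov functional has a nonempty, compact, connected, flow-invariant $\omega$-limit set on which $E$ is constant. Every point of $\omega(\varphi_0,\varphi_1)$ is then an equilibrium (here one uses that $\dot E \equiv 0$ on the $\omega$-limit set forces $\partial_t u \equiv 0$, hence the limit is of the form $(Q^*,0)$ with $-\Delta Q^* + Q^* - f(Q^*) = 0$), and the convergence argument of Section~\ref{sec:conv} — using that the linearization around $(Q^*,0)$ has at most one-dimensional kernel in $\mathcal{H}_{rad}$, together with invariant manifold theory and a \L{}ojasiewicz-type / Hale--Raugel convergence theorem — upgrades "$\omega$-limit set of equilibria" to genuine convergence to a single $(Q^*,0)$. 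The only remaining case is that the trajectory is unbounded, in which case necessarily $\liminf K_0(u(t)) = 0$ but $K_0$ cannot stay bounded away from $0$, so we extract $t_n\to\infty$ with $K_0(u(t_n))\to 0$; this is exactly the hypothesis feeding Theorem~\ref{K0(tn)stern}.

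With such a sequence in hand, Theorem~\ref{K0(tn)stern} provides that $\omega(\varphi_0,\varphi_1)$ is nonempty and contains at least one equilibrium $(Q^*,0)$. Having produced one equilibrium in the $\omega$-limit set, I would again invoke the convergence machinery of Section~\ref{sec:conv}: the key structural fact is the (at most) one-dimensionality of the kernel of the linearized operator, which allows the Hale--Raugel / \L{}ojasiewicz convergence theorem to apply near $(Q^*,0)$; once the trajectory enters a small neighborhood of this equilibrium it must converge to it, and in particular the trajectory is bounded — contradicting the assumption of unboundedness. Thus every global trajectory is bounded and converges to an equilibrium, which together with the blowup alternative gives the stated dichotomy.

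I expect the main obstacle to be the passage from "the $\omega$-limit set contains an equilibrium $(Q^*,0)$" to "the trajectory converges to $(Q^*,0)$." This is delicate for two reasons: first, the equilibrium $(Q^*,0)$ need not be hyperbolic — its linearization may have a one-dimensional kernel (a neutral direction), so one cannot use a naive exponential-dichotomy argument and must instead rely on a center-manifold reduction combined with a \L{}ojasiewicz inequality (or the Hale--Raugel approach) to get convergence without a rate; second, the functional-analytic setup is noncompact (the equation is on $\R^d$), so one must be careful that the radial reduction and the subcritical growth via Strichartz estimates give enough compactness for the $\omega$-limit set to be well-behaved and for the invariant manifold construction to go through. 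Verifying the hypotheses of the abstract convergence theorem in this infinite-dimensional, only-one-dimensional-kernel setting is where the real work lies; the blowup criterion (Lemma~\ref{eqBlowup}) and the Lyapunov monotonicity are comparatively routine.
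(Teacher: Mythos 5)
Your overall skeleton matches the paper's: use the Lyapunov functional and the blowup criterion to produce, for any global solution, a sequence $t_n\to\infty$ with $K_0(u(t_n))\to 0$; extract a subsequential limit which is an equilibrium $(Q^*,0)$; then upgrade to convergence of the whole trajectory. However, two of your steps rest on facts that are false, or at least unavailable, in this setting. First, you assert that a bounded global trajectory has a nonempty, compact, connected, invariant $\omega$-limit set consisting of equilibria. On $\R^d$ a trajectory bounded in $\mathcal{H}$ is not precompact, so none of this is automatic; this is exactly why the paper needs Proposition~\ref{lem:GEB seq} (a virial-type identity combined with a uniform-continuity argument to find $t_n$ with both $K_0(u(t_n))\to0$ and $\| \partial_t u(t_n)\|_{L^2}\to0$ even in the bounded case) and the compact embedding $H^1_{rad}\hookrightarrow L^p$, $2<p<2^*$, inside Theorem~\ref{K0(tn)stern}, to obtain even one strong subsequential limit. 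Note also that Lemma~\ref{eqBlowup} requires $K_0(u(t))\le-\delta$ on the \emph{whole} remaining interval of existence, not at a single time, so your deduction $\liminf_{t\to\infty}K_0(u(t))\ge0$ does not follow; what one actually extracts (Lemma~\ref{lem:GEU seq}) is a sequence of times along which $K_0\to0$ from below.

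Second, and more seriously, the final step cannot be carried out by the Hale--Raugel or \L ojasiewicz--Simon route you invoke, and the claim that ``once the trajectory enters a small neighborhood of $(Q^*,0)$ it must converge to it'' is false for a non-hyperbolic (and typically linearly unstable) equilibrium. The paper explicitly points out that both of those convergence theorems require a precompact trajectory with a connected $\omega$-limit set of equilibria, which is precisely what is not known here. Instead the paper runs a contradiction argument following Brunovsk\'{y} and Pol\'{a}\v{c}ik: if $\vec u(t)$ re-enters and exits a small ball around $(Q^*,0)$ infinitely often, the asymptotic phase property of the finite-dimensional center-unstable manifold forces the exit points to accumulate, after a subsequence, at an equilibrium on the sphere of radius $\beta$ lying on the one-dimensional local center manifold; examining whether such equilibria accumulate at $(Q^*,0)$ from one side or both sides, and applying Lemma~\ref{lemmeB1}, produces a point of $W^u_{loc}((Q^*,0))\setminus\{(Q^*,0)\}$ in the $\omega$-limit set, contradicting $\omega(\vec u_0)\subset\mathcal{E}_{rad}$. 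Supplying this mechanism --- the construction of the foliation and asymptotic phase for $W^{cu}$ via the localized equation, and the exploitation of the one-dimensionality of $\ker\mathcal{L}$ --- is the real content of the proof and is missing from your proposal.
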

For  the case $d\geq 7$,  we refer the reader to \cite{BRS2}. 

 \medskip 

To place this result into context, we now briefly recall   various related convergence theorems. Since we are considering the equation \ref{KGalpha} in the radial setting, the linearized Klein-Gordon operator around the equilibrium $(Q^*,0)$ has a kernel of dimension less than or  equal to $1$, that is, either  $0$ does not belong to the spectrum of the elliptic selfadjoint operator 
$$
\mathcal{L} \equiv -\Delta + I - f'(Q^*)
$$ 
or $0$ is a simple eigenvalue of $\mathcal{L}$ (see Section~\ref{sec:basic}, 
Lemma \ref{lem:spectre2A}). If $0$ is a simple eigenvalue of $\mathcal{L}$, then the dynamical system $S_{\alpha}(t)$ admits a $C^1$ local center manifold 
$W^c((Q^*,0))$ of dimension $1$ at $(Q^*,0)$. Since the $\omega$-limit set of any element $(\varphi_0,\varphi_1) \in \mathcal{H}_{rad}$ belongs to the set of equilibria, if the trajectory of $S_{\alpha}(t) (\varphi_0,\varphi_1) \equiv \vec u(t)$  were precompact in $\mathcal{H}_{rad}$, we could directly conclude by using the convergence results contained in 
\cite{BrP97b} or in \cite{HaRa} for example that the whole trajectory 
$S_{\alpha}(t) (\varphi_0,\varphi_1)$ converges to $(Q^*,0)$, when $t$ goes to infinity. Unfortunately, we do not know that the trajectory $S_{\alpha}(t) (\varphi_0,\varphi_1)$ is bounded and thus we do not even know that the $\omega$-limit set of $(\varphi_0,\varphi_1)$
is  bounded and connected.  However, adapting the proof of \cite[Lemma 1]{BrP97b} and using the asymptotic  phase property of the local center unstable and local center manifolds around $(Q^*,0)$ (see Appendix A for these concepts), we easily obtain that the entire  trajectory  
$S_{\alpha}(t) (\varphi_0,\varphi_1)$ converges to $(Q^*,0)$ as  $t$ goes to infinity. An alternative way for proving the convergence of the trajectory 
$S_{\alpha}(t) (\varphi_0,\varphi_1)$ towards $(Q^*,0)$ would be to use (instead of dynamical systems arguments) a \L ojasiewicz-Simon inequality (see  Sections~3.2 and 3.3 in the monograph of L.\ Simon \cite{Si96} and also \cite[Theorem 2.1]{HaJen07}) together with functional arguments as in Jendoubi and Haraux  (see \cite{HaJen99} or  
\cite{HaJen07}). The proof of the \L ojasiewicz-Simon inequality in \cite{Si96}  uses a Lyapunov-Schmidt decomposition. In the special case where the kernel of 
$\mathcal{L}$ is one-dimensional, this proof also shows 
that the set of equilibria of \ref{KGalpha} passing through $(Q^*,0)$ is a $C^1$-curve. Using this \L ojasiewicz-Simon inequality and introducing an appropriate functional like in \cite{HaJen07}, we could show that the $\omega$-limit set of every precompact trajectory converges to an equilibrium point. Unfortunately, the  trajectory $S_{\alpha}(t) (\varphi_0,\varphi_1)$ is not a priori bounded and it seems difficult to adapt the functional part of the proof of \cite[Theorem 3.1]{HaJen07}. Moreover, there is an additional difficulty in the construction of  such an appropriate functional coming from the fact that we need to use Strichartz estimates. So we have not been able to follow this route. 
 \vskip 5mm 
 
 The plan of this paper is as follows. Section~\ref{sec:basic} is devoted to basic properties of the Klein-Gordon equation~\ref{KGalpha}. In particular, we recall the local existence and uniqueness of mild solutions of the equation \ref{KGalpha}. In Section~\ref{sec:Nehari}, we introduce the functional $K_0$, which not only plays an important role in the proof of  Theorem \ref{ThBRS1} but also  defines the well-known Nehari manifold $\mathcal{N}$ as the locus of the radial zeros  of the functional $K_0$. In Lemma~\ref{eqBlowup}, we give a sufficient condition on $K_0$ for blow-up in finite time of the solutions of  \ref{KGalpha}. We end this section by describing the spectral properties of the linearized Klein-Gordon equation around a (radial) equilibrium point. Section~\ref{sec:core} is the core of this paper. In Section~\ref{sec:seq}
(see Theorem \ref{K0(tn)stern}) we show that if a solution $\vec u(t)$ does not blow up in finite positive time, then the $\omega$-limit set $\omega(\vec u(0))$ contains at least one equilibrium point. In Section~\ref{sec:conv} we show that the whole trajectory  $\vec u(t)$  converges to this equilibrium point and  is therefore bounded. 
  In Section \ref{sec:IMforKG}, we apply the classical invariant manifold theory, recalled in Appendix A, in order to construct the  local unstable, center unstable and center manifolds about equilibrium points of the Klein-Gordon equation \ref{KGalpha} and the  unstable, center unstable and center manifolds about equilibrium points of the localized Klein-Gordon equation \eqref{eq:Auphi0mod}.
In Appendix A, we recall the existence theorems for local center-stable,  local center-unstable and local center manifolds together with their foliations and exponential attraction properties with asymptotic phase in the formulation of Chen, Hale and Tan (see \cite{ChHaBT}). 
Finally, in Appendix B, we recall the classical convergence theorem (see \cite{Aulb}, 
\cite{HaMa} or \cite{HaRa})  in the generalised  form given by 
Brunovsk\'{y} and Pol\'{a}\v{c}ik in \cite{BrP97b}. 

Such a convergence theorem is needed in case the  dynamics near the equilibrium exhibits a nontrivial center manifold. As a result of dissipation and the radial condition,  this center manifold can be at most one-dimensional. For the nonlinearities~\eqref{fExample2}, it is known that the kernel of the linearized operator about  the ground state is trivial, see~\cite{ChenLin91}. But, due to the lack of precise description of the bound states,  we cannot guarantee that the local center manifold is absent about a bound state. The local strongly unstable manifold is finite-dimensional. The local strongly stable manifold is infinite-dimensional in stark contrast to the Hamiltonian 
scenario for which the local center manifold is the largest piece. The convergence theorem in 
\cite{BrP97b} then guarantees that, if the $\omega$-limit set is not a single equilibrium point $(Q^*,0)$, and if  $(Q^*,0)$ is stable for the restriction of $S_{\alpha}(t)$ to the local center manifold of  $(Q^*,0)$ (for this definition of stability, see  \eqref{eq:2.16} 
and Appendix B), then this $\omega$-limit set must contain a point on the unstable 
manifold of $(Q^*,0)$, distinct from
$(Q^*,0)$. But this contradicts the fact that, due to the properties of the Lyapunov functional \eqref{Eenergie}, the $\omega$-limit set is contained in the set of equilibrium points.  
\section{Basic properties}\label{sec:basic}

\subsection{Local  existence results}    \label{sec:linear decay}

Consider the linear equation, with $\alpha\ge0$, 
\EQ{
\label{eq:linear}
 \partial_t^2 u + 2\alpha  \partial_t u -\Delta u + u &= G, \quad (u, \partial_t u )\Big|_{t=0} = (u_0, u_1)\in H^1(\R^d) \times L^2(\R^d).
}
Since $v(t)=e^{\al t}u(t)$ satisfies 
\EQ{\label{eq:v eq}
v_{tt} -\Delta v + (1-\al^2) v= e^{\al t} G, \quad (v,v_t)\Big|_{t=0} = (u_0, u_1+\al u_0).
}

We deduce that the solution of~\eqref{eq:linear} is given by 
\EQ{
\label{eq:lin_sol}
u(t) &= e^{-\al t} \Big[ \cos(t\sqrt{-\Delta+1-\alpha^2}) + \al \f{\sin(t \sqrt{-\Delta+1-\alpha^2})}{\sqrt{-\Delta+1-\al^2}}\Big] u_0 \\
& + e^{-\al t} \f{\sin(t \sqrt{-\Delta+1-\alpha^2})}{\sqrt{-\Delta+1-\alpha^2}} u_1 
+ \int_{0}^{t} \f{\sin((t-s) \sqrt{-\Delta+1-\alpha^2})}{\sqrt{-\Delta+1-\alpha^2}} e^{-(t-s)\al}G(s)\, ds \\
& =\mathcal{S}_{1,\alpha}(t) u_0 + \mathcal{S}_{2,\alpha}(t) u_1 
+ \int_{0}^{t}
\mathcal{S}_{2,\alpha}(t-s) G(s) \, ds~.}

Clearly, the  regimes $0\le\alpha <1$, $\al=1$, and $\al>1$ exhibit quite different behaviours. 
The dispersion relation for $\al<1$ is that of Klein-Gordon (the characteristic variety is a hyperboloid), whereas for $\al=1$ it is that of the wave equation (the characteristic variety is a cone).

If $X$ is a Banach space, then we let $L^{p,\beta}_{t}(X)$ be the space with norm
\[
\|f\|_{L^{p,\beta}_{t}(X)} = \| e^{\beta t} \| f(t)\|_{X} \|_{L^{p}_{t}},\quad  \beta \in \mathbb{R}
\]
In this section, the $\beta$ in these weighted estimates has nothing to do with the regularity in~\eqref{H2f}. 

\begin{lemma}
\label{lem:free}
Let $0\le\al <1$ and assume $d\ge3$ for simplicity. Set $p=\f{2d}{d-2}$ and $\sigma=\f12-\f1d$, $\sigma'=1-\sigma$. 
The solution $u$ of \eqref{eq:linear} satisfies 
 the following Strichartz-type estimates  for any $0\le\beta\le\al$, 
\EQ{\label{eq:Strich1} 
\| u \|_{L^{2,\beta}_{t} B^{\sigma}_{p,2}\cap L^{\infty,\beta}_{t} H^{1}_{x}} \le C(\al)\Big[ \| (u_{0},u_{1}) \|_{H^{1}\times L^{2}} + \|G\|_{L^{2,\beta}_{t}B^{\sigma'}_{p',2}+L^{1,\beta}_{t}L^{2}_{x}} \Big]
}
where $C(\al)$ is uniform on compact intervals of $[0,1)$.   
\end{lemma}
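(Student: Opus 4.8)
The plan is to derive the Strichartz-type estimate \eqref{eq:Strich1} from the explicit Duhamel representation \eqref{eq:lin_sol} by treating the Klein-Gordon propagators $\mathcal{S}_{1,\alpha}(t)$, $\mathcal{S}_{2,\alpha}(t)$ as $e^{-\alpha t}$ times the standard (undamped) Klein-Gordon evolution at mass $m^2 = 1-\alpha^2 > 0$, for which Strichartz estimates on the admissible pair $(2, p)$ with $p = \frac{2d}{d-2}$ are classical. The key observation is that the parameter $\beta$ with $0 \le \beta \le \alpha$ is absorbed for free: multiplying \eqref{eq:Strich1} through by $e^{\beta t}$ and writing $e^{\beta t}\mathcal{S}_{j,\alpha}(t) = e^{(\beta-\alpha)t} \tilde{\mathcal{S}}_j(t)$ with $\tilde{\mathcal{S}}_j$ the mass-$(1-\alpha^2)$ undamped propagators, the factor $e^{(\beta-\alpha)t}$ is bounded by $1$ for $t \ge 0$ since $\beta \le \alpha$, and hence can only help in the $L^2_t$ and $L^\infty_t$ norms in time. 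So the whole estimate reduces to the $\beta = 0$, $\alpha$-fixed case, i.e.\ to undamped Klein-Gordon Strichartz estimates with an $m^2 = 1-\alpha^2$ that stays in a compact subset of $(0,1]$ — which is exactly where the uniformity of $C(\alpha)$ on compact subintervals of $[0,1)$ comes from.

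The concrete steps would be: (i) Record the standard Strichartz estimate for the undamped Klein-Gordon equation $v_{tt} - \Delta v + m^2 v = H$ on $\R^d$, $d \ge 3$: for the pair $(q,r) = (2, \frac{2d}{d-2})$, which is wave-admissible with the correct gain of $\sigma = \frac12 - \frac1d$ derivatives, one has $\|v\|_{L^2_t B^\sigma_{p,2} \cap L^\infty_t H^1_x} \lesssim_m \|(v_0,v_1)\|_{H^1\times L^2} + \|H\|_{L^2_t B^{\sigma'}_{p',2} + L^1_t L^2_x}$, with the implied constant depending continuously on $m$ for $m$ in a compact subset of $(0,\infty)$; this follows from the $TT^*$ argument and the Christ–Kiselev lemma together with the dispersive decay for $\frac{\sin(t\sqrt{-\Delta+m^2})}{\sqrt{-\Delta+m^2}}$ — note that since we only need the wave-type (rather than sharper Klein–Gordon-type) pair, uniformity as $m \to 0$ is not an issue and the statement is genuinely uniform for $m \in (0,1]$. (ii) Apply this with $v(t) = e^{\alpha t}u(t)$, $m^2 = 1 - \alpha^2$, and $H(t) = e^{\alpha t}G(t)$, using \eqref{eq:v eq}, noting $\|(u_0, u_1 + \alpha u_0)\|_{H^1 \times L^2} \le (1+\alpha)\|(u_0,u_1)\|_{H^1\times L^2}$. (iii) Undo the substitution: since $u = e^{-\alpha t}v$, for any $0 \le \beta \le \alpha$ and any of the norms appearing, $\|e^{\beta t} e^{-\alpha t} \cdot\| \le \|\cdot\|$ in $L^2_t$ and $L^\infty_t$ because $e^{(\beta-\alpha)t} \le 1$ on $[0,\infty)$; this bounds the left side of \eqref{eq:Strich1} by the left side of the undamped estimate for $v$. (iv) On the right side, bound the forcing term: $\|G\|_{L^{2,\beta}_t B^{\sigma'}_{p',2} + L^{1,\beta}_t L^2_x}$, after multiplying the frequency-localized pieces by $e^{\beta t}$, dominates $\|e^{\alpha t} G\|_{\cdots}$ only up to the same $e^{(\beta - \alpha)t} \le 1$ gain — wait, here the exponential goes the "wrong" way, so one must be slightly careful: writing $H = e^{\alpha t}G = e^{(\alpha-\beta)t} (e^{\beta t} G)$, we do \emph{not} get a clean bound since $e^{(\alpha - \beta)t}$ grows. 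The correct route is instead to keep $\beta$ throughout from the start: prove the \emph{weighted} undamped estimate $\|e^{\beta t}v\|_{L^2_t B^\sigma_{p,2}\cap L^\infty_t H^1_x} \lesssim \|(v_0,v_1)\| + \|e^{\beta t} H\|_{L^2_t B^{\sigma'}_{p',2} + L^1_t L^2_x}$ directly — but with $v = e^{\alpha t} u$ and $H = e^{\alpha t}G$, $e^{\beta t} v = e^{(\alpha+\beta)t} u$ and $e^{\beta t} H = e^{(\alpha+\beta)t}G$, which is not what we want either.

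The clean formulation avoiding this bookkeeping trap is: work directly with $w(t) = e^{(\alpha - \beta)t} u(t)$, which solves $w_{tt} - \Delta w + (1 - (\alpha-\beta)^2) w = e^{(\alpha-\beta)t}G + (\text{lower order in } \partial_t w)$ — more precisely, since $0 \le \alpha - \beta \le \alpha < 1$ we again have a positive mass, but now the inhomogeneous equation for $w$ picks up the damping term $2\beta \partial_t w$ on the left, so $w$ solves a \emph{damped} Klein-Gordon equation with damping parameter $\beta \ge 0$ and mass $1 - (\alpha-\beta)^2 > 0$; applying the undamped Strichartz estimate to $e^{\beta t}w = e^{\alpha t}u = v$ would loop back. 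Therefore the genuinely efficient argument is to prove \eqref{eq:lin_sol}'s three pieces obey weighted bounds directly via the Duhamel kernel $\mathcal{S}_{2,\alpha}(t-s) = e^{-(t-s)\alpha}\frac{\sin((t-s)\sqrt{-\Delta+1-\alpha^2})}{\sqrt{-\Delta+1-\alpha^2}}$: for the linear (data) terms one multiplies by $e^{\beta t} = e^{(\beta-\alpha)t} e^{\alpha t}$ and uses $e^{(\beta-\alpha)t}\le 1$ to reduce to the undamped $m^2 = 1-\alpha^2$ homogeneous Strichartz bound; for the Duhamel term one writes $e^{\beta t}\int_0^t \mathcal{S}_{2,\alpha}(t-s)G(s)\,ds = \int_0^t e^{(\beta-\alpha)(t-s)} \big(e^{\beta s}[\text{undamped } m^2=1-\alpha^2 \text{ propagator}](t-s)\big)(e^{-\beta s} e^{\beta s}\cdots)$ — the factor $e^{(\beta-\alpha)(t-s)} \le 1$ for $s \le t$, so by the Christ–Kiselev lemma / the retarded estimate this term is controlled by $\|e^{\beta s} G(s)\|_{L^2_s B^{\sigma'}_{p',2} + L^1_s L^2_x} = \|G\|_{L^{2,\beta}_t B^{\sigma'}_{p',2} + L^{1,\beta}_t L^2_x}$, exactly the right-hand side of \eqref{eq:Strich1}. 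The main obstacle, then, is purely the careful placement of the exponential weights so that in every term the surviving exponential factor is $e^{(\text{nonpositive})\cdot(\text{nonnegative time})} \le 1$; once that is arranged, the estimate is a black-box consequence of standard undamped Klein–Gordon Strichartz estimates for a fixed positive mass $m^2 = 1-\alpha^2$, and the uniformity of $C(\alpha)$ on compacta of $[0,1)$ follows from the continuous dependence of those constants on $m$ away from $m = 0$. (The case $\alpha = \beta = 0$ is the classical estimate verbatim; the restriction $d \ge 3$ is only "for simplicity" so that $p = \frac{2d}{d-2}$ is finite and the Besov spaces $B^\sigma_{p,2}$ are the standard ones.)
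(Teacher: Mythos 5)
Your final formulation is essentially the paper's own (very terse) proof: the paper disposes of the lemma in one line by invoking the substitution \eqref{eq:v eq} and the Keel--Tao endpoint Strichartz estimate for the Klein--Gordon equation (Lemma~2.46 of \cite{NaS}), which is exactly your reduction to the undamped propagators at mass $m^{2}=1-\alpha^{2}$. Your extra care with the weights for $0\le\beta<\alpha$ --- recognizing that the naive substitute-and-undo only yields the case $\beta=\alpha$ because the forcing weight goes the wrong way, and repairing this by placing the factor $e^{(\beta-\alpha)(t-s)}\le 1$ inside the retarded Duhamel kernel --- fills a real gap that the paper's one-liner glosses over (in the analogous Lemma~\ref{lem:free2} the paper instead proves $\beta=0$ and $\beta=\alpha$ separately and interpolates).

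Two corrections. First, your parenthetical in step~(i) is wrong: the pair $(q,r)=(2,\tfrac{2d}{d-2})$ is \emph{not} wave-admissible (for $q=2$, wave-admissibility $\tfrac1q+\tfrac{d-1}{2r}\le\tfrac{d-1}{4}$ forces $\tfrac1r\le\tfrac{d-3}{2(d-1)}$, which $\tfrac1r=\tfrac{d-2}{2d}$ violates for every $d$); it is the Keel--Tao endpoint for the Schr\"odinger-type admissibility $\tfrac2q+\tfrac dr=\tfrac d2$, available for Klein--Gordon only because $m>0$, and the constant genuinely degenerates as $m\to0$. This is precisely why the lemma assumes $\alpha<1$, claims uniformity of $C(\alpha)$ only on compact subsets of $[0,1)$, and is followed by the remark that it fails for $\alpha\ge1$. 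Your proof survives because only uniformity for $1-\alpha^{2}$ bounded away from $0$ is needed, but the claim of uniformity for $m\in(0,1]$ must be deleted. Second, at the double endpoint $q=\tilde q'=2$ the Christ--Kiselev lemma is not applicable; the retarded estimate for the modified kernel $e^{-(\alpha-\beta)(t-s)}K(t-s)$ should instead be justified either by observing that the Keel--Tao bilinear argument uses only the dispersive and energy bounds of the kernel on $\{s<t\}$, which are only improved by the factor $e^{-(\alpha-\beta)(t-s)}\le1$, or by writing $e^{-c\rho}=c\int_{\rho}^{\infty}e^{-c\sigma}\,d\sigma$ and expressing the modified Duhamel integral as an average of time-translated retarded integrals, each controlled uniformly.
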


\begin{proof}
This follows from \eqref{eq:v eq} and the Keel-Tao endpoint for the Klein-Gordon equation, see for example Lemma~2.46 in \cite{NaS}. 
\end{proof}

Lemma~\ref{lem:free} does not hold for $\al\ge1$. Indeed, for $\al=1$ we would need to replace the Strichartz estimates 
for Klein-Gordon in~\eqref{eq:Strich1} with those for the wave equation.   We set
$
\beta(\alpha)=\alpha
$ if $0\le\alpha\le1$ and $$\beta(\alpha)=\alpha-\sqrt{\alpha^{2}-1}$$ if $\al>1$.  Exploiting the exponential decay in~\eqref{eq:lin_sol}
we can now state the following  space-time averaged estimates. 

\begin{lemma}
\label{lem:free2}
Let $\al>0$. In all dimensions $d\ge1$ the solution $u$ of \eqref{eq:linear} satisfies the following energy bounds with decay
\EQ{\label{eq:ener dec}
\sup_{t\ge 0} e^{t\beta(\al)} \| (u, \partial_t u )(t) \|_{H^1\times L^2} \le C(\al) \Big [ \| (u_{0},u_{1}) \|_{H^{1}\times L^{2}}  +  \int_0^\infty e^{s\beta(\al)} \| G(s)\|_2 \, ds \Big]
}
as well as the exponentially weighted Strichartz  estimates, in dimensions $d\ge2$,   and 
with $0\le \beta<\beta(\al)$, 
\EQ{\label{eq:Strich2} 
\|  u\|_{L^{q,\beta}_t L^p_x} \le C(\al,\beta)\big[ \| (u_{0},u_{1}) \|_{H^{1}\times L^{2}} + \|  G\|_{L^{\tilde q',\beta}_t L^{\tilde p'}_x   }  \big]
}
where $\f{1}{q}+\f{d}{p}=\f{d}{2}-1= \f{1}{\tilde q'}+\f{d}{\tilde p'}-2$, $2\le p,\tilde p<\infty$, $2\le q,\tilde q$, and $\f1q+\f{d-1}{2p}\le\f{d-1}{4}$,  $\f1{\tilde q}+\f{d-1}{2\tilde p}\le\f{d-1}{4}$. 
The constant $C(\al,\beta)$ is uniform on compact subsets of 
$$
\{(\al,\beta)\:|\: \al \in (0,\infty), \,0\le \beta<\beta(\al)\}
$$
\end{lemma}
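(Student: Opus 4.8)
The plan is to derive everything from the explicit representation formula~\eqref{eq:lin_sol}, exploiting the exponential factors $e^{-\al t}$ and $e^{-(t-s)\al}$ there, combined with the substitution $v(t)=e^{\al t}u(t)$ from~\eqref{eq:v eq}–\eqref{eq:lin_sol} and known (unweighted) estimates for the wave/Klein–Gordon propagator. For the energy bound~\eqref{eq:ener dec}, I would first treat the range $0<\al\le1$ and the range $\al>1$ separately, since the operators $\cos(t\sqrt{-\Delta+1-\al^2})$ etc.\ are bounded on $H^1\times L^2$ uniformly in $t$ only when $\al\le1$ (hyperbolic regime: $1-\al^2\ge0$, symbol bounded), whereas for $\al>1$ the factor $1-\al^2<0$ produces a genuine exponential growth $e^{t\sqrt{\al^2-1}}$ on the low frequencies $|\xi|^2<\al^2-1$. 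In the first case $\beta(\al)=\al$ and the bound follows by putting absolute values inside~\eqref{eq:lin_sol}: the homogeneous part gives $e^{-\al t}\|(u_0,u_1)\|_{H^1\times L^2}$ (with an $\al$-dependent constant from the $\al\,\sin(\cdot)/\sqrt{\cdot}$ term, harmless since $\al\le1$), and the Duhamel term is $\int_0^t e^{-(t-s)\al}\|G(s)\|_2\,ds$, so multiplying by $e^{t\beta(\al)}=e^{t\al}$ and pulling the sup inside yields the right-hand side. In the second case, the worst growth of $\mathcal S_{1,\al}(t),\mathcal S_{2,\al}(t)$ is $e^{(-\al+\sqrt{\al^2-1})t}=e^{-t\beta(\al)}$, attained at the bottom of the spectrum; the same is true of the Duhamel kernel $\mathcal S_{2,\al}(t-s)$, so the identical manipulation goes through with $\beta(\al)=\al-\sqrt{\al^2-1}$. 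Uniformity on compact subsets of $\{\al\in(0,\infty)\}$ is clear by inspection of the constants.

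For the Strichartz bound~\eqref{eq:Strich2}, the idea is to reduce to the unweighted homogeneous-$\dot H^1$ wave Strichartz estimates. Write $u=u_{\mathrm{hom}}+u_{\mathrm{Duh}}$. For $u_{\mathrm{hom}}$, since $e^{t\beta(\al)}\le e^{t\al}$ (for $\al\le 1$, with equality) resp.\ $e^{t\beta(\al)}$ with $\beta(\al)<\al$ (for $\al>1$), and since the representation carries the factor $e^{-\al t}$, the quantity $e^{\beta t}u_{\mathrm{hom}}(t)$ has an extra decaying factor $e^{(\beta-\al)t}$ (resp.\ $e^{(\beta-\al+\sqrt{\al^2-1})t}$), strictly negative exponent because $\beta<\beta(\al)$; so one splits the underlying (bounded-symbol, $\al\le1$) or exponentially-growing-at-rate-$\sqrt{\al^2-1}$ ($\al>1$) free evolution into dyadic time blocks $[2^k,2^{k+1}]$, applies the standard wave-Strichartz estimate on each block (the Klein–Gordon propagator at fixed $\al$ is controlled by the wave one up to constants after the $v$-substitution and using that $1-\al^2$ is a fixed real number), and sums the resulting geometric series in $k$. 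The admissibility conditions $\f1q+\f{d}{p}=\f d2-1$ and $\f1q+\f{d-1}{2p}\le\f{d-1}{4}$ are exactly wave admissibility in $d$ dimensions at regularity $1$, which is why they appear. For $u_{\mathrm{Duh}}$ one does the same on the kernel $\mathcal S_{2,\al}(t-s)e^{-(t-s)\al}$: again dyadically decompose in $t-s$, use the unweighted inhomogeneous wave Strichartz estimate with exponents $(\tilde q',\tilde p')$ dual-admissible, and absorb the exponential weight; here one uses $\beta<\beta(\al)$ to make the geometric sum in the time-lag converge, and the Christ–Kiselev lemma (or a direct argument) to pass from the full-line to the retarded integral if needed.

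The main obstacle I expect is the bookkeeping in the $\al>1$ regime: the free propagator is no longer uniformly bounded, and one has to track the fact that the exponential growth rate $\sqrt{\al^2-1}$ is produced only by the finitely-many ``imaginary frequency'' modes $|\xi|^2\le\al^2-1$, while on $|\xi|^2>\al^2-1$ the evolution is oscillatory and bounded — so strictly one should split $u=P_{\mathrm{low}}u+P_{\mathrm{high}}u$ with a Fourier cutoff at $|\xi|\sim\sqrt{\al^2-1}$, handle $P_{\mathrm{low}}u$ by the crude energy estimate~\eqref{eq:ener dec} together with Bernstein (the low frequencies are harmless for Strichartz: the measure of the cutoff region is finite and fixed), and handle $P_{\mathrm{high}}u$ by genuine dispersive/Strichartz estimates where the symbol is comparable to that of the wave equation. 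This is routine but fiddly, and is the reason the hypotheses are stated with $\beta(\al)$ rather than $\al$ and with constants only uniform on compact subsets of $\{0\le\beta<\beta(\al)\}$ — any attempt to take $\beta=\beta(\al)$ would fail precisely on these low modes. I would also remark that~\eqref{eq:Strich2} requires $d\ge2$ simply because Strichartz estimates with $2\le p,q<\infty$ and the stated scaling are vacuous/degenerate for $d=1$, whereas~\eqref{eq:ener dec} needs no such restriction.
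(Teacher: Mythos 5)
Your argument for the energy bound \eqref{eq:ener dec} is the same as the paper's: both amount to the pointwise multiplier estimate $|m_{\al}(t,\xi)|+|\tilde m_{\al}(t,\xi)|\le C(\al)e^{-\beta(\al)t}$ read off from \eqref{eq:lin_sol}, with the worst case at the bottom of the spectrum when $\al>1$. For the Strichartz bound \eqref{eq:Strich2}, however, you take a genuinely different route. The paper performs a Littlewood--Paley splitting at frequency $\sim\al$ (not at $\sqrt{\al^2-1}$): on the high frequencies it proves, by stationary phase, a dispersive estimate that already carries the factor $e^{-\al t}$, runs the Keel--Tao machinery twice --- once discarding the exponential (giving $\beta=0$) and once through the transformed equation \eqref{eq:v eq} (giving $\beta=\al$) --- and then \emph{interpolates in $\beta$} to cover $0\le\beta\le\al$ for those frequencies; the low frequencies are handled by \eqref{eq:ener dec} plus Bernstein and Young's inequality, which is the one place where $\beta<\beta(\al)$ strictly is used. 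You instead propose a time-block decomposition plus the unweighted per-block Strichartz estimate and geometric summation. For the homogeneous part this works (indeed the pointwise bound $e^{(\beta-\al)t}\le 1$ already suffices, no decomposition needed), but the Duhamel term is where your plan is weakest: after writing $e^{\beta t}u_{\mathrm{Duh}}(t)=\int_0^t \mathcal S_{2,\al}(t-s)\,e^{(\beta-\al)(t-s)}\,e^{\beta s}G(s)\,ds$, the scalar factor $e^{(\beta-\al)(t-s)}\le1$ cannot simply be ``absorbed'' into the retarded Strichartz estimate, since that estimate is not a positive-kernel bound; you do correctly reach for a shell decomposition in $t-s$ plus Christ--Kiselev, which can be made to work but requires verifying uniform boundedness of the shell-restricted operators --- this is precisely the awkwardness that the paper's endpoint-interpolation trick is designed to avoid. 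A second point to watch: your frequency cut at $|\xi|\sim\sqrt{\al^2-1}$ for $\al>1$ leaves the turning-point region $\sqrt{\al^2-1}\lesssim|\xi|\lesssim\al$, where the phase $\sqrt{|\xi|^2+1-\al^2}$ degenerates and the dispersive estimate fails; the paper's cut at $|\xi|\gtrsim\al+1$ keeps the phase uniformly non-degenerate (which is what yields constants uniform on compacts in $\al$, across $\al=1$), and the leftover compact frequency region must then be folded into the energy-plus-Bernstein piece. Neither issue is fatal, but both need to be addressed for your proof to close.
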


\begin{proof}
Taking the Fourier transform of \eqref{eq:lin_sol}
yields 
\[
\hat u(t,\xi) =  m_{\al}(t,\xi) \widehat{u_{0}}(\xi) + \tilde m_{\al}(t,\xi) \widehat{u_{1}} (\xi) + \int_{0}^{t} \tilde m_{\al}(t-s,\xi) e^{-(t-s)\al}\widehat{G}(s,\xi)\, ds
\]
The multipliers satisfy the estimates
\[
|  m_{\al}(t,\xi)|  + |  \tilde m_{\al}(t,\xi)|  \le C(\al) e^{-\beta(\alpha)t}
\]
which proves~\eqref{eq:ener dec}.   For \eqref{eq:Strich2} we introduce the Littlewood-Paley decomposition
\[
1 = P_{\les \al} + \sum_j P_j = P_{\les \al} + P_{> \al}
\]
where the $P_j$ are associated to frequencies $2^j>\alpha$ and $P_{\les \al} f = f$ for all Schwartz functions with support in $\{|\xi|\le 1+2\alpha\}$. 
Let $K_\lambda ^{\pm} (t)$ be the propagator defined by, cf.~\eqref{eq:lin_sol},  
\[
[K_\lambda^{\pm} (t) f](x) = e^{-\al t}\int_{\R^d} e^{\pm it\sqrt{\xi^2+1-\alpha^2}}  e^{ix\cdot\xi} \chi(\xi/\lambda) \hat{f}(\xi)\, d\xi 
\]
where $\chi$ is the usual Littlewood-Paley bump function supported on an annulus, and $\lambda >\alpha+1 $ (and ignoring multiplicative constants). Then the root is smooth, and 
we may apply stationary phase to conclude that 
\[
\| K_\lambda^\pm (t)\|_\infty \le  e^{-\al t} \lambda^{d} \la t\lambda \ra^{-\f{d-1}{2}}   \les e^{-\al t} t^{-\f{d-1}{2}} \lambda^{\f{d+1}{2}}
\]
for all $t>0$. Proceeding as for the wave equation (see Keel-Tao), and ignoring the exponential decay for the frequencies $\gtrsim \al$,  yields the Strichartz estimates 
\eqref{eq:Strich2} for $P_{>\al} u$ with $\beta=0$.  On the other hand, by the same logic we can also derive Strichartz estimates for the transformed equation~\eqref{eq:v eq}
which yields~\eqref{eq:Strich2} with $\beta=\al$ for the piece $P_{>\al} u$. Interpolating between these two cases we obtain Strichartz inequalities for all $0\le\beta\le \al$
for those frequencies. Smaller frequencies require smaller $\beta$. Indeed, for the remaining piece $P_{\les \al} u$ we use the energy bound~\eqref{eq:ener dec} and Bernstein's inequality. 
To be precise,  the energy estimate
 \[
 \| P_{\les \al} u(t) \|_2  \le C(\al) \Big [ e^{-t\beta(\al)}\| (u_{0},u_{1}) \|_{H^{1}\times L^{2}}  +  \int_0^t e^{-(t-s)\beta(\al)} \| P_{\les \al} G(s)\|_2 \, ds \Big]
 \]
 implies via Bernstein's inequality that 
  \[
 e^{\beta t} \| P_{\les \al} u(t) \|_q  \le C(\al) \Big [ e^{-t(\beta(\al)-\beta)}\| (u_{0},u_{1}) \|_{H^{1}\times L^{2}}  +  \int_0^t e^{-(t-s)(\beta(\al)-\beta)} e^{\beta s}\| P_{\les \al} G(s)\|_{\tilde q'} \, ds \Big]
 \]
 Taking $L^p_t$ norms on both sides, and applying Young's inequality to the Duhamel integral yields \eqref{eq:Strich2} for all frequencies. 
\end{proof}

We now turn to the nonlinear equation~\ref{KGalpha}. We write $\vec u=(u, \partial_t u )$. 
%%%%%%%%%%%%%%%%%%%%%%%%%%%%%%%%%%%%%%%%%%%%%%%%

\begin{theorem} \label{thm:WP}
Let $d \leq 6$. 
Let $f:\R\to\R$ be a $C^1$ odd  function, satisfying the assumption \ref{H2f}.  Then for every data $\vec u_0$ in $\calH=H^1(\R^d) \times L^2(\R^d)$ ( resp.\   in 
$\mathcal{H}_{rad}$)
  the equation \ref{KGalpha} has a unique strong solution 
  \[
  u \in X \equiv X_T :=C([0,T], H^1(\R^d))\cap C^1([0,T], L^2(\R^d))
\]
  ( resp.\  in $C([0,T], H_{rad}^1(\R^d))\cap C^1([0,T], L_{rad}^2(\R^d))$ ),
  where $T$ only depends on $\| \vec u_0\|_{\calH}$. \\
   Moreover, if $3 \leq d \leq 6$, the solution belongs to  
  $$
  L^{\theta^*}((0,T), L^{2\theta^*}(\mathbb{R}^d))
  $$
  where 
  $\theta^*= \frac{d+2}{d-2}$ and the estimate \eqref{eq:fixed} below holds. \\
   Furthermore, the following properties hold.
  \begin{enumerate}
  \item  The solution 
  $$
  (t, \vec u_0) \in [0,T] \times \mathcal{H} \mapsto  \vec u (t) \equiv (u(t),\partial u(t))
  \in \mathcal{H}
  $$ 
  is continuous.
  \item  For any $0 \leq \tau \leq T$, the map  $ \vec u_0 \in  \mathcal{H} \mapsto S_{\alpha}(\tau) \vec u_0 \equiv \vec u(\tau) \in 
  \mathcal{H}$ is Lipschitz continuous on the bounded sets of $\mathcal{H}$ (see \eqref{eq:Lipuv}).
  \item   The map  $\vec u_0  \in \mathcal{H} \mapsto u(t) \in X \cap
 L^{\theta^*}((0,T), L^{2\theta^*}(\mathbb{R}^d))$ is a $C^1$-map.
  \item Let $T^*$ be the maximal time of existence. If $T^*<\infty$, then 
  $$
  \limsup_{t\to T^*} \|\vec u(t)\|_{\calH}=+\infty
  $$
  \item If  
  $\vec u_{0}\in H^{2}(\R^d)\times H^1(\R^d)$, then 
  \[
  u \in C([0,T), H^{2}(\R^d))\cap C^1([0,T), H^1(\R^d))
  \]
  \item The energy~\eqref{Eenergie} 
decreases:
\begin{equation}
\label{Et1t2}
E(\vec u(t_2)) - E(\vec u(t_1))=-2\alpha \int_{t_1}^{t_2} \| \partial_t u (s)\|_{L^2}^2 \, ds 
\end{equation}
and, in particular,
\begin{equation}
\label{Et2}
E(\vec u(t_2)) + 2\alpha \int_{0}^{t_2} \| \partial_t u (s)\|_{L^2}^2\, ds \leq   E(\vec u(0))
\end{equation} 
\item If $\|\vec u(0)\|\ll 1$, then the solution exists globally, and $\| \vec u(t)\|_{\calH}$ converges exponentially to
$0$ as $t\to\infty$.  
  \end{enumerate}
  \end{theorem}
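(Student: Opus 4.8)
The plan is to construct the solution by a contraction mapping argument for the Duhamel form of \ref{KGalpha}, built on the linear estimates of Lemmas~\ref{lem:free}--\ref{lem:free2}, and then to read off (1)--(7) by standard perturbative and density arguments. Writing $u=\Phi(u)$, where \[ \Phi(u)(t)=\mathcal S_{1,\al}(t)\varphi_0+\mathcal S_{2,\al}(t)\varphi_1+\int_0^t\mathcal S_{2,\al}(t-s)\,f(u(s))\,ds \] (and the companion formula for $\partial_t u$), one proceeds as follows. From \ref{H2f} and $f$ odd one has $|f(y)|\lesssim|y|^{1+\beta}+|y|^{\theta}$ and $|f(y_1)-f(y_2)|\lesssim(|y_1|^\beta+|y_2|^\beta+|y_1|^{\theta-1}+|y_2|^{\theta-1})|y_1-y_2|$. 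When $d=1,2$, since $H^1(\R^d)\hookrightarrow L^q(\R^d)$ for all $q<\infty$, the map $u\mapsto f(u)$ is locally Lipschitz from $H^1$ into $L^2$, and Picard iteration runs directly in $C([0,T],\mathcal H)$ using the energy bound \eqref{eq:ener dec}, with $T=T(\|\vec u_0\|_{\mathcal H})$. When $3\le d\le6$ one works in the Banach space $Y_T$ of functions $u$ with $\vec u\in C([0,T],\mathcal H)$ and $u\in L^{\theta^*}((0,T),L^{2\theta^*}(\R^d))$, equipped with the sum of the two norms. The dimensional restriction enters precisely here: $(\theta^*,2\theta^*)$ is a Strichartz-admissible pair at $H^1$-regularity for the Klein--Gordon/wave flow exactly when $\theta^*\ge2$, i.e.\ when $d\le6$, so Lemmas~\ref{lem:free}--\ref{lem:free2} give $\Phi(0)\in Y_T$ with $\|\Phi(0)\|_{Y_T}\le C\|\vec u_0\|_{\mathcal H}$ uniformly in $T$. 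The Duhamel term is estimated by combining the embedding $L^\infty_tH^1_x\hookrightarrow L^\infty_tL^{2^*}_x$ with the $L^{\theta^*}_tL^{2\theta^*}_x$ control of $u$ and feeding the product into the inhomogeneous estimate of Lemma~\ref{lem:free} or~\ref{lem:free2}; because $\theta<\theta^*$ \emph{strictly} (and $\beta<\theta-1$), choosing the exponents so as to place $f(u)$ in a dual Strichartz norm strictly below the critical one leaves room for Hölder's inequality in time to contribute a factor $T^{\delta}$, $\delta>0$, in front of $\|u\|_{Y_T}^{\theta}+\|u\|_{Y_T}^{1+\beta}$. Hence $\Phi$ is a contraction on the ball of radius $2C\|\vec u_0\|_{\mathcal H}$ in $Y_T$ once $T=T(\|\vec u_0\|_{\mathcal H})$ is small; this yields the solution and the estimate \eqref{eq:fixed}, and a standard subinterval bootstrap upgrades it to uniqueness in all of $X_T$ (resp.\ $X_T\cap L^{\theta^*}L^{2\theta^*}$). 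The radial subspace is invariant since each propagator in \eqref{eq:lin_sol} commutes with rotations. I expect this step — the bookkeeping of Strichartz exponents in the range $\tfrac{d}{d-2}<\theta<\theta^*$, together with the verification needed for (3) that the Nemytskii map $u\mapsto f(u)$ is $C^1$ in this topology, where the $\beta$-Hölder part of \ref{H2f} is used — to be the main technical point.

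\textbf{Dependence on the data: items (1)--(3).} Applying the contraction estimate to two solutions on their common interval of existence gives $\|\vec u(\tau)-\vec v(\tau)\|_{\mathcal H}\le C\|\vec u_0-\vec v_0\|_{\mathcal H}$ with constant depending on a bounded set containing the data (after finitely many iterations of the local construction), which is \eqref{eq:Lipuv}; combined with the continuity in $t$ built into $X_T$ this gives (1) and (2). For (3) one differentiates $u=\Phi(u)$ by the implicit function theorem: the derivative solves the linearized Duhamel equation, the point being that $u\mapsto f(u)$ is of class $C^1$ from $Y_T$ into the relevant dual Strichartz space $L^{\tilde q'}_tL^{\tilde p'}_x+L^1_tL^2_x$, which follows from $f\in C^1$ together with the growth and Hölder bounds \ref{H2f}.

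\textbf{Blow-up, regularity, energy, small data: items (4)--(7).} For (4): if $T^*<\infty$ while $\|\vec u(t_n)\|_{\mathcal H}\le M$ for some $t_n\uparrow T^*$, restart the solution at time $t_n$ with the uniform existence time $\delta=\delta(M)>0$ from the local theory; for $t_n>T^*-\delta$, uniqueness forces this to extend $\vec u$ beyond $T^*$, a contradiction. For (5), $H^2\times H^1$ regularity is propagated by running the same contraction one derivative higher — differentiating \eqref{eq:lin_sol} in $x$ and $t$, with $f(u)\in C([0,T),H^1)$ for $u\in C([0,T),H^2)$ guaranteed, in the range $d\le6$, by $f\in C^1$ and $\theta<\theta^*$. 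For (6), pairing \ref{KGalpha} with $\partial_t u$ and integrating in time gives \eqref{Et1t2} for $H^2\times H^1$ data, where the manipulation is justified; one then passes to general data in $\mathcal H$ using density of $H^2\times H^1$, the continuous dependence from (1)--(2), and the continuity of $E$ on $\mathcal H$ (valid because $\theta+1<2^*$), and \eqref{Et2} is \eqref{Et1t2} with $t_1=0$. For (7) one repeats the contraction in the exponentially weighted spaces underlying Lemma~\ref{lem:free2}, with decay rate any $\mu\in(0,\beta(\al))$, i.e.\ in $\{u:\sup_{t\ge0}e^{\mu t}\|\vec u(t)\|_{\mathcal H}+(\text{weighted Strichartz norm})<\infty\}$: since $f$ is superlinear — the exponents $\theta$ and $1+\beta$ of \ref{H2f} both exceed $1$ — the Duhamel term is bounded by a superlinear power of the weighted norm with no loss in the weight, so for $\|\vec u_0\|_{\mathcal H}$ small the solution is global and $\sup_{t\ge0}e^{\mu t}\|\vec u(t)\|_{\mathcal H}<\infty$, which is the claimed exponential decay to $0$.
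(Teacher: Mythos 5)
Your proposal is correct and follows essentially the same route as the paper: a contraction for the Duhamel map in $C([0,T],\mathcal H)\cap L^{\theta^*}_tL^{2\theta^*}_x$ (with $d\le6$ entering exactly as $\theta^*\ge2$), a gain of $T^\eta$ from strict subcriticality via interpolation of $L^{2\theta}$ between $L^2$ and $L^{2\theta^*}$, the Hölder continuity of $f'$ for the $C^1$ dependence on data, density of $H^2\times H^1$ for the energy identity, and weighted estimates plus continuity for small-data decay. The only cosmetic differences are that the paper places $f(u)$ in $L^{1}_tL^2_x$ rather than a general dual Strichartz norm, and uses difference quotients to make the $H^2\times H^1$ persistence rigorous.
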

\begin{proof}
We have $|f(u)|\les |u|^2 + |u|^\theta$. We begin with dimensions $d=1,2$. In that case the Sobolev embeddings and \eqref{eq:ener dec} imply,  for any $\beta$,
\EQ{
\label{eq:weight} 
e^{\beta t} \|\vec u(t)\|_{\calH} &\les \| \vec u(0)\|_{\calH} + \int_0^t e^{-\beta(t-s)} \big( \|u(s)\|_4^2 + \|u(s)\|_{2\theta}^\theta\big)\, ds\\
&\les \| \vec u(0)\|_{\calH} + \beta^{-1}(1-e^{-\beta T}) \max_{0\le s\le T}  e^{\beta s} \big( \|\vec u(s)\|_\calH^2 + \|\vec u(s)\|_{\calH}^\theta\big) 
}
where the implicit constant is of the form $C(\alpha)$ as above.  For $T$ small we discard the exponential weight whence 
\[
  \|\vec u(t)\|_{\calH} \les \| \vec u(0)\|_{\calH} + T \max_{0\le s\le T}   \big( \|\vec u(s)\|_\calH^2 + \|\vec u(s)\|_{\calH}^\theta \big) 
\]
This immediately shows that we can set up a contraction in the space $X$ and that $T$ only depends on $\| \vec u(0)\|_{\calH}$. 
Moreover, global existence for small data follows from~\eqref{eq:weight} by the method of continuity. This also implies  the exponential decay. 

We shall establish the persistence of $H^{2}\times H^{1}$ regularity later in this proof. Taking it for granted for now, 
and using the density of $H^2(\mathbb{R}^d)\times H^1(\mathbb{R}^d)$ in $H^1(\mathbb{R}^d)\times L^2(\mathbb{R}^d)$, 
one shows that
\begin{equation}
\label{dEdt}
E(\vec u(t))\in C^1((-\tilde{T},T)), \text{ and }
\frac{d}{dt}E(\vec u(t))=-2\alpha\| \partial_t u (t)\|_{L^2}^2.
\end{equation}
Integrating this implies the  above identities for the energy. 

We now continue with the dimensions $d\ge3$. If $\theta \le \f{2^{*}}{2}=\f{d}{d-2}$, then the same energy bounds suffice. 
As usual,  larger $\theta$ requires the Strichartz bounds.  

%Let next $\theta \geq \frac{d}{d-2}$.
The local wellposedness for small times does not require the exponential weights in the Strichartz estiimates and are identical to standard proofs for the wave equation. 

We first recall the main lines of the proof of the local existence and uniqueness of the solution. The local existence is proved by using the classical strict contraction fixed point theorem with parameters. In the fixed point argument below, we will use the Strichartz inequality 
\eqref{eq:Strich2} given in Lemma \ref{lem:free2}. 
Let 
$\theta^*=2^*-1= \frac{d+2}{d-2}$, $(\tilde{p}', \tilde{q}')= (2,1)$ and $(p,q)=
(2\theta^*,\theta^*)$. We remark that these pairs satisfy the conditions of  Lemma 
\ref{lem:free2} and in particular $q \geq 2$ if $d \leq 6$.

 Let $K_0>0$ be a fixed constant.  In what follows, we denote 
$B_{\mathcal H}(0,K_0)$ the ball of center $0$ and radius $K_0$ in $\mathcal H$.
Using the notation of the previous lemma, we set 
\begin{equation}
\label{eq:M0}
M_0 \equiv M_0(\alpha) =  4(C(\alpha) + 
C(\alpha,0)) K_0 \equiv 4 C_1(\alpha) K_0
\end{equation} 
and $T>0$ will be a positive constant, to be determined later.

\noindent 
We  introduce the following space
\begin{equation}
\begin{split}
\label{spaceY}
Y \equiv  Y_{T} \equiv \{ \vec u \in L^{\infty}((0,\tau_0), \mathcal{H}) \hbox{ with } 
& u \in L^{\theta^*}((0,\tau_0),L^{2\theta^*}(\mathbb{R}^d)) \cr 
&\, | \, \| u\|_{L^{\infty}(H^1) \cap W^{1,\infty}(L^2) \cap L^{\theta^*}(L^{2\theta^*})} \leq M_0\}~.
\end{split}
\end{equation}
We consider the mapping 
$$
\mathcal{F} : (\vec u_0, \vec u) \in B_{\mathcal H}(0,K_0) \times  Y \mapsto 
\mathcal{F}(\vec u_0, \vec u ) \equiv (\mathcal{F}_1, \mathcal{F}_2)(\vec u_0, \vec u ) \in Y~,
$$
defined by
\begin{equation}
\label{eq:definFronde}
  (\mathcal{F}_1(\vec u_0, \vec u ))(t) = \mathcal{S}_{1,\alpha}(t) u_0 + \mathcal{S}_{2,\alpha}(t) u_1 
+ \int_{0}^{t} \mathcal{S}_{2,\alpha}(t-s) f(u(s))\,  ds~,
\end{equation}
 and $\mathcal{F}_2(\vec u_0, \vec u ) = \partial_{t}\mathcal{F}_1(\vec u_0, \vec u )$, where 
$\vec u_0=(u_0, u_1)$ and $\vec u =(u, \partial_t u)$ . Fix some $\vec u_0 \in \mathcal{H}$  with $\|\vec u_0 \|_{\mathcal{H}} < K_0$. Consider the map $\mathcal{F}(\vec u_0, .): \vec u \in Y \mapsto 
\mathcal{F}(\vec u_0, \vec u ) \in Y$ and simply write $\mathcal{F}(\vec u_0, \vec u ) = \mathcal{F} \vec u$.

An application of Lemma \ref{lem:free2}   implies
\begin{equation}
\label{eq:calF0}
 \| \mathcal{F}(u_{0},0)\|_{Y} \leq C_1(\alpha) K_0 \leq \frac{M_0}{4}~.
\end{equation}
 Applying again Lemma 
\ref{lem:free2} and using the hypothesis \ref{H2f}, we get 
\begin{equation}
\begin{split}
\label{eq:calFaux1}
\| \mathcal{F} \vec u - \mathcal{F} \vec v \|_{Y} &\leq C_1(\alpha) \int_0^T \| f(u(s)) - f(v(s))\|_{L^2}\,  ds \cr
& \leq C_1(\alpha) \int_0^T \| \int_0^1 f' (v(s) +\lambda (u(s) -v(s)) ) (u(s) -v(s)) \,
d\lambda\|_{L^2} \, ds \cr
& \leq C_1(\alpha) C \int_0^T \| ( 1 + |u(s)|^{\theta -1} + |v(s)|^{\theta -1}) 
|u(s) - v(s)| \,\|_{L^2}\, ds
\cr
& \leq C_1(\alpha) C \big[ T \| u-v\|_{L^{\infty}(L^2)} + \int_0^T \| |u(s)|^{\theta -1}
|u(s) - v(s)| \,\|_{L^2}\, ds \cr
&\hphantom{+ \leq C_1(\alpha) C} + \int_0^T \| |v(s)|^{\theta -1}|u(s) - v(s)| \,\|_{L^2}\, ds
\big] 
\end{split}
\end{equation}
where $C=C(f)$. 
We next estimate the term 
$$
B = \int_0^T \| |u(s)|^{\theta -1}
|u(s) - v(s)| \,\|_{L^2}\, ds~.
$$ 
Applying the H\"older inequality, we obtain
\begin{equation}
\label{eq:calFaux2}
B  \leq \int_0^T \| u(s)\|_{L^{2\theta}}^{\theta -1} \| u(s) -v(s)\|_{L^{2\theta}} \, ds~. 
\end{equation}
We next write $2\theta$ as
$2\theta = 2\eta +2(1-\eta) \theta^*$
which is equivalent to
\begin{equation}
\label{eq:eta}
\eta = \frac{d+2 -\theta (d-2)}{4}
\end{equation}
The condition  $1 < \theta <\theta^*$ implies $0 < \eta <1$. Using the above decomposition of $\theta$ in \eqref{eq:calFaux2} together with a H\"older inequality, we get
\begin{equation}
\label{eq:calFaux3}
\begin{split}
B & \leq \int_0^T \| u(s)\|_{L^2}^{\frac{(\theta -1)\eta}{\theta}} 
\| u(s) \|_{L^{2\theta^*}}^{\frac{\theta^*(\theta -1)(1-\eta)}{\theta}} 
 \| u(s)-v(s)\|_{L^2}^{\frac{\eta}{\theta}} 
\| u(s)-v(s) \|_{L^{2\theta^*}}^{\frac{\theta^*(1-\eta)}{\theta}}\,  ds \cr
& \leq \| u(s)\|_{L^{\infty}(L^2)}^{\frac{(\theta -1)\eta}{\theta}} 
 \| u(s)-v(s)\|_{L^{\infty}(L^2)}^{\frac{\eta}{\theta}}
 \int_0^T \| u(s) \|_{L^{2\theta^*}}^{\frac{\theta^*(\theta -1)(1-\eta)}{\theta}} 
 \| u(s)-v(s) \|_{L^{2\theta^*}}^{\frac{\theta^*(1-\eta)}{\theta}} \, ds ~. 
\end{split}
\end{equation}
Applying again the H\"older inequality to the integral term, we obtain,
\begin{equation}
\label{eq:calFaux4}
\begin{split}
 \int_0^T \| u(s) \|_{L^{2\theta^*}}^{\frac{\theta^*(\theta -1)(1-\eta)}{\theta}} 
 \| u(s)-v(s) \|_{L^{2\theta^*}}^{\frac{\theta^*(1-\eta)}{\theta}} \, ds
 \leq &T^{\eta}\Big(\int_0^T \| u(s)-v(s) \|_{L^{2\theta^*}}^{\theta^*} \, ds\Big)^{\frac{1-\eta}{\theta}}\cr
 & \times \Big(\int_0^T \| u(s) \|_{L^{2\theta^*}}^{\theta^*}\, ds\Big)^{\frac{(\theta -1)(1-\eta)}{\theta}} ~.
 \end{split}
\end{equation}
The estimates \eqref{eq:calFaux3} and \eqref{eq:calFaux4} together with the Young inequality give
\begin{equation}
\label{eq:calFaux5}
\begin{split}
B \leq C T^{\eta} M_0^{\frac{\theta -1}{\theta}(\theta^*(1-\eta) +\eta)} 
\big[ \| u-v\|_{L^{\infty}(L^2)} +  \| u-v \|_{L^{\theta^*}(L^{2\theta^*})} \big]~.
\end{split}
\end{equation}
  We next choose $T_0>0$ so that 
 \begin{equation}
\label{eq:defiT0}
C_1(\alpha) C \big[ T_0 
+ 2  T_0^{\eta} M_0^{\frac{\theta -1}{\theta}(\theta^*(1-\eta) +\eta)} \big] = \frac{1}{4}~.
\end{equation} 
The estimates \eqref{eq:calFaux1} to \eqref{eq:calFaux5} imply 
that,  for $0 < T \leq T_0$,
\begin{equation}
\label{eq:calFaux6}
\begin{split}
\| \mathcal{F}\vec u - \mathcal{F} \vec v \|_{Y} \leq C_1(\alpha) C \big[ T 
+ 2  T^{\eta} M_0^{\frac{\theta -1}{\theta}(\theta^*(1-\eta) +\eta)} \big] 
 \| \vec  u- \vec v\|_{Y} \leq \frac{1}{4}  \| \vec u- \vec v\|_{Y}~.
 \end{split}
\end{equation}
{}From the estimates \eqref{eq:calF0} and \eqref{eq:calFaux6}, we   deduce that 
$\mathcal{F}$ is a strict contraction and thus has a unique fixed point  $\vec u \equiv \vec u(\vec u_0)$ in  $Y$
satisfying
\begin{equation}
\label{eq:fixed}
 \| \vec u(\vec u_0)\|_{Y} \leq  C_1(\alpha) \| \vec u_0\|_{\mathcal{H}}
\end{equation}
The fact that $\vec u(t) = (u(t), \partial_t u (t))$ also belongs to $C([0,T], \mathcal{H})$ is standard and left to the reader.  Likewise, we leave it to the reader to verify that the map
$(t, \vec u_0) \in [0,T] \times \mathcal{H} \mapsto  \vec u (t)\in \mathcal{H}$ is  jointly continuous.

 We now turn to property (2). To show that $\vec u_0 \in \mathcal{H} \to \vec u(\tau) \equiv
S_{\alpha}(\tau) \vec u_0 \in \mathcal{H}$ is Lipschitz continuous on the bounded sets of 
$\mathcal{H}$,  we  choose $\vec u_0$ and $\vec v_0$ in the ball $B_{ \mathcal{H}}(0,K_0)$.  Let $T_0>0$ be given by \eqref{eq:defiT0}  and $M_0$ be defined in 
\eqref{eq:M0}. Arguing as above (see the inequality \eqref{eq:calFaux6}), we obtain 
 the following inequality for $0 \leq T \leq T_0$, 
  \begin{equation}
\label{eq}  
 \| \mathcal{F}(\vec u_0, \vec u) - \mathcal{F}(\vec v_0, \vec v) \|_{Y_T}
 \leq  C_1(\alpha) \| \vec u_0 - \vec v_0\|_{\mathcal{H}} + \frac{1}{4}
\| \vec u - \vec v\|_{Y_T}~,
\end{equation}
and thus, the fixed points $\vec u(\vec u_0)$ and $ \vec v(\vec v_0)$ satisfy:
\begin{equation}
\label{eq:Lipuv}
\| \vec u(\vec u_0) - \vec  v(\vec v_0)\|_{Y_T} \leq  \frac{4}{3}C_1(\alpha) 
\| \vec u_0 - \vec v_0\|_{\mathcal{H}}~.
\end{equation} 
If the solutions $ \vec u(\vec u_0)$ and $ \vec v(\vec v_0)$ exist on a time interval $[0, T^*)$, where $T^* >T_0$, 
we repeat the above proof by considering now the ball in $\mathcal{H}$ of center $\vec u(\vec u_0)(T_0)$ and  
radius $K_1>0$ large enough so that $v(\vec v_0)(T_0)$ also belongs to this new ball and replacing the non-linearity $f(.)$ by 
$f(. + u(\vec u_0)(T_0)) - f(u(\vec u_0)(T_0))$.  Repeating this process a finite number of times shows that the map is Lipschitz continuous up to any time~$\tilde T<T^{*}$
and therefore on all of $[0,T^{*})$. 
 The above inequality also implies the uniqueness of the solution of \ref{KGalpha}.

\smallskip

  We next want to show the property (3), namely that the map  $$\vec u_0  \in \mathcal{H} \mapsto u(\vec u_0) \in X \cap L^{\theta^*}((0,T), L^{2\theta^*}(\mathbb{R}^d))$$ is a $C^1$-map. To this end, we will first go back to the mapping 
$$
\mathcal{F} : (\vec u_0, \vec u) \in B_{\mathcal H}(0,K_0) \times  Y \mapsto 
\mathcal{F}(\vec u_0,  \vec u ) \in Y
$$ 
which has been defined by \eqref{eq:definFronde}. And then, for $t \geq T_0$, proceed like in the proof of the property (2). Clearly the map 
$\mathcal{F}(\vec u_0, \vec u)$ is differentiable with respect to the variable $\vec u_0$ since it is a linear map in $\vec u_0$. The differentiability with respect to the variable 
$\vec u \in Y$ is proved as follows (we only indicate the main arguments and leave the details to the reader). Let $\vec h = (h, k) \in Y$ be small. Applying Lemma \ref{lem:free2}, one sees that the proof of the differentiability reduces to proving that
\begin{equation}
\label{eq:differf1}
 \| f(u+h) - f(u) -f'(u) h\|_{L^1((0,T),L^2)} = o(\| \vec  h\|_{Y})~.
\end{equation}
As above, using the hypothesis \ref{H2f}, using the fact that $0 <\beta \leq \frac{2}{d-2}$ and the classical Sobolev embeddings, we may write 
\begin{equation}
\label{eq:differf2}
\begin{split}
\| f(u+h) - f(u) -f'(u) h\|_{L^1((0,T),L^2)} &= \Big \| \int_0^1 (f' (u(s) +\lambda h(s) )
-f'(u(s)))h(s)\, d\lambda \Big\|_{L^1((0,T),L^2)}  \cr
& \leq  C \int_0^T \| (  |h(s)|^{\beta} + |h(s)|^{\theta -1}) 
|h(s)| \,\|_{L^2}\, ds
\cr
& \leq C \big[ T \| h\|_{L^{\infty}(H^1)}^{1+\beta} +  \int_0^T \| |h(s)|^{\theta -1}
|h(s)| \,\|_{L^2}\, ds \big] ~.
\end{split}
\end{equation}
We remark that the last term in the right-hand side of the inequality \eqref{eq:differf2} 
can be estimated as in the inequalities \eqref{eq:calFaux3} and  \eqref{eq:calFaux4}. 
We thus deduce from the inequalities \eqref{eq:calFaux3}, \eqref{eq:calFaux4} and \eqref{eq:differf2} that
\begin{equation}
\label{eq:differf3}
 \| f(u+h) - f(u) -f'(u) h\|_{L^1((0,T),L^2)} = O(\| \vec h\|_{Y}^{1+\delta})~,
\end{equation}
where $\delta= \min (\beta,\eta)$ and $\eta >0$ is defined in \eqref{eq:eta}. 
We thus have proved the property \eqref{eq:differf1}, which implies that 
$\mathcal{F}(\vec u_0, \vec u)$ 
is differentiable with respect to the variable $\vec u \in Y$. 
The derivative of 
$\mathcal{F}(\vec u_0, \vec u)$ with respect to $(\vec u_0, \vec u)$ is given by
$D\mathcal{F}(\vec u_0, \vec u) = (D\mathcal{F}_1,
D\mathcal{F}_2)(\vec u_0, \vec u)$, where $D\mathcal{F}_2(\vec u_0, \vec u)
= \partial_t D\mathcal{F}_1(\vec u_0, \vec u)$ and
\begin{equation}
\label{eq:differf4}
(D\mathcal{F}_1(\vec u_0, \vec u)(\vec v_0, \vec v)) (t) 
= \mathcal{S}_{1,\alpha}(t) v_0 + \mathcal{S}_{2,\alpha}(t) v_1 
+ \int_{0}^{t} \mathcal{S}_{2,\alpha}(t-s) f'(u(s))v(s)\,  ds~.
\end{equation}
We let to the reader to check that this derivative is continuous with respect to 
$(\vec u_0, \vec u)$.
Finally, we remark that, with the choice of the time $T_0$ made in \eqref{eq:defiT0}, 
the mapping $\mathcal{F}(\vec u_0,.): \vec u \in Y_{T} \mapsto
 \mathcal{F}(\vec u_0, \vec u)\in Y_{T}$
is a uniform contraction on $B_{\mathcal H}(0,K_0)$. We may thus apply the
 {\em uniform contraction principle} as stated for example in \cite[Theorem 2.2 on Page 25]{ChowHale}, 
 which implies that $\vec u_0 \in B_{\mathcal H}(0,K_0) \mapsto \vec u(\vec u_0) \in Y_T$ is of class $C^1$. 

\smallskip

We now return to the $H^{2}\times H^{1}$-regularity question, that is, prove the regularity property (5).  Assuming this regularity for now, 
taking a derivative of \ref{KGalpha} yields
\EQ{\label{formder}
\partial_t^2 v+2\alpha  \partial_t v -\Delta v+v-f'(u)v=0
}
where $v$ stands for any of the derivatives $\p_{x_{j}}u$, $1\le j\le d$.  The data for \eqref{formder} belong to $\mathcal{H}$
by assumption. We now perform the same estimates as in \eqref{eq:calFaux1}-\eqref{eq:calFaux5} to conclude that 
\[
\| \vec v\|_{Y}\le C\|(u_{0},u_{1})\|_{H^{2}\times H^{1}} + \f12 \| \vec v\|_{Y},
\]
see especially \eqref{eq:calFaux5}, \eqref{eq:calFaux6}. As above, these estimates require $T$ to be sufficiently small. 
To be precise, the smallness here is determined by $u$ alone through the constant $M_{0}$, see \eqref{eq:calFaux5}. 
It follows  that 
\[
\| \vec v\|_{Y}\le 2C\|(u_{0},u_{1})\|_{H^{2}\times H^{1}}
\]
which is the desired regularity estimate.  In order to pass from an a priori bound to a regularity statement we follow
a standard procedure involving difference quotients: letting $\vec e_{j}$ be the coordinate vectors in $\R^{d}$ we
define  with $h>0$
\[
v_{j}^{(h)}(x):= h^{-1}(u(x+h\vec e_{j}) - u(x))~.\]
By the argument leading to the a priori estimate we obtain  
\[
\big\| \vec v_{j}^{(h)} \big\|_{Y}\le 2C\|(u_{0},u_{1})\|_{H^{2}\times H^{1}}
\]
uniformly in $h>0$.  Passing to suitable weak limits, we obtain the $H^{1}\times L^{2}$ regularity of the derivatives of $u$,
as desired.
\smallskip

Finally, we  turn to the case of small data. We will only provide a sketch of the main argument. In the hypothesis \ref{H2f}, we can choose $\beta >0$ arbitrarily small. In particular, we choose  $0 < \beta <1$. We recall that, for any $y \in \mathbb{R}$,
\begin{equation}
\label{eq:estimfu}
|f(y)| \leq C ( |y|^{\beta} + |y|^{\theta -1}) |y| \leq C ( |y|^{1+ \beta}  + 
|y|^{\theta^*})~.
\end{equation}
 Proceeding as before, applying Lemma \ref{lem:free2}, using the inequality  
 \eqref{eq:estimfu}, one gets, for $t \geq 0$, 
 \begin{equation*}
\begin{split}
 \|  u\|_{L^{\theta^*,\beta}((0,t),L^{2\theta^*})} + 
 \| e^{\beta s} \vec u \|_{L^{\infty}((0,t), \mathcal{H})}  \leq C  [ & 
 \| (u_{0},u_{1}) \|_{H^{1}\times L^{2}} + 
 \| u^{1+\beta} \|_{L^{1,\beta}((0,t), L^{2})}  \cr
& + \| |u|^{\theta^*}  \|_{L^{1,\beta}((0,t), L^{2})}]~.
\end{split}
\end{equation*}
Applying the H\"older inequality, one deduces from the above inequality that, for 
 $t \geq 0$,
\begin{equation}
\label{eq:petit}
\begin{split}
\|  u\|_{L^{\theta^*,\beta}((0,t),L^{2\theta^*})} 
+ \| e^{\beta s} \vec u \|_{L^{\infty}((0,t), \mathcal{H})}   \leq C  
[ & \| (u_{0},u_{1}) \|_{H^{1}\times L^{2}} 
+ \| e^{\beta s}\vec u \|_{L^{\infty}((0,t), \mathcal{H})}^{1 +\beta} \\
& + \| u \|_{L^{\theta^*,\beta}((0,t),L^{2\theta^*})}^{\theta^*} ]~, 
\end{split}
\end{equation}
where we used that $\beta>0$. For small data the method of continuity implies global existence and smallness of the norms on the left-hand side. 
In particular, we have exponential convergence to zero in the energy (see also \cite{Keller83b}).
 \end{proof}

  In Section 3, we will linearize the equation \ref{KGalpha} around an equilibrium point. More generally, we can linearize the Klein-Gordon equation \ref{KGalpha} along any solution of the equation \ref{KGalpha}. This leads us to consider the following affine equation 
\begin{equation}
\label{eq:fprimeuwLin}
w_{tt} + 2\alpha w_t - \Delta w +w -f'(u^*(t,x)) w = G~, \quad 
(w,w_t)(0) \equiv \vec w(0) =  \vec w_0 \in\mathcal{H}~,
\end{equation}
where  $u^*(t,x) \in X_{\tau_0}\cap L^{\theta^*}((0,\tau_0), L^{2\theta^*}(\mathbb{R}^d))$, $\tau_0 >0$, and 
$G \in L^{1}((0,\tau_0), L^2( \mathbb{R}^d))$. The existence (and uniqueness) of a solution
 $\vec w \equiv (w,\partial_t w) \in
C([0,\tau_0), \mathcal{H})$ is classical if the dimension $d$ is equal to $1,2$. So we will state 
this existence result and the corresponding Strichartz estimates only in the case where 
$d \geq 3$.

\begin{proposition} \label{AStrichphi0}
Let $d \geq 3$ and  $\alpha \geq 0$. Assume that $u^*(t,x) \in X_{\tau_0} \cap L^{\theta^*}((0,\tau_0), L^{2\theta^*}(\mathbb{R}^d))$  and that 
$G \in L^{1}((0,\tau_0), L^2( \mathbb{R}^d))$. Then the equation
\eqref{eq:fprimeuwLin} admits a unique solution $\vec w \equiv (w,\partial_t w) \in C([0,\tau_0), \mathcal{H})$. Moreover, the solution $\vec w$ of \eqref{eq:fprimeuwLin} satisfies the following bound, for $0 \leq \tau < \tau_0$,
\begin{equation}
\label{eq:AStrichphi0}
 \| \vec w \|_{L^{\infty}((0,\tau), \mathcal{H})} + \| w \|_{L^{q}((0,\tau), L^p_x)}
  \leq C(\alpha ,\tau) \big [ \| \vec w_0 \|_{\mathcal{H}}  +   \| G\|_{L^{1}((0,\tau), L^2_x)}   \big]~,
\end{equation}
where $$\f{1}{q}+\f{d}{p}=\f{d}{2}-1,\quad 2\le p <\infty,\quad q \geq 2,$$
and 
$\f1q+\f{d-1}{2p}\le\f{d-1}{4}$.
The constant $C(\alpha, \tau) \equiv C(\alpha, \tau, u^*) \geq 1$  depends only on $\alpha$, 
$\tau$ and the norm of $u^*$ in the space $X_{\tau} \cap L^{\theta^*}((0,\tau), 
L^{2\theta^*}(\mathbb{R}^d))$.\\
If $u^*$, $G$ and the initial data are radial functions, then $\vec w$ is a radial solution.
\end{proposition}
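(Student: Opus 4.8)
\textbf{Proof plan for Proposition \ref{AStrichphi0}.}

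The plan is to treat \eqref{eq:fprimeuwLin} as a perturbation of the free damped Klein-Gordon equation, with the potential term $f'(u^*)w$ absorbed into the right-hand side and handled by an iteration on a short time interval, then bootstrapped to all of $[0,\tau_0)$. First I would fix a pair $(p,q)$ satisfying the stated admissibility conditions and set $Z_\tau := L^\infty((0,\tau),\mathcal H)\cap\{w:\ w\in L^q((0,\tau),L^p_x)\}$ with the obvious norm. On $Z_\tau$ I consider the affine map
\[
\Phi(\vec w)(t) = \mathcal S_{1,\alpha}(t)w_0 + \mathcal S_{2,\alpha}(t)w_1 + \int_0^t \mathcal S_{2,\alpha}(t-s)\big(f'(u^*(s))w(s) + G(s)\big)\,ds,
\]
and the goal is to show $\Phi$ is a contraction on $Z_\tau$ for $\tau$ small enough, with smallness depending only on $\alpha$ and $\|u^*\|_{X_\tau\cap L^{\theta^*}(L^{2\theta^*})}$.

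The core estimate is the bound on $\|f'(u^*)w\|_{L^1((0,\tau),L^2)}$ in terms of $\|w\|_{Z_\tau}$. By hypothesis \ref{H2f}, $|f'(y)|\lesssim \max(|y|^\beta,|y|^{\theta-1})\lesssim |u^*|^\beta + |u^*|^{\theta-1}$, so I must control $\||u^*|^{\theta-1}|w|\|_{L^1((0,\tau),L^2)}$ and $\||u^*|^{\beta}|w|\|_{L^1((0,\tau),L^2)}$. For the first, I would run exactly the Hölder interpolation already carried out in \eqref{eq:calFaux2}--\eqref{eq:calFaux5}: write $2\theta = 2\eta + 2(1-\eta)\theta^*$ with $\eta$ as in \eqref{eq:eta}, use Hölder in $x$ to split $\|u^*\|_{L^{2\theta}}^{\theta-1}\|w\|_{L^{2\theta}}$ into factors of $L^2_x$ and $L^{2\theta^*}_x$ norms, then Hölder in $t$ with the $T^\eta$ gain, obtaining a bound
\[
\||u^*|^{\theta-1}w\|_{L^1((0,\tau),L^2)} \lesssim \tau^{\eta}\,\|u^*\|_{X_\tau\cap L^{\theta^*}(L^{2\theta^*})}^{\theta-1}\,\|w\|_{Z_\tau}.
\]
For the $|u^*|^\beta|w|$ term, since $\beta < \theta-1$ and $u^*$ lies in $L^\infty_t H^1_x\subset L^\infty_t L^{r}_x$ for a range of $r$, the same scheme (or simply Hölder in $x$ using $\beta<2/(d-2)$ together with the $X_\tau$ bound, and Hölder in $t$) yields a factor $\tau^{\eta'}$ with some $\eta'>0$; I would absorb both into a single power $\tau^{\min(\eta,\eta')}$. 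Feeding the Strichartz inequality \eqref{eq:Strich2}/\eqref{eq:AStrichphi0}-type bound (from Lemma \ref{lem:free2} with $\alpha\ge0$, noting the constant there is uniform on compact sets and for $\alpha=0$ one uses the standard Keel-Tao estimates) then gives
\[
\|\Phi(\vec w) - \Phi(\vec v)\|_{Z_\tau} \le C_0\,\tau^{\min(\eta,\eta')}\,\big(\|u^*\|_{X_\tau\cap L^{\theta^*}(L^{2\theta^*})}^{\theta-1} + \|u^*\|_{X_\tau\cap L^{\theta^*}(L^{2\theta^*})}^{\beta}\big)\,\|w-v\|_{Z_\tau},
\]
so choosing $\tau=\tau_1$ small makes the constant $\le 1/2$ and $\Phi$ a contraction; the fixed point is the unique solution on $[0,\tau_1]$ and it obeys \eqref{eq:AStrichphi0} on that interval.

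To reach $\tau_0$ I iterate: the length $\tau_1$ depends only on $\alpha$ and the norm of $u^*$, which is finite on $[0,\tau_0)$, so finitely many steps of length $\sim\tau_1$ cover any $[0,\tau]$ with $\tau<\tau_0$; at each step the new data $\vec w(j\tau_1)\in\mathcal H$ by the previous step, and one accumulates the Strichartz norms, giving the bound \eqref{eq:AStrichphi0} with a constant $C(\alpha,\tau,u^*)$ growing with the number of steps but finite for $\tau<\tau_0$. Linearity of the equation makes uniqueness immediate from the contraction (or from Gronwall on $\|w\|_{L^\infty_t L^2_x}$ using only the energy estimate \eqref{eq:ener dec} and $f'(u^*)\in L^1_t L^\infty_x$-type control, which is weaker but also works). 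Finally, radiality is preserved because $\mathcal S_{1,\alpha},\mathcal S_{2,\alpha}$ are Fourier multipliers hence commute with rotations, so if $u^*,G,\vec w_0$ are radial the fixed point iteration stays in the radial subspace, which is closed in $Z_\tau$. The main obstacle is purely bookkeeping: making sure the Hölder exponents in the $|u^*|^{\theta-1}|w|$ estimate close with a strictly positive power of $\tau$ while only using the $X_\tau\cap L^{\theta^*}(L^{2\theta^*})$ norm of $u^*$ (not of $w$) — but this is exactly the computation \eqref{eq:calFaux2}--\eqref{eq:calFaux5} with the roles of "nonlinear factor" and "linear factor" reorganized, so no new idea is needed.
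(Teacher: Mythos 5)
Your proposal is correct and follows essentially the same route as the paper: the authors also treat $f'(u^*)w+G$ as the nonlinearity in the fixed-point scheme of Theorem~\ref{thm:WP}, reuse the H\"older interpolation \eqref{eq:calFaux2}--\eqref{eq:calFaux5} with $M_0$ replaced by the norm of $u^*$ in $X_\tau\cap L^{\theta^*}(L^{2\theta^*})$, and iterate the contraction finitely many times to reach $\tau_0$. The only additions you make (the explicit $|u^*|^\beta$ term, the $\alpha=0$ remark, and the closedness of the radial subspace) are details the paper leaves implicit, and they are handled correctly.
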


\begin{proof} This proposition can be proved in the same way as Theorem \ref{thm:WP}, by    considering the term $f'(u^*(t,x)) w+ G$ as a non-linearity. The changes are  minor in the fixed point argument used in the proof of Theorem \ref{thm:WP}. Here $Y$ and 
$\mathcal{F} = (\mathcal{F}_1,\mathcal{F}_2) = (\mathcal{F}_1, 
\partial_t \mathcal{F}_1)$ simply become:

$$ 
Y \equiv  Y_{T} \equiv \{ \vec w \in L^{\infty}((0,\tau_0), \mathcal{H}) \hbox{ with } 
w \in L^{\theta^*}((0,\tau_0),L^{2\theta^*}(\mathbb{R}^d)) \}~.
$$
and 
$$
(\mathcal{F}_1(\vec w_0, \vec w ))(t) = \mathcal{S}_{1,\alpha}(t) w_0 
+ \mathcal{S}_{2,\alpha}(t) w_1 + \int_{0}^{t} \mathcal{S}_{2,\alpha}(t-s) (f'(u^*(s)) w(s) + G(s))\,  ds~.
$$
We obtain estimates similar to \eqref{eq:calFaux6}, where now $M_0$ is replaced by the 
norm of $u^*$ in $X_{\tau} \cap L^{\theta^*}((0,\tau), L^{2\theta^*}(\mathbb{R}^d))$. 
If the time $T_0$ defined in \eqref{eq:defiT0} is larger than $\tau_0$, then we have proved the existence (and uniqueness) of the solution $\vec w(\vec w_0) \in Y_T$ and the estimates 
\eqref{eq:AStrichphi0} follow from Lemma \ref{lem:free2}. If $T_0 < \tau_0$, we repeat the above proof by taking as initial data $(\vec w(\vec w_0))(T_0)$ and by replacing
$$f'(u^*(t,x)) w(t,x)+ G(t,x)$$ by $$f'(u^*(t + T_0,x))w(t +T_0,x) + G(t+T_0,x)$$
We repeat this argument a finite number of times till we reach the time $\tau_0$.
  \end{proof}

%%%%%%%%%%%%%%%%%%%%%%%%%%%%%%%%%%%%%%%%%%%%%
%%%%%%%%%%%%%%%%%%%%%%%%%%%%%%%%%%%%%%%%%%%%%
 
\subsection{Definition of the functional $K_0$ and the Nehari manifold}
\label{sec:Nehari}

We introduce the functional $K_0: \varphi \in  H^1(\mathbb{R}^d) \mapsto K_0(\varphi) \in \mathbb{R}$, defined by
$$
K_0 (\varphi) = \int_{\mathbb{R}^d}( |\nabla \varphi |^2 + \varphi^2 - \varphi f(\varphi)) \, dx~,
$$
and introduce the Nehari manifold
\begin{equation}
\label{Nehari}
 \mathcal{N} = \{ \varphi \in H^1_{rad}({\bf R}^d) \, | \, K_0(\varphi) =0 \}~.
\end{equation}
The Nehari manifold arises naturally in the study of elliptic equations.
The ``Ambrosetti-Rabinowitz" hypothesis $(H.1)_f$ allows to prove the following lemmas, which will be used along this paper. The first one is trivial. 
\begin{lemma}
\label{SolBorne}
Assume that Hypothesis $(H.1)_f$ holds. 
%Assume that $\varphi \in H^1(\mathbb{R}^d)$ %is such that $K_0(\varphi) > -\delta$, where $\delta >0$. 
Then, for any $(\varphi, \psi)\in H^1(\mathbb{R}^d) \times L^2(\mathbb{R}^d)$, we have
\begin{equation}
\label{eqSolBorne}
\gamma ( \| \varphi\|_{H^1}^2 + \| \psi\|_{L^2}^2 ) \leq 2(1 + \gamma) E((\varphi, \psi)) -K_0(\varphi)~. 
\end{equation}
\end{lemma}
\begin{proof}
We simply write
\begin{equation}
\label{eqEgalK0E}
\begin{split}
\gamma ( \| \varphi\|_{H^1}^2 + \| \psi\|_{L^2}^2 ) = & 2(1 + \gamma) E((\varphi, \psi)) 
- K_0(\varphi) - (1 + \gamma) \| \psi\|_{L^2}^2 \cr
 &+ \int_{\mathbb{R}^d} \big(2(1+ \gamma) F(\varphi) - \varphi(x) f( \varphi(x))\big) \, dx \cr
 \leq &  2(1 + \gamma) E((\varphi, \psi)) - K_0(\varphi)~,
\end{split}
\end{equation}
where the integral is nonpositive by \ref{H1f}. 
\end{proof}

\begin{cor}\label{lem:u global}
Suppose $\vec u(t)= (u(t), \partial_{t} u(t))$ is a strong solution of \ref{KGalpha} defined on the maximal interval $0\le t< T^*$. Assume 
$$
\inf_{0\le t<T^*} K_0(u(t))> -\infty ~.
$$ 
Then $T^*=\infty$, i.e., the solution is global. 
\end{cor}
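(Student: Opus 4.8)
The plan is to combine the a priori inequality of Lemma~\ref{SolBorne}, the monotonicity of the energy, and the blow-up criterion of Theorem~\ref{thm:WP}~(4). Concretely, set $m := -\inf_{0\le t<T^*} K_0(u(t))$, which is finite by hypothesis, so that $-K_0(u(t)) \le m$ for all $t\in[0,T^*)$. Applying Lemma~\ref{SolBorne} with $(\varphi,\psi)=\vec u(t)=(u(t),\partial_t u(t))$ gives
\[
\gamma\, \| \vec u(t)\|_{\mathcal H}^2 \;=\; \gamma\big(\|u(t)\|_{H^1}^2 + \|\partial_t u(t)\|_{L^2}^2\big) \;\le\; 2(1+\gamma)\, E(\vec u(t)) - K_0(u(t)) \;\le\; 2(1+\gamma)\, E(\vec u(t)) + m .
\]

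Next I would use the energy dissipation identity \eqref{Et2} from Theorem~\ref{thm:WP}~(6), which yields $E(\vec u(t)) \le E(\vec u(0))$ for every $t\in[0,T^*)$ (here $E(\vec u(0))$ is finite because $\vec u(0)\in\mathcal H$ and $F$ has energy subcritical growth by \ref{H2f}, so $F(\varphi_0)\in L^1$ via Sobolev embedding). Inserting this into the previous display gives the uniform bound
\[
\sup_{0\le t<T^*} \| \vec u(t)\|_{\mathcal H}^2 \;\le\; \gamma^{-1}\big(2(1+\gamma)\, E(\vec u(0)) + m\big) \;<\; \infty .
\]

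Finally, I would invoke the blow-up alternative of Theorem~\ref{thm:WP}~(4): if $T^*<\infty$, then $\limsup_{t\to T^*}\|\vec u(t)\|_{\mathcal H}=+\infty$, which contradicts the bound just obtained. Hence $T^*=\infty$, i.e.\ the solution is global. There is really no serious obstacle here — the only points requiring a word of care are that $E(\vec u(0))<\infty$ (handled by the subcritical growth assumption) and that the energy identity \eqref{Et2}, proved in Theorem~\ref{thm:WP} first for $H^2\times H^1$ data and then extended by density, applies to the given strong solution.
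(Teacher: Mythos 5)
Your proof is correct and follows essentially the same route as the paper: Lemma~\ref{SolBorne} plus the monotonicity of the energy to get a uniform $\mathcal H$-bound, then the blow-up alternative of Theorem~\ref{thm:WP}. Your write-up is in fact slightly more careful than the paper's (which drops the factor $\gamma$ and the square on the norm), but the argument is identical.
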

\begin{proof}
By Lemma~\ref{SolBorne}, we have for some finite $M$ and all $0\le t<T^*$   
\EQ{ \nn
\|\vec u(t)\|_{\calH}  &\le 2(1+\gamma)E(u(t), \partial_t u (t))  + M \\
&\le 2(1+\gamma)E(u(0), \partial_t u (0))  + M
}
where the second line holds by the decrease of the energy.  Since finite time blowup means that $\|\vec u(t)\|_{\calH} $ goes to infinity in finite time along some subsequence,
we obtain the result. 
\end{proof}

The proof of the next lemma uses a convexity argument and follows the lines of the proof of \cite{PaSat} and
\cite[Corollary 2.13]{NaS}.   
We denote the nonlinear evolution by $S_\al(t)$. 

\begin{lemma}
\label{eqBlowup}
Assume that the hypotheses $(H.1)_f$ and $(H.2)_f$ hold.
Assume that $(u(t),  \partial_t u (t))$ is a solution of \ref{KGalpha} defined on $[0,T^*)$ where $T^*\in (0,\I]$ is maximal. If  
$K_0(u(t)) \leq -\delta$ (where $\delta >0$), for $t_0\le t <T^*$, then $T^*<\infty$, i.e.,  the solution blows up in finite time.
\end{lemma}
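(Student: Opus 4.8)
The plan is to run the classical concavity (convexity) argument of Payne--Sattinger, adapted to the damped equation. First I would introduce the auxiliary quantity
\[
I(t) = \int_{\mathbb{R}^d} u(t,x)^2 \, dx,
\]
and, anticipating a differential inequality that forces blow-up, work instead with $I(t)^{-p}$ for a suitable small $p>0$, or equivalently study $\log I(t)$ together with a correction accounting for the damping term $2\alpha \partial_t u$. Differentiating once gives $I'(t) = 2\int u\, \partial_t u$, and differentiating again and using \ref{KGalpha} yields
\[
I''(t) = 2\int \big( (\partial_t u)^2 - |\nabla u|^2 - u^2 + u f(u) \big)\,dx - 2\alpha I'(t)
= 2\int (\partial_t u)^2\,dx - 2 K_0(u(t)) - 2\alpha I'(t).
\]
So the hypothesis $K_0(u(t)) \le -\delta$ immediately gives $I''(t) + 2\alpha I'(t) \ge 4\int (\partial_t u)^2 + 2\delta$, which already shows $I$ grows (after absorbing the damping, e.g. by looking at $e^{2\alpha t} I'(t)$), but to get \emph{finite-time} blow-up one needs the sharper differential inequality.

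The key step is to combine the above with an energy identity to produce a concavity inequality. Using the decrease of the energy \eqref{Et1t2} and the Ambrosetti--Rabinowitz condition \ref{H1f}, one writes $-K_0(u(t)) = -2(1+\gamma)E(\vec u(t)) + (1+\gamma)\|\partial_t u\|_2^2 + (1+\gamma)\|\nabla u\|_2^2 + (1+\gamma)\|u\|_2^2 + \big(\text{the AR integral} \le 0\big)$ — more precisely I would use the identity from \eqref{eqEgalK0E} — to lower bound $-2K_0(u(t))$ by a positive multiple of $\|\partial_t u\|_2^2 + \|u\|_2^2$ plus $2\gamma\big(E(\vec u(0)) - E(\vec u(t))\big)$ type terms, the latter being controlled because $E$ is nonincreasing. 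The upshot should be an inequality of the form
\[
I''(t) + 2\alpha I'(t) \ge c_1 \int (\partial_t u)^2\,dx + c_2 I(t) + c_3,
\]
with $c_1, c_2, c_3 > 0$, for $t$ large. To convert this into finite-time blow-up despite the damping, I would pass to the variable $J(t) = e^{2\alpha t}$-weighted or, more robustly, show first that $I(t) \to \infty$ and $I'(t) > 0$ for large $t$ (using $c_2 I + c_3 > 0$ and a Gronwall argument on $e^{2\alpha t} I'$), then restart the clock at a time where $I$ and $I'$ are large and $I''$ dominates $2\alpha I'$, and finally run the standard Levine-type concavity estimate: using Cauchy--Schwarz, $I'(t)^2 = 4\big(\int u\,\partial_t u\big)^2 \le 4 I(t) \int (\partial_t u)^2$, so that with $\kappa = c_1/4$,
\[
I(t) I''(t) - (1+\kappa) I'(t)^2 \ge I(t)\big( c_2 I(t) + c_3 - 2\alpha I'(t)\big) \ge 0
\]
once $I$ is large enough relative to $I'$ (which holds eventually, or after a further shift). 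Then $\big(I^{-\kappa}\big)'' \le 0$, i.e. $I^{-\kappa}$ is concave and positive with negative derivative, forcing $I^{-\kappa}$ to hit zero in finite time, hence $I(t)\to\infty$ in finite time; combined with Theorem~\ref{thm:WP}(4) this gives blow-up of $\|\vec u(t)\|_{\calH}$.

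The main obstacle is handling the damping term $2\alpha \partial_t u$: it breaks the pure concavity of $I^{-\kappa}$, and naively it could conceivably prevent blow-up. The trick, following Feireisl and others, is that the good term $c_1 \int (\partial_t u)^2$ coming from $-2K_0$ (via \ref{H1f}, which gives a strictly positive $\gamma$) is exactly what is needed to beat both the $I'(t)^2/I(t)$ Cauchy--Schwarz term \emph{and} the damping contribution $2\alpha I'(t)$, provided one first establishes that $I(t)$ grows at least linearly (so that $I$ dominates $I'$, or the shifted quantity $I + $ linear term does). A secondary technical point is that we only have $u \in H^1$, not obviously $L^2$-bounded moments; but $I(t) = \|u(t)\|_2^2$ is finite since $u(t) \in H^1(\mathbb{R}^d)$, and all the manipulations above are justified first for $H^2 \times H^1$ data (where $\vec u \in C^1$ in time with values in $\calH$, by Theorem~\ref{thm:WP}(5)--(6)) and then extended by the continuous dependence in Theorem~\ref{thm:WP}(1)--(2). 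I would carry out the rigorous differentiation at the $H^2\times H^1$ level and then conclude by density.
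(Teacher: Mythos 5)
Your overall strategy (a Levine/Payne--Sattinger concavity argument driven by \ref{H1f}) is the right family, and your first steps --- the identity $I''=2\|\partial_t u\|_{L^2}^2-2K_0(u)-2\alpha I'$, the use of \eqref{eqEgalK0E} and the energy decay, and the Gronwall argument on $e^{2\alpha t}I'$ showing $I\to\infty$ --- all match the paper. But there is a genuine gap at the crucial step where you dispose of the damping term: you assert that $c_2 I(t)+c_3-2\alpha I'(t)\ge 0$ ``once $I$ is large enough relative to $I'$ (which holds eventually, or after a further shift)''. Nothing you have established implies this. Linear (or faster) growth of $I$ does not prevent $I'\asymp I$; indeed the reduced inequality $I''+2\alpha I'\ge \tfrac{c_1}{4}\,\frac{I'^2}{I}+c_2 I$ with $c_1=4(1+\gamma)$, $c_2=2\gamma$ is \emph{consistent} with pure exponential growth $I\sim e^{\lambda t}$ for any $\lambda$ with $-\gamma\lambda^2+2\alpha\lambda-2\gamma\ge0$, a nonempty window whenever $\alpha^2\ge 2\gamma^2$. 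Equivalently, if you try to absorb $2\alpha I'\le \eta\|\partial_t u\|_{L^2}^2+\tfrac{4\alpha^2}{\eta}I$ while still keeping $4(1+\kappa)\le c_1-\eta$ with $\kappa>0$, you are forced into $\alpha^2<2\gamma^2$; so the direct argument on $I(t)=\|u(t)\|_{L^2}^2$ only closes for small damping, whereas the lemma is claimed for every $\alpha>0$. (A minor additional slip: $I'^2\le 4I\|\partial_t u\|_{L^2}^2$ forces $\kappa=c_1/4-1=\gamma$, not $\kappa=c_1/4$.)

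The paper circumvents exactly this obstruction by changing the functional: it sets $y(t)=\tfrac12\|u(t)\|_{L^2}^2+\alpha\int_0^t\|u(s)\|_{L^2}^2\,ds$, for which the damping cancels identically in the second derivative, $\ddot y=\|\dot u\|_{L^2}^2-K_0(u)$, so no term $2\alpha I'$ ever has to be beaten. The price is that $\dot y$ acquires the extra term $\alpha\|u(t)\|_{L^2}^2=\alpha\|u(0)\|_{L^2}^2+2\alpha\int_0^t\langle u,\dot u\rangle$, and the cross term it produces in $\dot y^2$ is controlled by Cauchy--Schwarz against the product of $\alpha\int_0^t\|u\|_{L^2}^2$ (sitting inside $y$) and the cumulative dissipation $4\alpha(1+\gamma)\int_0^t\|\dot u\|_{L^2}^2$ (which the energy identity puts into the lower bound for $\ddot y$, and which you noticed but then discarded as merely ``controlled''). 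That pairing is the whole point for large $\alpha$. To repair your proof you should either adopt this modified functional or supply a genuine argument ruling out the regime $I'\gtrsim I$; as written, the proposal does not prove the lemma for general $\alpha>0$.
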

{}From Lemmas~\ref{SolBorne} and~\ref{eqBlowup} we immediately  deduce the following result.

\begin{coro}\label{coro-blow}
Assume that the initial energy $E(\vec u_0) $ is negative. Then the solution blows-up in finite time $T^* <+\infty$.
\end{coro}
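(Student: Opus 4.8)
The plan is to derive the conclusion of Corollary~\ref{coro-blow} directly from Lemma~\ref{eqBlowup} by showing that negative initial energy forces $K_0(u(t))$ to stay bounded away from zero from below for all times in the interval of existence. First I would recall the energy dissipation identity \eqref{Et2}, which gives $E(\vec u(t)) \le E(\vec u(0)) < 0$ for all $t \in [0,T^*)$; in particular the energy stays strictly negative with a uniform bound $E(\vec u(t)) \le E(\vec u_0) =: -\e_0$ where $\e_0 > 0$.

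Next I would invoke Lemma~\ref{SolBorne}, which furnishes the pointwise inequality
\EQ{\nn
\gamma\big(\|u(t)\|_{H^1}^2 + \|\partial_t u(t)\|_{L^2}^2\big) \le 2(1+\gamma) E(\vec u(t)) - K_0(u(t))~.
}
Since the left-hand side is nonnegative and $E(\vec u(t)) \le -\e_0$, this rearranges to
\EQ{\nn
K_0(u(t)) \le 2(1+\gamma) E(\vec u(t)) \le -2(1+\gamma)\e_0 < 0
}
for every $t \in [0,T^*)$. Thus the hypothesis of Lemma~\ref{eqBlowup} is met with $t_0 = 0$ and $\delta = 2(1+\gamma)\e_0 > 0$, and that lemma immediately yields $T^* < \infty$, i.e.\ finite-time blowup.

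There is essentially no obstacle here: the corollary is a two-line consequence of the two preceding results once one observes that the energy is monotonically nonincreasing (so it remains negative) and that Lemma~\ref{SolBorne} converts the sign of the energy into a strictly negative upper bound on $K_0$. The only point requiring a word of care is that one needs the \emph{uniform} (in $t$) negativity of $K_0$, not merely negativity at $t=0$; this is exactly what the combination of \eqref{Et2} and \eqref{eqSolBorne} supplies, since the bound $K_0(u(t)) \le 2(1+\gamma)E(\vec u_0)$ holds with a $t$-independent right-hand side. I would write the proof in three or four lines accordingly.
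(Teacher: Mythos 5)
Your proof is correct and is exactly the argument the paper intends: the paper simply states that the corollary follows ``immediately'' from Lemmas~\ref{SolBorne} and~\ref{eqBlowup}, and your chain (energy monotonicity \eqref{Et2} gives $E(\vec u(t))\le E(\vec u_0)<0$, then \eqref{eqSolBorne} gives $K_0(u(t))\le 2(1+\gamma)E(\vec u_0)=:-\delta<0$ uniformly in $t$, then Lemma~\ref{eqBlowup} applies) is precisely the intended two-line deduction.
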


\begin{proof}[Proof of Lemma~\ref{eqBlowup}] 
We assume without loss of generality that $t_0=0$. We also assume towards a contradiction that $T^*=\infty$. 
In order to show that $S_\alpha(t)(u_0,u_1)$ blows up in finite time, we use a convexity argument as in~\cite{PaSat}. Assume that $S_\alpha(t)(u_0,u_1)$ exists for all $t\geq0$ and let
$$y(t)=\frac{1}{2}\|u(t)\|_{L^2}^2+\alpha\int_0^t\|u(s)\|_{L^2}^2\,ds.$$
We have
\begin{equation}\label{eq:2.3}
\begin{aligned}
\dot{y}(t)&=(u(t),\dot{u}(t))+\alpha\|u(t)\|_{L^2}^2\\
&=(u(t),\dot{u}(t))+\alpha\|u(0)\|_{L^2}^2+2\alpha\int_0^t(u(s),\dot{u}(s))\,ds
\end{aligned}
\end{equation}
and
\begin{equation}\label{eq:2.4}
\begin{aligned}
\ddot{y}(t)&=\|\dot{u}(t)\|_{L^2}^2+(u(t),\ddot{u}(t)+2\alpha\dot{u}(t))\\
&=\|\dot{u}(t)\|_{L^2}^2+(u(t),(\Delta u-u+f(u))(t))\\
&=\|\dot{u}(t)\|_{L^2}^2-K_0(u(t))~.
\end{aligned}
\end{equation}
Thus,
\begin{equation}
\label{eq:2.6}
\ddot{y}(t)\geq\|\dot{u}(t)\|_{L^2}^2+\delta\geq\delta.
\end{equation}
We deduce from \eqref{eq:2.6} that $\lim_{t\rightarrow + \infty}\dot{y}(t) = + \infty$, and therefore  $\lim_{t\rightarrow + \infty} y(t)=+\infty$.

Next, we note that 
\EQ{ 
\label{eq:2.7}
\ddot{y}(t)&= \|\dot{u}(t)\|_{L^2}^2-K_0(u(t)) \\
& =(2+\gamma)\|\dot{u}(t)\|_{L^2}^2+\gamma \|u(t)\|_{H^1}^2-2(1+\gamma)E(t) \\
&\qquad - \int_{\R^d} \big(2(1+\gamma) F(u(t)) - u(t) f(u(t)) \big)\, dx
}
where we have set for simplicity $E(t)=E((u(t),\dot{u}(t)))$. But, we have
$$\dot{E}(t)=-2\alpha\|\dot{u}(t)\|_{L^2}^2$$
and
$$
E(t)=E(0)+\int_0^t \dot{E}(s)\,ds=E(0)-2\alpha\int_0^t\|\dot{u}(s)\|_{L^2}^2\,ds.
$$
Using \ref{H1f}  and the definition of $y(t)$, we can also write, for $t\geq0$,
\begin{align}
\label{eq:2.8}
\ddot{y}(t)\ge (2+\gamma)\|\dot{u}(t)\|_{L^2}^2+\gamma\|u(t)\|_{H^1}^2-2(1+\gamma)E(0)+4 \alpha (1+\gamma)\int_0^t\|\dot{u}(s)\|_{L^2}^2\,ds.
\end{align}
For the sake of illustration, assume first that $\al=0$.
Since $y(t)\to\infty$, we infer from \eqref{eq:2.8} that for large $t$ 
\EQ{\label{eq:ddoty}
\ddot{y}(t)\ge (2+\gamma)\|\dot{u}(t)\|_{L^2}^2 
}
 Then 
$|\dot y(t)|\le \| u(t)\|_{L^2} \|\dot{u}(t)\|_{L^2}$ whence 
\[
\ddot{y}(t)\ge \f{2+\gamma}{2} \f{\dot y^2(t)}{y(t)}
\]
This implies that $\f{d^2}{dt^2} (y^{-\delta} (t))<0$ where $\delta=\gamma/2$.  Since $y^{-\delta}(t)\to0$ as $t\to\infty$ we 
must have $\f{d}{dt} (y^{-\delta})(t)<0$ for some $t=t_1>0$ whence also  $\f{d}{dt}(y^{-\delta})(t)\le \f{d}{dt} (y^{-\delta})(t_1)<0$ for all $t\ge t_1$. But then $y^{-\delta}(t_2)=0$
for some $t_2>t_1$ which is a contradiction. 

For $\al>0$, we claim that there exists $c>1$ so that for large times
\EQ{\label{eq:c claim}
\ddot y(t) y(t) - c\dot y(t)^2> 0
}
If so, then 
\[
\f{d^2}{dt^2} \big( y^{-(c-1)} \big)(t) = -(c-1) y^{-c-1}(t) (\ddot y (t)y (t)  - c\dot y^2(t) )<0
\]
which leads to a contradiction as before. 

It remains to verify \eqref{eq:c claim}. Using the Cauchy-Schwarz inequality we obtain
\begin{align}
\label{eq:2.10}
& y(t)\ddot{y}(t)-c\dot{y}^2(t)\geq \Bigl(\frac{1}{2}\|u\|_{L^2}^2+\alpha\int_0^t\|u(s)\|_{L^2}^2\,ds\Bigr)\\
&\cdot\Bigl(    (2+\gamma)\|\dot{u}(t)\|_{L^2}^2+\gamma\|u(t)\|_{H^1}^2-2(1+\gamma)E(0)+4 \alpha (1+\gamma)\int_0^t\|\dot{u}(s)\|_{L^2}^2\,ds     \Bigr)\nonumber\\
&-c\Bigl[\|u\|_{L^2}\|\dot{u}\|_{L^2}+2\alpha\Bigl(\int_0^t\|u(s)\|_{L^2}^2\,ds\Bigr)^{\frac{1}{2}}\Bigl(\int_0^t\|\dot{u}(s)\|_{L^2}^2\,ds\Bigr)^{\frac{1}{2}}+\alpha\|u(0)\|_{L^2}^2\Bigr]^2.\nonumber
\end{align}
But, for any $\eps>0$, we estimate the term in brackets as follows: 
\begin{align*}
c&\Bigl[\|u\|_{L^2}\|\dot{u}\|_{L^2}+2\alpha\Bigl(\int_0^t\|u(s)\|_{L^2}^2\,ds\Bigr)^{\frac{1}{2}}\Bigl(\int_0^t\|\dot{u}(s)\|_{L^2}^2\,ds\Bigr)^{\frac{1}{2}}+\alpha\|u(0)\|_{L^2}^2\Bigr]^2\\
&\leq c(1+\eps)\Bigl(\|u\|_{L^2}\|\dot{u}\|_{L^2}+2\alpha\Bigl(\int_0^t\|u(s)\|_{L^2}^2\,ds\Bigr)^{\frac{1}{2}}\Bigl(\int_0^t\|\dot{u}(s)\|_{L^2}^2\,ds\Bigr)^{\frac{1}{2}}\Bigr)^2\\
&\qquad+c \Bigl(1+\frac{1}{\eps}\Bigr)\alpha^2\|u(0)\|_{L^2}^4\\
&\leq c(1+\eps)\Bigl(\frac{1}{2}\|u\|_{L^2}^2+\alpha\int_0^t\|u(s)\|_{L^2}^2\,ds\Bigr)\Bigl(2\|\dot{u}\|_{L^2}^2+4\alpha\int_0^t\|\dot{u}(s)\|_{L^2}^2\,ds\Bigr)\\
&\qquad+c \Bigl(1+\frac{1}{\eps}\Bigr) \alpha^2\|u(0)\|_{L^2}^4.
\end{align*}
Setting  $b=c(1+\eps)$, $C=c\alpha^2(1+\frac{1}{\eps})\|u(0)\|_{L^2}^4$,  we may replace  the right-hand side of this inequality  by
\[
\leq y(t) \Big(2b \|\dot{u}\|_{L^2}^2+4b\alpha\int_0^t\|\dot{u}(s)\|_{L^2}^2\,ds \Big) + C
\]

{} From the last inequality and from \eqref{eq:2.10}, we deduce that
\begin{align}
y\ddot{y}(t)-c\dot{y}^2(t) &\geq y(t)\Big\{ (2+\gamma-2b)\|\dot{u}(t)\|_{L^2}^2+4\alpha(1+\gamma-b)\int_0^t\|\dot{u}(s)\|_{L^2}^2\,ds+\gamma\|u(t)\|_{H^1}^2 \nn \\
&\qquad -2(1+\gamma)E(0)\Big\}-C\nonumber\\
& = y(t)\Psi(t)-C  \label{eq:2.11}
\end{align}
where $\Psi(t)$ is defined by the term in braces. 

We now adjust the constants $c>1$ and $\eps>0$ so that $2+\gamma-2b>0$, $1+\gamma-b>0$.   We now pick $\eta>0$ so small that
\[
2+\gamma-2b> \eta, \qquad  \gamma - \f{\eta}{2} -\al\eta>0
\]
This allows us to bound $\Psi(t)$ from below: 
\begin{align*}
\Psi(t)&=\left[\left(2+\gamma-2b-\frac{\eta}{2}\right)\|\dot{u}(t)\|_{L^2}^2+4\alpha(1+\gamma-b)\int_0^t\|\dot{u}(t)\|_{L^2}^2\,ds+\gamma \|\nabla u(t)\|_{L^2}^2\right.\\
&\qquad\left.+\left(\gamma -\frac{\eta}{2}-\alpha\eta\right)\|u(t)\|_{L^2}^2 +\eta\dot{y}(t)-2(1+\gamma)E(0)\right]\\
&\geq\eta\dot{y}(t)- 2(1+\gamma)E(0)+q(t)
\end{align*}
where $q(t)\geq0$. {}From \eqref{eq:2.11}, we infer that, for $t\geq0$,
\begin{align}
\label{eq:2.12}
y(t)\ddot{y}(t)-c\dot{y}^2(t)\geq y(t)[\eta\dot{y}(t)- 2(1+\gamma)E(0)+q(t)]-C.
\end{align}
Since $y(t), \dot y(t)\to\infty$ as $t\to\infty$, we are done. 
\end{proof}

\subsection{Spectral properties}

Suppose we have a  stationary solution $\fy_0\in H^1(\R^d)$ to \ref{KGalpha}, namely,
\[
-\Delta \varphi_0 + \varphi_0  - f(\varphi_0)=0
\]
By elliptic theory, see for example~\cite{BeLions83I, BeLions83II}, these solutions are exponentially decaying, and lie in $C^{3,\beta}$ for some $\beta>0$. 
Solving \ref{KGalpha} for $u=\fy_0+v$ yields
\EQ{\label{eq:vN}
v_{tt} + 2\alpha v_t -\Delta v + v -f'(\fy_0) v = N(\fy_0,v)
} 
where $N(\fy_0,v) = f(\fy_0+v)-f(\fy_0)-f'(\fy_0) v$. Set $\calL=-\Delta + I - f'(\fy_0)$. 
Rewrite \eqref{eq:vN} in the form
\EQ{\label{Sys1}
\p_t\binom{v}{v_t} = \left(\begin{matrix} 0 & 1\\ - \calL & -2\alpha\end{matrix}\right) \binom{v}{v_t} + \binom{0}{N(\fy_0,v)}
}
Denoting the matrix operator on the right-hand side by $A_\alpha$, and setting $\vec v:=\binom{v}{v_{t}}$, we may write~\eqref{Sys1} in the form
\[
\p_{t}\vec v = A_{\alpha}\vec v + \vec N
\]
The spectral properties of $\calL$ stated in the following lemma are standard, see for example~\cite{IS} and the references cited here. 

\begin{lemma}\label{lem:Lspec}
The operator $\calL$ is self-adjoint with domain $H^{2}(\R^{d})$. The spectrum $\sigma(\calL)$ consists of an essential part $[1,\infty)$, which is absolutely continuous,  
and finitely many eigenvalues of finite multiplicity all of which fall into $(-\infty, 1]$. The eigenfunctions are $C^{2,\beta}$ with $\beta>0$ and the ones associated with eigenvalues below $1$ are exponentially decaying.  Over the radial functions, all eigenvalues are simple. 
\end{lemma}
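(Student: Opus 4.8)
The plan is to regard $\calL=-\Delta+I-W$, with $W:=f'(\fy_0)$, as a mild perturbation of the free operator $-\Delta+I$. Since $\fy_0$ is bounded and exponentially decaying and $f'$ is continuous with $f'(0)=0$ and $\beta$-Hölder by~\ref{H2f}, the potential $W$ is bounded, lies in $C^{0,\beta}(\R^d)$, and obeys $|W(x)|\lec e^{-c|x|}$ for some $c>0$. Multiplication by the real-valued bounded function $1-W$ is then a bounded self-adjoint perturbation of $-\Delta$, so $\calL$ is self-adjoint with domain $D(-\Delta)=H^2(\R^d)$; and $W(-\Delta+I)^{-1}$ is compact (a product of a decaying multiplier with a decaying Fourier multiplier), so Weyl's theorem on the essential spectrum yields $\sigma_{ess}(\calL)=\sigma_{ess}(-\Delta+I)=[1,\infty)$.

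Next I would describe the spectrum away from $[1,\infty)$. Below the threshold the spectrum is purely discrete: isolated eigenvalues of finite multiplicity, accumulating only at $1$. To see there are finitely many, observe that $\mu<1$ is an eigenvalue of $\calL$ exactly when $\mu-1<0$ is an eigenvalue of $-\Delta-W$; since $W$ decays exponentially (hence $\langle x\rangle W\in L^1$ when $d=1$, and $W\in L^{d/2}\cap L^\infty$ when $d\ge3$, with $d=2$ handled by the analogous classical bound), the Bargmann/Cwikel--Lieb--Rozenblum estimates bound the number of such eigenvalues. For the claimed absolute continuity of $[1,\infty)$, I would invoke short-range scattering theory for $H=-\Delta+I-W$: exponential decay of $W$ is far more than enough for the limiting absorption principle (Agmon--Kato--Kuroda), which rules out singular continuous spectrum, while Kato's theorem on the absence of positive eigenvalues (valid already for $W=o(|x|^{-1})$) rules out eigenvalues embedded in $(1,\infty)$.

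For the regularity and decay of the eigenfunctions I would run a standard elliptic bootstrap. If $\calL\psi=\mu\psi$ then $-\Delta\psi=(\mu-1+W)\psi$ with $\psi\in H^2(\R^d)$; iterating Sobolev embeddings together with $L^p$-elliptic estimates a finite number of times pushes $\psi$ into $C^{0,\beta}$, and then, since $W\in C^{0,\beta}$, Schauder interior-plus-global estimates give $\psi\in C^{2,\beta}(\R^d)$. If in addition $\mu<1$, rewrite the equation as $(-\Delta+(1-\mu))\psi=W\psi$; as $1-\mu>0$ and $W(x)\to0$, an Agmon exponential-weight estimate (or a Combes--Thomas argument applied to the resolvent) gives $|\psi(x)|\le C_\eps\, e^{-(\sqrt{1-\mu}-\eps)|x|}$ for every $\eps>0$.

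Finally, simplicity over the radial sector. A radial eigenfunction $\psi=\psi(r)$ solves the ODE $-\psi''-\frac{d-1}{r}\psi'+(1-W(r)-\mu)\psi=0$ on $(0,\infty)$; near the origin the two-dimensional solution space splits into one solution that is bounded there and one ``singular'' solution whose gradient is not locally square-integrable (behaving like $r^{2-d}$ for $d\ge3$ and like $\log r$ for $d=2$, while for $d=1$ radiality means evenness, which forces $\psi'(0)=0$). Hence the space of solutions lying in $H^1$ near the origin is one-dimensional, so any two radial eigenfunctions with the same eigenvalue are proportional, and every eigenvalue of $\calL$ restricted to the radial functions is simple. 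The only genuinely nonelementary ingredient here is the absolute continuity of $[1,\infty)$, but this is classical for exponentially decaying potentials; everything else reduces to perturbation theory, elliptic regularity, and one-dimensional ODE analysis.
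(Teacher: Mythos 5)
Your proposal is correct and follows essentially the same route as the paper: Weyl's criterion for the essential spectrum, Agmon--Kato--Kuroda theory for absolute continuity on $[1,\infty)$, Birman--Schwinger-type bounds for finiteness of the discrete spectrum, Schauder estimates for $C^{2,\beta}$ regularity, Agmon's estimate for exponential decay, and reduction to a second-order ODE for radial simplicity. The only (harmless) variation is in the last step, where you count the one-dimensional admissible solution space at the origin (regular vs.\ singular behaviour), whereas the paper substitutes $u=r^{(d-1)/2}v$ and counts at infinity (decaying vs.\ growing solution) together with the Dirichlet condition at $r=0$.
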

\begin{proof}
The essential spectrum equals $[1,\infty)$ by the Weyl criterion. The Agmon-Kuroda theory on asymptotic completeness guarantees that 
there are no imbedded eigenvalues and no singular continuous spectrum. Thus, the spectral measure restricted to $[1,\infty)$ is purely absolutely continuous. 
The Birman-Schwinger criterion shows (due to the rapid decay of the potential $f'(\fy_{0})$) that there are only finitely many eigenvalues of $\calL$ which are $\le 1$, 
counted with multiplicity. 
The $C^{2,\beta}$ property of the eigenfunctions is standard elliptic regularity (Schauder estimates) since $\fy_{0}$ is smooth, and so $f'(\fy_{0})$ is H\"older regular. 

For the sake of completeness we remark that the threshold $1$ may be an eigenvalue or a resonance.  To illustrate what this means, consider $\R^{3}$. Then this distinction refers to the to fact that solutions to $\calL \psi=\psi$ either decay like $|x|^{-2}$ (which means $\psi\in L^{2}$ is an eigenfunction) 
or like $|x|^{-1}$, the latter implying that $\psi\not\in L^{2}(\R^{3})$ (this is the resonant case). We remark that over the radial functions only the resonant case 
can occur. However, none of this finer analysis at energy $1$ is  relevant for our purposes. 

The exponential decay of the eigenfunctions with eigenvalues below $1$ is known as Agmon's estimate.
The simplicity of the radial eigenfunctions is immediate from the reduction to an ODE on $(0,\infty)$ with a Dirichlet condition at $r=0$. 
Let us elaborate on the kernel of $\calL$, since it is important in our construction. We set 
$\mathcal{L} v =0$, $v \ne0$ radial and in $H^{1}$. Then 
 $$
 -\Delta v + v - f'(\varphi_0) v=0
 $$
 We already notes that $v \in C^{2,\beta}(\R^{d})$, and that $v(r)$ decays exponentially. 
 Set $u(r)=r^{\f{d-1}{2}}v(r)$. Then $u(0)=0$, $u(r)\to0$ as $r\to\infty$ (exponentially in fact), and it satisfies the equation 
 \EQ{\label{eq:uode}
 -u''(r) + u(r) - (\frac{d-1}{2})(\frac{d-3}{2}) \frac{u(r)}{r^2} -  f'(\varphi_0) u(r)=0~,
 \quad r>0
 }
 This ODE has a fundamental system consisting of a solution growing like $e^{r}$ and one decaying like $e^{-r}$
 as $r\to\infty$.  Only the latter can lie in the kernel and it does so if and only if it satisfies the boundary
 condition $u(0)=0$. In this case the kernel has dimension~$1$ otherwise it consists only of $\{0\}$. 
%%%%%%%%%%%%%%%%%%%%%%%%%%%%%%%%%%%%%%%%%%%%%%
\end{proof}

We now analyze the spectral properties of the matrix operator $A_{\alpha}$. 

\begin{lemma} \label{lem:spectre2A} 
\begin{itemize}
\item The  operator $A_\alpha$ has discrete spectrum if and only if $\mathcal{L}$ does. The essential spectrum of $A_{\al}$ lies
strictly to the left of the imaginary axis, i.e., in $\Re(z)<-\delta(\alpha)$ for some 
$\delta(\alpha)>0$.  The  spectrum of $A_{\al}$ on the imaginary axis is either empty 
or~$\{0\}$.  
In the latter case,  $0$ is an
eigenvalue of $A_\alpha$ and this occurs if and only if $0$ is an eigenvalue of $\calL$. Then $\dim(\ker(\calL))=1$, in which case $0$ is a simple eigenvalue. The  eigenvalues of $A_\alpha$ are precisely
\[
-\al \pm \sqrt{\al^2 - \mu}
\]
where $\mu \in \sigma(\calL)$  is an eigenvalue.  
\begin{itemize}
\item If $\alpha \ge 1$, then the discrete spectrum of $A_\alpha$ lies only on the real axis.
\item If $0<\alpha<1$, in addition to real eigenvalues, there may also be eigenvalues on the line $\Re(z)=-\alpha$ resulting from eigenvalues of $\calL$
in the gap $(0,1]$. 
\end{itemize}

\item The essential spectrum of $\calL$ gives rise to essential spectrum $\sigma_{\mathrm{ess}}(A_\al)$ of $A_{\al}$ as follows: 
\begin{itemize}
\item 
If  $0<\al\le 1$,  $\sigma_{\mathrm{ess}}(A_\al)$ is contained in  the line 
$\Re(z)=-\alpha$  and 
consists of $-\al\pm i \beta$, $\beta \ge \sqrt{1-\al^2}$.  
\item If  $\alpha >1$,  $\sigma_{\mathrm{ess}}(A_\al)$ consists of the entire line  $\Re(z)=-\alpha$ and of the interval $$[-\alpha - \sqrt{\alpha^2 - 1},
-\alpha + \sqrt{\alpha^2 - 1}]$$
\end{itemize}
\end{itemize}
\end{lemma}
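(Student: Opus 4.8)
The plan is to compute the spectrum of the matrix operator $A_\alpha$ by relating it explicitly to the spectrum of $\calL$ through the algebraic factorization that comes from looking for eigenfunctions of the form $\binom{\psi}{z\psi}$. First I would observe that if $z\in\mathbb{C}$ and $\psi$ solve $A_\alpha\binom{\psi}{z\psi}=z\binom{\psi}{z\psi}$, then the first component forces the lower component to be $z\psi$ (consistent), while the second component gives $-\calL\psi-2\alpha z\psi=z^2\psi$, i.e.\ $\calL\psi=-(z^2+2\alpha z)\psi$. Hence $z$ is an eigenvalue of $A_\alpha$ with eigenfunction $\binom{\psi}{z\psi}$ if and only if $\mu:=-(z^2+2\alpha z)=-(z+\alpha)^2+\alpha^2$ is an eigenvalue of $\calL$ with eigenfunction $\psi$; solving the quadratic $z^2+2\alpha z+\mu=0$ gives $z=-\alpha\pm\sqrt{\alpha^2-\mu}$, which is the claimed formula. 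Since by Lemma~\ref{lem:Lspec} the eigenvalues of $\calL$ on the radial sector are simple and there are finitely many of them below $1$, the corresponding point spectrum of $A_\alpha$ is as described. The equivalence "$A_\alpha$ has discrete spectrum iff $\calL$ does" follows from the same correspondence together with the resolvent identity below: $\calL$ discrete means $f'(\fy_0)$ has compact enough influence, but more simply one argues that $A_\alpha$ fails to have discrete spectrum exactly when its essential spectrum is nonempty, and that essential spectrum is governed entirely by the essential spectrum $[1,\infty)$ of $\calL$, computed next.

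For the essential spectrum I would use the standard fact that $\sigma_{\mathrm{ess}}(A_\alpha)$ is unchanged under relatively compact perturbations, so it equals the essential spectrum of the constant-coefficient operator obtained by dropping $f'(\fy_0)$, i.e.\ with $\calL$ replaced by $\calL_0:=-\Delta+I$ (the potential $f'(\fy_0)$ decays exponentially, hence the difference of resolvents is compact). For the free operator one passes to the Fourier side: $z$ belongs to the spectrum of the free matrix operator iff for some $\xi\in\mathbb{R}^d$ one has $z^2+2\alpha z+(|\xi|^2+1)=0$, i.e.\ $z=-\alpha\pm\sqrt{\alpha^2-1-|\xi|^2}$ with $|\xi|^2$ ranging over $[0,\infty)$. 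Tracing the locus of these roots as $|\xi|^2\ge0$ varies yields exactly the two stated cases: when $0<\alpha\le1$ the quantity under the root is $\le\alpha^2-1\le0$, so the roots lie on $\Re z=-\alpha$ with imaginary part $\pm\sqrt{1-\alpha^2+|\xi|^2}$, giving the two half-lines $-\alpha\pm i\beta$, $\beta\ge\sqrt{1-\alpha^2}$; when $\alpha>1$, for small $|\xi|^2\in[0,\alpha^2-1]$ the root is real and sweeps out the interval $[-\alpha-\sqrt{\alpha^2-1},\,-\alpha+\sqrt{\alpha^2-1}]$, while for $|\xi|^2>\alpha^2-1$ it sweeps out the full line $\Re z=-\alpha$. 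In all cases $\sigma_{\mathrm{ess}}(A_\alpha)\subset\{\Re z\le-\alpha\}\subset\{\Re z<-\delta(\alpha)\}$ with $\delta(\alpha)=\min(\alpha,\beta(\alpha))>0$, which is the asserted strict separation from the imaginary axis, and the discrete eigenvalues on (or near) the imaginary axis can only come from eigenvalues $\mu$ of $\calL$ in $(0,1]$ via $z=-\alpha\pm\sqrt{\alpha^2-\mu}$; in particular $0\in\sigma(A_\alpha)$ iff $\mu=0$ is an eigenvalue of $\calL$, in which case $\dim\ker\calL=1$ by Lemma~\ref{lem:Lspec} and one checks the geometric and algebraic multiplicities of the eigenvalue $0$ of $A_\alpha$ agree (the Jordan block is trivial because $\alpha\ne0$, so $z=-\alpha+\sqrt{\alpha^2}=0$ is a simple root of $z^2+2\alpha z$).

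The steps in order: (i) set up the eigenvalue correspondence $\binom{\psi}{z\psi}\leftrightarrow(\mu,\psi)$ with $\mu=-(z+\alpha)^2+\alpha^2$ and derive the formula $z=-\alpha\pm\sqrt{\alpha^2-\mu}$; (ii) read off the point spectrum and its simplicity from Lemma~\ref{lem:Lspec}, and check that $0$ is a simple eigenvalue of $A_\alpha$ when it occurs; (iii) prove the compactness of the difference of resolvents of $A_\alpha$ and its free counterpart (Birman--Schwinger / exponential decay of $f'(\fy_0)$), so that $\sigma_{\mathrm{ess}}(A_\alpha)=\sigma_{\mathrm{ess}}(A_\alpha^{\mathrm{free}})$; (iv) compute $\sigma_{\mathrm{ess}}(A_\alpha^{\mathrm{free}})$ by Fourier analysis, splitting into $0<\alpha\le1$ and $\alpha>1$; (v) assemble the strict bound $\Re z<-\delta(\alpha)$ on the essential spectrum and conclude the discreteness equivalence and the dichotomy for the spectrum on the imaginary axis. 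The main obstacle I expect is step (iii): showing that $A_\alpha$ is a relatively compact perturbation of the free operator in the non-self-adjoint first-order formulation requires a little care — one should write $(A_\alpha-z)^{-1}-(A_\alpha^{\mathrm{free}}-z)^{-1}$ via the second resolvent identity as $(A_\alpha-z)^{-1}\binom{0}{f'(\fy_0)v}$ acting on the first component, and then use that multiplication by the exponentially decaying $f'(\fy_0)$ composed with the smoothing of $(\calL_0-\cdot)^{-1}$ is compact on $H^1\times L^2$; equivalently, one can invoke the known self-adjoint result for $\calL$ and transfer it through the factorization. Everything else is routine quadratic-formula bookkeeping and Weyl-sequence arguments.
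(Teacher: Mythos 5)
Your proposal is correct in substance, and the point-spectrum part (the substitution $u_2=zu_1$ reducing the eigenvalue problem to $(\calL+2\al z+z^2)u_1=0$ and the quadratic formula $z=-\al\pm\sqrt{\al^2-\mu}$) is exactly the paper's argument. Where you diverge is the essential spectrum: the paper never introduces a ``free'' matrix operator. Instead it runs the same quadratic reduction at the level of the resolvent --- if $\tau\in\rho(A_\al)$ then $(A_\al-\tau)\binom{u_1}{u_2}=\binom{0}{v_2}$ is uniquely solvable, which forces $\tau^2+2\al\tau\notin\sigma(-\calL)$ --- so the \emph{entire} spectrum of $A_\al$ (point and essential) is read off from the already-established description of $\sigma(\calL)$ in Lemma~\ref{lem:Lspec}, whose essential part $[1,\infty)$ was obtained there by Weyl's criterion for the scalar self-adjoint operator. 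You instead re-prove Weyl stability directly for the non-self-adjoint matrix operator (compactness of the resolvent difference with the constant-coefficient operator) and then compute the free essential spectrum on the Fourier side. Both routes work and land on the same loci $-\al\pm\sqrt{\al^2-1-|\xi|^2}$; the paper's is more economical because all the perturbation theory is done once, for the self-adjoint scalar operator $\calL$, whereas yours is self-contained at the level of $A_\al$ but has to address the (definition-dependent) invariance of essential spectrum under relatively compact perturbations in the non-self-adjoint setting, which you correctly flag as the delicate step. Your verification that $0$ is a semisimple eigenvalue of $A_\al$ when $\ker\calL\neq\{0\}$ (no Jordan block since $\al>0$) is a detail the paper omits, and it is welcome; phrased precisely, a generalized eigenvector would require solving $\calL u_1=-2\al\psi$ with $\psi\in\ker\calL$, impossible by self-adjointness since $\langle 2\al\psi,\psi\rangle\neq0$. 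One small correction to your step (v): for $\al>1$ the inclusion $\sigma_{\mathrm{ess}}(A_\al)\subset\{\Re z\le-\al\}$ is false, since the real interval extends to the right up to $-\al+\sqrt{\al^2-1}=-\beta(\al)>-\al$; your final bound is rescued by your choice $\delta(\al)=\min(\al,\beta(\al))$, but the correct uniform statement is $\sigma_{\mathrm{ess}}(A_\al)\subset\{\Re z\le-\beta(\al)\}$, with any $0<\delta(\al)<\beta(\al)$ giving the strict inequality.
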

\begin{proof}
We need to address the solvability of the system 
\EQ{
\nn
A_\alpha \binom{u_1}{u_2} = z \binom{u_1}{u_2}
}
over the domain 
 $H^2_{rad}(\R^d) \times H^1_{rad}(\R^d)$  of $A_\alpha$.  This means that
\EQ{\nn
u_2 &= z u_1 \\
-\calL u_1 - 2\alpha u_2 &= z u_2
}
which is the same as 
\EQ{\nn
u_2 &= z u_1 \\
(\calL  + 2\alpha z +z^2)  u_1 &= 0
}
There exists a solution in the domain of $A_\alpha$ if and only if 
\[
2\alpha z + z^2  \in \sigma(-\calL)
\]
Taking $\lambda \in \sigma(\calL)$, this means that 
\EQ{\label{eq:z al}
z = -\alpha \pm \sqrt{\alpha^2 - \lambda},\quad \lambda \in \sigma(\calL)~.
}
% has a unique solution $(u_1,u_2) \in \mathcal{H}_{rad}$.
This relation establishes all the claims concerning the point spectrum of $A_{\alpha}$. 
 Let now $\tau$ belong to the resolvent set $\rho(A_\alpha)$ of $A_\alpha$.
 Then, for any $(0,v_2) \in \mathcal{H}_{rad}$, the system  
 \begin{equation}
\label{resoluble}
(A_\alpha - \tau Id) \binom{u_1}{u_2} =  \binom{0}{v_2}
\end{equation}
 has a unique solution $(u_1,u_2)$ in $\mathcal{H}_{rad}$, which implies that
 \begin{equation*}
-\mathcal{L} u_1 - (\tau^2 +2\alpha \tau) u_1 =  v_2
\end{equation*}
has a unique solution $u_1$ and thus $\tau^2 +2\alpha \tau \equiv -\lambda$ does not belong to the spectrum of $-\mathcal{L}$, that is, 
$$\tau \ne -\alpha \pm \sqrt{\alpha^2 - \lambda},\quad 
 \lambda \in  \sigma(\calL)$$ 
 and we are done. 
  \end{proof}
 
 %%%%%%%%%%%%%%%%%%%%%%%%%%%%%%%%%%%%%%%%%%%%%%%%%%%%%%%%%%%%%%%%%%%%%
%%%%%%%%%%%%%%%%%%%        FIGURE 1     %%%%%%%%%%%%%%%%%%%%%%%%%%%%
%%%%%%%%%%%%%%%%%%%%%%%%%%%%%%%%%%%%%%%%%%%%%%%%%%%%%%%%%%%%%%%%%%%%%
%\begin{figure}[ht]
%\begin{center}
%\includegraphics[width=0.55\textwidth]{FIG/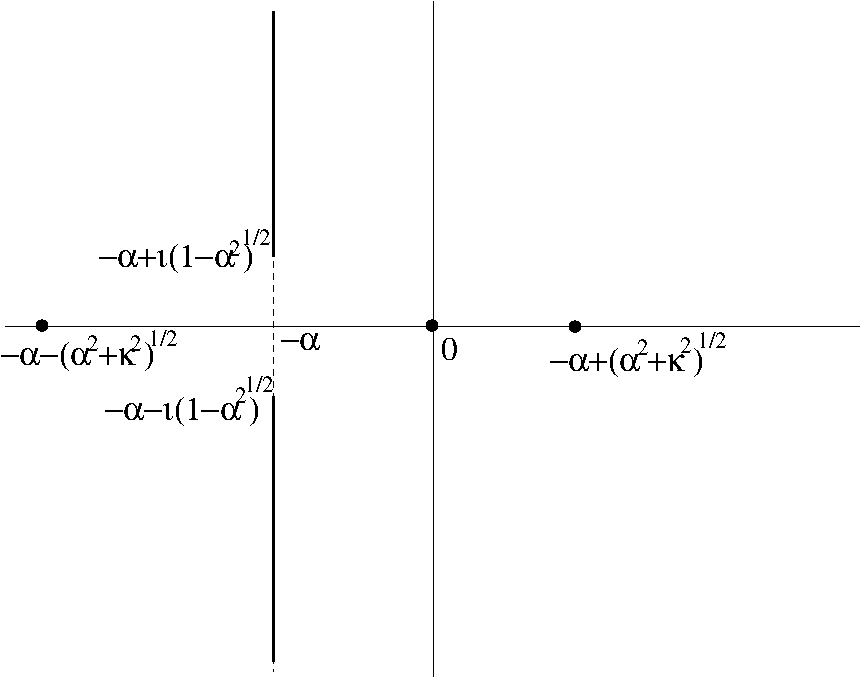}
%\end{center}
%\caption{\label{fig:1.1} The spectrum of $A_\al$ for $0<\al<1$}
%\end{figure}
%%%%%%%%%%%%%%%%%%%%%%%%%%%%%%%%%%%%%%%%%%%%%%%%%%%%
%%%%%%%%%%%%%%%%%%%%%%%%%%%%%%%%%%%%%%%%%%%%%%%%%%%%

The discrete spectrum of $A_{\al}$ (and therefore of $\calL$) is  important to our analysis.
In fact, the strongly unstable manifold  of the linear evolution $e^{tA_{\al}}$ 
 as $t\to\infty$  corresponds exactly
to spectrum of $A_{\al}$ in the right-half plane which occurs if and only  if $\calL$ exhibits
negative eigenvalues.  In the generality we assume here we cannot determine whether this is the case or not,
and so our arguments need to be flexible enough to account for both possibilities. 

However, consider the following additional condition,  where $\gamma$ is as in \ref{H1f}: 
for any $\phi\in H^1$, 
\EQ{\label{H5f}
 \int_{\R^d} \big[  \phi^2(x)f'(\phi(x))  - (1+2\gamma) \phi(x) f(\phi(x))  \big] \, dx \ge 0
}
Let  $\varphi_0\ne0$ be a stationary solution as before.  Then it follows from \eqref{H5f} that 
 \EQ{\label{eq:L+neg}
 \LR{\calL \varphi_0,\varphi_0} &= \int_{\R^d} ( |\nabla \varphi_0|^2 + \varphi_0^2 - f'(\varphi_0)\varphi_0^2)\,  dx  \\
 & = -2\gamma \int_{\R^d} f(\varphi_0)\varphi_0\,  dx  + \int_{\R^d} [(1+2\gamma)f(\varphi_0)\varphi_0 - f'(\varphi_0)\varphi_0^2]\,  dx  \\
 & \leq  -2\gamma \|\varphi_0\|_{H^1}^2 <0
 }
where we used that $K_0(\varphi_0)=0$. Therefore, $\calL$ has negative eigenvalues. 
We leave it to the reader to check that the class of nonlinearities $f$ given by a sum and difference of pure powers as in~\eqref{fExample2} satisfy~\eqref{H5f}.  Hence, for such nonlinearities all nonzero 
stationary solutions are linearly unstable. In other words, under the additional condition 
\eqref{H5f} all nonzero equilibria give rise to a strongly
unstable manifold of $e^{tA_{\al}}$. 

%%%%%%%%%%%%%%%%%%%%%%%%%%%%%%%%%%%%%%%%%%%%%%%%%%%%%%%%%%%%%%%%%%%%%
%%%%%%%%%%%%%%%%%%%        FIGURE 1     %%%%%%%%%%%%%%%%%%%%%%%%%%%%
%%%%%%%%%%%%%%%%%%%%%%%%%%%%%%%%%%%%%%%%%%%%%%%%%%%%%%%%%%%%%%%%%%%%%
\begin{figure}[ht]
\begin{center}
\includegraphics[width=0.55\textwidth]{}
\end{center}
\caption{The spectrum of $A_\al$ for $0<\al<1$}\label{fig:1.1} 
\end{figure}
%%%%%%%%%%%%%%%%%%%%%%%%%%%%%%%%%%%%%%%%%%%%%%%%%%%%
%%%%%%%%%%%%%%%%%%%%%%%%%%%%%%%%%%%%%%%%%%%%%%%%%%%%
\section{Proof of Theorem \ref{ThBRS1}}
\label{sec:core}

In this section, we are going to prove  Theorem \ref{ThBRS1}. To this end, given 
$(\varphi_0,\varphi_1) \in \mathcal{H}_{rad}$, we will first show that, if 
$S_{\alpha}(t)(\varphi_0,\varphi_1)$ does not blow up in finite time, then there exists a sequence of times $t_n$ going to $+\infty$ such that $S_{\alpha}(t_n)(\varphi_0,\varphi_1)$ converges to an equilibrium point $(Q^*,0)$. 

\subsection{Convergence to an equilibrium $(Q^*,0)$ along a subsequence} \label{sec:seq}
Denote the evolution operator of \ref{KGalpha} by $S_{\alpha}(t)$ and for $(\varphi_0,\varphi_1) \in \mathcal{H}_{rad}$, let $\vec u(t) := S_{\alpha}(t)(\varphi_0,\varphi_1)$.  
We have the following trichotomy for the forward evolution of \ref{KGalpha}:
\begin{enumerate}
\item[(FTB)]  $\vec u(t)$ blows up in finite positive time.  
\item[(GEB)]   $\vec u(t)$ exists globally and the trajectory 
$\{\vec u(t), t\geq 0\}$ is bounded in $\mathcal{H}_{rad}$,
\item[(GEU)]   $\vec u(t)$ exists globally and the trajectory 
$\{\vec u(t), t\geq 0\}$ is unbounded in $\mathcal{H}_{rad}$. 
 \end{enumerate}
 
 \begin{remark}\label{rem:3}
\upshape
Several remarks have to be made at this stage.
\begin{description}
\item[(i)] From Corollary~\ref{coro-blow}, we know that if $E(\varphi_0, \varphi_1) <0$, then $S_{\alpha}(t) (\varphi_0,\varphi_1)$  blows up in finite time. Thus, in the study of the cases (GEB) and (GEU), we only need to consider solutions $\vec u(t) \equiv S_{\alpha}(t) (\varphi_0,\varphi_1)$ such that, for any $t \geq 0$,
\begin{equation}
\label{Epositive}
E(u(t), \partial_t u (t)) \geq 0~.
\end{equation}

\item[(ii)] Assume now that a solution $\vec u(t) \equiv S_{\alpha}(t) (\varphi_0,\varphi_1)$ of
\ref{KGalpha} satisfies the properties \ref{H1f}, \ref{H2f} and  \eqref{Epositive}. Assume moreover, that the exponent $\theta$ in \ref{H2f} satisfies the bound
\begin{equation}
\label{bornetheta}
 \theta < 1 + \frac{4}{d}~.
\end{equation}
 Then, arguing exactly as in \cite[Lemma 4.2]{Feir98A}, one can prove that every global solution $S_{\alpha}(t)(\varphi_0,\varphi_1)$ is bounded in $\mathcal{H}$. In this proof, the upper bound \eqref{bornetheta} of $\theta$ plays a crucial role.

\item[(iii)] Now, let us turn to the case where $1 + \frac{4}{d} \leq \theta 
\leq \frac{d}{d-2}$.  We consider a global solution $(u(t), \partial_t u (t))= S_{\alpha}(t) (\varphi_0,\varphi_1)$. In this case, arguing as in \cite[Page 59]{Feir98A} by introducing the auxiliary equation satisfied by $\partial_{t} \vec u(t):=( \partial_t u (t),  \partial_t^2 u(t))$, one  shows that 
 $\partial_{t} \vec u(t)$ converges to $(0,0)$ in  $L^{2}(\mathbb{R}^d) \times 
 H^{-1}(\mathbb{R}^d)$. From this convergence property, we   deduce that 
$K_0(u(t))$ converges to $0$ as $t$ goes to infinity.
% (for more details, see the proof of Lemma~\ref{lem:GEB seq} under condition \ref{eq:ast} below).
\end{description}
\end{remark} 
 We first make a simple observation concerning the case (GEU).  Later in Section~\ref{sec:conv}, we shall show that (GEU) cannot occur. 
 \begin{lemma}\label{lem:GEU seq} 
 Assume  that the hypothesis $(H.1)_f$ and $(H.2)_f$ hold.
 In the case (GEU), we may assume that there exist a sequence of times $t_n$ and a sequence of numbers $\delta_n$, $\delta_n \leq 0$, such that $t_n \rightarrow+\infty$  
as $n \rightarrow +\infty$ and that
\begin{equation}
\label{K0n0}
K_0(u(t_n))  =\delta_n~, \hbox{ with } \lim_{n\rightarrow +\infty}\delta_n = 0~.
\end{equation}
\end{lemma}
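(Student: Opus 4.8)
The plan is to track the scalar function $g(t):=K_0(u(t))$ along the trajectory, which is globally defined since we are in case (GEU), and to produce the times $t_n$ via the intermediate value theorem. First I would note that $g$ is continuous on $[0,\infty)$: we have $u\in C([0,\infty),H^1_{rad}(\R^d))$ by global existence, and $K_0$ is continuous on $H^1(\R^d)$, the only nonlinear contribution $\varphi\mapsto\int_{\R^d}\varphi f(\varphi)\,dx$ being locally Lipschitz on bounded subsets of $H^1$ --- for $d\ge3$ because \ref{H2f} bounds $|\varphi f(\varphi)|$ by $C(|\varphi|^{2+\beta}+|\varphi|^{1+\theta})$ with $2+\beta<2^*$ and $1+\theta<2^*$, and for $d=1,2$ directly from the Sobolev embeddings together with $f'(0)=0$.

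Next I would extract the two asymptotic properties of $g$ that drive the argument. On the upper side, $\limsup_{t\to+\infty}g(t)\ge0$: were this false, there would be $\delta>0$ and $t_0\ge0$ with $K_0(u(t))\le-\delta$ for all $t\ge t_0$, and Lemma~\ref{eqBlowup} would then force blow-up in finite time, contradicting (GEU). On the lower side, $\liminf_{t\to+\infty}g(t)=-\infty$: since $u$ is continuous and the trajectory is unbounded in $\mathcal{H}_{rad}$, there is a sequence $\tau_k\to+\infty$ with $\|\vec u(\tau_k)\|_{\mathcal{H}}\to+\infty$, and combining Lemma~\ref{SolBorne} with the energy decrease \eqref{Et2} gives
\[
\gamma\|\vec u(\tau_k)\|_{\mathcal{H}}^2\le 2(1+\gamma)E(\vec u(\tau_k))-K_0(u(\tau_k))\le 2(1+\gamma)E(\vec u(0))-g(\tau_k),
\]
so that $g(\tau_k)\le 2(1+\gamma)E(\vec u(0))-\gamma\|\vec u(\tau_k)\|_{\mathcal{H}}^2\to-\infty$.

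Finally I would conclude. Fix $n\ge1$. Since $\limsup_{t\to+\infty}g(t)\ge0>-\tfrac1n$, there is $s_n>n$ with $g(s_n)>-\tfrac1n$; since $\liminf_{t\to+\infty}g(t)=-\infty$, there is $s_n'>s_n$ with $g(s_n')<-\tfrac1n$. As $g$ is continuous on $[s_n,s_n']$, the intermediate value theorem yields $t_n\in(s_n,s_n')$ with $g(t_n)=-\tfrac1n$. Setting $\delta_n:=-\tfrac1n\le0$ we get $t_n>n\to+\infty$, $K_0(u(t_n))=\delta_n$ and $\delta_n\to0$, which is precisely \eqref{K0n0}.

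I do not foresee a real obstacle; the only point requiring some care is the continuity of $K_0$ on $H^1$ (hence of $g$), which is exactly what legitimizes the intermediate value step. Without it one could only produce a sequence along which $g$ converges to some nonnegative limit, not necessarily to $0$ from below as required.
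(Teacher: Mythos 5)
Your proposal is correct and follows essentially the same route as the paper: Lemma~\ref{SolBorne} (plus energy decrease) forces $K_0(u(\cdot))$ to become negative along a subsequence because the trajectory is unbounded, while Lemma~\ref{eqBlowup} rules out $K_0(u(t))$ staying below any fixed $-\delta$, and the sequence $t_n$ is then extracted. You merely make explicit the continuity of $t\mapsto K_0(u(t))$ and the intermediate value step that the paper leaves implicit, which is a welcome clarification rather than a different argument.
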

\begin{proof}
If $K_0(u(t))\ge0$ for all times $T_0<t$, then the trajectory is bounded by Lemma~\ref{SolBorne}. 
So there exists a sequence $\tau_n\to\infty$ with $K_0(u(\tau_n))<0$.  If $K_0(u(t))<-\delta$ for all times $T_0<t<\infty$, where $\delta>0$ is fixed,  then
by Lemma~\ref{eqBlowup} finite time blowup occurs. This contradicts (GEU). 
\end{proof}
For the case (GEB) we shall now also construct such a sequence, albeit with $\delta_n= K_0 (u) (t_n)$ possibly being positive. 
\begin{proposition}\label{lem:GEB seq}
In the case (GEB) there exists a sequence $t_n\to\I$ with $K_0(u(t_n))\to0$ and $\|  \partial_t u (t_{n})\|_{L^2}\to 0$ as $n\to\infty$. 
\end{proposition}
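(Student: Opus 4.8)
The plan is to promote the boundedness of the orbit in case (GEB) to two quantitative facts and then read off $K_0$. First, since $\{\vec u(t):t\ge0\}$ is bounded in $\calH_{rad}$, the Sobolev embedding bounds $\int_{\R^d}F(u(t))$ in terms of $\|u(t)\|_{H^1}$, so $E(\vec u(t))$ is bounded below; being non-increasing by~\eqref{Et1t2} it converges to some $E_\infty$, and~\eqref{Et1t2} then gives
\[
2\alpha\int_0^\infty\|\partial_t u(s)\|_{L^2}^2\,ds=E(\vec u(0))-E_\infty<\infty .
\]
In particular $\liminf_{t\to\infty}\|\partial_t u(t)\|_{L^2}=0$; more precisely $\int_n^{n+1}\|\partial_t u(s)\|_{L^2}^2\,ds\to0$ as $n\to\infty$.

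Next I would bring in the virial-type functional $q(t):=(u(t),\partial_t u(t))_{L^2}$. Since $|q(t)|\le\|u(t)\|_{L^2}\|\partial_t u(t)\|_{L^2}\le C\|\partial_t u(t)\|_{L^2}$ on the (bounded) orbit, the first step gives $q\in L^2(0,\infty)$; and using \ref{KGalpha} one computes $\dot q(t)=\|\partial_t u(t)\|_{L^2}^2-K_0(u(t))-2\alpha q(t)$, which is \emph{bounded} along the orbit (because $\|\partial_t u\|_{L^2}$, $K_0(u)$ and $q$ all are). An $L^2$ function with bounded derivative tends to $0$, so $q(t)\to0$. Averaging the identity for $\dot q$ over $[n,n+1]$ then yields
\[
\int_n^{n+1}K_0(u(s))\,ds=\int_n^{n+1}\|\partial_t u(s)\|_{L^2}^2\,ds-\big(q(n+1)-q(n)\big)-2\alpha\int_n^{n+1}q(s)\,ds\;\longrightarrow\;0 .
\]

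To finish I see two routes. The robust one follows Feireisl~\cite{Feir98A}: differentiating \ref{KGalpha}, $v:=\partial_t u$ solves $v_{tt}+2\alpha v_t-\Delta v+v=f'(u)v$, and the boundedness of the orbit provides exactly the control $u\in L^\infty_t H^1_x$ needed to run the energy (and Strichartz, for larger $\theta$) estimate for $\vec v$ in $L^2\times H^{-1}$ for all $1<\theta<\theta^*$; combining that estimate with $\int_0^\infty\|v(s)\|_{L^2}^2\,ds<\infty$ and a Gronwall argument gives $\vec v(t)\to(0,0)$ in $L^2(\R^d)\times H^{-1}(\R^d)$, i.e. $\|\partial_t u(t)\|_{L^2}\to0$ and $\|\partial_t^2u(t)\|_{H^{-1}}\to0$. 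Pairing \ref{KGalpha} with $u(t)$ then gives $K_0(u(t))=-\la\partial_t^2u(t),u(t)\ra-2\alpha(\partial_t u(t),u(t))\to0$ since $\|u(t)\|_{H^1}$ is bounded, and \emph{any} sequence $t_n\to\infty$ works. A more elementary alternative is to stay with the two observations above: on $[n,n+1]$ let $G_n=\{t:\|\partial_t u(t)\|_{L^2}^2\le\rho_n^{1/2}\}$ with $\rho_n=\int_n^{n+1}\|\partial_t u\|_{L^2}^2$, so $|G_n|\to1$ by Chebyshev and on $G_n$ one has $K_0(u(t))=-\dot q(t)+o(1)$; since $\int_{G_n}\dot q=(q(n+1)-q(n))-\int_{[n,n+1]\setminus G_n}\dot q\to0$ (bounded integrand on a set of vanishing measure), the average of $K_0\circ u$ over $G_n$ tends to $0$ while every point of $G_n$ already has small velocity, and one then selects $t_n\in G_n$ where $K_0(u(t_n))$ is near that average (or uses an intermediate-value crossing close to $G_n$).

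The main obstacle is precisely this last step: getting $K_0(u(t))\to0$ \emph{at the same times} where $\|\partial_t u(t)\|_{L^2}\to0$. The signed averaged estimate $\int_n^{n+1}K_0(u)\to0$ does not localise $K_0(u)$ pointwise, and $t\mapsto K_0(u(t))$ is not obviously uniformly continuous along the flow — its time derivative involves $(\nabla u,\nabla\partial_t u)$, which is not controlled because $\partial_t u$ only lies in $L^2$. This is why I would favour the Feireisl route, which bypasses the difficulty by upgrading to genuine decay of $\partial_t\vec u$ in $L^2\times H^{-1}$; there the point where real work is needed is re-deriving Feireisl's a priori estimate in the Strichartz framework, and it is exactly there that the boundedness hypothesis defining case (GEB) is used.
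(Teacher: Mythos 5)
Your setup (energy convergence, $\int_0^\infty\|\partial_t u(s)\|_{L^2}^2\,ds<\infty$, the identity $\dot q=\|\partial_t u\|_{L^2}^2-K_0(u)-2\alpha q$ for $q=\langle u,\partial_t u\rangle$, and $q(t)\to0$ by the Barb\u{a}lat-type argument) matches the paper's, and you have correctly located the crux: producing one sequence of times along which $K_0(u)$ and $\|\partial_t u\|_{L^2}$ vanish \emph{simultaneously}. But neither of your routes closes it. Route A asserts that Feireisl's auxiliary estimate for $\partial_t\vec u$ in $L^2\times H^{-1}$ can be run ``for all $1<\theta<\theta^*$'' with Strichartz; this is precisely the extension the authors state does not seem to go through (Remark~\ref{rem:3}(iii) restricts it to $\theta\le d/(d-2)$, the range where $\|f'(u)v\|_{H^{-1}}$ is controlled by H\"older--Sobolev with $u\in H^1$, $v\in L^2$; for larger $\theta$ the dual estimate does not close from boundedness of the orbit alone), so invoking it without supplying the estimate is a gap, not a proof. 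Route B fails at the selection step: the average of the continuous function $K_0\circ u$ over the \emph{disconnected} set $G_n$ tending to $0$ does not yield a point of $G_n$ where $K_0\circ u$ is small — it could equal $+1$ on half of $G_n$ and $-1$ on the other half, with all zero crossings confined to the exceptional set $[n,n+1]\setminus G_n$ where $\|\partial_t u\|_{L^2}$ is not small — and, as you note, you cannot exclude this via uniform continuity of $K_0\circ u$, whose time derivative contains the uncontrolled term $(\nabla u,\nabla\partial_t u)$.

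The missing idea is the paper's Lemma~\ref{lem.unif}: do not work with $K_0\circ u$ directly but with $A(t):=2\|\partial_t u(t)\|_{L^2}^2-\dot q(t)=K_0(u(t))+\|\partial_t u(t)\|_{L^2}^2+2\alpha q(t)$, rewritten as $A(t)=2E(\vec u(t))+\int_{\R^d}\big[2F(u)-f(u)u\big]\,dx+2\alpha q(t)$. In this form the dangerous $\|\nabla u\|_{L^2}^2$ contribution is absorbed into $E(\vec u(t))$, which is continuous, non-increasing and convergent, hence uniformly continuous on $[0,\infty)$; the nonlinear integral is handled through the uniform bound $\|u(t)-u(s)\|_{L^2}\lesssim|t-s|$ (from $\partial_t u\in L^\infty_tL^2_x$) interpolated against the $H^1$ bound, together with the compact radial embedding; and $q$ is uniformly continuous for the same reason. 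With this uniform continuity in hand, the paper's sign dichotomy finishes the proof: either $A$ eventually keeps a fixed sign, in which case $\int(\|\partial_t u\|_{L^2}^2+|A|)\,dt<\infty$ and one extracts $t_n$ directly, or $A$ has zeros $\tau_m\to\infty$, near which $A$ stays small on a window of fixed length while $\int\|\partial_t u\|_{L^2}^2$ over that window tends to $0$, so one can pick $t_n$ in the window with both quantities small. That is exactly the step your proposal lacks.
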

\begin{proof}
Taking the inner product in $L^2$ of the equation \ref{KGalpha} with $u$ and integrating by parts yields
\begin{align}
\label{eq:6}
\int\left(|\nabla u|^2+u^2-f(u)u\right)\,dx+2\alpha\int u \partial_t u \,dx=\int  (\partial_t u) ^2\,dx-\frac{d}{dt}\int  \partial_t u u\,dx.
\end{align}
Notice that for smooth (or, more precisely, $H^2\times H^1$) initial data, this integration by parts is justified. Moreover,  for $H^1\times L^2$ initial data, we conclude that the map 
$$ (\varphi_0, \varphi_1 ) \in H^1 \times L^2  \mapsto  \int  \partial_t u u\,  dx \in \mathbb{R}
$$ is $C^1$ with derivative given by~\eqref{eq:6}. In the sequel, we shall take~\eqref{eq:6} as a definition for 
$$ \frac{d}{dt}\int  \partial_t u u\,dx.$$
We wish to choose a sequence $t_n\to+\infty$ so that each term on the right-hand side of \eqref{eq:6}, when evaluated at $t_n$, tends to $0$ as $n\to+\infty$. First, we rewrite \eqref{eq:6} as
\begin{align}
\label{eq:7}
\int_{\mathbb{R}^d}\left(|\nabla u|^2+u^2+ (\partial_t u) ^2-f(u)u\right)\,dx+2\alpha\int_{\mathbb{R}^d}u \partial_t u \,dx=2\int_{\mathbb{R}^d} (\partial_t u) ^2\,dx-\frac{d}{dt}\int  \partial_t u u\,dx.
\end{align}
We now make the following {\bf Claim:} 

{\em 
There exists a sequence $t_n\to+\infty$ such that
\begin{align*}
\begin{cases}
\lim_{n\rightarrow +\infty} \| \partial_t u (t_n)\|_{L^2}=0 \\
\lim_{n\rightarrow + \infty} 2\| \partial_t u (t_n)\|_{L^2}^2-\frac{d}{dt}\int( \partial_t u u)(t_n,x)\,dx =0.
\end{cases}
\end{align*}
}

Since the energy is nonnegative, we have that for any $0<T<\tilde{T}$,
\begin{align}
\nn
2\alpha\int_T^{\tilde{T}}\| \partial_t u (s)\|_{L^2}^2\,ds= E(\vec{u}(T))- E(\vec{u}(\tilde{T}))\leq K= E(\vec {u} (0))~,
\end{align}
and
\begin{align}
\nn
\left|\int_T^{\tilde{T}}\frac{d}{dt}\int( \partial_t u u)(s,x)\,dxds\right|=\left| \LR{ \partial_t u (\tilde{T}),u(\tilde{T})} - \LR{ \partial_t u (T),u(T)}\right|\leq\tilde{K}~,
\end{align}
where $\tilde{K}>0$ is independent of $T$, and $\tilde{T}$.
We now distinguish  two cases:

{\bf Case 1}: If there exists $T_0>0$ such that, for $t\geq T_0$, $$2\| \partial_t u (t)\|_{L^2}^2-\frac{d}{dt} \LR{ \partial_t u (t),u(t)}$$ does not change sign (and, for example, is nonnegative), then for any $T_0\leq T<\tilde{T}$,
\begin{align*}
\int_T^{\tilde{T}}&\left(\| \partial_t u (t)\|_{L^2}^2+\left|2\| \partial_t u (t)\|_{L^2}^2-\frac{d}{dt} \LR{ \partial_t u (t),u(t)} \right|\right)dt\\
&=\int_T^{\tilde{T}}\left(\| \partial_t u (t)\|_{L^2}^2+\left(2\| \partial_t u (t)\|_{L^2}^2-\frac{d}{dt} \LR{ \partial_t u (t),u(t)} \right)\right)dt\leq2K+\tilde{K}~.
\end{align*}
This allows us to show that there exists a sequence $t_n\to+\infty$, such that
\begin{align}
\label{eq:10}
\| \partial_t u (t_n)\|_{L^2}^2+\left| 2\| \partial_t u (t_n)\|_{L^2}^2-\frac{d}{dt}|\LR{ \partial_t u (t_n),u(t_n)} \right|\to0\quad\text{ as }t_n\to+\infty.
\end{align}

{\bf Case 2}: There exists a sequence of times $\tau_m\to+\infty$ such that
\begin{align}
\nn
A(\tau_m):=2\| \partial_t u (\tau_m)\|_{L^2}^2-\frac{d}{dt} \LR{ \partial_t u (\tau_m),u(\tau_m)}=0.
\end{align}
To conclude, we need 
\begin{lemma}\label{lem.unif}
There exists a subsequence $\tau_{m_j}$ and $\eta_0 >0$ such that the function $A(t)$ is uniformly continuous on 
$$\mathcal{I} = \bigcup_{\tau_{m_j}}[\tau_{m_j} -\eta_0,\tau_{m_j} +\eta_0].$$
\end{lemma}
\begin{proof}
We write 
\begin{align}
\label{eq:12}
A(t)=2 E(\vec{u}(t)) + \int_{\mathbb{R}^d} \big[2F(u(t,x)) - f(u)u(t,x)\big]\,dx+2\alpha \mu(t)
\end{align}
where $\mu(t)=\LR{u(t), \partial_t u (t)}$.
Since $E(\vec{u}(t))$ is continuous and has a limit as $t\to+\infty$, $E(\vec{u}(t))$ is uniformly continuous on $[0,+\infty)$. 
Since  $\tau_m \to +\infty$, there exist a subsequence (that we still denote $\tau_m$ for ease of notation)  and  $e \in H^1_{rad}(\R^d)$ such that $u(\tau_m)$ converges weakly to $e$ in $H^1(\R^d)$ as 
$m$ goes to infinity. Thus, using the fact that the injection $H^1_{rad}(\R^d)
 \to L^p(\R^d)$, 
$2 < p< 2^*$, is compact, we deduce that   $u(\tau_m)$ converges strongly  to $e(x)$ in $L^p(\R^d)$ as $\tau_m$ goes to infinity. Furthermore, since $t\mapsto\int f(u)u(t,x)\,dx$ is continuous 
and \[ \int \big[2F(u(\tau_m,x)) -  f(u)u(\tau_m,x)\big]\,dx \to  
\int \big[2F(e(x)) -  f(e)e(x)\big]\,dx  
\]
 as $\tau_m\to+\infty$, we obtain the uniform continuity on $\mathcal{I}$ of the middle term in~\eqref{eq:12}. 

Integration by parts shows that 
\begin{equation*}
\label{eq:13}
\mu(t+\delta)=\mu(t)+\int_{t}^ {t+ \delta} \int _{\mathbb{R}^d}\left( (\partial_t u) ^2-2\alpha(u \partial_t u )-|\nabla u|^2-u^2+f(u)u\right)\,dxds 
\end{equation*}
Since $\vec u(t) = S_{\alpha}(t) (\varphi_0,\varphi_1)$, $t \geq 0$,  is bounded in $\mathcal{H}$,  we deduce that $\mu(t)$ is uniformly continuous on $[0,+\infty)$.
\end{proof}
Now, the construction of a sequence $t_n\to+\infty$ such that \eqref{eq:10} holds
 follows by a standard inductive procedure. Indeed, assume that we have constructed a sequence $\{ t_1< \dots< t_N \} $ such that 
 $$ \forall 1\leq n \leq N, \quad \| \partial_t u (t_n) \|_{L^2} \leq 2^{-n},\quad |A(t_n )| \leq 2^{-n}.$$
 Let $\epsilon = 2^{-(N+1)}$.  Since $A(\tau_{m_j})=0$, according to Lemma~\ref{lem.unif} there exists $\eta>0$ such that for  
 any $t\in [\tau_{m_j}- \eta, \tau_{m_j}+ \eta]$ one has  $$|A(t) |\le 2^{-(N+1)}.$$  Then, since 
 $$ \lim_{j\rightarrow + \infty} \int_{\tau_{m_j}- \eta}^{ \tau_{m_j}+ \eta} \|\partial_t u\|_{L^2}^2 (s) \, d s =0,$$
 we obtain that for $j$ large enough, there exists $s_j\in  [\tau_{m_j}- \eta, \tau_{m_j}+ \eta]$ such that 
 $$ \|\partial_t u (s_j) \|_{L^2}\leq 2^{-(N+1)}.$$
 Choosing  $t_{N+1}=s_j$ for $j$ large enough ensures that $t_N +1  < t_{N+1}$ and  
 $$ \| \partial_t u (t_{N+1}) \|_{L^2} \leq 2^{-(N+1)}, \quad |A(t_{N+1} )| \leq 2^{-(N+1)}.$$
 From the {\em Claim} above,  and \eqref{eq:7}, we deduce that 
\begin{align}
\nn
\lim_{n\to+\infty}\int_{\mathbb{R}^d}\left(|\nabla u|^2+u^2-f(u)u\right)(t_n,x)\,dx=0
\end{align}
i.e., $\lim_{n\to+\infty}K_0(u(t_n))=0$
as desired. 
\end{proof}

Next, by means of these vanishing results for $K_{0}$, we deduce the convergence to an equilibrium
along a subsequence. 

\begin{theorem} 
\label{K0(tn)stern} 
Let $\alpha >0$ and $\vec u_0:=(\varphi_0,\varphi_1)\in \mathcal{H}_{rad}$ 
so that the solution $\vec u(t)$ exists for all times $t >0$. Let   $t_n$ be  a sequence of times such that 
$K_0(u(t_n))= \delta_n$ converges to $0$, then  there exists an equilibrium point 
$\vec u^* = (Q^*,0) \in \mathcal{H}_{rad}$ such that (after possibly extracting a subsequence), $\vec u(t_n)$ converges to $(Q^*,0)$ in 
$\mathcal{H}$.
\end{theorem}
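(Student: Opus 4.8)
The plan is to combine the decay of $K_0$ along $t_n$ with the monotonicity of the Lyapunov functional $E$, then extract a limit using the compactness of the radial Sobolev embedding; the delicate point will be the strong $L^2$-convergence of the velocities. As a first step I would record the global structure. Since the solution is global, Corollary~\ref{coro-blow} and the energy identity \eqref{Et1t2} force $E(\vec u(t))\ge 0$ for all $t\ge 0$ (otherwise restarting the flow at a time with negative energy produces finite-time blow-up), so $E(\vec u(t))$ decreases to a limit $E_\infty\ge 0$ and, by \eqref{Et2}, $\int_0^\infty\|\partial_t u(s)\|_{L^2}^2\,ds<\infty$; in particular $\int_{t_n}^{t_n+\tau}\|\partial_t u(s)\|_{L^2}^2\,ds\to 0$ for every fixed $\tau>0$. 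Moreover Lemma~\ref{SolBorne} gives $\gamma\|\vec u(t_n)\|_{\mathcal{H}}^2\le 2(1+\gamma)E(\vec u(0))-\delta_n$, so $\{\vec u(t_n)\}$ is bounded in $\mathcal{H}_{rad}$; after passing to a subsequence, $u(t_n)\rightharpoonup Q^*$ in $H^1_{rad}(\R^d)$, $\partial_t u(t_n)\rightharpoonup\psi$ in $L^2$, and by the compactness of $H^1_{rad}(\R^d)\hookrightarrow L^p(\R^d)$ for $2<p<2^*$ (in particular $p=\theta+1$ and $p=2+\beta$) one has $u(t_n)\to Q^*$ strongly in those $L^p$.

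Next I would identify $Q^*$ as an equilibrium and upgrade to strong $H^1$-convergence. Since $\|\vec u(t_n)\|_{\mathcal{H}}$ is uniformly bounded, Theorem~\ref{thm:WP} furnishes a fixed $\tau_0>0$ on which $\vec v_n(s):=S_\alpha(s)\vec u(t_n)=\vec u(t_n+s)$ is uniformly bounded in $\mathcal{H}$ (and in $L^{\theta^*}((0,\tau_0),L^{2\theta^*})$ when $d\ge3$), while $\partial_s v_n\to 0$ in $L^2((0,\tau_0)\times\R^d)$ by the previous paragraph and $\partial_s^2 v_n$ stays bounded in $L^\infty((0,\tau_0),H^{-1})$ by the equation. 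A compactness argument (Arzel\`a--Ascoli in time together with the radial compact Sobolev embedding in space) gives, along a further subsequence, $v_n\to v_\infty$ in $C([0,\tau_0],L^p)$ for $2\le p<2^*$; since $\partial_s v_\infty=0$ one has $v_\infty\equiv Q^*$, and passing to the limit in \ref{KGalpha} (the nonlinearity converges by \ref{H2f} and the $L^p$-convergence) yields $-\Delta Q^*+Q^*-f(Q^*)=0$. Hence $(Q^*,0)\in\mathcal{H}_{rad}$ is an equilibrium and $K_0(Q^*)=0$, so $\int f(u(t_n))u(t_n)\,dx\to\int f(Q^*)Q^*\,dx=\|Q^*\|_{H^1}^2$, whence $\|u(t_n)\|_{H^1}^2=K_0(u(t_n))+\int f(u(t_n))u(t_n)\,dx\to\|Q^*\|_{H^1}^2$; together with the weak convergence this gives $u(t_n)\to Q^*$ strongly in $H^1$.

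It then remains to show $\partial_t u(t_n)\to 0$ in $L^2$, which I expect to be the main obstacle. The uniform bound on $\partial_s^2 v_n$ in $L^\infty_s H^{-1}$ makes $s\mapsto\partial_s v_n(s)$ uniformly equicontinuous into $H^{-1}$; combined with $\partial_s v_n\to 0$ in $L^2((0,\tau_0),H^{-1})$ this forces $\partial_s v_n\to 0$ uniformly on $[0,\tau_0]$ in $H^{-1}$, so $\partial_t u(t_n)\to 0$ in $H^{-1}$ and $\psi=0$, i.e. $\partial_t u(t_n)\rightharpoonup 0$ in $L^2$. Upgrading this to strong convergence is the delicate step: by the energy identity and the strong $H^1$-convergence just obtained, $\|\partial_t u(t_n)\|_{L^2}^2\to 2\bigl(E_\infty-E(Q^*,0)\bigr)$, so the claim amounts to $E_\infty=E(Q^*,0)$. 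I would prove this by exploiting the smallness of the dissipation over $[t_n,t_n+\tau_0]$ and comparing $\vec u(t_n+\cdot)$ with the solution $S_\alpha(\cdot)(u(t_n),0)$ issued from zero initial velocity: the difference solves a perturbed linear equation with data $(0,\partial_t u(t_n))$ and, by the exponentially weighted energy and Strichartz estimates of Lemma~\ref{lem:free2}, remains controlled by $\|\partial_t u(t_n)\|_{L^2}$ with a genuine exponential gain in time, while $S_\alpha(\tau_0)(u(t_n),0)\to(Q^*,0)$ in $\mathcal{H}$ by continuity of the flow and the strong $H^1$-convergence of $u(t_n)$. Once $\partial_t u(t_n)\to 0$ in $L^2$ is established, combining it with $u(t_n)\to Q^*$ in $H^1$ gives $\vec u(t_n)\to(Q^*,0)$ in $\mathcal{H}$, as asserted.
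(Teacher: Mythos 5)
Your proposal coincides with the paper's proof up to and including the strong $H^1$-convergence of $u(t_n)$ to $Q^*$: boundedness of $\vec u(t_n)$ via Lemma~\ref{SolBorne}, vanishing of $\int_{t_n}^{t_n+T}\|\partial_t u\|_{L^2}^2\,ds$, the Arzel\`a--Ascoli argument with the compact radial embedding applied to the shifted solutions, identification of the limit as an equilibrium, and the upgrade from weak to strong $H^1$-convergence using $K_0(u(t_n))\to 0$ together with $K_0(Q^*)=0$. All of that is correct and is essentially the paper's argument.

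The gap is in the last step, which you yourself flag as the main obstacle. Reducing the claim to $E_\infty=E(Q^*,0)$ is fine, but the proposed comparison of $\vec u(t_n+\cdot)$ with $S_\alpha(\cdot)(u(t_n),0)$ does not prove it. Over a fixed interval $[0,\tau_0]$ the ``exponential gain'' is only the multiplicative constant $e^{-\beta(\alpha)\tau_0}<1$, while the Duhamel/Strichartz constants in the perturbative comparison are $\ge 1$; writing $a=\lim_n\|\partial_t u(t_n)\|_{L^2}$, the best your scheme yields is an inequality of the form $\tfrac12 a^2=E_\infty-E(Q^*,0)\le C(\tau_0)\,a$, i.e.\ $a\le 2C(\tau_0)$, which is vacuous (and one cannot let $\tau_0\to\infty$, since the comparison is only valid on a short, data-dependent interval and $C(\tau_0)$ grows). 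The genuine difficulty is converting the time-averaged smallness of $\|\partial_t u\|_{L^2}$ near $t_n$ into pointwise smallness at the exact time $t_n$; your $H^{-1}$-equicontinuity argument only gives weak $L^2$-convergence of the velocities, which is not enough. The paper resolves this by splitting $u_n-u^*=w_n+v_n$ as in \eqref{eq:19}--\eqref{eq:20}, proving $\|(w_n,\partial_t w_n)\|_{\mathcal H}\to0$ uniformly on $[-T,T]$, so that $\partial_t v_n\to0$ in $L^2_tL^2_x$, and then invoking the observability inequality \eqref{eq:25} for the free Klein--Gordon flow, $\|\partial_t v_n(0)\|_{L^2}^2\le c(T)\int_{-T}^{T}\|\partial_t v_n\|_{L^2}^2\,ds$, which follows from the uniform-in-$\xi$ lower bound on $\int_{-T}^{T}\sin^2\bigl(t\sqrt{|\xi|^2+1}\bigr)\,dt$. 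Some such quantitative average-to-pointwise mechanism is indispensable here and is missing from your proposal.
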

\begin{proof}
{}From Lemma~\ref{SolBorne} we conclude that
\[
\sup_{n\ge 0} \| (u(t_n),  \partial_t u (t_n)) \|_{\calH} <\I
\]
 We recall that without loss of generality, we may assume that
\begin{align}
\nn
E(u(t), \partial_t u (t))\geq 0,\quad\forall t\geq0.
\end{align}
Since the left-hand side is non-increasing,  there exists $\ell \geq 0$ such that
\begin{equation}
\label{eq:ell}
\lim_{t\to+\infty}E(u(t), \partial_t u (t))=\ell \geq 0.
\end{equation}
In fact, from the equality valid for any $t_1\leq t_2$,
$$
E(u(t_1), \partial_t u (t_1))-E(u(t_2), \partial_t u (t_2))=2\alpha\int_{t_1}^{t_2}\| \partial_t u (s)\|_{L^2}^2\,ds~,
$$
we deduce that $\int_{t_1}^{t_2}\| \partial_t u (s)\|_{L^2}^2\,ds$ tends to 0, as $t_1, t_{2}\to\infty$.

 We consider the equations
\begin{align}
\tag*{$(KG)_\alpha^n$}
\label{eq:kgn}
\begin{cases}
\partial_{tt} u_{n}+2\alpha \partial_t u_{n}-\Delta u_n+u_n-f(u_n)=0\\
(u_n(0), \partial_t u_{n}(0))=(u(t_n), \partial_t u (t_n))
\end{cases}
\end{align}
By Theorem~\ref{thm:WP},  there exists $T>0$ and $C>0$  such that, for any $n$,  the solution $(u_n(t), \partial_t u_{n}(t))$ is in $ C^0([-T,T],\mathcal{H})$ and, for $-T\leq t\leq T$,
\begin{align}
\label{eq:10'}
\|(u_n(t), \partial_t u_{n}(t))\|_{\mathcal{H}}\leq C~.
\end{align}
  In the case $d=1$ or $d=2$, the inequality \eqref{eq:10'} implies that 
$\|u_n\|_{L^{\infty}((-T,T), L^p)}\leq C$, for any $2 \leq p <+\infty$. In the case 
$3 \leq d \leq 6$, the estimate \eqref{eq:fixed} in Theorem~\ref{thm:WP} implies that
\begin{equation}
\label{eq:Strichartz3.6}
 \| u_n\|_{L^{\theta^*}((0,T), L^{2\theta^*})}\leq C~.
\end{equation}
where $\theta^* = \frac{d+2}{d-2}$. 
By uniqueness,  $u_n(t)= u(t_n+t)$.
For any $s,t\in[-T,T]$,
\begin{align*}
\int_{\mathbb{R}^d}|u_n(t)-u_n(s)|^2\,dx=&\int_{\mathbb{R}^d}\left|\int_s^t \p_{t}u_{n}(\sigma)\,d\sigma\right|^2\,dx\\
&\leq|t-s|\int_{\mathbb{R}^d}\int_s^t|\p_{t}u_{n}(\sigma)|^2\,d\sigma dx\\
&\leq|t-s|\int_{s+t_n}^{t+t_n}\| \partial_t u (\sigma)\|_{L^2}^2\,d\sigma
\end{align*}
whence 
\begin{align}
\label{eq:11}
\|u_n(t)-u_n(s)\|_{L^2}^2&\leq|t-s|\int_{s+t_n}^{t+t_n}\| \partial_t u (\sigma)\|_{L^2}^2\,d\sigma\\
&\leq2T\int_{t_n-T}^{t_n+T}\| \partial_t u (\sigma)\|_{L^2}^2\,d\sigma\longrightarrow0\quad\text{ as }n\to+\infty.\nonumber
\end{align}
For $s,t\in[-T,T]$, and fixed $p\in (2,2^{*})$,  interpolation gives the existence of $a \in (0,1)$ such that 
\begin{align}
\label{eq:12'}
\|u_n(t)-u_n(s)\|_{L^p}&\leq\|u_n(t)-u_n(s)\|_{L^{2^{*}}}^{a}
\|u_n(t)-u_n(s)\|_{L^2}^{1- a}\\
&\les |t-s|^{\frac{1- a}{2}}\left(\int_{t_n-T}^{t_n+T}\| \partial_t u (\sigma)\|_{L^2}^2\,d\sigma\right)^{\frac{1-a}{2}}\nonumber
\end{align}
with a uniform constant in $n$.  Fix $2<p_{0}<p_{1}<2^{*}$ and set $X:= L^{p_{0}}(\R^{d})
\cap L^{p_{1}}(\R^{d})$. 
The choice of $p_{0}, p_{1}$ depends on the nonlinearity $f(u)$ through the parameters $\beta, \theta$ in~\ref{H2f}. 

We consider the family of functions $(u_n(t))_n$ in $C^0([-T,T];X)$. By the property \eqref{eq:10'}, 
$$\bigcup_{\substack{n\in\mathbb{N},\\t\in[-T,T]}}u_n(t)\subset\text{ bounded set of }H^1_{rad}(\mathbb{R}^d).$$
Due to the compact embedding of $H^1_{rad}(\mathbb{R}^d)$ into $X$, we deduce that 
$$\bigcup_{\substack{n\in\mathbb{N},\\t\in[-T,T]}}u_n(t)\subset\text{ compact set of }X$$
Moreover, by \eqref{eq:12'}, the family $(u_n(t))_n$ is equicontinuous in $C^0([-T,T];X)$. 
Thus, by the theorem of Ascoli, (after possibly extracting a subsequence) the sequence $u_n(t)$ converges in $C^0([-T,T];X)$ to a function 
$u^*(t)\in C^0([-T,T];X)$.

Moreover, by \eqref{eq:11} and \eqref{eq:12'}, $u^*(t)$ is constant on the time interval $[-T,T]$. 
We shall simply write $u^*(t)\equiv u^*$. Remark that we deduce from $K_{0}(u_{n}(0))\to0$  and the convergence of $u_n(t)$ towards $u^*$ in $C^0([-T,T];X)$ that
\begin{align}
\label{eq:13'}
\lim_{n\to+\infty}\|u_n(0)\|_{H^1}^2=\int_{\mathbb{R}^d} f(u^{*})u^{*}\,dx.
\end{align}
For this implication we need to choose $p_{0},p_{1}$ close to $2, 2^{*}$, respectively, depending on~\ref{H2f}. 

To summarize, we  know that 
\begin{itemize}
\item $u_n(t)\to u^*$ as $n\to+\infty$ in $C^0([-T,T];X)$ and $u^* :=  u^*(t)$
\item $\p_{t}u_{n}(t)\to0$ as $n\to+\infty$ in $L^2((-T,T);L^2(\mathbb{R}^d))$
\item $(u_n(t),\p_{t}u_{n}(t))_n$ is uniformly bounded in $n$ in $L^\infty((-T,T);\mathcal{H})$ and, in particular in $L^2((-T,T);\mathcal{H})$.
\end{itemize}
Taking  these properties into account, one   shows that $(u_n,\partial_t u_{n})$ converges in the sense of distributions (i.e., $\mathcal{D}'((-T,T)\times\mathbb{R}^d)$) towards $(u^*,0)$ as $n\to+\infty$ and that $(u^*,0)$ is an equilibrium point of \ref{KGalpha}.
Since $(u_n(0), \partial_t u_{n}(0))$ is uniformly bounded in $\mathcal{H}$, with respect to $n$, there exists a subsequence (that we still label by $n$) such that $u_n(0)
\rightharpoonup u^*$ as $n\to+\infty$ weakly in $H^1(\mathbb{R}^d)$.

Since $u^*$ is an equilibrium point of \ref{KGalpha}, the following equality holds:
\begin{align}
\label{eq:14}
\int_{\mathbb{R}^d} f(u^*)u^{*}\,dx=\int_{\mathbb{R}^d}(|\nabla u^*|^2+(u^*)^2)\,dx.
\end{align}
The equalities \eqref{eq:13'} and \eqref{eq:14} imply that 
\begin{align}
\label{eq:15}
\lim_{n\to+\infty}\|u_n(0)\|_{H^1}^2=\|u^*\|_{H^1}^2
\end{align}
and thus, since $u_n(0) \rightharpoonup u^*$ as $n\to+\infty$ weakly in 
$H^1(\mathbb{R}^d)$, the convergence of $u_n(0)$ towards $u^*$ takes place in the strong sense in $H^1(\mathbb{R}^d)$. Moreover,  the strong convergence of 
$u_n(0)$ towards $u^*$ in $L^2(\mathbb{R}^d)$ and the property \eqref{eq:11} imply the
strong convergence of $u_n(s)$ towards $u^*$ in $L^2(\mathbb{R}^d)$, uniformly in $s \in [-T,T]$. In summary, 
$$u_n(.)\to u^* \text{ in }C^{0}((-T,T),L^2(\R^{d})).$$
To finish the proof of Theorem~\ref{K0(tn)stern} it remains to prove 
\begin{equation}\label{ii}
\p_{t}u_n(0)\to 0 \text{ in }L^{2}(\R^{d}).
\end{equation}
As a first step towards the proof of  property \eqref{ii}, we consider the equation satisfied by $\tilde u_{n}:=u_n-u^*$, namely
\begin{align}
\label{eq:18}
\begin{cases}
\partial_{tt} \tilde u_n -\Delta \tilde u_n+\tilde u_n =f(u_n)-f(u^{*})-2\alpha\partial_t \tilde u_n\\
\tilde u_n(0)=u_n(0)-u^*\to0\quad\text{ as }n\to+\infty\quad\text{ in }H^1(\mathbb{R}^d)\\
\partial_t \tilde u_n(0)=\partial_t u_{n}(0)
\end{cases}
\end{align}
We write $u_n-u^*=w_n+v_n$ where $w_n$ and $v_n$ are solutions of the following equations:
\begin{align}
\label{eq:19}
\begin{cases}
\partial_{tt} w_{n}-\Delta w_n+w_n=f(u_n)-f(u^{*})-2\alpha \partial_t u_n \\
w_n(0)=u_n(0)-u^*\\
\partial_t w_{n}(0)=0
\end{cases}
\end{align}
and
\begin{align}
\label{eq:20}
\begin{cases}
\partial_{tt} v_{n}-\Delta v_n+v_n=0\\
v_n(0)=0\\
\partial_t v_{n}(0)= \partial_t u_{n}(0).
\end{cases}
\end{align}
The classical energy estimates for the  Klein-Gordon equation imply that, for $-T\leq t\leq T$,
\EQ{ 
\label{eq:21}
\|(w_n, \p_{t}w_{n})(t)\|_{\calH}\leq C\Big[ \|u_n(0)-u^*\|_{H^1}+&2\alpha\sqrt{2T}\left(\int_{-T}^T\|\partial_t u_{n}(s)\|_{L^2}^2\,ds\right)^{\frac{1}{2}}\\
&+\int_{-T}^T\|f(u_n)(s)-f(u^{*})\|_{L^2}\,ds\Big] 
}
Taking into account Hypothesis \ref{H2f}, one has 
\EQ{\label{eq:beta theta}
&\int_{-T}^T\| f(u_n)(s)-f(u^{*})\|_{L^2}\,ds\\
&\le C \int_{-T}^T\| (u_n(s)-u^{*})(|u_{n}|^{\beta}  + |u^{*}|^{\beta} + |u_{n}|^{\theta-1}  + |u^{*}|^{\theta-1} ) \|_{L^2}\,ds  
}  
Here $0<\beta<\theta-1$ can be taken arbitrarily small, which only affects the constant 
$C$. 
Actually, we can choose $0<\beta<\theta-1$ so that 
$2 \leq2\beta p_0/(p_0 -2) \leq p_1$. Applying the H\"older inequality, we obtain,
\EQ{\label{eq:32}
&\int_{-T}^T\| (u_n(s)-u^{*})(|u_{n}|^{\beta}  + |u^{*}|^{\beta}  ) \|_{L^2}\,ds  \\
&\le CT   \| u_{n}-u^{*}\|_{L^{\I}(I,L^{p_{0}})}  (\| u_{n}\|_{L^{\I}(I,L^{p_2})}^{\beta}
+ \| u^{*}\|_{L^{\I}(I,L^{p_{2}})}^{\beta})\\
& \leq CT   \| u_{n}-u^{*}\|_{L^{\I}(I,L^{p_{0}})}  (\| u_{n}\|_{L^{\I}(I, H^1)}^{\beta}
+ \| u^{*}\|_{L^{\I}(I, H^1)}^{\beta})~.
}
where $2\leq p_2\leq p_1 <2^{*}$ is fixed.
 Since $u_{n}\to u^{*}$ in $C(I,X)$, we conclude that the right-hand side of~\eqref{eq:32}
vanishes in the limit $n\to\I$. We next estimate the term 
\begin{multline}
\label{eq:thetaLinfiniAux}
 \int_{-T}^T \| (u_n-u^{*}) |u^{*}|^{\theta-1} ) \|_{L^2}\,ds  \leq 2T 
\| u_n-u^{*} \|_{L^{\infty}(L^2)} \|u^{*} \|^{\theta-1}_{L^{\infty}(L^{\infty})} \\
\leq 
C \| u_n-u^{*} \|_{L^{\infty}(L^2)}~,
\end{multline}
which tends to $0$ as $n\to\infty$.
To bound the remaining term in~\eqref{eq:beta theta}, we argue as in the proof of Theorem \ref{thm:WP}. Indeed, from the estimates \eqref{eq:calFaux1} to \eqref{eq:calFaux4}, we deduce that
\EQ{\label{eq:Xpq}
 \int_{-T}^T\|(u_n(s)-u^{*}(s))|u_{n}(s)|^{\theta-1}\|_{L^2}\,ds \leq & (2T)^{\eta} C
\| u_n\|_{L^{\infty}(I,L^2)}^{\frac{(\theta -1)\eta}{\theta}} 
 \| u_n- u^{*}\|_{L^{\infty}(I,L^2)}^{\frac{\eta}{\theta}}\\
& \times  \big[\| u_n \|_{L^{\theta^*}(I,L^{2\theta^*})}^{(1-\eta)\theta^*}
+  (2T)^{1-\eta}\| u^*\|_{L^{\infty}(I,L^{2\theta^*})}^{(1-\eta)\theta^*}\big] \cr
\leq & C(1+ T) \| u_n- u^{*}\|_{L^{\infty}(I,L^2)}^{\frac{\eta}{\theta}}~,
}
where, by \eqref{eq:eta}, $\eta = \frac{d+2 -\theta (d-2)}{4}$. The right-hand side of the 
inequality \eqref{eq:Xpq} tends to $0$ as $n$ goes to infinity. 

Finally, in view of \eqref{eq:21}, 
\eqref{eq:beta theta}, \eqref{eq:32}, \eqref{eq:thetaLinfiniAux} and \eqref{eq:Xpq}, 
we conclude that 
 \EQ{\label{eq:23}
 \|(w_n(t), \partial_t w_{n}(t)\|_\mathcal{H}\to0 \text{\ \ as\ \ }n\to+\infty,
 }
  uniformly in  $-T\leq t\leq T$.

By construction,  $v_n=(u_n-u^*)-w_n$ and, in particular, $\partial_t  v_{n}=\partial_t  u_{n}- \partial_t  w_{n}$. From \eqref{eq:23} and the properties of $\|\partial_t  u_{n}\|_{L^2(I;L^2(\mathbb{R}^d))}$, we infer that
\begin{align}
\label{eq:24}
\|\partial_t  v_{n}\|_{L^2((-T,T);L^2(\mathbb{R}^d))}\leq\|\partial_t  u_{n}\|_{L^2((-T,T);L^2(\mathbb{R}^d))}+\sqrt{2T}\|\partial_t  w_{n}\|&_{C^0([-T, T];L^2(\mathbb{R}^d))}\to0
\end{align}
as $n\to\I$. 

In the final step of the proof we shall turn this $L^{2}_{t}$ averaged vanishing of $\| \partial_t  v_{n}(t)\|_{L^{2}_{x}}$ as $n\to\I$
into vanishing in the uniform sense in~$t$. The main tool for this is 
the  following ``observation inequality'' for equation~\eqref{eq:20}. 

\begin{lemma}
For any $T_0>0$, there exists a positive constant $c(T_0)>0$, independent of $n$, such that
\begin{align}
\label{eq:25}
\|\partial_t  v_{n}(0)\|_{L^2(\mathbb{R}^d)}^2\leq c(T_0)\int_{-T_0}^{T_0}
\int_{\mathbb{R}^d}|\partial_t  v_{n}|^2\,dxds.
\end{align}
\end{lemma}

\begin{proof}
For sake of simplicity, we set:
$$\partial_t  v_{n}(0)\equiv \partial_t  u_{n}(0)=v_{n1}.$$
If $\hat{v}_n$ denotes the Fourier transform of $v_n$, we have
$$\hat{v}_n(t,\xi)=\frac{\sin{\left(t\sqrt{|\xi|^2+1}\right)}}{\sqrt{|\xi|^2+1}}\hat{v}_{n1}(\xi)$$
and therefore
$$\|\partial_t\hat{v}_n(t,\cdot)\|_{L^2}^2=\int_{\mathbb{R}^d}\left|\sin{\left(t\sqrt{|\xi|^2+1}\right)}\right|^2|\hat{v}_{n1}(\xi)|^2\,d\xi$$
as well as
\begin{equation}
\label{eq:26}
\begin{split}
\int_{-T_0}^{T_0}\|\partial_t\hat{v}_n(t,\cdot)\|_{L^2}^2\,dt&=
\int_{-T_0}^{T_0}\int_{\mathbb{R}^d}\left|\sin{\left(t\sqrt{|\xi|^2+1}\right)}\right|^2|\hat{v}_{n1}(\xi)|^2\,d\xi dt\\
&=\int_{\mathbb{R}^d}\left(\int_{-T_0}^{T_0} \left|\sin{\left(t\sqrt{|\xi|^2+1}\right)}\right|^2\,dt\right)|\hat{v}_{n1}(\xi)|^2\,d\xi\\
&\geq\tilde{c}(T_0)| \int_{\mathbb{R}^d} \hat{v}_{n1}(\xi)|^2\,d\xi~,
\end{split}
\end{equation}
where $\tilde{c}(T_0)>0$, since $T_0>0$.
Indeed
\begin{align*}
\int_{-T_0}^{T_0} \left|\sin{\left(t\sqrt{|\xi|^2+1}\right)}\right|^2\,dt&=
\int_{-T_0}^{T_0} \left(\frac{1-\cos{\left(2t\sqrt{|\xi|^2+1}\right)}}{2}\right)\,dt\\
&=T_0 -\frac{\sin{\left(2T_0\sqrt{|\xi|^2+1}\right)}}{2\sqrt{|\xi|^2+1}} ~.
\end{align*}
One easily sees that, for any $T_0 >0$, there exists $\tilde{c}(T_0)>0$ such that, for any $|\xi|$,
\begin{equation}
\label{eq:27}
T_0 -\frac{\sin{\left( 2 T_0\sqrt{|\xi|^2+1}\right)}}{2\sqrt{|\xi|^2+1}}  \geq   \tilde{c}(T_0)~.
\end{equation}
The estimate \eqref{eq:25} is then a direct consequence of \eqref{eq:26}, \eqref{eq:27} and Plancherel's theorem.
\end{proof}
{}From the property  \eqref{eq:25} and the estimate \eqref{eq:24}, one deduces that
\begin{align}
\label{eq:28}
\|\partial_tu_{n}(0)\|_{L^2}\leq c(T)\left[\|\partial_tu_{n}\|_{L^2((-T,T);L^2)}+\sqrt{2T}\|\partial_tw_{n}\|_{C^0([-T,T];L^2(\mathbb{R}^d))}\right]&\to0\\
&\text{ as }n\to+\infty\nonumber
\end{align}
and the theorem is proved.
\end{proof}

\subsection{Convergence property}\label{sec:conv}

 Let $\vec u_0 = (\varphi_0,\varphi_1) \in  \mathcal{H}_{rad}$ be so that the solution 
 $\vec u(t) = S_{\alpha}(t)\vec u_0 \equiv (u(t), \partial_t u (t))$ exists globally and may be unbounded. Theorem~\ref{K0(tn)stern} asserts that there exists a sequence of times 
 $t_n \to +\infty$ such that $\vec u(t_n) \to  (Q^*,0)$ strongly in $\mathcal{H}_{rad}$, where  $Q^*$ is an equilibrium of \ref{KGalpha}. We shall now show by contradiction
 that then necessarily $\vec u(t)\to  (Q^*,0)$ strongly in $\mathcal{H}_{rad}$ as 
 $t\to\I$ and hence the trajectory is bounded. In other words, Theorem~\ref{K0(tn)stern} implies that the $\omega$-limit set $\omega(\vec u_0)$ is not empty and contains an equilibrium point $(Q^*,0) \in \mathcal{H}_{rad}$. We recall that the $\omega$-limit set of $\vec u_0$ is defined as
\begin{equation*}
 \begin{split}
\omega(\vec u_0) = \{ \vec w \in \mathcal{H}_{rad} \, | \, \exists & \hbox{  a sequence } s_n \geq 0, 
\hbox{ so that } s_n \rightarrow_{n \rightarrow +\infty} +\infty~,\cr
& \hbox { and }  S_{\alpha} (s_n) \vec u_0 \rightarrow_{n \rightarrow +\infty} \vec w  \}~. 
\end{split}
\end{equation*} 
 Below we will show that the $\omega$-limit set  $\omega(\vec u_0)$ reduces to the singleton $(Q^*,0)$, and that the entire trajectory converges to this point in the strong sense. And this concludes the proof of Theorem~\ref{ThBRS1}. 
\medskip

 Before proving that the entire trajectory $\vec u(t) = 
S_{\alpha}(t)\vec u_0$ converges to $(Q^*,0)$, we will emphasize that the $\omega$-limit set $\omega(\vec u_0)$ is contained in the set $\mathcal{E}_{rad}$  of radial equilibrium points of \ref{KGalpha}.

\begin{lemma} The $\omega$-limit set $\omega(\vec u_0)$  satisfies the property
\begin{equation}
\label{eq:Ecalrad}
\omega(\vec u_0) \subset \mathcal{E}_{rad}~.
\end{equation} 
\end{lemma}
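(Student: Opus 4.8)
The plan is to combine three facts already in hand: (a) the energy $E$ of \eqref{Eenergie} is a \emph{strict} Lyapunov functional, being non-increasing with $\frac{d}{dt}E(\vec u(t))=-2\alpha\|\partial_t u(t)\|_{L^2}^2$ and $\alpha>0$; (b) a global solution has $E(\vec u(t))\ge0$ for all $t$, hence $E(\vec u(t))$ has a finite limit as $t\to+\infty$; (c) on a short time interval whose length depends only on an $\calH$-norm bound, the flow $S_\alpha(t)$ is Lipschitz in the data on bounded sets of $\calH$ (Theorem~\ref{thm:WP}(1)--(2)). Together these let me run a LaSalle-type argument on a single fixed short interval, even though the whole trajectory need not be precompact.

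First I would fix $\vec w=(w_0,w_1)\in\omega(\vec u_0)$ and $s_n\to+\infty$ with $S_\alpha(s_n)\vec u_0\to\vec w$ in $\calH_{rad}$, and write $\vec u(t)=S_\alpha(t)\vec u_0$. Since $\vec u$ is global, $E(\vec u(t))\ge0$ for every $t\ge0$: otherwise $E(\vec u(t_1))<0$ for some $t_1$ would, by Corollary~\ref{coro-blow} applied to the solution restarted at time $t_1$ (which by uniqueness is $\vec u(t_1+\cdot)$), force blow-up in finite time, contradicting global existence (cf.\ Remark~\ref{rem:3}(i)). Hence $\ell:=\lim_{t\to+\infty}E(\vec u(t))$ exists. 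As $\{S_\alpha(s_n)\vec u_0\}_n$ is bounded in $\calH$, Theorem~\ref{thm:WP} provides a uniform time $T>0$, depending only on that bound and on $\|\vec w\|_\calH$, on which all these solutions live and on which $\vec v_0\mapsto S_\alpha(\cdot)\vec v_0$ is Lipschitz; using $S_\alpha(s_n+t)\vec u_0=S_\alpha(t)\big(S_\alpha(s_n)\vec u_0\big)$ and setting $\vec z(t):=S_\alpha(t)\vec w=(z(t),\partial_t z(t))$, this gives
\[
\vec u(s_n+\cdot)\longrightarrow\vec z(\cdot)\quad\text{in }C([0,T];\calH)\ \text{ as }n\to\infty.
\]

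Next, since $E$ is continuous on $\calH$ and $E(\vec u(s_n+T))\le E(\vec u(s_n+t))\le E(\vec u(s_n))$ for $0\le t\le T$, with both outer terms tending to $\ell$, I obtain $E(\vec z(t))=\ell$ for all $t\in[0,T]$. Then the energy identity \eqref{Et1t2} applied to $\vec z$ yields $0=E(\vec z(T))-E(\vec z(0))=-2\alpha\int_0^T\|\partial_t z(s)\|_{L^2}^2\,ds$, so $\partial_t z\equiv0$ on $[0,T]$ (here $\alpha>0$ is essential). Consequently $z(t)\equiv z(0)=w_0$ and $w_1=\partial_t z(0)=0$; substituting $z\equiv w_0$ into \ref{KGalpha} gives $-\Delta w_0+w_0-f(w_0)=0$, and by the elliptic regularity recalled in \S\ref{sec:basic} (before Lemma~\ref{lem:Lspec}) $w_0$ is a genuine radial stationary solution. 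Since $w_0\in H^1_{rad}$, this shows $\vec w=(w_0,0)\in\mathcal E_{rad}$, i.e.\ \eqref{eq:Ecalrad}.

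The only delicate point — the ``hard part'', such as it is — is the uniformity in the convergence $\vec u(s_n+\cdot)\to\vec z(\cdot)$: one must invoke that the life span and the Lipschitz constant in Theorem~\ref{thm:WP} depend on the data only through an $\calH$-norm bound, so that the limit $n\to\infty$ can be taken on one \emph{fixed} interval $[0,T]$ (no a priori global bound on $\vec u(t)$ is available). As an alternative to (c), one may instead repeat verbatim the compactness argument in the proof of Theorem~\ref{K0(tn)stern} — Ascoli in $C([-T,T];L^{p_0}\cap L^{p_1})$ via the compact embedding $H^1_{rad}\hookrightarrow L^{p_0}\cap L^{p_1}$, together with $\int_0^\infty\|\partial_t u(s)\|_{L^2}^2\,ds<\infty$ (a consequence of (a)--(b)) to kill $\partial_t u$ near the times $s_n$. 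That variant is actually shorter here, because both components of $S_\alpha(s_n)\vec u_0$ already converge, so neither the hypothesis $K_0(u(t_n))\to0$ nor the observation inequality used in that proof is needed. Either way, no genuinely new difficulty arises beyond what is already contained in Theorems~\ref{thm:WP} and~\ref{K0(tn)stern}.
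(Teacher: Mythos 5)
Your proof is correct and follows essentially the same route as the paper: both arguments rest on the existence of the limit $\ell=\lim_{t\to\infty}E(\vec u(t))$, the continuity of the flow on bounded sets of $\calH$ over a fixed short interval (so that $\vec u(s_n+\cdot)\to S_\alpha(\cdot)\vec w$), and the strictness of the Lyapunov functional for $\alpha>0$. The paper phrases it as a contradiction (a non-equilibrium limit point would drop the energy below $\ell$ after time $\sigma$), while you run the same LaSalle argument directly via the energy identity to conclude $\partial_t z\equiv 0$; the content is identical.
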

\begin{proof}
Let $\vec v_0 =(v_0,v_1) \in \omega(\vec u_0)$. Then, there exists a sequence $s_n \rightarrow_{n \rightarrow +\infty} +\infty$ such that 
$S_{\alpha} (s_n) \vec u_0 \equiv \vec u(s_n) \rightarrow_{n \rightarrow +\infty} 
\vec v_0$. \\
On the one hand, we know by \eqref{eq:ell} that the energy satisfies 
$$
E(\vec u(s_n ))\to \ell =E(({Q}^{*},0))
$$
as $n\to+\infty$, and
$$
E(\vec u(s_n) )\to E(\vec v_0).
$$
If $\vec v_0$ is not an equilibrium point, then for some time $\sigma>0$,
\begin{align}
\label{eq:2.15}
E(S_\alpha(\sigma) \vec v_0) \leq E(\vec v_0)-\delta = \ell -\delta
\end{align}
where $\delta>0$.
Since $$
E(\vec u(s_n+\sigma))\to \ell
$$ 
and 
$$
E(\vec u(s_n +\sigma))\to E(S_\alpha(\sigma)\vec v_0)~,
$$ 
we arrive at a contradiction and \eqref{eq:Ecalrad} holds.
\end{proof}

\begin{remark} \label{rem:omegatau}
Let us fix a positive time $\tau >0$ and introduce the $\omega$-limit set 
$\omega_{\tau}(\vec u_0)$ of the discrete dynamical system defined by the iterates 
$S_{\alpha}(\tau)^m$, $m \in \mathbb{N}$, that is,
\begin{equation*}
 \begin{split}
\omega_{\tau}(\vec u_0) = \{ \vec w \in \mathcal{H}_{rad} \, | \, \exists & \hbox{  a sequence } k_n \geq 0, 
\hbox{ so that } k_n \rightarrow_{n \rightarrow +\infty} +\infty~,\cr
& \hbox { and }  S_{\alpha} (\tau)^{k_n} \vec u_0 \rightarrow_{n \rightarrow +\infty} \vec w  \}~. 
\end{split}
\end{equation*} 
Obviously, $\omega_{\tau}(\vec u_0) \subset
\omega(\vec u_0)$. Using the fact that $\omega(\vec u_0)$ is contained in 
$\mathcal{E}_{rad}$ and that the Lipschitz property of $S_{\alpha}(t): \vec v_0 \in \mathcal{H} \to S_{\alpha}(t)\vec v_0 \in \mathcal{H}$, which is uniform with respect to 
$t \in [0,\tau]$ (see the arguments in Step 1 of Section 4 and especially the estimates \eqref{eq:AStricAux1}, \eqref{eq:AStricAux2}, and \eqref{eq:AStricAux3}), one can show 
that 
\begin{equation}
\label{eq:omega-tau}
\omega_{\tau}(\vec u_0) = \omega(\vec u_0) ~.
\end{equation} 
\end{remark} 
\vskip 4mm

 To prove that the $\omega$-limit $\omega(\vec u_0)$ is a singleton and that the entire trajectory converges to this point, we will apply a generalization  of the classical convergence theorem of Aulbach \cite{Aulb}, Hale-Massat \cite{HaMa} and Hale-Raugel \cite{HaRa}, due to Brunovsk\'{y} and P.\ Pol\'{a}\v{c}ik
 \cite{BrP97b}, which uses local invariant manifold theory. For more details on these convergence theorems, we refer the reader to Appendix~B   and especially to Lemma  \ref{lemmeB1} that we shall apply below.
 The behaviour of $S_{\alpha}(t)\vec u_0=\vec u(t)$ heavily depends on the spectral properties of the  linearized operator 
$\mathcal{L}$ about $Q^*$ and the linearized operator $\tilde{\Sigma}_{\alpha}(t)
 = e^{A_{\alpha}t}$ about  $(Q^*,0)$ (see the definitions  \eqref{eq:vN}, \eqref{Sys1} or \eqref{AgL0} with 
$\varphi=Q^*$). Lemma~\ref{lem:spectre2A} describes the spectrum of the operator 
$A_{\alpha}$.\\
Before proving this convergence result, we need to recall some notation given in Section 
\ref{sec:IMforKG}.
There we introduce the  modified (localized) Klein-Gordon equation 
\eqref{eq:Auphi0mod} and show that this localized equation defines a globally defined flow 
$\bar{S}_{\alpha}(t)$ on $\mathcal{H}_{rad}$, such that,
\begin{equation}
\label{32}
\vec u (t) = S_{\alpha}(t) ((Q^*,0) +\vec v_0) = (Q^*,0) + \bar{S}_{\alpha}(t) \vec v_0~, \hbox{ as long as }  \vec u(t) \in B_{r_1}~,
\end{equation}
where $B_{r_1} \equiv B((Q^*,0),r_1)$ is the open ball of center $(Q^*,0)$ and radius $r_1 >0$,  with $r_1 \leq (8 C(\alpha,\tau_0))^{-1}r_0$ (see Remark \ref{rem:SEgalSbarre}).  In other terms, if  we set 
$$
S^*_{\alpha}(t) \vec u_0= (Q^*,0) + \bar{S}_{\alpha}(t) (\vec u_0  - (Q^*,0))~,
$$ 
then $S_{\alpha}(t) \vec u_0$ and $S_{\alpha}^*(t) \vec u_0$ coincide as long as 
$S_{\alpha}(t) \vec u_0 \in B_{r_1}$. \\
In Section \ref{sec:IMforKG}, we define  the (global) stable, unstable, center stable, center unstable, and center manifolds $W^{i*}((Q^*,0))$ of $S^*_{\alpha}(t)$ about $(Q^*,0)$, where
$i= s, u, cs, cu, c$ respectively. Since $S_{\alpha}(t) \vec u_0$ and 
$S_{\alpha}^*(t) \vec u_0$ coincide as long as $S_{\alpha}(t) \vec u_0 \in B_{r_1}$, 
we may define the local stable, unstable, center stable, center unstable, and 
center  manifolds $W^{i}_{loc}((Q^*,0))$ of $S_{\alpha}(t)$ about $((Q^*,0))$ 
as follows:
 \begin{equation}
\label{32Wsuc}
W^{i}_{loc}((Q^*,0)) = W^{i*}((Q^*,0)) \cap B_{r_1}~, \quad i= s,  u, cs,  cu, c~.
\end{equation}

We begin our proof with the particular case where $(Q^*,0)$ is the (hyperbolic) trivial equilibrium $(0,0)$ of \ref{KGalpha}.  
We remark that in that case $\mathcal{L}= - \Delta+ I$ and the entire spectrum of $A_\al$ lies in a half-plane of the form $\Re z<-\delta<0$. 
In the terminology of Section \ref{sec:IMforKG} and of Appendix~A,  this means that the local stable manifold 
$W^{u}_{loc}((0,0))$ is a whole neighborhood of $(0,0)$ and that then necessarily $(0,0)$ is an isolated equilibrium, and the perturbative equation \eqref{eq:vN} around $(0,0)$ exhibits exponential  decay of solutions in $\mathcal{H}_{rad}$  for small data. Actually, this exponential  decay to zero had already been proved in Theorem 
\ref{thm:WP}. In particular, $\vec u(t)\to  (0,0)$ in that case as 
$t\to\I$. 

\medskip

  Let us come back to the general case. If $Q^*\ne0$, then Lemma~\ref{lem:spectre2A} states that $A_\al$ has either a trivial kernel, or a one-dimensional kernel. The former case means that the dynamics near $(Q^*,0)$ is {\em hyperbolic}, whereas in the latter case it is not. 
In the hyperbolic scenario, we have no central part, which means that the 
 invariant manifolds constructed in Section \ref{sec:IMforKG} and in Appendix~A only involve stable and unstable manifolds
 $W^{s}_{loc}((Q^*,0))$ and $W^{u}_{loc}((Q^*,0))$.
In both cases, the (local) unstable manifold $W^{u}_{loc}(Q^*,0)$ is finite-dimensional since 
$\calL$ has only finitely many eigenvalues  (and thus only finitely many eigenvalues with positive real part). 

\medskip

 In the non-hyperbolic case, the kernel of 
 $A_{\alpha}$ is one-dimensional, the local center manifold $W^{c}_{loc}((Q^*,0))$ is a   $C^1$-curve containing $(Q^*,0)$. We 
 notice that we can also  choose $r_1 >0$ small enough so that $W^{c}_{loc}((Q^*,0)) 
 = W^{c*}((Q^*,0)) \cap \overline B_{r_1}$ is a connected curve. Moreover, as remarked above, the (local) unstable manifold $W^{u}_{loc}(Q^*,0)$ is finite-dimensional. 
  In order to prove the convergence  to $(Q^*,0)$, we would like to directly apply the classical convergence theorem of \cite{BrP97b} or \cite{HaRa}, which is the case (1) of Theorem \ref{th:ConvB2}. However, we do not know that the trajectory $\vec u(t)$ is bounded and thus 
 we also cannot ascertain that  the $\omega$-limit set $\omega(\vec u_0)$ is connected.   So we will apply the more general convergence 
 Theorem \ref{th:ConvB1} of Brunovsk\'{y} and Pol\'{a}\v{c}ik, and more precisely their local Lemma \ref{lemmeB1}, which are  recalled in Appendix B. To this end, we need to show that $(Q^*,0)$ is stable for $S_{\alpha}(t)$ restricted to the local center manifold (see the definition \eqref{eq:2.16} below). In order to prove this stability, we shall use the same arguments as  Brunovsk\'{y} and 
Pol\'{a}\v{c}ik in the proof of Lemma \ref{lemmeB1}.  Like them, we will make use of  the attraction of the center unstable manifold with asymptotic 
phase of Section \ref{sec:IMforKG} (see also Appendix~A). 
 Notice that the hyperbolic case can be considered as a special case, where the local center unstable (respectively,  center) manifold reduces to the local unstable manifold (respectively, 
to $(Q^*,0)$).  In the non-hyperbolic case, the center manifold is present and the  dynamics is more delicate to analyze.  

We proceed by contradiction and assume that $\vec u(t)\not\to( Q^*,0)$. 
Since $\vec u(t)$ does not converge to $( {Q}^*,0)$, there exists $\beta_0>0$, $\beta_0<\frac{r_1}{2}$  with the
following property:  for any $0<\beta\leq\beta_0$, if 
$\vec u(t_0)\in B_\mathcal{H}((Q^{*},0),\beta)$, there exists a first time $\tau_0>0$ such that $\vec u(t_0+\tau) \in B_\beta$, for $0\leq\tau<\tau_0$, and $\vec u(t_0+\tau_0) \not\in B_\beta$. In other words, $\vec u(t_0+\tau_0)$ belongs to the sphere
 $S(({Q}^{*},0),\beta)$.

We first fix $\beta>0$, $\beta\leq\beta_0$. By Theorem \ref{K0(tn)stern}, there exists 
$n(\beta)$ such that, for $n\geq n(\beta)$, $\vec u(t_{n})\in B_\beta$. 
Moreover, there exists a first time $\tau_n(\beta)>0$ such that
\begin{equation}
\label{eq:sphere}
\begin{split}  
\vec u(t_n+\tau) &\in B_\beta\quad\text{ for }0\leq\tau <\tau_n(\beta)\\
\vec u(t_n+\tau) & \not\in B_\beta\quad\text{ for }\tau=\tau_n(\beta)~.
\end{split}
\end{equation}
Since $\vec u(t_n)\to({Q}^{*},0)$ as $n\to+\infty$, we remark that $\tau_n(\beta)\to+\infty$ as $n\to+\infty$. 
 We now invoke the asymptotic phase property of the center-unstable manifold, see~\eqref{eq:AfoliaLya} (or also \eqref{eq:Aattracexp1} in Theorem \ref{AVarieteCenInst}).
Thus, there exists $\xi_n:=\xi(\vec u(t_n)) \in W^{cu}_{loc}(Q^*,0)$ such that, for 
$t\geq0$,
\begin{align}
\label{PCU}
\|S^*_\alpha(t)\vec u(t_{n})- S^*_\alpha(t)\xi_n\|_\mathcal{H}\leq c_0 
\rho_0^t\|\vec u(t_n)-\xi_n\|_\mathcal{H},
\end{align}
where $0<\rho_0<1$. And, by continuity of the map $\xi(\cdot)$,
$$
\xi_n\to({Q}^{*},0)\quad\text{ as }n\to+\infty.
$$
In particular, \eqref{PCU} implies that
\begin{align}
\label{eq:2.13}
\|S_\alpha(\tau_n(\beta))\vec u(t_{n})-S^*_\alpha(\tau_n(\beta))\xi_n\|_\mathcal{H}\to 0\quad\text{ as }n\to+\infty.
\end{align}
Since $W^{cu*}((Q^*,0))$ is finite-dimensional and  by \eqref{eq:2.13},
$S^*_\alpha(\tau_n(\beta))\xi_n$ is bounded, the sequence $S^*_\alpha(\tau_n(\beta))\xi_n$, $n \in \mathbb{N}$,  contains a convergent subsequence. 
We conclude that up to passing to a subsequence one has 
\EQ{
\label{eq:2.14}
\vec u(t_n+\tau_n(\beta)) = S_\alpha(\tau_n(\beta))\vec u(t_n) \to 
(\tilde{u}_0, \tilde{u}_1) \in \bar{B}_{\beta} \text{ as }n\to+\infty.\nonumber
}
By the invariance property of $W^{cu*}((Q^*,0))$ and by~\eqref{eq:2.13}, 
\begin{equation}
\label{eq:limWCU}
(\tilde{u}_0, \tilde{u}_1)\in  W^{cu}_{loc}((Q^*,0))~.
\end{equation}
We remark that, by \eqref{eq:Ecalrad} and \eqref{eq:sphere},
\EQ{\label{ClaimQ}
(\tilde{u}_0, \tilde{u}_1)\text{\ \  is an equilibrium point\ \ } (\tilde{Q},0) \equiv
 (\tilde{Q}(\beta),0) \text{\ \  and\ \ } \|(\tilde{Q}(\beta),0)-(Q^*,0)\|_\mathcal{H}=\beta.
}
 If $({Q}^{*},0)$ is an isolated equilibrium point, then \eqref{ClaimQ} with $\beta\leq\frac{r_1}{2}$ leads to a contradiction. We remark that, in the hyperbolic case, $({Q}^{*},0)$ is necessarily an isolated equilibrium which ends the proof in this case.

 \medskip 

 Let us now focus  on the case where $({Q}^{*},0)$ is not isolated. 
Before completing the proof in this case, we recall a definition of Brunovsk\'{y} and Pol\'{a}\v{c}ik, see Appendix~B. We say that 
$(Q^{*},0)$ is {\bf stable for} $S_\alpha(t)\vert_{W_{loc}^c((Q^{*},0))}$ if, 
$\forall \epsilon>0$, $\exists \theta>0$ such that, for any $\vec v_0\in 
W^{c}_{loc}(({Q^*,0))}$, 
$$
\|\vec v_0-({Q}^{*},0)\|_\mathcal{H}\leq\theta
$$
 implies that, for $t\geq0$,
\begin{align}
\label{eq:2.16}
\|S_\alpha(t) \vec v_0-({Q}^{*},0)\|_\mathcal{H}\leq\epsilon.
\end{align}
%%%%%%%%%%%%%%%%%%%%%%%%%%%%%%%%%%%%%%
%%%%%%%%%%%%%%%%%%%%%%%%%%%%%%%%%%%%%%%%%%%%%%%% 
We now complete our proof. 
By construction and~\eqref{eq:limWCU}, the element $(\tilde{Q}(\beta),0)$ belongs to 
$W^{cu}_{loc}((Q^*,0))$.  Since $(\tilde{Q}(\beta),0)$ is an equilibrium point, it necessarily belongs to the local center manifold $W^{c}_{loc}((Q^*,0))$ (see Section \ref{sec:IMforKG} and Appendix A for more explanations), which, as we saw earlier, is a $C^1$  one-dimensional embedded manifold passing through $(Q^{*},0)$.

Since \eqref{ClaimQ} holds for any small $\beta>0$, 
we see that  this curve segment  contains equilibria   in the $omega$-limit set
$\omega( \vec u_0)$ which are arbitrarily close to, but distinct from, 
$(Q^{*},0)$. 
In fact, we can say even more than that. First, 
we place an order on the curve $\tilde{W}_{r_1}^c(({Q}^{*},0))$ if $r_1>0$ is small enough. 
We adopt the notation $v^-<({Q}^{*},0)<v^+$ if 
$v^-$ (respectively $v^+$) is to the ``left'' ( resp.\  ``right'') of $({Q}^{*},0)$ on 
the curve segment $\tilde{W}_{r_1}^c(({Q}^{*},0))$.
By intersecting the tangent
line to this curve at $(Q^{*},0)$ with the spheres of radius $\beta$ for all small $\beta$, 
we see that there are two possibilities: 
\begin{enumerate}
\item Either  there exist two families of equilibria $(Q_m^-,0)$ and $(Q_m^+,0)$ with 
$(Q_m^-,0)<({Q}^{*},0)<(Q_m^+,0)$ such that
\begin{align}
\label{eq:2.17}
(Q_m^\pm,0)\to({Q}^{*},0)\quad\text{ as }m\to+\infty.
\end{align}
  A simple dynamical argument based on \eqref{eq:2.17} implies  that 
 $S_\alpha(t)\vert_{W^{c}_{loc}((Q^*,0))}$ is in fact stable.  We can now directly apply Lemma \ref{lemmeB1} of Brunovsk\'{y} and Pol\'{a}\v{c}ik  to 
 the time $1$ map $S_{\alpha}(1)$, which implies that  the $\omega$-limit set 
 $\omega_{1}(\vec u_0)$ and thus the $\omega$-limit set 
 $\omega(\vec u_0)$ contain an element of $W^u_{loc}(({Q}^{*},0))\backslash({Q}^{*},0)$. This contradicts the fact that $\omega(\vec u_0) \in \mathcal{E}_{\ell}$.
 Instead of directly applying Lemma \ref{lemmeB1} to  the map $S_{\alpha}(1)$, we can 
 also argue for the flow $S_{\alpha}(t)$ as at the end of the proof  of \cite[Lemma 1]{BrP97b}  of Brunovsk\'{y} and Pol\'{a}\v{c}ik  and  directly show  that 
 $(\tilde Q(\beta),0) \in W^u_{loc}(({Q}^{*},0))\backslash({Q}^{*},0)$, where 
 $\tilde{Q}(\beta)$ is as in~\eqref{ClaimQ}. But this contradicts the fact that 
 $(\tilde{Q}(\beta),0)$ is an equilibrium  and so we again obtain the desired convergence. 

\medskip

\item   Or there exists $\beta_2>0$ such that there is no equilibrium point from the family $(\tilde Q(\beta),0)$ on the ``left" (say) of $(Q^*,0)$ in 
$W^c_{loc}(({Q}^{*},0)) \cap B_{2\beta_2}$. But then, the above arguments  (and in particular the properties \eqref{ClaimQ}) imply that, for every $0 \leq \beta \leq \beta_{2}$, there exists an equilibrium 
$(\tilde{Q}^{+}(\beta),0)$ in $\omega(\vec u_0)$ satisfying the properties \eqref{ClaimQ}. This implies that on the right of $(Q^*,0)$, $W^c_{loc}(({Q}^{*},0))$ consists only of equilibria and that the $\omega$-limit set $\omega(\vec u_0)$ contains a curve 
$\mathcal{C}$ of equilibria with end point $(Q^*,0)$ (as for an interval).  
We then choose an equilibrium $(\tilde{Q}^{+}(\beta),0)$ in the interior of 
$\mathcal{C}$ and    close to $(Q^*,0)$. 
We repeat  the above proof  with $(Q^*,0)$ replaced by $(\tilde{Q}^{+}(\beta),0)$. And we  again obtain the same contradiction as in Case (1).  
\end{enumerate}

\begin{remark}  \label{3LojaSimon}
\upshape
  In the particular case of a wave type or reaction-diffusion equation, the proof of 
the \L ojasiewicz-Simon inequality (see Sections~3.2 and 3.3 in the monograph of L.~Simon \cite{Si96} and also \cite[Theorem 2.1]{HaJen07}) shows that, when the kernel of 
$\mathcal{L}$ is one-dimensional,  the set of equilibria of \ref{KGalpha} passing through 
$(Q^*,0)$ is a $C^1$-curve. We could have used this property in the proof
above to avoid the last arguments and apply Theorem \ref{th:ConvB1}. However, in view of possible extensions, we chose not to use this property.
\end{remark} 

%%%%%%%%%%%%%%%%%%%%%%%%%%%%%%%%%%%%%%%%%%%%%%%%
%%%%%%%%%%%%%%%%%%%%%%%%%%%%%%%%%%%%%%%%%%%%%%%%

\section{Invariant manifold theory for the Klein-Gordon equation}
\label{sec:IMforKG}

 In Section~\ref{sec:conv}, in order to prove the convergence of any global solution (in positive time) towards an equilibrium point $(\varphi_0,0)$ of \ref{KGalpha}, we used the properties of the local unstable, local center unstable and local center manifolds
$W^i_{loc}((\varphi_0,0))$, $i=u, cu, c$ about $(\varphi_0,0)$ for the flow $S_{\alpha}(t)$. There, 
we defined these local manifolds as the intersections of the global manifolds  
$W^{i*}((\varphi_0,0))$, $i=u, cu, c$ about $(\varphi_0,0)$ for the global flow 
$S^{*}_{\alpha}(t)$,  with the ball of center $(\varphi_0,0)$ and radius $r_1>0$, where 
$r_1>0$ is small enough. We recall that the global flow $S^{*}_{\alpha}(t)$ was defined 
by 
$$
S^*_{\alpha}(t) \vec u_0= (\varphi_0,0) + \bar{S}_{\alpha}(t) (\vec u_0  - (\varphi_0,0))~,
$$ 
where $\bar{S}_{\alpha}(t)$ is the global flow defined by the localized  Klein-Gordon equation \eqref{eq:Auphi0mod} below. \\
In this section, we construct the global invariant manifolds $W^i((0,0))$, $i=u,cu,c$, for the global flow $\bar{S}_{\alpha}(t)$ and 
obtain the attraction property of $W^{cu}((0,0))$ by applying the general invariant manifold theory recalled in Appendix A.
\vskip 4mm

Let $(\varphi_{0},0) \in \mathcal{H}_{rad}$ be an equilibrium point  of \ref{KGalpha}, that is, $\varphi_0$ is a radial solution of the elliptic equation
\begin{equation}
\label{Aelliptic}
-\Delta \varphi_0 +\varphi_0 -f(\varphi_0) =0~.
\end{equation}
Solving the equation \ref{KGalpha} in the neighborhood of $(\varphi_{0},0)$ leads one to solve the equation
\begin{equation}
\label{Auphi0}
v_{tt} + 2\alpha v_t + \mathcal{L}v - g_0(v)= 0~, \quad (v,v_t)(0) \equiv \vec v(0) \in \mathcal{H}_{rad}~.
\end{equation}
where 
\begin{equation}
\label{AgL0}
\begin{split}
&\mathcal{L}  = -\Delta + I -  f'(\varphi_0)~, \cr
& g_0(v)  = f(\varphi_0+v)- f(\varphi_0) - f'(\varphi_0) v~.
\end{split}
\end{equation}
The equation \eqref{Auphi0} can be written in  matrix form as follows
\EQ{\label{ASys1}
\p_t\binom{v}{v_t} = \left(\begin{matrix} 0 & 1\\ - \mathcal{L} & -2\alpha\end{matrix}\right) \binom{v}{v_t} + \binom{0}{g_0(v)} \equiv A_{\alpha} \vec v
+ \binom{0}{g_0(v)}
}
We denote by $\tilde{\Sigma}_{\alpha}(t)= e^{A_{\alpha}t}$ the linear group 
generated by $A_{\alpha}$ and $\tilde{S}_{\alpha}(t)$ the local flow defined by the equation \eqref{Auphi0}. We notice that 
\begin{equation}
\label{eq:SStilde}
S_{\alpha}(t) \vec u_0 = S_{\alpha}(t)( (\varphi_0,0) + \vec v_0) = 
(\varphi_0,0) + \tilde{S}_{\alpha}(t) \vec v_0~, \quad \hbox{ where } \vec v_0 = 
\vec u_0 - (\varphi_0,0) ~.
\end{equation}

%We will apply Theorems \ref{th:FoliaA1} and \ref{th:FoliaA2} to the equation 
%\eqref{Auphi0} in the case of small $\alpha \geq 0$, when $\varphi_0$ is a hyperbolic %solution of the elliptic equation \eqref{Aelliptic}, in order to build the local center stable %and the local (strongly) unstable manifold of the solution $(0,0)$ of \eqref{Auphi0}, as %well as the foliation above the unstable manifold.
\medskip

\noindent When $\alpha >0$, according to Lemma \ref{lem:spectre2A}, the radius 
$\rho (\sigma_{ess}(\tilde{\Sigma}_{\alpha}(\tau)))$ of the essential spectrum of 
$\tilde{\Sigma}_{\alpha}(\tau)$ satisfies:
$$
\rho (\sigma_{ess}(\tilde{\Sigma}_{\alpha}(\tau))) \leq  \delta(\alpha,\tau) <1
$$
The operator $A_{\alpha}$ can have a finite number of negative eigenvalues 
$\mu_j^{-}(\alpha) <0$ (resp.\  a finite number of positive eigenvalues $\mu_j^{+}(\alpha) >0$), in which case, $\lambda_j^{-}(\tau,\alpha) \equiv 
\exp ( \mu_j^{-}(\alpha) \tau) <1$ (resp.\  $\lambda_j^{+}(\tau,\alpha) \equiv 
\exp ( \mu_j^{+}(\alpha) \tau) >1$). 

In addition, if $1$ is an eigenvalue of $\tilde{\Sigma}_{\alpha}(\tau_0)$,$\tau_0 >0$, it is a simple eigenvalue (and is a simple eigenvalue of
$\tilde{\Sigma}_{\alpha}(\tau)$ for any $\tau >0$). Since this case plays an important role in the proof of Section~\ref{sec:conv}, we assume that $1$ is an eigenvalue of 
$\tilde{\Sigma}_{\alpha}(\tau_0)$,$\tau_0 >0$. In this case, we will construct a local center unstable manifold $W^{cu}_{loc}((0,0))$ of the equilibrium $(0,0)$  of 
$\tilde{S}_{\alpha}(t)$, a foliation of a neighborhood of $(0,0)$ in $\mathcal{H}_{rad}$ over $W^{cu}_{loc}((0,0))$ as well as a local center manifold $W^{c}_{loc}((0,0))$ 
by applying Theorems  \ref{th:FoliaA2} and \ref{th:FoliaA4} to   
$\tilde{S}_{\alpha}(t)$. We choose $\tau_0 >0$ small enough ($\tau_0$ will be made more precise later). And we set 
$$
L = \tilde{\Sigma}_{\alpha}(\tau_0)~.
$$
The spectrum $\sigma(L)$ can be decomposed as in Hypothesis (HA.5.1) and one can define constants $C_1 \geq 1$, $C_2 \geq 1$, $\eta>0$ and $\varepsilon>0$ satisfying the estimates \eqref{eq:SousEnsheta}, \eqref{eq:estcus}, \eqref{eq:AL1L2moins'}.
Unfortunately, $\tilde{S}_{\alpha}(t)$ is only a local flow and thus 
$\tilde{S}_{\alpha}(\tau_1)$ will not satisfy the hypothesis (HA.3). Moreover, we need to 
show that the Lipschitz-constant $\mathrm{Lip}(R)$ can be chosen as small as needed, which is not true for $\tilde{S}_{\alpha}(t)$.
Therefore, we need to make a localization in the following way, for instance.
Let $r_0 >0$ be a small constant, which will  be made more  precise later. We introduce a smooth cut-off function $\chi: \R \to [0,1]$ such that 
\begin{equation}
\label{Chi}
\begin{split}
\chi(s)   = \begin{cases}
1 & \text{if } |s| \leq 1, \\
0 & \text{if } |s| \geq 2~. \\
\end{cases}
\end{split}
\end{equation}
And, we consider the modified Klein-Gordon equation,
\begin{equation}
\label{eq:Auphi0mod}
v_{tt} + 2\alpha v_t + \mathcal{L}v - g_0(v)
\chi \big(\frac{\| \vec v\|_{ \mathcal{H}}^2}{r^2}\big)=0~, \quad 
 \vec v(0) =  \vec v_0 \in\mathcal{H}_{rad}~,
\end{equation}
where $0 < r \leq r_0$ is fixed. To simplify the notation, we set
$$
h( \vec v) = g_0(v) \chi \big(\frac{\| \vec v\|_{ \mathcal{H}}^2}{r^2}\big)~.
$$ 
We first show that, for any  $\vec v_0 \in \mathcal{H}$, the equation 
\eqref{eq:Auphi0mod} admits a unique solution 
$\vec v(t) \equiv  \bar{S}_{\alpha}(t) \vec v_0 \in  C^0([0,+\infty),\mathcal{H})$ 
(we leave to the reader to show that  $\bar{S}_{\alpha}(t) \vec v_0$ also belongs to  
$C^0((-\infty,0],\mathcal{H})$). 
To this end, it is sufficient to show that, for any $\vec v_0 \in \mathcal{H}$, the solution 
$\vec v(t) \equiv 
\bar{S}_{\alpha}(t) \vec v_0$ of  \eqref{eq:Auphi0mod} exists on the time interval $[0,\tau_0]$ and remains bounded there. \\
We will do that in two steps. We will give the proof only in the case where $d \geq 3$, the case $d \leq 2$ being easier.
We first recall that the solution $\vec v(t)$ of \eqref{eq:Auphi0mod} is given by the Duhamel formula,
\begin{equation}
\label{eq:ADuhamel}
\vec v(t) =  \tilde{\Sigma}_{\alpha}(t) \vec v_0 + \int_{0}^{t} \tilde{\Sigma}_{\alpha}(t-s) (0,g_0(v(s)) \chi \big(\frac{\| \vec v(s)\|_{ \mathcal{H}}^2}{r^2}\big) )^t
\, ds~,
\end{equation}
and also remark that, as long as $\vec v(s) \notin B_{\mathcal{H}}(0, \sqrt{2}r)$, the term $h(\vec v(s))$ vanishes. 

\medskip

{\bf Step 1: } Let $\vec v_0 \in \mathcal{H}$ so that $\| \vec v_0\|_{ \mathcal{H}} \leq m r$ with 
$ (8C(\alpha,\tau_{0}))^{-1} \leq m \leq 2$ for example. We set: $M_0 \equiv M_0(mr) = 
4C(\alpha,\tau_{0}) mr$, where $C(\alpha,\tau_{0}) \geq 1$ is the constant given in Proposition 
\ref{AStrichphi0}.  To show the local existence of the solution $\vec v(t)$ on the time interval $[0, \tau_0]$, we argue as in the proof of Theorem \ref{thm:WP} and introduce the space
\begin{equation*}
\begin{split}
%\label{AspaceY}
Y \equiv  \{ \vec v \in L^{\infty}((0,\tau_0), \mathcal{H}) \hbox{ with } 
& v \in L^{\theta^*}((0,\tau_0),L^{2\theta^*}(\mathbb{R}^d)) \cr 
&\, | \, \| v\|_{L^{\infty}(H^1) \cap W^{1,\infty}(L^2) \cap L^{\theta^*}(L^{2\theta^*})} \leq M_0(mr)\}~.
\end{split}
\end{equation*}
Like there we introduce the mapping $\mathcal{F} : Y \to Y$ defined by
$$
(\mathcal{F}\vec v)(t) = \tilde{\Sigma}_{\alpha}(t) \vec v_0  
+ \int_{0}^{t} \tilde{\Sigma}_{\alpha}(t-s) (0,h(\vec v(s)))^t ds~.
$$
The application of Proposition \ref{AStrichphi0} implies
\begin{equation}
\label{eq:calF0*}
 \| \mathcal{F}(0)\|_{Y} \leq C(\alpha, \tau_0) mr  \leq \frac{M_0(mr)}{4}~.
\end{equation}
We next show that $\mathcal{F}$ is a strict contraction from $Y$ into $Y$. Due to the hypothesis \ref{H2f}, we may write, for $v_1, v_2 $ in $H^1(\R^d)$,
\begin{equation}
\label{Agy1y2}
\begin{split}
 |(g_0(v_1) - g_0(v_2))(x)|& = |f(\varphi_0(x) + v_1(x)) - f(\varphi_0(x) + v_2(x))  - 
f'(\varphi_0(x)) (v_1(x) - v_2(x)) | \cr
& = | \int_{0}^{1}(f'(\varphi_0 +v_2 + \sigma (v_1 - v_2)) - f'(\varphi_0)) (v_1- v_2) 
d\sigma | \cr
& \leq C |( |v_1|^{\beta} + |v_2|^{\beta} + |v_1|^{\theta - 1} + |v_2|^{\theta -1}) 
(v_1- v_2) |~,
\end{split}
\end{equation}
where $0 <\beta < \min(\theta - 1, \frac{2}{d-2})$   and $C \equiv C(f,\varphi_0)$ is a constant depending only on $f$ and on $\varphi_0$. For $\vec v_i \in Y$, $i=1,2$,  Proposition \ref{AStrichphi0} and  the inequality \eqref{Agy1y2} imply,
\begin{equation}
\label{eq:AStricAux1}
\begin{split}
\| \mathcal{F}\vec v_1 - \mathcal{F} &\vec v_2 \|_{Y} \leq C(\alpha, \tau_0) 
\int_0^{\tau_0}
\| h(\vec v_1(s)) - h( \vec v_2(s))\|_{L^2} ds \cr
\leq  C(\alpha, \tau_0) & \int_0^{\tau_0}\|  (g_0(v_1)- g_0(v_2)) 
\chi \big(\frac{\| \vec v_2\|_{ \mathcal{H}}^2}{r^2}\big) + g_0(v_1)
 \big(\chi \big(\frac{\| \vec v_1\|_{ \mathcal{H}}^2}{r^2}\big)
- \chi \big(\frac{\| \vec v_2\|_{ \mathcal{H}}^2}{r^2}\big) \big)\|_{L^2} ds \cr
 \leq   C(\alpha, \tau_0) & C \big[ \int_0^{\tau_0} \| ( |v_1(s)|^{\beta} + |v_2(s)|^{\beta}) 
|v_1(s) - v_2(s)| \,\|_{L^2}ds \cr
&+ \int_0^{\tau_0} \| ( |v_1(s)|^{\theta -1} + |v_2(s)|^{\theta -1}) 
|v_1(s) - v_2(s)| \,\|_{L^2}ds \cr
&+ \int_0^{\tau_0} \| ( |v_1(s)|^{\beta +1} + |v_1(s)|^{\theta})\|_{L^2}
 | \big(\chi \big(\frac{\| \vec v_1\|_{ \mathcal{H}}^2}{r^2}\big)
- \chi \big(\frac{\| \vec v_2\|_{ \mathcal{H}}^2}{r^2}\big) \big)| ds \big]
\cr
& \equiv B_1 +B_2 +B_3 ~.
\end{split}
\end{equation}
Arguing as in the proof of Theorem \ref{thm:WP}, by using the Sobolev embeddings, the 
 H\"older inequality and  the fact that $0 < \beta < \frac{2}{d-2}$, we obtain the following inequality for $B_1$:
\begin{equation}
\label{eq:AStricAux2}
\begin{split}
B_1 &\leq   C(\alpha, \tau_0)  C \int_0^{\tau_0} (\| v_1\|_{H^1}^{\beta} 
+ \| v_2\|_{H^1}^{\beta})\| v_1 -v_2\|_{H^1}ds 
 \leq  2  C(\alpha, \tau_0)\tau_0  C  M_0 (rm)^\beta\| v_1 -v_2\|_{L^{\infty}(H^1)}
\end{split}
\end{equation}
The bound of the term $B_2$ is obtained as in the proof of Theorem \ref{thm:WP} 
 (see \eqref{eq:calFaux5}): 
\begin{equation}
\label{eq:AStricAux3}
B_2 \leq 2  C(\alpha, \tau_0)  C^2 \tau_0^{\eta} 
M_0(rm)^{\frac{\theta -1}{\theta}(\theta^*(1-\eta) +\eta)} 
\big[ \| v_1 - v_2\|_{L^{\infty}(L^2)} +  \| v_1 - v_2 \|_{L^{\theta^*}(L^{2\theta^*})} \big]~.
\end{equation}
where $\eta >0$ is given in the formula \eqref{eq:eta}. It remains to bound the term $B_3$. 
We first remark that, since $ \chi' \big(\frac{\| \vec w\|_{ \mathcal{H}}^2}{r^2}\big)$ vanishes if $\| \vec w\|_{ \mathcal{H}} \geq \sqrt 2 r$, we may write
\begin{equation}
\label{eq:AStricAux4}
\begin{split}
 | \big(\chi \big(\frac{\| \vec v_1\|_{ \mathcal{H}}^2}{r^2}\big)
- \chi \big(\frac{\| \vec v_2\|_{ \mathcal{H}}^2}{r^2}\big) \big)| 
&\leq \int_0^1 |\chi' \big(\frac{\| \vec v_2 +\sigma (\vec v_1 -\vec v_2)\|_{ \mathcal{H}}^2}{r^2}\big) \big(\frac{\vec v_2 +\sigma (\vec v_1 -\vec v_2)}{r^2},(\vec v_1 -\vec v_2))_{\mathcal{H}} |d\sigma \cr
& \leq \frac{\sqrt 2}{r} \| \vec v_1 - \vec v_2\|_{\mathcal{H}}~.
\end{split}
\end{equation}
The estimate \eqref{eq:AStricAux4}, together with the estimates 
\eqref{eq:AStricAux2} and \eqref{eq:AStricAux3} with $v_2=0$, imply that
\begin{equation}
\label{eq:AStricAux5}
B_3 \leq 4 \sqrt 2 m  C^2C(\alpha, \tau_0)^2  [\tau_0 M_0 (rm)^\beta + 2 C(\alpha, \tau_0) 
\tau_0^{\eta} M_0(rm)^{\frac{\theta -1}{\theta}(\theta^*(1-\eta) +\eta)}]
\| \vec v_1 - \vec v_2\|_{L^{\infty}(\mathcal{H})}~.
\end{equation}
Choosing $r_0>0$ small enough so that
\begin{equation}
\label{eq:AStricAux6}
\begin{split}
K(r_0,\tau_0) \equiv & 2 C(\alpha, \tau_0)\tau_0  C  M_0 (2r_0)^\beta + 4 C(\alpha, \tau_0)  C^2 \tau_0^{\eta} 
M_0(2r_0)^{\frac{\theta -1}{\theta}(\theta^*(1-\eta) +\eta)} \cr
 +& 8 \sqrt 2  C^2  C(\alpha, \tau_0)^2  [\tau_0 M_0 (2r_0)^\beta + 2 C(\alpha, \tau_0) 
\tau_0^{\eta} M_0(2r_0)^{\frac{\theta -1}{\theta}(\theta^*(1-\eta) +\eta)}] \leq \frac{1}{4}~,
\end{split}
\end{equation}
we deduce from the inequalities \eqref{eq:AStricAux1} to \eqref{eq:AStricAux6} that
\begin{equation}
\label{eq:AStricAux7}
\| \mathcal{F}\vec v_1 - \mathcal{F} \vec v_2 \|_{Y} \leq \frac{1}{4} \| \vec v_1 - 
 \vec v_2 \|_{Y}~,
\end{equation}
 which implies with \eqref{eq:calF0*}, that, for any $\vec v_1 \in Y$,
 \begin{equation}
\label{eq:AStricAux8}
\| \mathcal{F}\vec v_1\|_{Y} \leq \frac{M_0(mr)}{2} ~.
\end{equation}
Therefore, $ \mathcal{F}$ is a strict contraction and admits a unique fixed point 
$\vec v(\vec v_0)$ in $Y$. The uniqueness of the solution 
$\vec v$ of the equation \eqref{eq:Auphi0mod} on the time interval $[0,\tau_0]$ is proved as in the proof of Theorem \ref{thm:WP}. \\
 Let next $\vec v_{0,i}$, $i=1,2$, be so that $\| \vec v_{0,i}\|_{ \mathcal{H}} \leq m r$, and let $\vec v_{i}$, $i=1,2$, be the corresponding solutions of 
the equation \eqref{eq:Auphi0mod} on the time interval $[0,\tau_0]$; by the above proof, they belong to $Y$. 
Applying Proposition \ref{AStrichphi0}  and repeating the above proof, we show that
 \begin{equation}
\label{eq:AStricAux9}
\| \vec v_1 - \vec v_2\|_{Y} \leq \frac{4}{3} C(\alpha,\tau_0) \| \vec v_{0,1} - \vec v_{0,2}\|_{Y} ~.
\end{equation} 
As in the proof of Theorem  \ref{thm:WP}, one also shows that 
$\vec v_0 \in B_{ \mathcal{H}}(0, mr) \mapsto \vec v(\vec v_0) \in Y$ is a 
$C^1$-function.\\
 In the remaining part of the proof, we set $m=2$.

\medskip

{\bf Step 2 :}   We begin by showing  that for every $\vec v_0 \in \mathcal{H}$, 
$\vec v(t) = \bar{S}_{\alpha}(t)\vec v_0$ exists on $[0,+\infty)$. Let first $\vec v_0 \in \mathcal{H}$ satisfying
$\| \vec v_{0}\|_{\mathcal{H}} \leq 2 r$, then, by Step 1, $\vec v(t)$ stays in the ball $B_{ \mathcal{H}}(0, M_0(2r))$  for $0 \leq t \leq \tau_0$. Let next  $\vec v_0 \in \mathcal{H}$ be such that 
$\| \vec v_{0}\|_{\mathcal{H}} \geq 2 r$ and let 
$\vec v(t)= \bar{S}_{\alpha}(t) \vec v_0$ be the mild local solution of \eqref{eq:Auphi0mod}. By continuity of this solution, there exists a time 
$t_1>0$ so that $\vec v(t) \notin B_{\mathcal{H}}(0, \sqrt 2 r)$, for $0 \leq t \leq t_1$. We have, for 
$0 \leq t \leq t_1$,
 \begin{equation}
\label{eq:AStricAux10}
\vec v(t) =  \tilde{\Sigma}_{\alpha}(t) \vec v_0~,
\end{equation}
and, in particular, for $0 \leq t \leq \inf(t_1,\tau_0)$,
\begin{equation}
\label{eq:AStricAux11}
\| \vec v(t)\|_{\mathcal{H}} + \| v\|_{L^{\theta^*}((0,t),L^{2\theta^*})} \leq 
C(\alpha,\tau_0) \| \vec v_0\|_{ \mathcal{H}}~.
\end{equation}
If at a time $t_1$, $\vec v(t_1)$ enters into the ball $B_{ \mathcal{H}}(0, 2r)$, then, according to Step 1, for $t_1\leq t \leq t_1 + \tau_0$, $\vec v(t)$ still exists, stays in the ball $B_{ \mathcal{H}}(0, M_0(2r))$ and satisfies the estimates given in Step~1.  We thus have proved that, for every $\vec v_0 \in \mathcal{H}$, $\vec v(t)$ exists on the time interval $[0,\tau_0]$. Consequently, for every $\vec v_0 \in \mathcal{H}$, 
$\bar{S}_{\alpha}(t)\vec v_0$ exists on $[0,+\infty)$. Likewise, one shows that 
$\bar{S}_{\alpha}(t)\vec v_0$ exists on $(-\infty,0])$. Arguing as in the proof  
of Theorem  \ref{thm:WP}, one shows the continuity properties of 
$\bar{S}_{\alpha}(t)\vec v_0$ with respect to $(t, \vec v_0)$ and the fact that, for any 
$t \in \R$, $\vec v_0 \in \mathcal{H} \mapsto \bar{S}_{\alpha}(t)\vec v_0 
\in \mathcal{H}$ is a $C^1$-map.

We are now able to prove that $\bar S_{\alpha}(t)$ satisfies the assumptions $(HA.3)$, 
$(HA.5.2)$, and $(HA.5.3)$.   We first prove the last part of assumption $(HA.3)$. 
 $\bar S_{\alpha}(t)$ is Lipschitz continuous, with a Lipschitz constant which is  uniform in $0 \leq t
\leq \tau_0$. The idea is that it is true if  $\vec v_{0,1}$ and 
$\vec v_{0,2}$ belong to $B_{ \mathcal{H}}(0, 2r)$ by \eqref{eq:AStricAux9}. If $\vec v_{0,2}\in B_{ \mathcal{H}}(0, 2r)$ and $\vec v_{0,1}\not \in B_{ \mathcal{H}}(0, 2r)$, we estimate the difference up to the first time $t_1\leq \tau_0$ when 
$\vec v_{1}(t)$ enters the ball $ B_{ \mathcal{H}}(0, 2r)$, and then apply the estimate proved in the first case up to time 
$\tau_0$. Finally, if both initial data are outside $B_{ \mathcal{H}}(0, 2r)$, we apply the linear estimates up to the first time when one solution enters $B_{ \mathcal{H}}(0, 2r)$ and then the estimate of the second case. As a consequence,  to conclude,
it remains to show that, if $ \| \vec v_{0,1}\|_{\mathcal{H}} \leq 2r$ and 
$\| \vec v_{0,2}\|_{\mathcal{H}} \geq 2r$ so that $ \| \vec v_2 (t) \|_{\mathcal{H}} \geq 2r$ for any $t \geq 0$, then $\vec v_1 - \vec v_2$ satisfies the estimate 
\eqref{eq:AStricAux9}. 
Using Proposition \ref{AStrichphi0}, the inequalities \eqref{Agy1y2}, 
\eqref{eq:AStricAux1}, and \eqref{eq:AStricAux5},  we obtain, for 
$0 \leq t \leq \tau_0$,
\begin{equation}
\label{eq:AStricAux12}
\begin{split}
&\|  \vec v_1 - \vec v_2\|_{Y} \cr
&\leq C(\alpha,\tau_0) \big[ \| \vec v_{0,1} - \vec v_{0,2} \|_{\mathcal{H}} +
\int _0^{\tau_0} \| h(\vec v_1(s)) ds \big] \cr
&\leq C(\alpha, \tau_0) \big[ \| \vec v_{0,1} - \vec v_{0,2} \|_{\mathcal{H}} +
 \int_0^{\tau_0}\|   g_0(v_1)
 \big(\chi \big(\frac{\| \vec v_1\|_{ \mathcal{H}}^2}{r^2}\big)
- \chi \big(\frac{\| \vec v_2\|_{ \mathcal{H}}^2}{r^2}\big) \big)\|_{L^2} ds \big]\cr
& \leq C(\alpha, \tau_0)  \| \vec v_{0,1} - \vec v_{0,2} \|_{\mathcal{H}} 
+  B_3~,
\end{split}
\end{equation}
 where $B_3$ had already been defined and used in \eqref{eq:AStricAux1}.
As before, the inequality \eqref{eq:AStricAux4} holds. Therefore, we deduce from the estimates \eqref{eq:AStricAux12}, \eqref{eq:AStricAux5} and the condition 
\eqref{eq:AStricAux6} that, for $0 \leq t \leq \tau_0$,
\begin{equation}
\label{eq:AStricAux13}
\begin{split}
\|  \vec v_1 - \vec v_2\|_{Y}
\leq C(\alpha,\tau_0) \| \vec v_{0,1} - \vec v_{0,2} \|_{\mathcal{H}} 
+ \frac{1}{4} \| \vec v_1 - \vec v_2\|_{Y}~.
\end{split}
\end{equation}
And thus the inequality  \eqref{eq:AStricAux9} holds. {}From all the above results, one 
infers that $\bar S_{\alpha}(t)$ is Lipschitz continuous and that
\begin{equation}
\label{eqLipschitzSbar}
  \sup_{0 \leq t \leq \tau_0} \hbox{ Lip }(\bar S_{\alpha}(t)) = D \leq \frac{16}{9} 
C^3(\alpha,\tau_0)~.
\end{equation}
Likewise, one shows that this estimate also holds for $-\tau_0, \leq t \leq 0$. Thus, Hypothesis $(HA.3)$ is satisfied.

\medskip

We next show that the hypotheses $(HA.5.2)$ and $(HA.5.3)$ hold. To this end, we set
\begin{equation}
\label{A:decompbar}
\begin{split}
\bar S_{\alpha}(\tau_0) &= \tilde{\Sigma}_{\alpha}(\tau_0) + R(\tau_0) \equiv L(\tau_0) + 
R(\tau_0) \cr
\bar S_{\alpha}(-\tau_0) &= \tilde{\Sigma}_{\alpha}(-\tau_0) + \tilde{R}(\tau_0) \equiv 
L(\tau_0)^{-1} + \tilde{R}(\tau_0)~.
\end{split}
\end{equation}
Let  $\vec v_0 \in \mathcal{H}$ and $\vec v(t) = \bar S_{\alpha}(t) \vec v_0$; then,  
$R(\tau_0)$ writes 
\begin{equation}
\label{eq:AR0}
R(\tau_0) = \int_0^{\tau_0}\tilde{\Sigma}_{\alpha}(t-s) (0, h(v(s)))^t ds~.
\end{equation}
To prove that the conditions \eqref{eq:RHcus},   \eqref{eq:RtildeHcsu}, and 
\eqref{eq:HA5dernier} hold, we will show that Lip$(R(\tau_0))$ and
 Lip$(\tilde{R}(\tau_0))$ go to zero as $r_0$ goes to zero (we will only show it for $R(\tau_0)$, since the proof is similar for 
$\tilde{R}(\tau_0)$).
To show this property, we are going back to the three cases considered above. If 
$ \vec v_{0,1}$ and $ \vec v_{0,2}$ belong to $B_{ \mathcal{H}}(0, 2r)$, then the estimates 
\eqref{eq:AStricAux1} to \eqref{eq:AStricAux9} imply that
\begin{equation}
\label{eq:ALipR1}
 \|R(\tau_{0})\vec v_{0,1} - R(\tau_{0}) \vec v_{0,2} \|_{Y}  \leq \frac{4}{3} 
K(r_0, \tau_0) C(\alpha,\tau_0) \| \vec v_{0,1} - \vec v_{0,2}\|_{ \mathcal{H}}~.
\end{equation}
The estimate \eqref{eq:AStricAux12} shows that the same property \eqref{eq:ALipR1}
holds if $\vec v_{0,1}$ belongs to $B_{ \mathcal{H}}(0, 2r)$ and $\vec v_{0,2}$ is so that  $ \| \vec v_2 (t) \|_{\mathcal{H}} \geq 2r$ for any $0 \leq t \leq \tau_0$. Finally, we remark that if 
$\vec v_{i} (t) \notin B_{ \mathcal{H}}(0, 2r)$, $i=1,2$, for $0 \leq t \leq \tau_0$, then 
$R(\tau_{0})\vec v_{0,1} - R(\tau_{0}) \vec v_{0,2}=0$. Combining all the above cases and using the estimate \eqref{eqLipschitzSbar}, we finally  obtain that, in every case,
\begin{equation}
\label{eq:ALipR1BIS}
 \|R(\tau_{0})\vec v_{0,1} - R(\tau_{0}) \vec v_{0,2} \|_{Y}  \leq \frac{16}{9} 
K(r_0, \tau_0) C^3(\alpha,\tau_0) \| \vec v_{0,1} - \vec v_{0,2}\|_{ \mathcal{H}}~.
\end{equation}
Since $K(r_0, \tau_0)$ goes to zero as $r_0$ goes to zero, Lip$(R(\tau_0))$ goes to zero as $r_0$ goes to zero and the condition \eqref{eq:RHcus} is satisfied provided $r_0$ is chosen small enough. Likewise the conditions  \eqref{eq:RtildeHcsu} and \eqref{eq:HA5dernier} hold, provided $r_0$ is chosen small enough. {} From now on, we fix  $r_0 >0$ small enough so that these conditions are satisfied and we choose $r=r_0$ in  \eqref{eq:Auphi0mod}. 
\medskip

We have seen that, for $r_0>0$ small enough, $\bar{S}_{\alpha}(t)$ satisfies the hypotheses of  Theorems  \ref{th:FoliaA2} and \ref{th:FoliaA4}. 
We can thus state the following result concerning the invariant manifolds of  
$\bar{S}_{\alpha}(t)$.  For the notations and definitions of the different invariant manifolds, we refer the reader to Appendix A below.\\
As in the assumption (HA.5.1), we denote by
 $P_{i}$  the spectral (continuous) projection associated to the spectral set 
$\sigma^{i}$  and let $\mathcal{H}_{rad,i}$ be the image 
$\mathcal{H}_{rad,i} = P_i \mathcal{H}_{rad}$, 
where $i= cu, cs , u, s, c$.

\begin{theorem} \label{AVarieteCenInst} Let $\alpha>0$ be fixed. \\
1) There exists a $C^1$ globally Lipschitz continuous map 
$g_{cu}: \mathcal{H}_{rad,cu} \to \mathcal{H}_{rad,s}$ so that the $C^1$ center unstable manifold $W^{cu}((0,0))$ of $\bar{S}_{\alpha}(t)$ at  $(0,0)$
$$
W^{cu}((0,0)) = \{\vec v_{cu} +g_{cu}(\vec v_{cu}) \, | \, \vec v_{cu} \in 
\mathcal{H}_{rad,cu} \}
$$ 
satisfies all the properties given in Theorem \ref{th:FoliaA1}. 

\smallskip

2) There exists a $C^1$ globally Lipschitz continuous map 
$g_{u}: \mathcal{H}_{rad,u} \to \mathcal{H}_{rad,cs}$ so that the $C^1$ (strongly) unstable manifold $W^{u}((0,0))$ of $\bar{S}_{\alpha}(t)$ at  $(0,0)$
$$
W^{u}((0,0)) = \{\vec v_{u} +g_{u}(\vec v_{u}) \, | \, \vec v_{u} \in 
\mathcal{H}_{rad,u} \}
$$ 
satisfies all the properties described in the statement (2) of  Theorem \ref{th:FoliaA4}.

\smallskip

3) Moreover, there exists a continuous mapping $\ell: \mathcal{H}_{rad} \times 
\mathcal{H}_{rad,s} \to \mathcal{H}_{rad,cu}$, such that, for any $\vec v 
\in \mathcal{H}_{rad}$, the  manifold $\mathcal{M}_{\vec v}= \{ \vec v + \ell( \vec v, \vec v_s) \, | \,  \vec v_s \in \mathcal{H}_{rad,s}\}$ satisfies all the properties in Theorem \ref{th:FoliaA2}. In particular, $\{ \mathcal{M}_{\vec \xi} \, | \, 
\vec \xi \in W^{cu}((0,0))\}$ is a foliation of $\mathcal{H}_{rad}$ over $W^{cu}((0,0))$.

\smallskip

4) In particular, there exist $\tilde{c} >1$, $0 <\rho_0 <1$, and, for any $\vec v_0 \in \mathcal{H}_{rad}$, a unique element 
$\vec \xi( \vec v_0) \in W^{cu}((0,0))$ such that, for $t \geq 0$,
 \begin{equation}
\label{eq:Aattracexp1}
\| \bar S_{\alpha}(t) \vec v_0 -  \bar S_{\alpha}(t) \vec \xi( \vec v_0) \|_{\mathcal{H}} \leq \tilde{c} \rho_0^t \| \vec v_0 - 
 \vec \xi( \vec v_0)\|_{\mathcal{H}}~.
\end{equation}
Moreover, the map $\vec v_0 \in \mathcal{H}_{rad} \mapsto \vec\xi( \vec v_0) \in 
W^{cu}((0,0))$ is continuous.

\smallskip

5) There exists a $C^1$ globally Lipschitz continuous map $g_{c}: \mathcal{H}_{rad,c} \to \mathcal{H}_{rad,s}\oplus \mathcal{H}_{rad,u}$ with $g_{c}(0)=0$, so that the center  manifold $W^{c}(0)$ of $\bar{S}_{\alpha}(t)$ at  $(0,0)$
$$ 
W^{c}((0,0)) = \{ x_c + g_{c}(x_c) \, | \, x_c \in \mathcal{H}_{rad,c}\} =
 W^{cu}((0,0)) \cap W^{cs}((0,0))
$$
satisfies  all the  properties given in statement (4) of Theorem \ref{th:FoliaA4}. 
\end{theorem}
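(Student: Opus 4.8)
The plan is to obtain Theorem~\ref{AVarieteCenInst} by a direct application of the abstract invariant-manifold, foliation and center-manifold theorems of Appendix~A (in the Chen--Hale--Tan formulation) to the time-$\tau_0$ map $L:=\bar S_\alpha(\tau_0)$; almost all the work has in fact already been done in the preceding part of this section, and what remains is to assemble the hypotheses and quote the abstract statements. First I would record that $\bar S_\alpha(t)$ is a globally defined flow on $\mathcal{H}_{rad}$ (Steps~1 and~2), of class $C^1$ in $(t,\vec v_0)$, and globally Lipschitz uniformly for $|t|\le\tau_0$ by \eqref{eqLipschitzSbar}; this is hypothesis $(HA.3)$. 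Next, I would invoke Lemma~\ref{lem:spectre2A} to get the spectral decomposition of $\sigma(L)=\sigma(\tilde\Sigma_\alpha(\tau_0))$ demanded by $(HA.5.1)$: the essential spectrum of $L$ sits in a disk of radius $\delta(\alpha,\tau_0)<1$, there are only finitely many eigenvalues outside the unit circle (the images under $\mu\mapsto e^{\mu\tau_0}$ of the finitely many negative, and possibly positive, eigenvalues of $\calL$), and $1$ is at most a simple eigenvalue; this produces the splitting $\mathcal{H}_{rad}=\mathcal{H}_{rad,u}\oplus\mathcal{H}_{rad,c}\oplus\mathcal{H}_{rad,s}$, the projections $P_i$, and the constants $C_1,C_2,\eta,\varepsilon$.

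Then, writing $\bar S_\alpha(\tau_0)=L+R(\tau_0)$ and $\bar S_\alpha(-\tau_0)=L^{-1}+\tilde R(\tau_0)$ as in \eqref{A:decompbar}, I would use \eqref{eq:ALipR1BIS} together with the choice of $r_0$ made in \eqref{eq:AStricAux6} to conclude that $\mathrm{Lip}(R(\tau_0))$ and $\mathrm{Lip}(\tilde R(\tau_0))$ can be made as small as wished by shrinking $r_0$, which secures $(HA.5.2)$ and $(HA.5.3)$. Having fixed $r_0$ small, I would then read off: statement~(1) from Theorem~\ref{th:FoliaA1} (the $C^1$ center-unstable manifold $W^{cu}((0,0))=\mathrm{graph}(g_{cu})$, automatically invariant under the full flow since $\bar S_\alpha(t)$ commutes with $L$ and $W^{cu}$ is characterized dynamically); statement~(2) from part~(2) of Theorem~\ref{th:FoliaA4} (the strongly unstable manifold $W^u((0,0))=\mathrm{graph}(g_u)$, finite-dimensional because $\calL$ has finitely many eigenvalues); statement~(3) from Theorem~\ref{th:FoliaA2} (the negatively invariant foliation $\{\mathcal{M}_{\vec v}\}$ over $W^{cu}$, with leaves depending continuously on the base point); statement~(4) by combining the contraction rate of $R$ along the leaves with the spectral gap of $L$, which yields $\tilde c>1$, $0<\rho_0<1$ and the continuous asymptotic-phase map $\vec v_0\mapsto\vec\xi(\vec v_0)$, the single intersection point of the leaf through $\vec v_0$ with $W^{cu}((0,0))$; and statement~(5) from part~(4) of Theorem~\ref{th:FoliaA4} (the $C^1$ center manifold $W^c((0,0))=W^{cu}((0,0))\cap W^{cs}((0,0))=\mathrm{graph}(g_c)$ with $g_c(0)=0$).

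The hard part is not the construction of the manifolds, which is entirely encapsulated in Appendix~A, but verifying that the abstract hypotheses hold with the right quantitative rates — in particular, that $g_{cu},g_u,g_c$ are $C^1$ and not merely Lipschitz. This needs, beyond smallness of $\mathrm{Lip}(R(\tau_0))$, a spectral-gap (``rate'') condition comparing $\mathrm{Lip}(R(\tau_0))$ to the separation between the spectral pieces of $L$ on and around the unit circle; since $\mathrm{Lip}(R(\tau_0))\to0$ as $r_0\to0$ while these gaps are fixed once $\tau_0$ is fixed, the condition is met by taking $r_0$ small enough. A second point requiring care is the passage from the discrete iteration $\{L^n\}$ to the continuous flow: one must check that the manifolds and foliation built for $L$ are invariant under $\bar S_\alpha(t)$ for every real $t$ — which follows from the group property of $\bar S_\alpha$ and the uniqueness clauses of the abstract theorems — and that the attraction estimate \eqref{eq:Aattracexp1}, a priori valid only along $t\in\tau_0\mathbb{N}$, upgrades to all $t\ge0$ by interpolation using the uniform Lipschitz bound \eqref{eqLipschitzSbar}.
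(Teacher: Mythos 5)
Your proposal is correct and follows essentially the same route as the paper: the paper's proof of Theorem~\ref{AVarieteCenInst} consists precisely of the preceding Steps~1 and~2 of Section~\ref{sec:IMforKG}, which verify $(HA.3)$, $(HA.5.1)$--$(HA.5.3)$ for the localized flow $\bar S_\alpha(t)$ via the decomposition \eqref{A:decompbar} and the smallness of $\mathrm{Lip}(R(\tau_0))$, $\mathrm{Lip}(\tilde R(\tau_0))$ as $r_0\to0$, after which the five statements are read off from Theorems~\ref{th:FoliaA1}, \ref{th:FoliaA2} and \ref{th:FoliaA4}. Your additional cautions (the $C^1$ regularity clauses, and the passage from the discrete iterates of $L$ to the continuous flow with the attraction estimate upgraded by the uniform Lipschitz bound \eqref{eqLipschitzSbar}) are exactly the points the paper delegates to the Chen--Hale--Tan framework recalled in Appendix~A.
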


Let us go back to the ``actual''  variable $\vec u= \vec v + (\varphi_0, 0)^t$. We set
$$
S^*_{\alpha}(t) \vec u_0= (\varphi_0,0)^t + \bar{S}_{\alpha}(t) 
(\vec u_0  - (\varphi_0,0))~.
$$ 
Then the invariant manifolds of $S_{\alpha}^*(t)$ are defined by
\begin{equation}
\label{eq:Wi}
W^{i *}((\varphi_0,0)) = (\varphi_0,0)^t + W^{i}((0,0))~, i=c
u, c, u, s~.
\end{equation}

 \begin{remark} \label{rem:SEgalSbarre}
We emphasize that the proof given in Step 1 above shows that if, for example, $r=r_0$,
$m= (8 C(\alpha,\tau_0))^{-1}$, and $\| \vec u_0\|_{ \mathcal{H}} \leq mr_0$,  then, for $0 \leq t \leq \tau_0$, 
$$
 \| \bar{S}_{\alpha}(t) \vec u_0\|_{Y} \leq r_0/2~,
 $$
 which implies that, for $0 \leq t \leq \tau_0$, $\bar{S}_{\alpha}(t) \vec u_0 
 = S_{\alpha}(t) \vec u_0$. In other terms, if $\vec u_0$ belongs to the 
 ball  $B_{ \mathcal{H}_{rad}}((\varphi_0,0),r_1)$ of center 
 $(\varphi_0,0)$ and radius $r_1 \leq (8 C(\alpha,\tau_0))^{-1}r_0$, then 
 $S^*_{\alpha}(t) \vec u_0 = S_{\alpha}(t) \vec u_0$.  This allows one to define the local invariant manifolds $W^i_{loc}((\varphi_0,0))$  of $S_{\alpha}(t)$ about $(\varphi_0,0)$ as
 \begin{equation}
\label{eq:4Wiloc}
W^i_{loc}((\varphi_0,0)) = W^{i *}((\varphi_0,0)) \cap 
 B_{ \mathcal{H}_{rad}}((\varphi_0,0),r_1) ~, i=cu, c, u, s~.
\end{equation}
 \end{remark}

\begin{remark} \label{prop:AVarieteInst}
1) In the above theorem, $\mathcal{M}_0$ coincides with the (strongly) stable manifold 
$\tilde{W}^{s}((0,0))$. 
2) If $\ker(\calL)= \{0\}$, then the center unstable manifold $W^{cu}((0,0))$ coincides with the unstable manifold $W^{u}((0,0))$ of $(0,0)$, while $\mathcal{M}_0$ coincides with the stable manifold $W^{s}((0,0))$. 
\end{remark}

\begin{remark} \label{prop:AVarietealpha0} 
In the case where $\alpha=0$, we can also apply Theorems  \ref{th:FoliaA1} and 
\ref{th:FoliaA2} below in order to prove the existence of the strong unstable manifold and the existence of a center stable manifold around any equilibrium point of \ref{KGalpha} as well as the existence of a foliation of $\mathcal{H}_{rad}$ over the unstable manifold. This gives an alternative proof to the construction of a center stable manifold, by the Hadamard method  in \cite{NaS} (for more details, see \cite{BRS2}). 
\end{remark}

%%%%%%%%%%%%%%%%%%%%%%%%%%%%%%%%%%%%%%%%%%
%%%%%%%%%%%%%%%%%%%%%%%%%%%%%%%%%%%%%%%%%%%%%
%\setcounter{section}{0}
%\renewcommand{\thesection}{{\Alph{section}}}

\appendix
\section{Global invariant manifolds and foliations by the Lyapunov-Perron method}
\label{sec:varietes}

 In this appendix, we recall the basic properties of  invariant manifold theory 
that we applied to the equation \ref{KGalpha} in Section \ref{sec:IMforKG}. We reproduce the theorems of Chen, Hale and Tan about global invariant manifolds and foliations as given in \cite{ChHaBT}.
 For classical results on invariant manifolds, we also refer the reader to the books \cite{Carr}, \cite{Henry}, \cite{HPS77}, and \cite{Palis-Melo} for example as well as to \cite{BaJon} and  to \cite{ChowLL}.
\vskip 3mm

Let $X$ be a Banach space with norm $\| \cdot\|_{X}$ and $S(t) : X \to X$ be a non-linear semigroup, satisfying the following hypotheses:
\begin{description}
\item [{\bf (HA.1)} ]$S(.).: (t,x) \in [0, + \infty) \times X \mapsto S(t)x \in X$ is continuous and there exists a constant $\tau_0 >0$ such that, 
$$
 \sup_{0 \leq t \leq \tau_0} \mathrm{Lip}(S(t)) =D < +\infty~.
$$

\item [{\bf (HA.2)}] There exists $\tau$, $0 < \tau \leq \tau_0$ such that $S(\tau)$ can be decomposed as 
$$
 S(\tau) = L + R ~, 
 $$
 where $L: X \to X$ is a bounded linear operator and $R : X \to X$ is a global Lipschitz continuous map, satisfying the following properties.

\item [{\bf (HA.2.1)}] There are subspaces $X_{i}$, $i=1, 2$, of $X$ and continuous projections 
$P_{i}: X \to X_{i}$ such that $P_1 + P_2 = I$, $X = X_{1} \oplus X_{2}$, $L$ leaves $X_{i}$, $i =1, 2$, invariant and $L$ commutes with  $P_{i}$, $i =1, 2$. The restrictions $L_{i}$ of $L$ to $X_{i}$ satisfy the following properties. The map $L_1$ has a bounded inverse and there exist constants $ 0 \leq \beta_2 < \beta_1$, $C_i \geq 1$, $i =1, 2$, such  that,
for $k \geq 0$,
\begin{equation}
\label{eq:AL1L2}
\begin{split}
\| L_1^{-k}P_1 \|_{L(X,X)} \leq C_1\beta_1^{-k}~, \cr
\| L_2^{k}P_2\|_{L(X,X)} \leq C_2 \beta_2^{k}~.
\end{split}
\end{equation}

\item [{\bf (HA.2.2)}] The maps $L$ and $R$ satisfy the condition 
\begin{equation}
\label{eq:ALRinf1}
\frac{(\sqrt{C_1} + \sqrt{C_2})^2}{\beta_1 - \beta_2} \mathrm{Lip}(R) < 1~.
\end{equation}
\end{description}

 Chen, Hale and Tan considered the following quantity, for $\gamma \in (\beta_2,\beta_1)$,
 \begin{equation}
\label{eq:Alambda}
 \lambda(\gamma) = \frac{C_1}{\beta_1 - \gamma} + \frac{C_2}{\gamma - \beta_2}~.
\end{equation}
 A short computation shows that, under the condition \eqref{eq:ALRinf1}, there exist 
 $\gamma_i$, $i=1, 2$, with $\beta_2 < \gamma_2 <\gamma_1< \beta_1$ such that,
 \begin{equation}
 \label{AChoixgamma}
 \lambda(\gamma_1) \mathrm{Lip}(R)=  \lambda(\gamma_2) \mathrm{Lip}(R)=1~, 
 \hbox{ and } \lambda(\gamma) \mathrm{Lip}(R) <1~, \quad \forall \gamma \in (\gamma_2, \gamma_1)~.
\end{equation}
In the trivial case, where $\mathrm{Lip}(R) =0$, one sets $\gamma_1=\beta_1$ and $\gamma_2=\beta_2$.

We are now able to state the first theorem, concerning the existence of an invariant  manifold, which is a graph over $X_1$. 

\begin{theorem} \label{th:FoliaA1}
Assume that the hypotheses (HA.1), (HA.2) hold and that $R(0)=0$. Then there exists a globally Lipschitz  map $g: X_1 \to X_2$ with $g(0)=0$, and 
\begin{equation}
\label{eq:Lipg}
\mathrm{Lip}(g) \leq \min_{\gamma_2 \leq \gamma \leq \gamma_1} \frac{C_1 C_2  \mathrm{Lip}(R)
 \gamma} {\beta_1 (\gamma - \beta_2) (1 -\lambda(\gamma) \mathrm{Lip}(R))}~,
\end{equation} 
so that the Lipschitz  submanifold 
$$ 
G = \{ x_1 + g(x_1) \, | \, x_1 \in X_1\}
$$
satisfies the following properties: 
\begin{description}
\item[(i)] \textrm{(Invariance)} The restriction to $G$ of the semi-flow $S(t)$, $t \geq 0$, can be extended to a Lipschitz continuous flow on $G$. In particular, $S(t)G=G$, for any 
$t \geq 0$, and for any $\xi \in G$, there exists a unique negative semi-orbit $u(t) \in G$ of $S(.)$, $t \leq 0$, so that $u(0)=\xi$. 

\item[(ii)] \textrm{(Lyapunov exponent)} If a negative semi-orbit  $u(t)$, $ t\leq 0$, of 
$S(.)$ is contained in $G$, then,
\begin{equation}
\label{eq:ALyExp1}
\limsup_{t \to -\infty} \frac{1}{|t|}\ln |u(t)| \leq - \frac{1}{\tau}\ln \gamma_1~.
\end{equation}
Conversely, if a negative semi-orbit  $u(t)$, $ t\leq 0$, of $S(.)$ is contained in $X$ satisfies
\begin{equation}
\label{eq:ALyExp2}
\limsup_{t \to -\infty} \frac{1}{|t|}\ln |u(t)| < - \frac{1}{\tau}\ln \gamma_2~.
\end{equation}
then, it is contained in $G$.
\item[(iii)] \textrm{(Smoothness)} If the map $S(\tau): X \to X$ is of class $C^1$, then $g: X_1 \to X_2$ is of class $C^1$, that is, $G$ is a $C^1$-submanifold of $X$. 
\end{description}
\end{theorem}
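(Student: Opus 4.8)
The plan is to obtain Theorem~\ref{th:FoliaA1} by the Lyapunov-Perron method applied to the discrete dynamical system generated by the time-$\tau$ map, following Chen, Hale and Tan~\cite{ChHaBT}; we only indicate the structure of the argument. Set $T=S(\tau)=L+R$, and recall from \eqref{AChoixgamma} that, under \eqref{eq:ALRinf1}, one may fix $\gamma$ with $\gamma_2<\gamma<\gamma_1$ and $\lambda(\gamma)\mathrm{Lip}(R)<1$, where $\lambda(\gamma)$ is as in \eqref{eq:Alambda}. For $\xi\in X_1$ one looks for a full backward orbit $(x_n)_{n\le 0}$ of $T$ with $P_1x_0=\xi$ inside the weighted space $\Sigma_\gamma=\{(x_n)_{n\le 0}:\|x\|_\gamma:=\sup_{n\le 0}\gamma^{-n}|x_n|_X<\infty\}$. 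Projecting the orbit relation $x_{n+1}=Lx_n+R(x_n)$ with $P_1,P_2$ and using that $L_1$ is invertible while $L_2$ contracts, such an orbit must solve the Lyapunov-Perron equation
\[
x_n=L_1^{\,n}\xi-\sum_{j=n}^{-1}L_1^{\,n-1-j}P_1R(x_j)+\sum_{j=-\infty}^{n-1}L_2^{\,n-1-j}P_2R(x_j),\qquad n\le 0,
\]
with the middle sum empty for $n=0$; the two geometric tails converge in $\Sigma_\gamma$ precisely because $\beta_2<\gamma<\beta_1$, by \eqref{eq:AL1L2}.

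Denote the right-hand side by $(\mathcal{T}_\xi x)_n$. Using \eqref{eq:AL1L2} and summing the geometric series, one finds $\|\mathcal{T}_\xi x-\mathcal{T}_\xi y\|_\gamma\le\lambda(\gamma)\mathrm{Lip}(R)\|x-y\|_\gamma$, so $\mathcal{T}_\xi$ is a contraction on $\Sigma_\gamma$ and admits a unique fixed point $x(\xi)$; moreover $\xi\mapsto x(\xi)$ is Lipschitz with a constant controlled by $C_1/(\beta_1-\gamma)$ and $(1-\lambda(\gamma)\mathrm{Lip}(R))^{-1}$. Evaluating the fixed-point equation at $n=0$ gives $P_1x_0(\xi)=\xi$, so one sets $g(\xi)=P_2x_0(\xi)=\sum_{j=-\infty}^{-1}L_2^{\,-1-j}P_2R(x_j(\xi))$; then $g:X_1\to X_2$ is globally Lipschitz, and optimizing the resulting bound over $\gamma\in[\gamma_2,\gamma_1]$ produces exactly \eqref{eq:Lipg}. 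Since $R(0)=0$, the zero sequence is the fixed point for $\xi=0$, hence $g(0)=0$. Invariance $S(\tau)G=G$ and the existence of a negative semi-orbit through each point of $G$ follow by shifting orbits in the fixed-point equation; the Lyapunov exponent dichotomy \eqref{eq:ALyExp1}--\eqref{eq:ALyExp2} is read off from membership in $\Sigma_{\gamma_1}$ versus $\Sigma_{\gamma_2}$ together with uniqueness of the backward orbit in the relevant weighted space. To pass from the time-$\tau$ map to the full semiflow, one uses that $S(t)$ commutes with $T=S(\tau)$ for $t\ge 0$, together with the uniform Lipschitz bound (HA.1) on $[0,\tau_0]$, so that $S(t)G$ is again a $T$-invariant Lipschitz graph over $X_1$ for each such $t$; uniqueness of such a graph then forces $S(t)G=G$ for all $t\ge 0$, and this gives the flow extension on $G$.

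For statement (iii), when $T=S(\tau)$ is $C^1$ one invokes the fiber contraction theorem: differentiating the fixed-point equation formally in $\xi$ yields, over the already-constructed fixed point $x(\xi)$, a new uniform contraction on a space of candidate derivatives; its unique fixed point is identified with $Dx(\xi)$ by a standard telescoping and uniform-convergence argument, and then $g=P_2\circ(\text{evaluation at }n=0)$ inherits $C^1$ regularity. The two genuinely delicate points are the bookkeeping that produces the sharp Lipschitz constant \eqref{eq:Lipg} — all of $C_1,C_2,\beta_1,\beta_2$ must be tracked through the geometric sums — and the fiber-contraction argument for $C^1$ smoothness, which is where most of the work in \cite{ChHaBT} lies; the remaining verifications (invariance, the exponential dichotomy, and the discrete-to-continuous passage) are routine. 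We refer to \cite{ChHaBT} for the complete details.
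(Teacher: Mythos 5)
Your sketch is correct and follows exactly the route the paper itself indicates: this theorem is quoted from Chen, Hale and Tan \cite{ChHaBT}, and the paper's only ``proof'' is the remark that the result is obtained by the Lyapunov--Perron method applied first to the time-$\tau$ map and then transferred to the continuous semiflow, which is precisely the structure of your argument (weighted backward-sequence space, contraction with constant $\lambda(\gamma)\mathrm{Lip}(R)$, fiber contraction for $C^1$ smoothness). Your explicit deferral of the sharp constant \eqref{eq:Lipg} and of the fiber-contraction details to \cite{ChHaBT} matches the paper's own treatment, so nothing further is needed.
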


The second theorem states the existence of a foliation of $X$ over the invariant manifold 
$G$. 

\begin{theorem} \label{th:FoliaA2}
Assume that the hypotheses (HA.1), (HA.2) hold and that $R(0)=0$. Then, there exists an invariant foliation of $X$ over $G$ as follows.
\begin{description}
\item[(i)] \textrm{(Invariance)} There exists a continuous mapping $\ell: X \times X_2 \to X_1$ such that, for any $\xi \in G$, $\ell(\xi,P_2\xi)=P_1 \xi$ and the manifold 
$\mathcal{M}_{\xi}= \{x_2 + \ell(\xi,x_2) \, | \, x_2 \in X_2 \}$ passing through $\xi$ satisfies:
\begin{equation}
\label{eq:AfoliaInv}
S(t) \mathcal{M}_{\xi} \subset \mathcal{M}_{S(t)\xi}~, \quad t \geq 0~,
\end{equation}
and
 \begin{equation}
\label{eq:AfoliaLya}
  \mathcal{M}_{\xi}  = \{ y \in X\, | \, \limsup_{t \to \infty} \frac{1}{t} 
  \ln |S(t)y - S(t)\xi |  \leq \frac{1}{\tau} \ln \gamma_2 \} ~.
 \end{equation}
Moreover, the map $\ell: X \times X_2 \to X_1$ is uniformly Lipschitz continuous  in the $X_2$ direction.
\item[(ii)] \textrm{(Completeness)} Suppose in addition that
\begin{equation}
\label{eq:AgamRinf1}
\begin{split}
\big[\min_{\gamma_2 \leq \gamma \leq \gamma_1} \frac{C_1 C_2  \mathrm{Lip}(R) }
{ (\beta_1 -\gamma) (1 -\lambda(\gamma) \mathrm{Lip}(R))}    \big] 
\cdot
 \big[\min_{\gamma_2 \leq \gamma \leq \gamma_1} \frac{C_1 C_2  \mathrm{Lip}(R)
 \gamma} {\beta_1 (\gamma - \beta_2) (1 -\lambda(\gamma) \mathrm{Lip}(R))} \big] <1.
\end{split}
\end{equation}
Then, for any $x \in X$, $\mathcal{M}_x \cap G$ consists of a single point. In particular,
\begin{equation}
\label{eq:AfoliaProp}
\mathcal{M}_{\xi} \cap \mathcal{M}_{\eta} = \emptyset~, \quad \forall \xi, \eta \in G~, \quad \quad 
X = \bigcup_{\xi \in G} \mathcal{M}_{\xi}~. 
\end{equation}
In other terms, $\{\mathcal{M}_{\xi} \, | \, \xi \in G\}$ is a foliation of $X$ over $G$.\\
Moreover, the mapping $x \in X \mapsto \xi(x)= \mathcal{M}_x \cap G$ is a continuous map from $X$ into $G \subset X$. 

\item[(iii)] \textrm{(Smoothness)} If the map $S(\tau): X \to X$ is of class $C^1$, then 
$\ell: X \times X_2 \to X_1$ is of class $C^1$ in the $X_2$ direction. Hence, $\mathcal{M}_{\xi}$ is a $C^1$-submanifold of $X$, for any $\xi \in G$.
\end{description}
\end{theorem}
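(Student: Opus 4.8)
The plan is to construct the foliation by the Lyapunov–Perron method applied to the time‑$\tau$ map $T := S(\tau) = L + R$, in the same spirit as Theorem~\ref{th:FoliaA1} but now tracking \emph{forward} orbits that are exponentially asymptotic to a prescribed base orbit. First I would reduce to a difference equation: fix $\xi\in X$, set $\bar x(n):=T^{n}\xi$, and for any other orbit $x(n)=T^{n}y$ observe that the difference $w(n):=x(n)-\bar x(n)$ solves $w(n+1)=L\,w(n)+R_{n}(w(n))$, where $R_{n}(w):=R(\bar x(n)+w)-R(\bar x(n))$ satisfies $R_{n}(0)=0$ and $\mathrm{Lip}(R_{n})\le\mathrm{Lip}(R)$. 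Splitting $w=w_{1}+w_{2}$ with $w_{i}=P_{i}w$ and demanding $\sup_{n}\gamma^{-n}\|w(n)\|_{X}<\infty$ for some $\gamma\in(\gamma_{2},\gamma_{1})$, I invert the linear part: the $X_{2}$‑component is propagated forward from $w_{2}(0)$, while the $X_{1}$‑component is recovered by summing \emph{backward} from $n=\infty$ (legitimate since $\|L_{1}^{-k}P_{1}\|\le C_{1}\beta_{1}^{-k}$ and $\gamma<\beta_{1}$). This gives the fixed‑point equation
\[
(\mathcal T w)(n)=L_{2}^{\,n}w_{2}(0)+\sum_{k=0}^{n-1}L_{2}^{\,n-1-k}P_{2}R_{k}(w(k))-\sum_{k=n}^{\infty}L_{1}^{\,n-1-k}P_{1}R_{k}(w(k))
\]
on the Banach space $\mathcal B_{\gamma}:=\{w:\mathbb N\to X\ |\ \|w\|_{\gamma}:=\sup_{n}\gamma^{-n}\|w(n)\|_{X}<\infty\}$, with free parameters $\xi$ (through the $R_{k}$) and $w_{2}(0)\in X_{2}$.

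Next I would run the contraction: the geometric bounds in $(HA.2.1)$ give $\|\mathcal Tw-\mathcal Tw'\|_{\gamma}\le \lambda(\gamma)\,\mathrm{Lip}(R)\,\|w-w'\|_{\gamma}$ with $\lambda(\gamma)$ essentially as in \eqref{eq:Alambda}, and by \eqref{AChoixgamma} this is $<1$ for $\gamma\in(\gamma_{2},\gamma_{1})$, so $\mathcal T$ has a unique fixed point $w=w(\cdot\,;\xi,w_{2}(0))$, Lipschitz in $w_{2}(0)$ and continuous in $\xi$ with constants uniform in $\xi$ (since $\mathrm{Lip}(R_{k})\le\mathrm{Lip}(R)$). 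I then define $\ell(\xi,\cdot):X_{2}\to X_{1}$ by $\ell(\xi,x_{2}):=P_{1}\xi+P_{1}w(0;\xi,x_{2}-P_{2}\xi)$, so that $\mathcal M_{\xi}:=\{x_{2}+\ell(\xi,x_{2})\ |\ x_{2}\in X_{2}\}$ is the graph over $X_{2}$ through $\xi$; the bound on $\mathrm{Lip}$ of $\ell$ in the $X_{2}$‑direction comes directly from the contraction estimate, and $\ell(\xi,P_{2}\xi)=P_{1}\xi$ because $w\equiv0$ is the fixed point when $y=\xi$.

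Then the listed properties. Invariance \eqref{eq:AfoliaInv} is immediate from uniqueness of the fixed point: shifting the base orbit $\bar x(\cdot)$ to $\bar x(\cdot+m)$ and $w$ to $w(\cdot+m)$ maps fixed points to fixed points. The characterization \eqref{eq:AfoliaLya} follows in one direction because any $y\in\mathcal M_{\xi}$ yields $w\in\mathcal B_{\gamma}$, hence $\limsup\le\frac1\tau\ln\gamma$ for every $\gamma>\gamma_{2}$; in the other direction, any orbit with $\limsup_{n}\frac1n\ln\|w(n)\|\le\frac1\tau\ln\gamma_{2}$ lies in $\mathcal B_{\gamma}$ for $\gamma\in(\gamma_{2},\gamma_{1})$ and so equals the constructed leaf orbit by uniqueness; the passage from the discrete map $T$ to the continuous semiflow $S(t)$ uses $(HA.1)$, which controls $\sup_{0\le s\le\tau}\mathrm{Lip}(S(s))$ and interpolates the rate estimates between integer multiples of $\tau$. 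For completeness (ii): $G$ is a graph over $X_{1}$ with Lipschitz constant bounded as in \eqref{eq:Lipg}, $\mathcal M_{x}$ is a graph over $X_{2}$ with Lipschitz constant as in the first bracket of \eqref{eq:AgamRinf1}, and their intersection is the (unique) fixed point of the composition of the two graph maps, a contraction precisely when the product of the two constants is $<1$, i.e. under \eqref{eq:AgamRinf1}; this yields $|\mathcal M_{x}\cap G|=1$, the partition \eqref{eq:AfoliaProp}, and continuity of $x\mapsto\xi(x)$. Smoothness (iii) follows from the fiber contraction theorem applied to the pair (fixed‑point map, formal derivative), upgrading the Lipschitz dependence of $w(0;\xi,x_{2})$ on $x_{2}$ to $C^{1}$, hence $\ell(\xi,\cdot)\in C^{1}$.

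I expect the main obstacle to be the boundary nature of the rate: the contraction only holds for $\gamma$ strictly inside $(\gamma_{2},\gamma_{1})$, with $\lambda(\gamma_{2})\mathrm{Lip}(R)=1$, so the leaf characterization \eqref{eq:AfoliaLya} can only be phrased with the limiting rate "$\le\gamma_{2}$" in the $\limsup$ sense rather than a clean bound $\|w(n)\|\le C\gamma_{2}^{n}$; reconciling the two inclusions in \eqref{eq:AfoliaLya} forces one to work simultaneously at all rates $\gamma\downarrow\gamma_{2}$ and use that the fixed point is independent of $\gamma$. The second delicate point is the passage from the time‑$\tau$ map to the genuine flow and checking consistency of the leaves at non‑integer multiples of $\tau$, which is exactly where hypothesis $(HA.1)$ is used.
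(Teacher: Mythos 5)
Your proposal is correct and follows essentially the same route as the paper: the paper does not reprove this theorem but cites Chen--Hale--Tan, whose argument is precisely the Lyapunov--Perron fixed-point construction for the time-$\tau$ map (with the contraction constant $\lambda(\gamma)\,\mathrm{Lip}(R)$ for $\gamma\in(\gamma_2,\gamma_1)$) followed by the passage back to the continuous semiflow via (HA.1). Your treatment of part (ii) — realizing $\mathcal{M}_x\cap G$ as the fixed point of the composition of the two graph maps, which is a contraction exactly under \eqref{eq:AgamRinf1}, and invoking the uniform contraction principle for continuity of $x\mapsto\xi(x)$ — coincides with the only portion of the proof the paper spells out.
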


\noindent {\bf Comments on the proof of Theorems \ref{th:FoliaA1} and \ref{th:FoliaA2}:}\\
Theorems \ref{th:FoliaA1} and \ref{th:FoliaA2} are proved in \cite{ChHaBT} by first showing the corresponding results for the map $S(\tau)$ and at the end  coming back to the continuous dynamical system. This means that Theorems \ref{th:FoliaA1} and \ref{th:FoliaA2} still hold for iterates of maps $S(\tau)$. It suffices to replace $t \in \R$ by $n\tau$, $n \in \mathbb{N}$. Theorems \ref{th:FoliaA1} and \ref{th:FoliaA2} are proved in
\cite{ChHaBT} by using the Lyapunov-Perron method. 
\vskip 1mm

The property that the mapping $x \in X \mapsto \xi(x) = \mathcal{M}_x \cap G$  is a continuous map from $X$ into $G \subset X$ is not stated in the main Theorem 1.1 of
\cite{ChHaBT}. It is merely a consequence of the proof of \cite[Lemma 3.4]{ChHaBT}. Indeed, given $x \in X$, the intersection points $\xi(x)$ of $\mathcal{M}_x$ with $G$ are the solutions of 
 \begin{equation}
\label{eq:xix}
\xi(x) \equiv y_2 + \ell(x,y_2) = \ell(x,y_2) + g(\ell(x,y_2))~,
\end{equation}
where $y_2 \in X_2$.  This leads to study the fixed points of the map $F_x(y_2) \equiv F(x, y_2) = g(\ell(x,y_2))$, depending on the parameter $x \in X$. One can check that the condition \eqref{eq:AgamRinf1} implies that $F_x: X_2 \to X_2$ is a strict contraction and therefore has a unique fixed point $y_2(x)$. The continuity property of $y_2(x)$ with respect to $x \in X$ is a direct consequence of the continuity of $F$ with respect to the variable 
$x \in X$ and of the {\em uniform contraction principle} (see 
\cite[Theorem 2.2 on Page 25]{ChowHale}). 
 It follows that $\xi(x) = y_2(x) + \ell(x, y_2(x)) \in G$ is also continuous with respect to $x \in X$.

\begin{remark} \label{rem:A3}
 If the equilibrium point $0$ of $S(.)$ is hyperbolic, then we may choose 
$\beta_2 < 1 <\beta_1$. In this case, $G$ is the classical unstable manifold $W^{u}(0)$ and $M_{\xi}$, $\xi \in G$, defines an invariant foliation of $X$ over $W^{u}(0)$, with 
$\mathcal{M}_0$ being the classical stable manifold $W^{s}(0)$. And the solutions on 
$\mathcal{M}_0$ decay exponentially to $0$, as $t$ goes to $+\infty$. \\
 If $0$ is a non-hyperbolic equilibrium point and $\beta_2  <\beta_1 < 1$ with $\beta_1$ close to $1$, then Theorems 
\ref{th:FoliaA1} and \ref{th:FoliaA2} allow for the construction of the center-unstable manifold 
$G = W^{cu}(0)$ of $0$ and a foliation over it.  If $0$ is a non-hyperbolic equilibrium point and $1 <\beta_2  <\beta_1$ with $\beta_2$ close to $1$, then Theorems 
\ref{th:FoliaA1} and \ref{th:FoliaA2} give the strongly unstable manifold
$G = W^{su}(0)$ of $0$ and a foliation over it. If $\gamma_2 <1$, the existence of the foliation implies that each positive semi-orbit of $S(t)$ converges exponentially to an orbit of $G$ and is synchronized with this orbit in time. This property is often called ``attraction" of $G$ with asymptotic phase". \\
We emphasize that the construction in Theorems \ref{th:FoliaA1} and \ref{th:FoliaA2}
is also interesting in the case where $S_{\alpha}(.)$ depends on a parameter $\alpha$ and  
$\beta_2(\alpha)< 1  <\beta_1(\alpha)$ with $\beta_2(\alpha)$ arbitrarily close to $1$ as $\alpha$ converges  say to $\alpha_0=0$. 
\end{remark}

\medskip
Mutatis mutandis, repeating the  arguments of the proofs of Theorems 
\ref{th:FoliaA1} and \ref{th:FoliaA2}, one can also show the existence of a 
Lipschitz  manifold $\tilde{G}= \{ x_2 + \tilde{g}(x_2) \, | \, x_2 \in X_2\}$ where 
$\tilde{g}: X_2 \to X_1$  is a globally Lipschitz  map  with $\tilde{g}(0)=0$, such that 
$\tilde{G}$ is  invariant and such that, if a semi-orbit $u(t)$, $ t\geq 0$, of $S(.)$ is contained in 
$\tilde{G}$, then,
\begin{equation}
\label{eq:ALyExp1tilde}
\limsup_{t \to \infty} \frac{1}{t}\ln |u(t)| \leq  \frac{1}{\tau}\ln \tilde{\gamma}_2^{-1}~,
\end{equation}
where $\beta_2 < \tilde{\gamma}_2^{-1} < \tilde{\gamma}_1^{-1} < \beta_1$ is made more precise below, 
and also the existence of a foliation $\tilde{\mathcal{M}}_{\xi}$ (in reverse time) of $X$ 
over $\tilde{G}$. 

If $S(t)$ is a non-linear group, these properties can be proved by reversing the time in 
Theorems \ref{th:FoliaA1} and \ref{th:FoliaA2}. In Section~\ref{sec:core}, the existence of a center manifold  played an important role. 
We can derive this existence by defining the center manifold as the intersection of the center stable and center unstable manifolds. The center stable manifold is constructed like the Lipschitz  manifold $\tilde{G}= \{ x_2 + \tilde{g}(x_2) \, | \, x_2 \in X_2\}$ described above. Since throughout the paper we are only dealing with groups,  we will quickly show the existence of  $\tilde{G}$ by reversing the time in Theorem \ref{th:FoliaA1}. The constants appearing in the proof below are maybe not optimal, but we are not looking here for optimality.

\medskip

In addition to the hypothesis (HA.2), we assume now that 
\begin{description}
\item [{\bf (HA.3)} ] $S(.).: (t,x) \in (-\infty, + \infty) \times X \mapsto S(t)x \in X$ is continuous and there exists a constant $\tau_0 >0$ such that, 
$$
 \sup_{-\tau_0 \leq t \leq \tau_0} \mathrm{Lip}(S(t)) =D < +\infty~.
$$

\item [{\bf (HA.4)}] $S(-\tau)$ can be decomposed as 
$$
 S(-\tau) = L ^{-1}+ \tilde {R}  ~, 
 $$
 where $\tau$ and $L: X \to X$ have been introduced in the hypothesis (HA.2) and 
 where $\tilde{ R} : X \to X$ is a global Lipschitz continuous map, satisfying the following property:
\begin{equation}
\label{eq:RtildeH}
\frac{(\sqrt{C_1} + \sqrt{C_2})^2}{\beta_1 - \beta_2} \beta_1 \beta_2
\mathrm{Lip}(\tilde{R}) < 1~.
\end{equation}
\end{description}

We remark that the linear map $L^{-1}$ satisfies the hypothesis (HA.2.1) with $P_1$ (resp.\ 
$P_2$) replaced by $P_2$ (resp.\   $P_1$),  $C_1$ (resp.\ 
$C_2$) replaced by $C_2$ (resp.\  $C_1$), and $\beta_1$ (resp.\  $\beta_2$) replaced by
$\beta_2^{-1}$ (resp.\  $\beta_1^{-1}$).  Indeed, we have
\begin{equation}
\label{eq:AL1L2moins}
\begin{split}
\|( L^{-1})^{-k}P_2 \|_{L(X,X)} \leq C_2 (\beta_2^{-1})^{-k}~, \cr
\| ( L^{-1})^{k}P_1\|_{L(X,X)} \leq C_1 (\beta_1^{-1})^{k}~.
\end{split}
\end{equation}
We next set
\begin{equation}
\label{eq:Alambdatilde}
 \tilde{\lambda}(\tilde{\gamma}) = \frac{C_2}{\beta_2^{-1} -  \tilde{\gamma}} + 
 \frac{C_1}{\tilde{\gamma} - \beta_1^{-1}}~.
\end{equation}
As above, a short computation shows that, under the condition \eqref{eq:RtildeH}, there exist 
 $\tilde{\gamma_i}$, $i=1, 2$, with $\beta_1^{-1} < \tilde{\gamma_1} <
 \tilde{\gamma_2}< \beta_2^{-1}$ such that,
 \begin{equation}
 \label{AChoixgammatilde}
\tilde{\lambda}( \tilde{\gamma_1}) \mathrm{Lip}(\tilde{R})=  \tilde{\lambda}( \tilde{\gamma_2}) \mathrm{Lip}(\tilde{R})=1~, 
 \hbox{ and } \tilde{\lambda}(\tilde{\gamma_1}) \mathrm{Lip}(\tilde{R}) <1~, \quad \forall \tilde{\gamma}\in (\tilde{\gamma_1}, \tilde{\gamma_2})~.
\end{equation}
We may now apply Theorem \ref{th:FoliaA1} to the nonlinear semigroup $\tilde{S}(t) = S(-t)$ and we obtain the following result.

\begin{theorem} \label{th:FoliaA3}
Assume that the hypotheses (HA.2), (HA.3), and (HA.4) hold and that $R(0)= 
\tilde{R}(0)= 0$. Then there exists a globally Lipschitz  map $\tilde{g}: X_2 \to X_1$ with $\tilde{g}(0)=0$ and 
\begin{equation}
\label{eq:Lipgtilde}
\mathrm{Lip}(\tilde{g}) \leq  \min_{\tilde{\gamma_1} \leq  \tilde{\gamma} \leq 
\tilde{\gamma_2}} \frac{C_1 C_2  \mathrm{Lip}(\tilde{R}) \beta_1 \beta_2} 
{(\beta_1 - 1/\tilde{\gamma} ) (1- \tilde{\lambda}(\tilde{\gamma})\mathrm{Lip}(\tilde{R}))}~,
\end{equation}
so that the Lipschitz  submanifold 
$$ 
\tilde{G} = \{ x_2 + \tilde{g}(x_2) \, | \, x_2 \in X_2\}
$$
satisfies the following properties: 
\begin{description}
\item[(i)] \textrm{(Invariance)}  $\tilde{G}$ is invariant under $S(t)$, i.e.,  $S(t)\tilde{G}=
\tilde{G}$, for any $t \geq 0$. \\

\item[(ii)] \textrm{(Lyapunov exponent)} If a positive semi-orbit  $u(t)$, $ t\geq 0$, of 
$S(.)$ is contained in $\tilde{G}$, then,
\begin{equation}
\nn
\limsup_{t \to \infty} \frac{1}{t}\ln |u(t)| \leq  \frac{1}{\tau}\ln 
\frac{1}{\tilde{\gamma_2}}~.
\end{equation}
Conversely, if a positive semi-orbit  $u(t)$, $ t\geq 0$, of $S(.)$ in $X$, satisfies
\begin{equation}
\label{eq:ALyExp2tilde}
\limsup_{t \to \infty} \frac{1}{t}\ln |u(t)| < \frac{1}{\tau}\ln \frac{1}{\tilde{\gamma_1}}~.
\end{equation}
then, it is contained in $\tilde{G}$.
\item[(iii)] \textrm{(Smoothness)} If the map $S(\tau): X \to X$ is of class $C^1$, then 
$\tilde{g}: X_2 \to X_1$ is of class $C^1$, that is, $\tilde{G}$ is a $C^1$-submanifold of 
$X$. 
\end{description}
\end{theorem}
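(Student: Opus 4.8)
The plan is to obtain Theorem~\ref{th:FoliaA3} as a direct corollary of Theorem~\ref{th:FoliaA1} applied to the time-reversed flow $\tilde S(t):=S(-t)$, $t\ge 0$; the only genuine work is to track the constants through the reversal. Since $S(\cdot)$ is a group (the standing assumption in this part of the paper), $\tilde S$ is again a nonlinear semigroup, and I would first check that it satisfies (HA.1): joint continuity of $(t,x)\mapsto\tilde S(t)x$ on $[0,+\infty)\times X$ is the restriction of the continuity in (HA.3), and $\sup_{0\le t\le\tau_0}\mathrm{Lip}(\tilde S(t))=\sup_{-\tau_0\le s\le 0}\mathrm{Lip}(S(s))\le D<+\infty$, again by (HA.3).

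Next I would verify (HA.2) for $\tilde S$ at the same time $\tau\in(0,\tau_0]$ fixed in (HA.2). By (HA.4), $\tilde S(\tau)=S(-\tau)=L^{-1}+\tilde R$, so the splitting is the bounded linear operator $\tilde L:=L^{-1}$ (well defined, since (HA.4) presupposes $L$ invertible) together with the globally Lipschitz nonlinearity $\tilde R$ with $\tilde R(0)=0$. For (HA.2.1) one keeps the same decomposition $X=X_1\oplus X_2$ but interchanges the roles of the two summands: I would take $X_2$ (projection $P_2$) as the ``first'' subspace and $X_1$ (projection $P_1$) as the ``second'' one. Then $\tilde L$ leaves both invariant, the restriction $\tilde L|_{X_2}=(L_2)^{-1}$ is boundedly invertible, and the decay estimates \eqref{eq:AL1L2moins} are exactly \eqref{eq:AL1L2} with $(C_1,C_2,\beta_1,\beta_2)$ replaced by $(C_2,C_1,\beta_2^{-1},\beta_1^{-1})$, the new gap $\beta_1^{-1}<\beta_2^{-1}$ being inherited from $\beta_2<\beta_1$. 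For (HA.2.2), under the same replacements the smallness condition \eqref{eq:ALRinf1} becomes $\tfrac{(\sqrt{C_1}+\sqrt{C_2})^2}{\beta_2^{-1}-\beta_1^{-1}}\,\mathrm{Lip}(\tilde R)<1$, and since $\beta_2^{-1}-\beta_1^{-1}=(\beta_1-\beta_2)/(\beta_1\beta_2)$ this is precisely the hypothesis \eqref{eq:RtildeH}. Consequently the auxiliary quantity $\lambda(\gamma)$ of \eqref{eq:Alambda} turns into $\tilde\lambda(\tilde\gamma)$ of \eqref{eq:Alambdatilde}, and the constants $\gamma_2<\gamma_1$ of \eqref{AChoixgamma} turn into $\beta_1^{-1}<\tilde\gamma_1<\tilde\gamma_2<\beta_2^{-1}$ of \eqref{AChoixgammatilde}, the original ``outer'' constant $\gamma_1$ (nearest $\beta_1$) corresponding to the reversed ``outer'' constant $\tilde\gamma_2$ (nearest the new upper endpoint $\beta_2^{-1}$), and $\gamma_2$ corresponding to $\tilde\gamma_1$.

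Then I would apply Theorem~\ref{th:FoliaA1} to $\tilde S$ and transcribe its conclusions back to the original variables. It produces a globally Lipschitz $\tilde g:X_2\to X_1$ (the graph being over the ``first'' subspace $X_2$) with $\tilde g(0)=0$, whose Lipschitz bound \eqref{eq:Lipg}, after the substitution above and the elementary identity $\tilde\gamma\big/\big(\beta_2^{-1}(\tilde\gamma-\beta_1^{-1})\big)=\beta_1\beta_2\big/(\beta_1-1/\tilde\gamma)$, is exactly \eqref{eq:Lipgtilde}. Invariance in Theorem~\ref{th:FoliaA1}(i) gives $\tilde S(t)\tilde G=\tilde G$ for $t\ge 0$ with an extension to a flow on $\tilde G$; since $S$ is a group, $S(t)\tilde G=\tilde S(-t)\tilde G=\tilde G$ for all $t$, which is property~(i). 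A positive semi-orbit of $S$ contained in $\tilde G$ is a negative semi-orbit of $\tilde S$ contained in $\tilde G$, and for it $\limsup_{t\to-\infty}\tfrac1{|t|}\ln|u(t)|$ (computed for $\tilde S$) equals $\limsup_{t\to+\infty}\tfrac1t\ln|u(t)|$ (computed for $S$); so Theorem~\ref{th:FoliaA1}(ii), read with $\gamma_1$ replaced by $\tilde\gamma_2$ and $\gamma_2$ by $\tilde\gamma_1$, yields both the decay bound $\le\tfrac1\tau\ln\tilde\gamma_2^{-1}$ for semi-orbits in $\tilde G$ and the converse inclusion under $\limsup<\tfrac1\tau\ln\tilde\gamma_1^{-1}$, i.e.\ property~(ii) together with \eqref{eq:ALyExp2tilde}. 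Finally, Theorem~\ref{th:FoliaA1}(iii) applied to $\tilde S(\tau)=S(-\tau)$ gives the $C^1$ regularity of $\tilde g$ whenever $S(\pm\tau)$ are $C^1$ (as they are in the application, by Theorem~\ref{thm:WP}), which is property~(iii).

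The only point requiring care is this bookkeeping of constants: time-reversal swaps $(C_1,\beta_1)$ with $(C_2,\beta_2^{-1})$ and reverses the ordering of the $\gamma$'s, so that the constant called $\gamma_1$ in Theorem~\ref{th:FoliaA1} plays the role of the constant called $\tilde\gamma_2$ here. Once this identification is in place, every assertion of Theorem~\ref{th:FoliaA3} is a verbatim translation of the corresponding assertion of Theorem~\ref{th:FoliaA1}. A more self-contained alternative would be to rerun the Lyapunov--Perron fixed-point argument of \cite{ChHaBT} directly in reverse time, but that merely reproves Theorem~\ref{th:FoliaA1} and is unnecessary here.
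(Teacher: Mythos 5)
Your proposal is correct and follows exactly the paper's own route: the text preceding Theorem~\ref{th:FoliaA3} applies Theorem~\ref{th:FoliaA1} to $\tilde S(t)=S(-t)$ after swapping $(P_1,C_1,\beta_1)$ with $(P_2,C_2,\beta_2^{-1})$, which is precisely your bookkeeping, and your identity $\tilde\gamma/\bigl(\beta_2^{-1}(\tilde\gamma-\beta_1^{-1})\bigr)=\beta_1\beta_2/(\beta_1-1/\tilde\gamma)$ correctly recovers \eqref{eq:Lipgtilde} from \eqref{eq:Lipg}. Your remark that (iii) really requires $S(-\tau)$ to be $C^1$ is a fair (and correctly resolved) refinement of the statement.
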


We next consider the classical case where $S(.)$ is a non-linear group satisfying the assumption (HA.3) as well as  
\begin{description}
\item [{\bf (HA.5)}] The point $0$ is an equilibrium point of $S(.)$. And there exists 
$\tau$, $0 < \tau \leq \tau_0$ such that $S(\tau)$ and $S(-\tau)$ can be decomposed as follows
$$
S(\tau) = L+ R~, \quad S(-\tau) = L^{-1} + \tilde{R}~,
$$
where $L: X \to X$ is a bounded linear operator, $R : X \to X$  and $\tilde{R} : X \to X$
are global Lipschitz continuous maps, satisfying the following properties.

\item [{\bf (HA.5.1)}] The spectrum $\sigma(L)$ of $L$ can be written as 
$$
\sigma(L) =  \sigma^{s} \cup \sigma^{c} \cup \sigma^{u}~,
$$
where $\sigma^{s}$, $\sigma^{c}$ and  $\sigma^{u}$ are closed subsets of 
$\{ \lambda \in \mathbb{C} \, | \,
 |\lambda| < 1\}$, $\{ \lambda \in \mathbb{C} \, | \, |\lambda| = 1\}$, and 
 $\{ \lambda \in \mathbb{C} \, | \, |\lambda| > 1\}$. 
 \end{description}
 There exists $\eta >0$ such that 
\begin{equation}
\label{eq:SousEnsheta}
\sigma^{s} \subset  \{ \lambda \in \mathbb{C} \, | \,  |\lambda| < 1 -\eta\}~, \quad
\sigma^{u} \subset  \{ \lambda \in \mathbb{C} \, | \, |\lambda| > 1 + \eta\}
\end{equation}
We set: $\sigma^{cu} = \sigma^{c} \cup \sigma^{u}$ and $\sigma^{cs}=
 \sigma^{c} \cup \sigma^{s}$.
Let $P_{i}$  be the spectral (continuous) projector associated to the spectral set 
$\sigma^{i}$  and let $X_i$ be the image $X_i = P_i X$, 
where $i= cu, cs , u, s, c$. We  have that $P_{cu} +P_s=I = P_{cs} + P_u$. The linear map 
$L$ leaves $X_{i}$ invariant and  commutes with $P_i$, $i = cu, cs, u, s,c$. \\
Now we choose $0 <\varepsilon < \eta/2$. The restrictions $L_i$ of $L$ to $X_i$ satisfy the following properties. There exist  constants $C_1 \geq1$ and
$C_2 \geq 1$ such that, for $k \geq 0$,
\begin{equation}
\label{eq:estcus}
\begin{split}
\| L_{cu}^{-k}P_{cu} \|_{L(X,X)} \leq  & C_1 (1-\varepsilon)^{-k}~, \cr
\| L_{s}^{k}P_s\|_{L(X,X)} \leq & C_2 (1-\eta)^{k}~,
\end{split}
\end{equation}
and
\begin{equation}
\label{eq:AL1L2moins'}
\begin{split}
\|( L_{cs}^{-1})^{-k}P_{cs} \|_{L(X,X)} \leq C_2 ((1+\varepsilon)^{-1})^{-k}~, \cr
\| ( L_{u}^{-1})^{k}P_u\|_{L(X,X)} \leq C_1 (( 1+ \eta )^{-1})^{k}~.
\end{split}
\end{equation}
We further assume that the maps $R$ and $\tilde{R}$ satisfy the conditions.
\begin{description}
\item [{\bf (HA.5.2)}] 
The following inequalities hold
\begin{equation}
\label{eq:RHcus}
\frac{(\sqrt{C_1} + \sqrt{C_2})^2}{\eta -\varepsilon}
\mathrm{Lip}(R) < 1~,
\end{equation}
and
\begin{equation}
\label{eq:RtildeHcsu}
\frac{(\sqrt{C_1} + \sqrt{C_2})^2}{\eta -\varepsilon} (1+\varepsilon)(1+ \eta)
\mathrm{Lip}(\tilde{R}) < 1~.
\end{equation}
\item [{\bf (HA.5.3)}] 
We define the function $\lambda(\gamma)$ as  in \eqref{eq:Alambda}, that is,
\begin{equation}
\label{eq:Alambdacus}
 \lambda(\gamma) = \frac{C_1}{1-\varepsilon - \gamma} + \frac{C_2}{\gamma - 
1+\eta}~,
\end{equation}
and the quantities $\gamma_i$, $i=1,2$, with 
$1-\eta < \gamma_2 < \gamma_1 < 1-\varepsilon$, satisfying  \eqref{AChoixgamma}. 
Likewise, we define  the function $\tilde{\lambda}(\tilde{\gamma})$ as  in 
\eqref{eq:Alambdatilde}, that is,
\begin{equation}
\label{eq:Alambdacsu}
 \tilde{\lambda}(\tilde{\gamma}) = \frac{C_2}{(1+\varepsilon)^{-1} -  \tilde{\gamma}} +  \frac{C_1}{\tilde{\gamma} - (1+\eta)^{-1}}~.
\end{equation}
and the quantities $\tilde{\gamma}_i$, $i=1,2$, with 
$(1+\eta)^{-1} < \tilde{\gamma}_1 < \tilde{\gamma}_2 < (1+\varepsilon)^{-1}$, satisfying  \eqref{AChoixgammatilde}. \\
We next introduce the function  $\lambda^*(\gamma^*)$: 
\begin{equation}
\label{eq:Alambdacus*}
 \lambda^*(\gamma^*) = \frac{C_1}{1+\eta - \gamma^*} + \frac{C_2}{\gamma^* - 
1- \varepsilon}~,
\end{equation}
and the quantities $\gamma^*_i$, $i=1,2$, with 
$1+\varepsilon < \gamma^*_2 < \gamma^*_1 < 1+\eta$, satisfying
\begin{equation}
 \label{AChoixgamma*}
 \lambda^*(\gamma^*_1) \mathrm{Lip}(R)=  \lambda(\gamma^*_2) \mathrm{Lip}(R)=1~, 
 \hbox{ and } \lambda^*(\gamma) \mathrm{Lip}(R) <1~, \quad \forall \gamma^* \in (\gamma^*_2, \gamma^*_1)~.
\end{equation} 
We finally require that the following inequality holds:
\begin{equation}
\label{eq:HA5dernier}
\begin{split}
\min_{\gamma_2 \leq \gamma \leq \gamma_1} \frac{C_1 C_2  
\mathrm{Lip}(R) \gamma}
{ (1-\varepsilon) (\gamma - 1 +\eta) (1 -\lambda(\gamma) \mathrm{Lip}(R))}     
\times
\min_{\tilde{\gamma_1} \leq  \tilde{\gamma} \leq 
\tilde{\gamma_2}} \frac{C_1 C_2  \mathrm{Lip}(\tilde{R}) (1+\varepsilon)(1+\eta)} 
{(1+\eta - 1/\tilde{\gamma} ) (1- \tilde{\lambda}(\tilde{\gamma})\mathrm{Lip}(\tilde{R}))} <1.
\end{split}
\end{equation}
\end{description}

Applying  Theorems \ref{th:FoliaA1} and \ref{th:FoliaA3} to the above flow $S(.)$, we obtain the following properties, which are used in Sections~\ref{sec:core} and \ref{sec:IMforKG}.

\begin{theorem} \label{th:FoliaA4}
Assume that the hypotheses (HA.3) and (HA.5) are satisfied. Then, the following properties hold. 
\\
\begin{enumerate}
\item There exists a globally Lipschitz   map $g_{cu}: X_{cu} \to X_s$ with 
$g_{cu}(0)=0$, so that the Lipschitz  center unstable manifold $W^{cu}(0)$
$$ 
W^{cu}(0) = \{ x_c + x_{u} + g_{cu}(x_c + x_{u}) \, | \, x_c \in X_c, x_u \in X_u\}
$$
satisfies all the properties described in Theorem \ref{th:FoliaA1}. 
In particular, if $S(\tau)$ is of class $C^1$, then $g_{cu}: X_{cu} \to X_s$ is of class $C^1$.

\item There exists a globally Lipschitz   map $g_{u}: X_{u} \to X_{cs}$ with 
$g_{u}(0)=0$, so that the Lipschitz    unstable (also called strongly unstable) 
manifold $W^{u}(0)$
$$ 
W^{u}(0) = \{ x_{u} + g_{u}(x_{u}) \, | \, x_u \in X_u\}
$$
satisfies all the properties described in Theorem \ref{th:FoliaA1} with $\gamma$ replaced by $\gamma^*$ and $\gamma_{i}$ replaced by $\gamma^*_{i}$, $i=1,2$. In particular, if 
$S(\tau)$ is of class $C^1$, then $g_{u}: X_{u} \to X_{cs}$ is of class $C^1$. \\
And,  if a negative semi-orbit  $u(t)$, $ t\leq 0$, of  $S(.)$ is contained in $W^{u}(0)$, then,
\begin{equation}
\label{eq:ALyExpetoile1}
\limsup_{t \to -\infty} \frac{1}{|t|}\ln |u(t)| \leq - \frac{1}{\tau}\ln \gamma^*_1~.
\end{equation}

\item There exists a globally Lipschitz  map $g_{cs}: X_{cs} \to X_u$ with 
$g_{cs}(0)=0$ so that the Lipschitz  center stable manifold $W^{cs}(0)$ 
$$ 
W^{cs}(0) = \{ x_c  + x_{s} + g_{cs}(x_c + x_{s}) \, | \, x_c \in X_c, x_s \in X_s\}
$$
satisfies all the properties described in Theorem \ref{th:FoliaA3}. In particular, if $S(\tau)$ is of class $C^1$, then $g_{cs}: X_{cs} \to X_u$ is of class $C^1$. 
\item There exists a globally Lipschitz   map $g_{c}: X_{c} \to X_s \oplus X_u$ with 
$g_{c}(0)=0$, so that the Lipschitz  center  manifold $W^{c}(0)$
$$ 
W^{c}(0) = \{ x_c + g_{c}(x_c) \, | \, x_c \in X_c\} = W^{cu}(0) \cap W^{cs}(0)
$$
satisfies  the following properties: \\
(i) $W^{c}(0)$ is invariant under $S(t)$, i.e., $S(t)W^{c}(0) = W^{c}(0)$, for any $t \geq 0$. \\
(ii) The properties (ii) of Theorem \ref{th:FoliaA1} and the properties (ii) of Theorem
\ref{th:FoliaA3}  hold. In particular, if a trajectory $u(t)$, $t \in (-\infty, \infty)$ of $S(.)$ is contained in $W^{c}(0)$, then
\begin{equation}
\label{eq:propcenter}
\limsup_{t \to -\infty} \frac{1}{|t|}\ln |u(t)| \leq - \frac{1}{\tau}\ln \gamma_1~, \quad
\limsup_{t \to \infty} \frac{1}{t}\ln |u(t)|  \leq  \frac{1}{\tau}\ln 
\frac{1}{\tilde{\gamma_2}}~.
\end{equation}
Moreover, $W^{c}(0)$ contains all the equilibria of $S(t)$. \\
(iii)  If the map $S(\tau): X \to X$ is of class $C^1$, then 
$g_{c}:  X_{c} \to X_s \oplus X_u$ is of class $C^1$, that is, $W^{c}(0)$ is a $C^1$-submanifold of $X$. 

\item If moreover the condition \eqref{eq:AgamRinf1} holds with $\beta_1= 1- \varepsilon$
and $\beta_{2}= 1 -\eta$, then one has a foliation of $X$ over $W^{cu}(0)$ as defined in 
Theorem \ref{th:FoliaA2}.
\end{enumerate}
\end{theorem}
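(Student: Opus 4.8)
The plan is to derive each of the five assertions directly from Theorems~\ref{th:FoliaA1}, \ref{th:FoliaA2} and \ref{th:FoliaA3} by choosing, in each case, the appropriate splitting $X = X_1 \oplus X_2$ together with constants $\beta_2 < \beta_1$, and then checking that the abstract hypotheses (HA.1)--(HA.2) (resp.\ (HA.3)--(HA.4)) reduce \emph{exactly} to the bounds imposed in (HA.5). For item~(1) I take $X_1 = X_{cu}$, $X_2 = X_s$, $\beta_1 = 1-\varepsilon$, $\beta_2 = 1-\eta$: then \eqref{eq:estcus} is precisely \eqref{eq:AL1L2}, \eqref{eq:RHcus} is precisely \eqref{eq:ALRinf1}, so Theorem~\ref{th:FoliaA1} applies verbatim and produces $W^{cu}(0)$ with its smoothness. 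For item~(2) I take $X_1 = X_u$, $X_2 = X_{cs}$, $\beta_1 = 1+\eta$, $\beta_2 = 1+\varepsilon$; here $L_u$ is boundedly invertible since $\sigma^u$ is bounded away from $0$, \eqref{eq:AL1L2moins'} supplies \eqref{eq:AL1L2}, \eqref{eq:RHcus} again gives \eqref{eq:ALRinf1}, and the function $\lambda(\gamma)$ of \eqref{eq:Alambda} turns into $\lambda^*(\gamma^*)$ of \eqref{eq:Alambdacus*}, so Theorem~\ref{th:FoliaA1} yields $W^{u}(0)$ and the Lyapunov bound \eqref{eq:ALyExpetoile1}. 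For item~(3) I use the same splitting $X_1 = X_u$, $X_2 = X_{cs}$ but now in the framework of Theorem~\ref{th:FoliaA3}: (HA.2) holds as in item~(2), while (HA.4) with $\beta_1\beta_2 = (1+\eta)(1+\varepsilon)$ and $\beta_1 - \beta_2 = \eta - \varepsilon$ is exactly \eqref{eq:RtildeHcsu}; the manifold $\tilde G$ of Theorem~\ref{th:FoliaA3} is then $W^{cs}(0)$, carrying the forward Lyapunov characterization of that theorem. Item~(5) follows by applying Theorem~\ref{th:FoliaA2} in the splitting of item~(1), which is legitimate precisely because of the extra completeness hypothesis \eqref{eq:AgamRinf1} with $\beta_1 = 1-\varepsilon$, $\beta_2 = 1-\eta$.

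The one part demanding a genuine argument is item~(4), the center manifold $W^{c}(0)$. Writing $W^{cu}(0) = \{x_c + x_u + g_{cu}(x_c+x_u)\}$ with $g_{cu}$ valued in $X_s$, and $W^{cs}(0) = \{x_c + x_s + g_{cs}(x_c+x_s)\}$ with $g_{cs}$ valued in $X_u$, projecting the equation $x_c + x_u + g_{cu}(x_c+x_u) = x_c' + x_s' + g_{cs}(x_c'+x_s')$ onto $X_c$, $X_u$, $X_s$ shows that a point lies in the intersection iff, for a given $x_c \in X_c$, the pair $(x_u, x_s) \in X_u \times X_s$ solves $x_u = g_{cs}(x_c + x_s)$ and $x_s = g_{cu}(x_c + x_u)$. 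This is a fixed-point problem in $(x_u,x_s)$ whose contraction constant is bounded by $\mathrm{Lip}(g_{cu})\,\mathrm{Lip}(g_{cs})$; by \eqref{eq:Lipg} (with $\beta_1 = 1-\varepsilon$, $\beta_2 = 1-\eta$) and \eqref{eq:Lipgtilde} (with $\beta_1 = 1+\eta$, $\beta_2 = 1+\varepsilon$), this product is precisely the left-hand side of \eqref{eq:HA5dernier}, hence $< 1$. The uniform contraction principle (\cite[Theorem 2.2 on Page 25]{ChowHale}) then produces a globally Lipschitz $g_c : X_c \to X_s \oplus X_u$ with $g_c(0)=0$, of class $C^1$ whenever $S(\tau)$ is, such that $W^{c}(0) = \{x_c + g_c(x_c)\} = W^{cu}(0) \cap W^{cs}(0)$.

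It then remains to record the properties of $W^{c}(0)$. Invariance is inherited from that of $W^{cu}(0)$ and $W^{cs}(0)$ (both invariant under the group, so is their intersection). The two-sided Lyapunov bounds \eqref{eq:propcenter} follow from statement~(ii) of Theorem~\ref{th:FoliaA1} applied to the negative semi-orbit, which lies in $W^{cu}(0)$, and from statement~(ii) of Theorem~\ref{th:FoliaA3} applied to the positive semi-orbit, which lies in $W^{cs}(0)$. Finally, any equilibrium $p$ has a constant orbit with $\frac{1}{|t|}\ln|p| \to 0$ as $|t|\to\infty$, which strictly beats the thresholds $-\frac{1}{\tau}\ln\gamma_2 > 0$ and $\frac{1}{\tau}\ln\tilde{\gamma_1}^{-1} > 0$, so the converse parts of statement~(ii) in Theorems~\ref{th:FoliaA1} and \ref{th:FoliaA3} place $p$ in $W^{cu}(0)\cap W^{cs}(0) = W^{c}(0)$. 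The only technical care needed throughout is the consistent identification of the abstract constants $(\beta_1,\beta_2,C_1,C_2)$ with the spectral data in (HA.5.1); no estimate beyond those already proved in Appendix~A is required, which is why the main obstacle is the harmless-looking but essential contraction argument for the intersection in item~(4).
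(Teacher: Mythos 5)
Your proposal is correct and follows essentially the same route as the paper: items (1), (2), (3), (5) by applying Theorems~\ref{th:FoliaA1}--\ref{th:FoliaA3} with the indicated choices of $(X_1,X_2,\beta_1,\beta_2)$, and item (4) by turning $W^{cu}(0)\cap W^{cs}(0)$ into a fixed-point problem whose contraction constant is $\mathrm{Lip}(g_{cu})\,\mathrm{Lip}(g_{cs})<1$ by \eqref{eq:HA5dernier}, concluding with the uniform contraction principle. The only (harmless) differences are that you phrase the intersection as a coupled system in $(x_u,x_s)$ where the paper substitutes $x_s=g_{cu}(x_c+x_u)$ to get a single equation for $x_u$, and that you spell out the Lyapunov-exponent argument placing the equilibria in $W^c(0)$, which the paper merely asserts.
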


\begin{proof}
(1) Statements (1) and (5)  are direct consequences of Theorem \ref{th:FoliaA1} 
and Theorem \ref{th:FoliaA2} respectively, applied to the case where $\beta_1=1-\varepsilon$ and 
$\beta_2=1-\eta$. \\
(2) Statement (2) is a  direct consequence of Theorem \ref{th:FoliaA1}, applied to the case
where $\beta_1= 1+\eta$ and $\beta_2 = 1+\varepsilon$. \\
(3) Statement (3) is a  direct consequence of Theorem \ref{th:FoliaA3}, applied to the case
where $\beta_2^{-1}= (1+ \varepsilon)^{-1}$ and $\beta_1^{-1} = (1+ \eta)^{-1}$. \\
Let us next prove the statement (4).
We are looking for the trajectories $u(t)$, which satisfy both properties of 
\eqref{eq:propcenter}. These two properties together are satisfied only by the elements in
$W^{cu}(0) \cap W^{cs}(0)$. \\
Thus, we are looking for the elements $x= x_c +x_s + x_u$ so that
\begin{equation}
\label{eq:wcegalite1}
x_c + x_u + g_{cu}(x_c+x_u)= x_c +x_s +  g_{cs}(x_c+x_s)= x_c + g_{cu}(x_c+x_u) +
g_{cs}(x_c+ g_{cu}(x_0+x_u))~,
\end{equation}
or also for the elements $x_u \in X_u$ satisfying
\begin{equation}
\label{eq:wcegalite2}
x_u = g_{cs}(x_c+ g_{cu}(x_c+x_u))~.
\end{equation}
In other terms, given $x_c \in X_c$, we are looking for the fixed point of the map
$x_u \in X_u \mapsto F(x_c, x_u) = g_{cs}(x_c+ g_{cu}(x_c+x_u)) \in X_u$.
We notice that the Lipschitz constant of $F(x_c, .)$ satisfies
$$
\mathrm{Lip}(F(x_c,.)) \leq \mathrm{Lip}(g_{cs}) \times \mathrm{Lip}(g_{cu})~.
$$
By Theorems \ref{th:FoliaA1} and \ref{th:FoliaA3} and the assumption 
\eqref{eq:HA5dernier}, we have, for any $x_0 \in X_0$
\begin{equation}
\label{eq:ALipF0}
\begin{split}
&\mathrm{Lip}(F(x_c,.) \leq \cr
&\min_{\gamma_2 \leq \gamma \leq \gamma_1} \frac{C_1 C_2  
\mathrm{Lip}(R) \gamma}
{ (1-\varepsilon) (\gamma - 1 +\eta) (1 -\lambda(\gamma) \mathrm{Lip}(R))}     
\times
\min_{\tilde{\gamma_1} \leq  \tilde{\gamma} \leq 
\tilde{\gamma_2}} \frac{C_1 C_2  \mathrm{Lip}(\tilde{R}) (1+\varepsilon)(1+\eta)} 
{(1+\eta - 1/\tilde{\gamma} ) (1- \tilde{\lambda}(\tilde{\gamma})\mathrm{Lip}(\tilde{R}))} <1.
\end{split}
\end{equation}
Therefore, $x_u \in X_u \to F(x_c, x_u) \in X_u$ is a strict contraction, uniformly in $x_c$. Thus, for any $x_c \in X_c$, there exists a unique fixed point  $h(x_c) \in X_u$  
of $F(x_c,.)$. And $g_c(x_c)$ is given by
$$
g_c(x_c)= x_c +h(x_c) + g_{cu}(x_c+h(x_c))~.
$$
The regularity of the map $g_c$ is proved by using the regularity of the mappings $g_{cu}$ and $g_{cs}$ and by applying the uniform contraction principle of 
\cite[Theorem 2.2 on Page 25]{ChowHale}.
\end{proof}

\begin{remark} \label{rem:WuWs} 
1. If the equilibrium point is hyperbolic (that is, $\sigma^{c} = \emptyset $), then one can choose $\varepsilon = \eta $ in the hypotheses  (HA.5.1) and (HA.5.2).
The center unstable manifold $W^{cu}(0)$ and the (strongly) unstable manifold $W^{u}(0)$ coincide (that is, $g_{cu} =g_{u}$). And the center manifold  $W^{c}(0)$ reduces to $0$. 
\\
2. In the above theorem, we have only stated those properties which are used in this paper.  We leave it  to the reader to state the existence of the (strongly) stable manifold. 
\end{remark}

\section{Classical convergence results} \label{sec:ThconvBP}

In the study of  asymptotic behaviour of dynamical systems, one often encounters the following question: 
knowing that the $\omega$-limit set  of a relatively compact trajectory contains an equilibrium point $x_0$, 
does this $\omega$-limit set reduce to the point $x_0$, i.e., does the trajectory converge to $x_0$? This question is especially 
interesting in the case of gradient systems (that is, systems which admit a strict Lyapunov functional).  In fact, consider a gradient system 
with a  hyperbolic equilibrium $x_0$. Then $x_0$ is isolated and the whole trajectory converges to this point $x_0$. 
If the equilibrium $x_0$ is not hyperbolic and  the spectrum of the linearized dynamical system around $x_0$ 
intersects the unit circle, then $x_0$ could lie in a continuum of equilibria, which could be contained in the $\omega$-limit set. 
If $x_0$ belongs to a normally hyperbolic manifold of equilibria, we can still have convergence to $x_0$, under additional hypotheses.

In the proof of Theorem \ref{ThBRS1}, we use the convergence property to an equilibrium point in order to prove the boundedness of the orbits, which are global in forward time. We recall here the general convergence property in the form proved by 
Brunovsk\'{y} and Pol\'{a}\v{c}ik in \cite{BrP97b}, who extended  earlier  
convergence results, proved for example by Aulbach \cite{Aulb} in the finite-dimensional frame, or by Hale and Raugel \cite{HaRa}, who generalised  the convergence property 
of Aulbach to the infinite-dimensional setting (see also the paper \cite{HaMa} of 1982, 
 and \cite{Rau94} for applications). In the case of the one-dimensional parabolic equation with separate boundary conditions, convergence proofs had been given before in 
 \cite{Mata78} and \cite{Zele}.

Let $X$ be a Banach space and $\Phi: X \to X$ be a continuous map admitting a fixed point $y_0$. Without loss of generality, we may choose $y_0=0$.
 Brunovsk\'{y} and Pol\'{a}\v{c}ik assumed the following hypotheses:
\begin{itemize}
\item {\bf (HB.1)}  There exists a neighborhood $U$ of $0$ in $X$ so that the restriction 
$\Phi {\big |}_{U}: U \to X$ is of class $C^1$.

\item {\bf (HB.2)}  The spectrum $\sigma (DF(0))$ can be written as 
$\sigma (DF(0)) = \sigma^{s} \cup \sigma^{c} \cup \sigma^{u}$,  where $\sigma^{s}$, $\sigma^{c}$ and   $\sigma^{u}$ are closed subsets of $\{ \lambda \in \mathbb{C} \, | \,
 |\lambda| < 1\}$, $\{ \lambda \in \mathbb{C} \, | \, |\lambda| = 1\}$, and 
 $\{ \lambda \in \mathbb{C} \, | \, |\lambda| > 1\}$.
\end{itemize}
As in Appendix A, we introduce  the spectral projectors $P_{i}$ of $B= DF(0)$ associated with the spectral sets $\sigma^{i}$, $i= s, c, u$ and the images $X_i= P_{i}X$. We recall that these spaces are all $B$-invariant  and $X= X_{s} \oplus X_{c} \oplus X_{u}$. We also denote $X_{cu} = X_{c} \oplus X_{u}$. 

\medskip

As we have seen in Appendix A, the hypotheses (HB.1) and (HB.2) allow one to construct Lipschitz continuous local center unstable and local center manifolds  $W^{cu}_{loc}(0)$, $W^{c}_{loc}(0)$ of $\Phi$ at 
$0$ as graphs over 
$X_{cu}$ and $X_{c}$, respectively, and also the local unstable manifold $W^{u}_{loc}(0)$ as a graph over $X_{u}$, by  extending the map $\Phi$ into a global Lipschitz continuous and $C^1$ mapping  $\tilde{\Phi}$, which coincides with $\Phi$ on the ball $B_X(0,\delta)$
of center $0$ and radius $\delta >0$ ($\delta$ being small enough), and by applying Theorems \ref{th:FoliaA1} and \ref{th:FoliaA4}. These local invariant manifolds are defined in the following way
\begin{equation}
\label{eq:BvarietesInv}
W^{i}_{loc}(0) = \tilde{W}^{i}_{\delta}(0)~, \quad i= cu, c, u~,
\end{equation}
where $\tilde{W}^{cu}_{\delta}(0)$, $\tilde{W}^{c}_{\delta}(0)$ and 
$\tilde{W}^{u}_{\delta}(0)$ are the global center stable, center and unstable manifolds of 
$\tilde{\Phi}$ around $0$.

On the other hand, Theorem \ref{th:FoliaA2} in Appendix~A on the invariant foliations implies 
that $W^{cu}_{loc}(0)$  is exponentially attractive in $X$ with asymptotic phase (see Appendix \ref{sec:varietes} for more details).
Likewise, one can show that $W^{c}_{loc}(0)$ is exponentially attractive in backward time in $W^{cu}_{loc}(0)$ with asymptotic phase. These asymptotic phase properties are among the key  arguments in the proof of the main convergence theorem  \ref{th:ConvB1} below.

\begin{remark} \label{rem:PhiTilde}
Actually, 
the hypothesis (HB.1) can be replaced by the weaker hypothesis: \\
{\bf (HB.1bis)} There exists a neighborhood $U$ of $0$ in $X$ so that the restriction 
$\Phi {\big |}_{U}: U \to X$ is Lipschitz continuous and differentiable at every fixed point contained in $U$.
\end{remark}

Before stating the main convergence result of \cite{BrP97b}, we introduce the concept of stability restricted to $W^{c}_{loc}(0)$. 
We say that $0$ is stable for the map  $\Phi{\big |}_{W^{c}_{loc}(0)}$, if, for any $\varepsilon >0$, there exists $\eta >0$ 
such that, for any $y \in W^{c}_{loc}(0)$  with $\| y\|_{X} \leq \eta$, we have
\begin{equation}
\label{AppendBStable}
\| \Phi^n(y)\|_{X} \leq \varepsilon~, \quad \forall \; n=0,1,2, \ldots~.
\end{equation}

As pointed out in \cite{BrP97b}, this stability is independent of the choice of the local center manifold $W^{c}_{loc}(0)$. 
The independence of this stability on the choice  of the local center manifold can be proved by using foliations as in the paper of 
 \cite{BuDeLu}, who actually showed that the flows on different local center manifolds are conjugated (under some more restrictive hypotheses, which can be easily removed). As also remarked in  \cite{BrP97b}, the fact that the stability is independent of the choice of the local centre manifold, is not needed in the proof of Theorem~\ref{th:ConvB1} below.

\begin{theorem} \label{th:ConvB1}
Assume that the hypotheses (HB.1) (or (HB.1bis)) and (HB.2) hold. Let $x_0 \in X$ 
be such that the fixed point $0$ belongs to the $\omega$-limit set $\omega(x_0)$ 
of $x_0$. Assume that either $X_{cu}$ is finite-dimensional or that the trajectory 
$\Phi^{n}(x_0)$, $n=1,2, \cdots$, of $x_0$ is relatively compact. 
Assume, moreover, that $0$ is stable for the map $\Phi{\big |}_{W^{c}_{loc}(0)}$, where 
$W^{c}_{loc}(0)$ is a local center manifold of $0$. \\
Then  either $\Phi^{n}(x_0)$ converges to $0$ as $n\to\infty$, or 
$\omega(x_0)$ contains a point of the local unstable manifold $W^{u}_{loc}(0)$ of $0$, distinct from $0$.
\end{theorem}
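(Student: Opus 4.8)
The plan is to reduce the convergence statement to the foliation machinery of Appendix~A, following the strategy of Brunovsk\'y and Pol\'a\v{c}ik. First I would set up the dichotomy via the center-unstable manifold $W^{cu}_{loc}(0)$: by Theorem~\ref{th:FoliaA2} (or rather its local version obtained by cutting off $\Phi$ to a global $\tilde\Phi$), $W^{cu}_{loc}(0)$ is exponentially attractive with asymptotic phase in $X$, provided the iterates stay in a small ball $B_X(0,\delta)$. Since $0\in\omega(x_0)$, there is a subsequence $\Phi^{n_k}(x_0)\to 0$; I would pick $n_k$ large enough that $\Phi^{n_k}(x_0)$ is so close to $0$ that its forward orbit either stays in $B_X(0,\delta)$ forever (in which case, using compactness of the orbit or finite-dimensionality of $X_{cu}$ together with attraction to $W^{cu}_{loc}(0)$, one traps the whole orbit near the center manifold and then exploits stability on $W^c_{loc}(0)$) or leaves $B_X(0,\delta)$ at a first exit time. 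The key point is that if the orbit leaves the ball, the asymptotic phase property produces a shadowing orbit on $W^{cu}_{loc}(0)$ that also leaves, and since $\omega(x_0)\ni 0$ forces the shadowing to start arbitrarily close to $0$, a limiting argument (using that $W^{cu}_{loc}(0)$ is locally compact when $X_{cu}$ is finite-dimensional, or relative compactness of the orbit otherwise) yields a point of $\omega(x_0)$ on $W^{cu}_{loc}(0)$ at distance exactly some fixed $\beta$ from $0$.

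Next I would push this point from the center-unstable manifold onto the unstable manifold. The mechanism is: on $W^{cu}_{loc}(0)$ the dynamics splits (up to asymptotic phase along the $X_c$-direction) into the center part and the unstable part, and the center part is controlled by the stability hypothesis on $\Phi|_{W^c_{loc}(0)}$. Concretely, one writes the limit point $\xi\in W^{cu}_{loc}(0)\cap\omega(x_0)$ with $\|\xi\|=\beta$, uses that its backward orbit (which exists and stays on $W^{cu}_{loc}(0)$ by invariance and the Lyapunov-exponent bounds of Theorem~\ref{th:FoliaA1}) must converge to $0$ in backward time — here the stability on the center manifold rules out escape along $X_c$ backward, so the backward orbit is governed by $X_u$ and converges to $0$ — hence $\xi$ lies on the local unstable manifold $W^u_{loc}(0)$. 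Since $\beta>0$ was arbitrary (small) and fixed, $\xi\ne 0$, giving the second alternative. I would organize this as: (a) reduce to $\tilde\Phi$ and its global invariant manifolds; (b) construct $\xi\in\omega(x_0)$ on $W^{cu}_{loc}(0)$, $\xi\ne 0$, via asymptotic phase plus the subsequence $n_k$; (c) use stability on $W^c_{loc}(0)$ to show the backward orbit of $\xi$ tends to $0$, hence $\xi\in W^u_{loc}(0)$.

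The main obstacle I expect is step (b): carefully extracting a limit point that genuinely lies in $\omega(x_0)$ \emph{and} on $W^{cu}_{loc}(0)$ \emph{and} is at a controlled positive distance from $0$. This requires simultaneously (i) the asymptotic-phase estimate $\|\Phi^n(x_0)-\Phi^n(\xi_n)\|\le c_0\rho^n\|x_0\text{-}\xi_n\|$ with $\xi_n\in W^{cu}_{loc}(0)$ the phase of $\Phi^{n_k}(x_0)$, (ii) uniform control so that $\Phi^{n_k}(x_0)$ does not leave the ball before reaching the sphere of radius $\beta$, and (iii) a compactness input (finite-dimensionality of $X_{cu}$ is precisely what makes $W^{cu}_{loc}(0)$-bounded sets precompact, so that $\Phi^m(\xi_{n_k})$ has a convergent subsequence). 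The case split ``orbit stays in $B_X(0,\delta)$'' versus ``orbit exits'' must be handled so that in the first case one actually concludes $\Phi^n(x_0)\to 0$: this uses that an orbit trapped near $W^{cu}_{loc}(0)$, together with stability of $0$ on the center manifold and exponential contraction transverse to $W^{cu}_{loc}(0)$, forces convergence — essentially because the $\omega$-limit set would then be a compact invariant set inside a small neighborhood on which the only recurrence allowed by stability is at $0$.

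The remaining steps are routine given Appendix~A: the reduction to $\tilde\Phi$ is the standard cut-off argument already used in Section~\ref{sec:IMforKG}, the Lyapunov-exponent characterizations in Theorem~\ref{th:FoliaA1}(ii) and Theorem~\ref{th:FoliaA4}(2) directly identify $W^u_{loc}(0)$ with the set of points whose backward orbit decays at the rate $\gamma_1^{*}$, and the independence of the stability notion from the choice of $W^c_{loc}(0)$ (cited from \cite{BuDeLu}) means one may work with any convenient representative. I would present the proof in the order (a), (b), (c) above, isolating the trapping-implies-convergence lemma as a preliminary observation and then devoting the bulk of the argument to the asymptotic-phase extraction in (b).
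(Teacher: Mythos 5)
First, a point of reference: the paper does not actually prove Theorem~\ref{th:ConvB1}. It quotes the result from Brunovsk\'y--Pol\'a\v{c}ik \cite{BrP97b} and records that it is a direct consequence of Lemma~\ref{lemmeB1}, which is itself quoted without proof. The closest thing to a proof in the paper is the adaptation of this argument to the concrete semiflow $S_\alpha(t)$ in Section~\ref{sec:conv}. Your steps (a) and (b) reproduce that architecture faithfully: the exit-time construction at radius $\beta$, the asymptotic phase $\xi_k$ of $\Phi^{n_k}(x_0)$ on $W^{cu}_{loc}(0)$, the observation that the exit times tend to infinity so the shadowing error $c_0\rho^{p_k}\|\Phi^{n_k}(x_0)-\xi_k\|$ vanishes, and the compactness extraction (finite-dimensionality of $X_{cu}$, or relative compactness of the orbit) producing $\xi\in\omega(x_0)\cap W^{cu}_{loc}(0)$ with $\|\xi\|$ comparable to $\beta$. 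This part is correct and is exactly what the paper does around \eqref{PCU}--\eqref{ClaimQ}.

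The genuine gap is in step (c), the passage from $W^{cu}_{loc}(0)$ to $W^{u}_{loc}(0)$. You claim that the backward orbit of $\xi$ converges to $0$ because ``stability on the center manifold rules out escape along $X_c$ backward.'' The stability hypothesis \eqref{AppendBStable} is a \emph{forward}-time statement about orbits \emph{starting} near $0$ on $W^c_{loc}(0)$; it gives no control whatsoever on backward orbits, and the conclusion you want is in fact false for a generic point of $W^{cu}_{loc}(0)$ whose backward orbit stays in a small ball. For instance, if $W^c_{loc}(0)$ consists entirely of fixed points (the situation actually arising in Section~\ref{sec:conv}), a nonzero fixed point $\xi$ near $0$ is its own backward orbit, stays in every small ball, yet does not converge to $0$ backward and does not lie on $W^u_{loc}(0)$. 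The correct mechanism, which is where the stability hypothesis really enters, runs through the foliation of $W^{cu}_{loc}(0)$ over $W^{c}_{loc}(0)$ with leaves tangent to $X_u$ (the backward-time analogue of Theorem~\ref{th:FoliaA2}): write $\xi_k$ as a point of the leaf through a base point $\zeta_k\in W^{c}_{loc}(0)$ with $\zeta_k\to 0$; apply \emph{forward} stability to conclude that $\Phi^{p_k}(\zeta_k)$ stays within any prescribed $\epsilon$ of $0$ once $k$ is large; use invariance of the foliation to place $\Phi^{p_k}(\xi_k)$ on the leaf through $\Phi^{p_k}(\zeta_k)$; and pass to the limit using continuity of the foliation map $\ell$ and disjointness of the leaves to conclude that the base point of the leaf through $\xi$ is $0$, i.e.\ $\xi\in\mathcal{M}_0=W^u_{loc}(0)$, with $\xi\ne0$ since $\|\xi\|\approx\beta$. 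Your outline identifies all the right tools but misattributes the role of the stability hypothesis; as written, step (c) would not survive scrutiny. (Your handling of the ``trapped'' alternative is also more elaborate than necessary: if for every $\beta>0$ the forward orbit is eventually contained in $B_X(0,\beta)$, that is already the statement $\Phi^n(x_0)\to0$.)
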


Theorem \ref{th:ConvB1} generalises the above mentioned convergence result of \cite{HaRa} 
 in two ways. Firstly, the hypotheses do not require that  
 $\omega(x_0)$ consists only of fixed points. Secondly, it does not require that the trajectory $\Phi^{n}(x_0)$, $n=1,2, \cdots$, of 
$x_0$ be relatively compact.  But, of course, it requires the additional stability property defined above. 

In \cite{BrP97b}, Brunovsk\'{y} and Pol\'{a}\v{c}ik have proved the following lemma (see \cite[Lemma 1]{BrP97b}) and have obtained Theorem \ref{th:ConvB1} as a direct consequence of it. We emphasize that  Lemma \ref{lemmeB1} is really a local result anf that Lemma \ref{lemmeB1} will hold for any mapping 
$\Phi^{*}: y \in \mathcal{U} \mapsto  \Phi^{*} y \in X$ coinciding with  $\Phi$ in 
$\mathcal{U}$. In particular, $\Phi^{*}$ need not be well defined outside $\mathcal{U}$, which is the case in our application in Section 3.

\begin{lemma} \label{lemmeB1}
Assume that the hypotheses (HB.1) (or (HB.1bis)) and (HB.2) hold, that $\delta>0$ is small enough so that $B_X(0, \delta) \subset \mathcal{U}$ and that $0$ is stable for the map 
$\Phi{\big |}_{W^{c}_{loc}(0)}$. Let $x_k \in X$ and $p_k \in \mathbb{N}$ be sequences satisfying the following properties:
\begin{enumerate}
\item $x_k \to 0$ as $k \to +\infty$.
\item $\Phi^j(x_k) \in B_X(0,\beta)$ for $j= 0,1,2, \ldots, p_k$ and 
$\Phi^{p_k +1}(x_k) \notin B_X(0,\beta)$, where $0< \beta < \delta$.
\item In the case, where $\dim X_{cu} = \infty$, the set $\{\Phi^j_*(x_k) \, | \, k \in 
\mathbb{N}, j= 0,  \ldots, p_k \}$ is relatively compact.
\end{enumerate}
Then $\Phi^{p_k}(x_k)$  contains a subsequence converging to an element of $W^u_{loc}(0) \setminus \{0\}$.
\end{lemma}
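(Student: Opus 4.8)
The plan is to prove Lemma~\ref{lemmeB1} by following the Brunovsk\'{y}--Pol\'{a}\v{c}ik argument, exploiting the asymptotic phase property of the local center unstable manifold $W^{cu}_{loc}(0)$. First I would use the exponential attraction with asymptotic phase of $W^{cu}_{loc}(0)$ (Theorem~\ref{th:FoliaA2}, applied via the global manifold $\tilde{W}^{cu}_{\delta}(0)$): for each $x_k$ there is a unique $\xi_k := \xi(x_k) \in W^{cu}_{loc}(0)$ with $\|\Phi^j_*(x_k) - \Phi^j_*(\xi_k)\|_X \le c_0 \rho_0^{\,j}\|x_k - \xi_k\|_X$ for $j \ge 0$, where $0<\rho_0<1$, and $\xi_k \to 0$ as $k\to\infty$ by continuity of $\xi(\cdot)$ and $x_k\to0$. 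Since $\Phi^{p_k}(x_k)$ stays in $B_X(0,\beta)$ while $\Phi^{p_k+1}(x_k)$ leaves it, and the distance between $\Phi^{p_k}(x_k)$ and $\Phi^{p_k}_*(\xi_k)$ tends to $0$ (because $\|x_k-\xi_k\|_X\to0$ and $\rho_0^{\,p_k}\le 1$), the sequence $\Phi^{p_k}_*(\xi_k)$ lies in $W^{cu}_{loc}(0)$, is eventually bounded by $\beta + o(1)$, and hence — using either finite-dimensionality of $X_{cu}$ or the relative compactness hypothesis (3) — has a convergent subsequence. Passing to that subsequence, $\Phi^{p_k}(x_k)\to z$ with $z \in \bar B_X(0,\beta)$, and by invariance of $W^{cu}_{loc}(0)$ together with the estimate above, $z \in W^{cu}_{loc}(0)$; moreover $z \ne 0$ can be arranged since $\Phi^{p_k+1}(x_k)\notin B_X(0,\beta)$ forces $\|z\|$ bounded below after one more application of $\Phi$ (continuity of $\Phi$ on $\mathcal U$).

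Next I would show $z$ actually lies on the unstable manifold $W^u_{loc}(0)$, not merely on $W^{cu}_{loc}(0)$. Decompose along $W^{cu}_{loc}(0)$ using the foliation of $W^{cu}_{loc}(0)$ over $W^c_{loc}(0)$ (the backward-time attraction of $W^c_{loc}(0)$ inside $W^{cu}_{loc}(0)$ with asymptotic phase, noted just after~\eqref{eq:BvarietesInv}): there is $\zeta_k \in W^c_{loc}(0)$ shadowing $\Phi^j_*(\xi_k)$ in backward time. The stability of $0$ for $\Phi|_{W^c_{loc}(0)}$ prevents the center component from leaving a small ball, so the only way $\Phi^{p_k}(x_k)$ can reach the sphere of radius $\beta$ is through growth in the unstable directions; quantitatively, the center-part of the orbit stays within $\varepsilon$ while the full orbit reaches $\beta$, so for small $\varepsilon$ the limit $z$ must have nontrivial unstable component and, by the characterization of $W^u_{loc}(0)$ as the set of points whose backward orbit decays at the appropriate exponential rate (Lyapunov exponent statement in Theorem~\ref{th:FoliaA4}(2), \eqref{eq:ALyExpetoile1}), one checks that the backward orbit of $z$ — which exists and stays small because $z$ is a forward limit of orbit segments in $B_X(0,\beta)$ — decays at rate governed by $\gamma_1^*$, hence $z \in W^u_{loc}(0)$. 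Since $z\ne0$, this gives the conclusion.

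The main obstacle, in my view, is the last step: ruling out that $z$ has a genuinely central (non-unstable) component and converting the ``distance reaches $\beta$'' information into the sharp exponential backward-decay estimate that places $z$ on $W^u_{loc}(0)$. This requires combining three asymptotic-phase facts (attraction of $W^{cu}_{loc}(0)$, backward attraction of $W^c_{loc}(0)$ within it, and the stability of $0$ on $W^c_{loc}(0)$) in a quantitative way, keeping track of the rates $\rho_0$, $\gamma_1, \gamma_2, \tilde\gamma_1,\tilde\gamma_2, \gamma_1^*$ so that the estimates close; the bookkeeping, while not conceptually deep, is where care is needed. Everything else — extracting convergent subsequences, the shadowing estimates, invariance — is routine once the invariant-manifold package from Appendix~A is in hand. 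I would therefore organize the write-up as: (i) the asymptotic-phase shadowing and extraction of $z\in W^{cu}_{loc}(0)\setminus\{0\}$; (ii) the center-stability argument forcing growth in the unstable direction; (iii) the identification $z\in W^u_{loc}(0)$ via the Lyapunov-exponent characterization, with the rate bookkeeping carried out explicitly.
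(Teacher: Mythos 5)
First, note that the paper itself does not prove Lemma~\ref{lemmeB1}: it is quoted verbatim from Brunovsk\'y--Pol\'a\v{c}ik \cite[Lemma 1]{BrP97b}, and the closest the paper comes to a proof is the adaptation of that argument carried out in Section~\ref{sec:conv}. Your step (i) follows exactly that route and is essentially complete: the asymptotic phase $\xi_k=\xi(x_k)\in W^{cu}_{loc}(0)$ from the foliation of Theorem~\ref{th:FoliaA2}, the estimate $\|\Phi^{p_k}(x_k)-\Phi_*^{p_k}(\xi_k)\|\le c_0\rho_0^{p_k}\|x_k-\xi_k\|\to0$, compactness (finite dimension of $X_{cu}$ or hypothesis (3)), and the observation that $z\ne0$ because $\Phi_*(z)=\lim\Phi^{p_k+1}(x_k)$ has norm $\ge\beta$ while $\Phi_*(0)=0$, all check out. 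This much, however, only places $z$ in $W^{cu}_{loc}(0)\setminus\{0\}$, which is where the paper's Section~\ref{sec:conv} stops before switching to arguments specific to its gradient-like setting.

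The genuine gap is in your steps (ii)--(iii), and the one concrete mechanism you state there does not work as written. You argue that the backward orbit of $z$ "exists and stays small because $z$ is a forward limit of orbit segments in $B_X(0,\beta)$" and therefore "decays at rate governed by $\gamma_1^*$". Boundedness (or smallness) of the backward orbit is exactly the Lyapunov-exponent characterization of $W^{cu}$ (Theorem~\ref{th:FoliaA1}(ii) with $\gamma_2<1$); membership in $W^u_{loc}(0)$ requires backward decay at a rate $\gamma^*_1>1$, and a point of $W^{cu}$ with a genuinely central leaf base point has a bounded backward orbit that does \emph{not} decay. Likewise, "nontrivial unstable component" is not sufficient: such a point can still carry a nonzero center component and lie outside $W^u$. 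The missing chain, which is where the stability hypothesis actually enters, is the following. Use the backward foliation of $W^{cu}_{loc}(0)$ over $W^{c}_{loc}(0)$ to assign to $\Phi_*^{p_k}(\xi_k)$ a base point $\zeta_k\in W^c_{loc}(0)$; the backward shadowing over all $p_k$ steps gives $\|\xi_k-\Phi_*^{-p_k}(\zeta_k)\|\le C\sigma^{p_k}\cdot O(\beta)\to0$, hence $\Phi_*^{-p_k}(\zeta_k)\to0$; stability of $0$ for $\Phi|_{W^c_{loc}(0)}$, applied with an arbitrary $\varepsilon>0$, then forces $\zeta_k=\Phi_*^{p_k}\big(\Phi_*^{-p_k}(\zeta_k)\big)\to0$ (not merely $\|\zeta_k\|\le\varepsilon$ for one fixed $\varepsilon$); finally, continuity of the foliation identifies the leaf through $z=\lim\Phi_*^{p_k}(\xi_k)$ with the leaf over $\lim\zeta_k=0$, which is precisely $W^u_{loc}(0)$. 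Without this passage from "starts near $0$" to "$\zeta_k\to0$" via stability, the conclusion $z\in W^u_{loc}(0)$ cannot be reached; and note also that the backward foliation of $W^{cu}$ over $W^c$ that this uses is only asserted, not constructed, in Appendix~A, so it must be invoked explicitly.
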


As an easy consequence of Theorem \ref{th:ConvB1},  Brunovsk\'{y} and Pol\'{a}\v{c}ik have obtained the following more classical theorem. 

\begin{theorem} \label{th:ConvB2}
Assume that the hypotheses (HB.1) (or (HB.1bis)) and (HB.2) hold. Let $x_0$ be a point in 
$X$ such that the fixed point $0$ belongs to the $\omega$-limit set $\omega(x_0)$ of 
$x_0$ and such that $\omega(x_0)$ is contained in the set $\mathrm{Fix}(\Phi)$ of fixed points of $\Phi$. Assume that either $X_{cu}$ is finite-dimensional or that the trajectory 
$\Phi^{n}(x_0)$, $n=1,2, \cdots$, of $x_0$ is relatively compact. 
Assume moreover that one of the following two properties holds: 
\begin{enumerate}
\item $\dim X^{c} = 1$ and the trajectory 
$\Phi^{n}(x_0)$, $n=1,2, \cdots$, of $x_0$ is relatively compact. 
\item $\dim X^{c} = m < \infty$ and there is a submanifold $M \subset X$ with 
$\dim M=m$ such that $0 \in M \subset \mathrm{Fix}(\Phi)$.
\end{enumerate}
Then $\omega(x_0) = \{0\}$.
\end{theorem}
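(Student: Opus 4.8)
The strategy is to invoke the local convergence result Theorem~\ref{th:ConvB1} (equivalently Lemma~\ref{lemmeB1}), for which the only hypothesis not already granted is that $0$ be stable for $\Phi\vert_{W^c_{loc}(0)}$. When that stability holds, Theorem~\ref{th:ConvB1} gives the dichotomy: either $\Phi^n(x_0)\to 0$ --- i.e. $\omega(x_0)=\{0\}$ --- or $\omega(x_0)$ meets $W^u_{loc}(0)\setminus\{0\}$. The second alternative is impossible: a point $y\in\omega(x_0)\cap(W^u_{loc}(0)\setminus\{0\})$ is a fixed point, since $\omega(x_0)\subset\mathrm{Fix}(\Phi)$, so its constant backward orbit $(y,y,\dots)$ is a negative semi-orbit lying in the (strongly) unstable manifold $W^u_{loc}(0)$; but by Theorem~\ref{th:FoliaA4}(2) every negative semi-orbit contained in $W^u_{loc}(0)$ obeys $\limsup_{t\to-\infty}\frac{1}{|t|}\ln|u(t)|\le -\frac{1}{\tau}\ln\gamma^*_1<0$ because $\gamma^*_1>1$, which forces $|y|=0$, a contradiction. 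Hence $\omega(x_0)=\{0\}$ whenever the stability holds. In case~(2) I will establish this stability; in case~(1), where it may genuinely fail, I will instead reach $\omega(x_0)=\{0\}$ directly.

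The common starting point is the standard fact that, for $\delta>0$ small enough, every fixed point of $\Phi$ in $B_X(0,\delta)$ lies on $W^c_{loc}(0)$: the constant orbit at a fixed point trivially meets the Lyapunov-exponent characterizations of both $W^{cu}_{loc}(0)$ and $W^{cs}_{loc}(0)$ recorded in Theorems~\ref{th:FoliaA1}(ii) and~\ref{th:FoliaA4}, hence lies in $W^{cu}_{loc}(0)\cap W^{cs}_{loc}(0)=W^c_{loc}(0)$ (this is Theorem~\ref{th:FoliaA4}(4)(ii)). In case~(2) the $m$-dimensional submanifold $M\subset\mathrm{Fix}(\Phi)$ with $0\in M$ therefore lies, near $0$, inside $W^c_{loc}(0)$, which is a Lipschitz --- indeed $C^1$ --- graph over $X_c\cong\mathbb{R}^m$. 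Projecting along this graph identifies $M$ near $0$ with an $m$-dimensional topological submanifold of $\mathbb{R}^m$ through $0$; by invariance of domain its image is open, so $M$ fills a whole neighborhood of $0$ in $W^c_{loc}(0)$. Consequently $\Phi$ restricted to $W^c_{loc}(0)$ is the identity near $0$, so $0$ is trivially stable for $\Phi\vert_{W^c_{loc}(0)}$, and the first paragraph concludes case~(2).

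In case~(1), $\dim X_c=1$ and the trajectory is relatively compact, so $\omega(x_0)$ is nonempty, compact and connected, and by the remark above $\omega(x_0)\cap B_X(0,\delta)$ consists of fixed points on the one-dimensional $C^1$ curve $W^c_{loc}(0)$, on which $\Phi$ induces a scalar map $\psi$ fixing $0$. If $\omega(x_0)\cap B_X(0,\delta)=\{0\}$ for some small $\delta$, then $\{0\}$ is relatively open and closed in the connected set $\omega(x_0)$, whence $\omega(x_0)=\{0\}$. Otherwise $\omega(x_0)$ contains fixed points arbitrarily close to, but distinct from, $0$, and connectedness yields a nondegenerate closed sub-arc $I\subset\omega(x_0)$ through $0$ on which $\psi=\mathrm{id}$; this case I would rule out by using the asymptotic-phase attraction of the center--unstable manifold (Theorem~\ref{th:FoliaA4}(1), together with the foliation of Theorems~\ref{th:FoliaA2} and~\ref{th:FoliaA4}(5)) to replace the trajectory, past the time it first enters $B_X(0,\delta)$, by an orbit lying in $W^{cu}_{loc}(0)$ with the same $\omega$-limit set; this orbit is bounded in forward time, hence lies in $W^{cu}_{loc}(0)\cap W^{cs}_{loc}(0)=W^c_{loc}(0)$, so it is a forward orbit of $\psi$; and a bounded forward orbit of a one-dimensional map whose $\omega$-limit set is contained in $\mathrm{Fix}(\psi)$ is eventually absorbed into a single fixed point, contradicting that its $\omega$-limit set equals the nondegenerate arc $I$. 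Thus case~(1) also gives $\omega(x_0)=\{0\}$.

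The heart of the matter --- and the place where genuine work is required --- is the stability/finiteness analysis near $W^c_{loc}(0)$: case~(2) is essentially a dimension count followed by invariance of domain, but case~(1) needs careful bookkeeping of the possible configurations of fixed points on the center curve and of how a relatively compact trajectory with $0$ in its $\omega$-limit set must interact with the asymptotic-phase attraction of the center--unstable manifold (including checking that the auxiliary orbit stays in the domain of the local manifolds). Everything downstream --- feeding stability into Theorem~\ref{th:ConvB1} and discarding the unstable-manifold alternative by means of the strictly negative Lyapunov exponent on $W^u_{loc}(0)$ --- is routine.
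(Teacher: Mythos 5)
Your case (2) argument is correct and coincides with the paper's: every fixed point of $\Phi$ near $0$ lies on $W^{c}_{loc}(0)$, so the $m$-dimensional manifold $M\subset \mathrm{Fix}(\Phi)$ fills a whole neighbourhood of $0$ in the $m$-dimensional graph $W^{c}_{loc}(0)$ (the paper asserts this by a dimension count; your invariance-of-domain justification is a welcome extra detail). Hence $\Phi\vert_{W^{c}_{loc}(0)}$ is the identity near $0$, $0$ is stable in the sense of \eqref{AppendBStable}, and Theorem \ref{th:ConvB1} together with the absence of nonzero fixed points on $W^{u}_{loc}(0)$ (your Lyapunov-exponent argument) gives $\omega(x_0)=\{0\}$.

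The gap is in case (1). Having produced the nondegenerate arc $I\subset\omega(x_0)\cap\mathrm{Fix}(\Phi)$ through $0$, you propose to replace the trajectory, from the first time it enters $B_X(0,\delta)$, by its asymptotic-phase shadow on $W^{cu}_{loc}(0)$ ``with the same $\omega$-limit set''. The asymptotic-phase property \eqref{eq:AfoliaLya} is a statement about the globally modified map $\tilde\Phi$, and it transfers to $\Phi$ only for as long as the original orbit stays in the ball on which $\Phi$ and $\tilde\Phi$ agree. Nothing guarantees this: $\omega(x_0)$ contains $I$ and possibly much more, may extend outside $B_X(0,\delta)$, and then the orbit exits $B_X(0,\delta)$ infinitely often, after which the $\tilde\Phi$-shadow orbit has no relation to $\omega(x_0)$. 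Controlling precisely this recurrence of exits is the content of Lemma \ref{lemmeB1} (the exit times $p_k$) and the reason the stability hypothesis is needed at all; your argument silently assumes the trajectory is eventually trapped in the ball, which is not known. The repair is short and is what the paper does: once you have the arc $I$ of fixed points with $0\in I$, you are back in case (2) --- apply it at $0$ if $0$ lies in the relative interior of $I$, otherwise at an interior point $y^{*}\in I\cap\omega(x_0)$ after replacing $\Phi(x)$ by $\Phi(y^{*}+x)$; either conclusion ($\omega(x_0)=\{0\}$ or $\omega(x_0)=\{y^{*}\}$) contradicts the assumption that $\omega(x_0)$ contains two distinct points. (A minor point: ``eventually absorbed into a single fixed point'' for the one-dimensional map should read ``converges to''; the orbit need not become stationary.)
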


\begin{proof} We give the proof, because it is    short. 

First assume that (2) holds. Then, if $\delta >0$ is small enough, the sets $M$ and $W^u_{loc}(0)$ coincide since 
$M \subset W^u_{loc}(0)$,  and  they both have the same dimension $m$. The  assumption 
$M  \subset \mathrm{Fix}(\Phi)$ thus implies that $0$ is stable for  
 the map $\Phi{\big |}_{W^{c}_{loc}(0)}$. Since $W^u_{loc}(0) \setminus \{0\}$ contains no 
 fixed point if $\delta >0$ is small enough and since $\omega(x_0) \in \mathrm{Fix}(\Phi)$,
 Theorem \ref{th:ConvB1} implies that $\omega(x_0) = \{0\}$.
 
In the case (1), we first remark that, since  the trajectory $\Phi^{n}(x_0)$, $n=1,2, \cdots$, of $x_0$ is relatively compact and since $\omega(x_0)$ consists only of fixed points, the omega-limit set $\omega(x_0)$ is connected (see for example 
\cite[Lemma 2.7]{HaRa}). If $\omega(x_0)$  contains more than one fixed point, then all fixed points near $0$ are contained in $W^c_{loc}(0)$ and thus $0$ belongs to a curve of fixed points. If $0$ belongs to the relative interior of this curve, one applies the case (2), which leads to a contradiction. If $0$ does not belong to the relative interior of this curve, we consider a fixed point $y^*$ near $0$, contained in the relative interior of this curve of fixed points and in $\omega(x_0)$. Replacing $\Phi$ by $\Phi(y^* +x)$, we are now back to the case (2). Applying the case (2), we obtain that  $\omega(x_0)=y^*$, which also leads to a contradiction. 
\end{proof}

Suppose we consider an element $x_0 \in X$ such that we do not a priori know that the trajectory $\{\Phi^n(x_0) \, | \, n \in \mathbb{N}\}$ is bounded.  Then, even if $\dim X^{c} = 1$, we cannot directly apply case~(1) of Theorem \ref{th:ConvB2}. Indeed, the proof of case~(1) uses the connectedness property of $\omega(x_0)$.
One can then try to apply the more general Theorem \ref{th:ConvB1} in order to obtain a convergence result.

 In Section~\ref{sec:conv} we encountered such a case. We did not know there  that the forward trajectory $\{S_{\alpha}(t) \vec u_0 \, | \, t  \geq 0\}$ is bounded. 
 Thus, as in the proof of Lemma \ref{lemmeB1}, we used the property that 
 $W^{cu}_{loc}(0)$  is exponentially attractive in $X$ with asymptotic phase together with the fact that $\dim X^{c} = 1$, to obtain that $S_{\alpha}(t)$ has the  stability property \eqref{eq:2.16} (or \eqref{AppendBStable}). Then, we  applied  Theorem \ref{th:ConvB1}  to the  time $\tau$-map $\Phi =S_{\alpha}(\tau)$, where $\tau >0$ is small enough, in order to obtain the convergence result.
 
Since the arguments in Section~\ref{sec:conv} did not use the particular properties of 
 $S_{\alpha}(t)$, it is also valid  in the case of a more general semi-flow and allows us to state the following general result. 

 \begin{coro}\label{th:CoroB5}
Assume that the map  $\Phi = S(\tau)$ where $S(t): \mathbb{R} \times X \to X$ is a continuous dynamical system and that $\tau >0$ is a small enough positive time, so that
$\Phi = S(\tau)$  satisfies  the hypotheses (HB.1) (or (HB.1bis)) and (HB.2). Let $x_0$ be a point in $X$ such that the 
equilibrium point $0$ belongs to the $\omega$-limit set $\omega(x_0)$ of $x_0$ and such that $\omega(x_0)$ 
is contained in the set of equilibrium  points of $S(t)$. 
Assume that either $X_{cu}$ is finite-dimensional or that the trajectory 
$\Phi^{n}(x_0)$, $n=1,2, \cdots$, of $x_0$ is relatively compact. 
Assume moreover that $\dim X^{c} = 1$. Then $\omega(x_0) = \{0\}$.
\end{coro}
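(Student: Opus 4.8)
The plan is to transcribe, almost verbatim, the contradiction argument of Section~\ref{sec:conv} (convergence of $S_\alpha(t)\vec u_0$ to an equilibrium) into the abstract setting of this appendix, with the asymptotic-phase estimate \eqref{PCU} replaced by the attraction with asymptotic phase of $W^{cu}_{loc}(0)$ furnished by Theorem~\ref{th:FoliaA2} and the discussion preceding Theorem~\ref{th:ConvB1}, and the use of the flow near \eqref{eq:2.13}--\eqref{eq:2.17} replaced by the local statement of Lemma~\ref{lemmeB1}. Suppose, for contradiction, that $\omega(x_0)\neq\{0\}$. Then $\Phi^n(x_0)$ does not converge to $0$, so there is $\delta_0\in(0,\delta/2)$, with $\delta$ as in Lemma~\ref{lemmeB1}, such that $\Phi^n(x_0)\notin B_X(0,\delta_0)$ for infinitely many $n$, while $0\in\omega(x_0)$ gives $\liminf_n\|\Phi^n(x_0)\|_X=0$. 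First I would fix $\beta\in(0,\delta_0]$, pick $p_k\to\infty$ with $x_k:=\Phi^{p_k}(x_0)\to0$ and $x_k\in B_X(0,\beta)$, and let $q_k\geq0$ be the first index with $\Phi^j(x_k)\in B_X(0,\beta)$ for $0\leq j\leq q_k$ and $\Phi^{q_k+1}(x_k)\notin B_X(0,\beta)$; continuity of $\Phi$ forces $q_k\to\infty$.

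Next I would apply the asymptotic phase of $W^{cu}_{loc}(0)$: there are $c>1$, $\rho\in(0,1)$ and a continuous map $\xi(\cdot)$ with $\xi(0)=0$ such that $\xi_k:=\xi(x_k)\in W^{cu}_{loc}(0)$ satisfies $\|\Phi^n(x_k)-\Phi^n(\xi_k)\|_X\leq c\rho^n\|x_k-\xi_k\|_X$ for $n\geq0$; hence $\xi_k\to0$ and $\|\Phi^{q_k}(x_k)-\Phi^{q_k}(\xi_k)\|_X\to0$. Since either $X_{cu}$ is finite-dimensional (so the bounded sequence $\Phi^{q_k}(\xi_k)\in W^{cu}_{loc}(0)$ subconverges) or $\{\Phi^n(x_0)\}$ is relatively compact (so $\Phi^{q_k}(x_k)=\Phi^{p_k+q_k}(x_0)$ subconverges), I may pass to a subsequence along which $\Phi^{q_k}(x_k)\to\tilde x$ with $\|\tilde x\|_X\leq\beta$; by invariance of $W^{cu}_{loc}(0)$ and the fact that $\Phi$ agrees with its global Lipschitz extension on $B_X(0,\delta)$, one gets $\tilde x\in W^{cu}_{loc}(0)$. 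As $p_k+q_k\to\infty$, $\tilde x\in\omega(x_0)$, hence $\tilde x$ is an equilibrium, so $\Phi(\tilde x)=\tilde x$; passing to the limit in $\|\Phi^{q_k+1}(x_k)\|_X\geq\beta$ then gives $\|\tilde x\|_X=\|\Phi(\tilde x)\|_X\geq\beta$, whence $\|\tilde x\|_X=\beta$. Write $\tilde Q(\beta):=\tilde x$.

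Then I would run the dichotomy exactly as in Section~\ref{sec:conv}. If $0$ is an isolated equilibrium, taking $\beta$ below the isolation radius makes $\tilde Q(\beta)\in B_X(0,\delta)\setminus\{0\}$ impossible. Otherwise, since $\dim X^c=1$, the center manifold $W^c_{loc}(0)$ is a $C^1$ embedded curve through $0$ containing every equilibrium near $0$ (Theorem~\ref{th:FoliaA4}(4)(ii)); so each $\tilde Q(\beta)\in W^{cu}_{loc}(0)$, being an equilibrium, lies on this curve, and $\tilde Q(\beta)\in\omega(x_0)$. Ordering the curve and intersecting its tangent at $0$ with the spheres of radius $\beta$ as on p.~\pageref{eq:2.17}, there are two cases. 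If equilibria $Q_m^-<0<Q_m^+$ of the family accumulate at $0$ from both sides, then $W^c_{loc}(0)$ is invariant under the group $S(t)$ and orbits on it cannot cross the equilibria lying between $Q_m^-$ and $Q_m^+$, so $0$ is stable for $\Phi\vert_{W^c_{loc}(0)}$; Lemma~\ref{lemmeB1} applied to $(x_k)$, $(q_k)$ then forces (a subsequence of) $\Phi^{q_k}(x_k)\to\tilde Q(\beta)$ to lie in $W^u_{loc}(0)\setminus\{0\}$, which contains no equilibrium --- a contradiction. If instead the equilibria of the family lie on only one side of $0$ in some $B_X(0,2\beta_2)$, then that side of $W^c_{loc}(0)$ consists entirely of equilibria, all in $\omega(x_0)$, so $\omega(x_0)$ contains an arc $\mathcal C$ of equilibria with endpoint $0$; choosing an equilibrium $y^*$ in the relative interior of $\mathcal C$ close to $0$ (so $y^*$ is not isolated, has equilibria of $\mathcal C$ on both sides, and inherits (HB.1)--(HB.2) with $\dim X^c=1$ since it lies on the $C^1$ center manifold) and repeating the whole argument with $0$ replaced by $y^*$ puts us in the first case, again a contradiction. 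In every case $\omega(x_0)\neq\{0\}$ is contradicted.

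The step I expect to be the main obstacle is the bookkeeping in the second case: one must be sure that a one-sided sub-arc of $W^c_{loc}(0)$ is genuinely pointwise equilibria (not merely a set of accumulation points of equilibria), so that a non-isolated interior equilibrium $y^*$ with two-sided accumulating equilibria can be selected, and that the abstract hypotheses (HB.1)--(HB.2) together with $\dim X^c=1$ transfer to $y^*$; both points rely on the $C^1$ structure of the center manifold and on $y^*$ lying on it. The remaining steps are the literal abstract counterparts of the computations in Section~\ref{sec:conv}.
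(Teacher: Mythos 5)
Your proposal is correct and takes the paper's own route: the paper justifies this corollary precisely by observing that the contradiction argument of Section~\ref{sec:conv} (exit times from $B_X(0,\beta)$, asymptotic phase of $W^{cu}_{loc}(0)$, sub-convergence to an equilibrium $\tilde Q(\beta)$ on the sphere of radius $\beta$, and the two-case dichotomy on the one-dimensional center curve with Lemma~\ref{lemmeB1} invoked in the two-sided case) transfers verbatim to the abstract setting, which is exactly what you have transcribed. The point you flag as delicate --- that a one-sided sub-arc of $W^{c}_{loc}(0)$ consists pointwise of equilibria, permitting the shift to an interior equilibrium $y^*$ --- is handled in the paper exactly as you do, by intersecting the tangent line to the $C^1$ center curve with the spheres of radius $\beta$ for all small $\beta$.
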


Let us finally notice that, in the case of gradient systems generated by some evolutionary equations with an analytic non-linearity or satisfying the hypotheses (1) or (2) of 
Theorem \ref{th:ConvB2}, one also obtains convergence results based on the \L ojasiewicz inequality (see \cite{Si96, HaJen99, HaJen07})  and on the construction of appropriate energy functionals, when the trajectories are relatively compact. 
The  convergence proofs given \cite{Si96, HaJen99, HaJen07}  require in an essential way that the trajectory 
$S(t) x_0$, $t \geq 0$, be relatively compact and thus do not seem to be applicable in Section~\ref{sec:conv} above.


\begin{thebibliography}{99}


\bibitem{Aulb}
B.\ Aulbach, {\it Continuous and Discrete Time Dynamics Near Manifolds of
Equilibria}, Springer Verlag, Berlin-Heidelberg, 1984.


\bibitem{BaJon}
 P. W. Bates and C.K.R.T. Jones, {\it Invariant manifolds for semilinear partial differential equations}, Dynamics reported, {\bf 2} , pp. 1–38, Dynam. Report. Ser. Dynam. Systems Appl., 2, Wiley, Chichester, 1989

\bibitem{BeLions83I}
H.\  Berestycki and P.\ L.\  Lions,  
{\it Nonlinear scalar field equations. I. Existence of a ground state}, Arch.\ Rational Mech.\ Anal.\  {\bf 82} (1983), pp.\ 313--345.


\bibitem{BeLions83II}
H.\  Berestycki and P.\ L.\  Lions,  
{\it Nonlinear scalar field equations. II. Existence of infinitely many solutions}, Arch.\ Rational Mech.\ Anal.\  {\bf 82} (1983), pp.\ 347--375.


\bibitem{BrP97b}
 P.\ Brunovsk\'{y} and P.\ Pol\'{a}\v{c}ik,
{\it On the local structure of $\omega$-limit sets of maps}, Z.\ Angew. 
Math.\ Phys.\ {\bf 48} (1997), pp.\ 976--986.

\bibitem{BuDeLu}
A.\ Burchard, B.\ Deng, and K.\ Lu, {\it Smooth conjugacy of centre manifolds},
Proc.\ Roy.\ Soc.\ Edinburgh Sect.\ A {\bf  120}, (1992), pp.\ 61--77. 


\bibitem{BRS2}
N. Burq, G. Raugel and W. Schlag,  {\it Long time dynamics for damped Klein-Gordon equations II} 
 

\bibitem{Carr}
J. Carr, {\it Application of Centre Manifold Theory}, Applied Mathematical Sciences,
{\bf  35}, Springer-Verlag, New York, 1981.

\bibitem{Caz85} 
T. Cazenave, {\it Uniform Estimates for solutions of non-linear Klein-Gordon equations}, 
J.\ Functional Analysis, {\bf 60} (1985), pp.\ 36--55.



\bibitem{ChenLin91}
 C.- C.\ Chen and C.- S.\ Lin,  {\it Uniqueness of the ground state solutions 
of $\Delta u + f(u) =0$ in ${\bf R}^n$, $n \geq 3$}, Comm.\ PDE, {\bf 16} (1991), pp.\ 1549--1572.




\bibitem{ChHaBT}
X.-Y.\ Chen, J.\ K.\ Hale and B.\ Tan, 
{\it Invariant Foliations for $C^1$ Semigroups in Banach Spaces},
J.\ of Differential Equations, {\bf 139}  (1997) , pp.\ 283--318.

\bibitem{ChowHale} 
S.-N.\ Chow and J.\ K.\ Hale, 
{\it  Methods of bifurcation theory}, Grundlehren Math. Wiss., {\bf 251}
Springer-Verlag, New York-Berlin, 1982.


\bibitem{ChowLL}
S.-N.\ Chow, X.-B.\ Lin, and K.\ Lu, {\it Smooth invariant foliations in infinite dimensional
spaces}, J.\ Differential Equations {\bf 94} (1991),  pp.\ 266--291.

\bibitem{Coff72}
C.\ V.\ Coffman, {\it Uniqueness of the ground state solution for $\Delta u - u +u^3=0$
 and a variational characterization of other solutions}, Arch.\ Rational Mech.\ Anal.\ {\bf 46} (1972), pp.\ 81--95.



\bibitem{CG-HY11}
C.\ Cort\'azar, M.\ Garc\'{\i}a-Huidobro and C.\ S.\ Yarur, {\it  On the uniqueness of sign changing bound state solutions of a semilinear equation}, 
Ann.\ Inst.\ H.\ Poincar\'e Anal.\ Non Lin\'eaire {\bf 28} (2011), pp.\ 599--621. 

 
\bibitem{CHSch12}
O.\ Costin, M.\ Huang and W.\ Schlag, {\it  On the spectral properties of $L_{\pm}$ in three dimensions}, Nonlinearity  {\bf 25} (2012), pp.\ 125--164.


\bibitem{DuMer} 
T.\ Duyckaerts and F.\ Merle
 {\it Dynamics of threshold solutions for energy-critical wave equation}, 
Int.\ Math.\ Res.\ Notices, {\bf 2},  (2008).

\bibitem{DuKeMer} 
T.\ Duyckaerts, C.\ Kenig and F.~Merle, 
 {\it Classification of radial solutions of the focusing, energy-critical wave equation}, 
 Camb.\ J.\ Math.~1 (2013), no.~1, pp.\ 75--144.    


\bibitem{Es-A03} 
J. A. Esquivel-Avila, {\it The dynamics of a nonlinear wave equation}, 
J.\ Math.\ Anal.\ Appl. {\bf 279} (2003), pp.\ 135--150.

\bibitem{ES-A04} 
J.\ A.\ Esquivel-Avila, {\it Qualitative analysis of a nonlinear
wave equation}, Discrete And Continuous Dynamical Systems
 {\bf 10} (2004), pp.\ 787--804.

\bibitem{Feir94a} 
 E.\ Feireisl, {\it Convergence to an equilibrium for semilinear wave equations on unbounded intervals}, Dynam.\ Syst.\ Appl.~{\bf  3} (1994), pp.\  423--434.


\bibitem{Feir97} 
 E.\ Feireisl,
{\it Long-time behavior and convergence for semilinear wave equations on ${\bf R}^N$}. 
J.\ Dynam.\ Differential Equations {\bf  9} (1997), pp.\ 133--155. 

\bibitem{Feir98A} 
 E.\ Feireisl,
{\it Finite energy travelling waves for nonlinear damped wave equations}, Quarterly Journal of Applied mathematics {\bf LVI} (1998), pp.\ 55--70.


\bibitem{GaSq06} 
F.\ Gazzola and M.\ Squassina, 
{\it Global solutions and finite time blow up for damped semilinear
wave equations}, Ann. I. H. Poincar\'e – Analyse non-lin\'eaire
{\bf 23} (2006), pp.\ 185--207.
 
  \bibitem{HaMa}
J.\ K.\ Hale and P.\ Massatt, {\it Asymptotic behavior of gradient-like systems}, Dynamical Systems II (A.\ R.\ Bednarek and L.\ Cesari, eds.), Academic Press 1982, pp.\ 85--101.

\bibitem{HaRa}
J.\ K.\ Hale and G.\ Raugel, {\it Convergence in Gradient-Like Systems with
Applications to P.D.E.}, Z.\ Angew.\ Math.\ Phys. {\bf 43} (1992), pp.\ 63--124.

 \bibitem{HaJen99}
A.\ Haraux and  M.\ A.\ Jendoubi, {\it  Convergence of bounded weak solutions of the wave equation with dissipation and analytic nonlinearity}, Calc.\ Var.\ Partial Differential Equations {\bf 9}
 (1999), pp.\ 95--124.
 
  \bibitem{HaJen07} 
A.\ Haraux and  M.\ A.\ Jendoubi, {\it  On the convergence of global and bounded solutions of some evolution equations}, J.\ Evol.\ Equ.~{\bf 7} (2007), pp.\ 449--470.

 
\bibitem{Henry}
D.\ Henry, {\it Geometric theory of semilinear parabolic equations}, Lecture
Notes In Mathematics {\bf 840}, Springer-Verlag, New York, 1981.

\bibitem{HPS77}
M.\ W.\  Hirsch, C.\ C.\ Pugh and M.\  Shub,
{\it Invariant manifolds}, Lecture Notes in Mathematics {\bf 583}, 
Springer-Verlag,  Berlin-New York, 1977.

\bibitem{IbMaNa} 
S.\ Ibrahim, N.\ Masmoudi and K.\ Nakanishi, 
{\it  Scattering threshold for the focusing nonlinear Klein-Gordon equation}, 
Analysis and PDE, {\bf 4} (2011), pp.\ 405--460. 

\bibitem{IS} A.\ Ionescu and W.\ Schlag {\em Agmon-Kato-Kuroda theorems for a large class of perturbations. }
Duke Math.\ J.\ 131 (2006), no.~3, 397--440. 

\bibitem{KeelTao98}
M.\ Keel and T.\ Tao, {\it Endpoint Strichartz Estimates},
American Journal of Mathematics {\bf 120} (1998), pp.\ 955--980

\bibitem{Keller83a}
C.\  Keller, {Stable and unstable manifolds for the nonlinear wave equation with dissipation}, 
 J.\ Differential Equations {\bf 50} (1983), pp.\ 330--347.
 
 \bibitem{Keller83b}
C.\ Keller, {\it Large-time asymptotic behavior of solutions of nonlinear wave equations perturbed from a stationary ground state}. Comm.\ Partial Differential Equations {\bf 8} (1983), pp.\ 1073--1099.

\bibitem{KM08A}
C.\ E.\ Kenig and F.\ Merle, {\it Global well-posedness, scattering and blow-up for the energy-critical focusing non-linear wave equation}, Acta Math., {\bf 201}, (2008), pp. 147-212.

\bibitem{LinSog95}
H.\ Lindblad and C.\ Sogge, {\it On existence and scattering with minimal regularity
for semilinear wave equations}, J.\ Funct.\ Anal.\ {\bf 130} (1995),  pp.\ 357--426.
 
 
\bibitem{Mata78}
H. Matano, {\it Convergence of solutions of one-dimensional semilinear parabolic 
equations}, Journal of Mathematics of Kyoto University {\bf 18} (1978), pp.\ 221--227.  

\bibitem{McLeod93}
K.\ McLeod, {\it Uniqueness of Positive Radial Solutions of $\Delta u + f(u) =0$ in $\R^n$, II},
Trans.\ AMS.\ {\bf 339} (1993), pp.\ 495--505.


\bibitem{NaS}
 K.\ Nakanishi and W.\ Schlag,  {\it Invariant manifolds and dispersive Hamiltonian Evolution Equations}, Z\"urich Lectures in Advanced Mathematics. European Mathematical Society (EMS), Z\"urich, 2011.

\bibitem{Palis-Melo}
J.\ Palis and W.\ de Melo, {\it Geometric theory of dynamical systems.
An introduction}. Translated from the Portuguese by A.\ K.\ Manning.
Springer-Verlag, New York-Berlin, 1982.

 
\bibitem{PaSat}
I.\ E.\ Payne and D.\ H.\ Sattinger, {\it Saddle points and instability of nonlinear hyperbolic equations}, Israel J.\ Math. {\bf 22} (1975), pp.\ 273--303.


\bibitem{Rau94}
 G.\ Raugel, {\it Dynamics of Partial Differential Equations on Thin Domains, CIME 
Course, Montecatini Terme}, Lecture Notes in Mathematics {\bf 1609},
Springer Verlag, (1995), pp.\ 208--315.


\bibitem{Schlag06}
W.\ Schlag, {\it Spectral theory and nonlinear partial differential equations: a survey}, Discrete Contin.\ Dyn.\ Syst.~{\bf 15} (2006), pp.\ 703--723.

\bibitem{Si96}
L.\ Simon,  
{\it Theorems on regularity and singularity of energy minimizing maps}, 
Lect.\ in Math., ETH Z\"urich, Birkh\"auser, 1996.

\bibitem{Strauss}
W.\ A.\ Strauss, {\it Existence of solitary waves in higher dimensions}, Commun.\ Math.\ Phys. {\bf 55} (1977), pp.\ 149--162.


 \bibitem{Zele} 
T.\ J.\ Zelenyak, {\it Stabilization of solutions of boundary value problems for a second 
order parabolic equation with one space variable}, Diff.\ Equations {\bf 4} (1968), pp.\ 17--22., 
 
\end{thebibliography}
\end{document}